\title{5-dimensional geometries III: the fibered geometries}
\author{Andrew Geng}
\theoremstyle{plain}
\newtheorem{thm}{Theorem}[section]
\newtheorem{prop}[thm]{Proposition}
\newtheorem{lemma}[thm]{Lemma}
\newtheorem{cor}[thm]{Corollary}
\theoremstyle{definition}
\newtheorem{defn}[thm]{Definition}
\theoremstyle{remark}
\newtheorem{rmk}[thm]{Remark}
\newtheorem{eg}[thm]{Example}
\let\c@figure\c@thm
\let\c@table\c@thm
\numberwithin{figure}{section}
\numberwithin{table}{section}
\newcommand{\keyword}{\emph}
\newcommand{\op}{\operatorname}
\newcommand{\ol}{\overline}
\newcommand{\R}{\mathbb{R}}
\newcommand{\C}{\mathbb{C}}
\newcommand{\Q}{\mathbb{Q}}
\newcommand{\Z}{\mathbb{Z}}
\newcommand{\Hyp}{\mathbb{H}}
\newcommand{\Euc}{\mathbb{E}}
\newcommand{\Heis}{\mathrm{Heis}}
\newcommand{\Sol}{\mathrm{Sol}}
\newcommand{\SLcover}{\widetilde{\mathrm{SL}_2}}
\newcommand{\lie}{\mathfrak}
\newcommand{\tanalg}{T_{\mathbf{1}}} 
\newcommand{\tanisom}{\operatorname{\lie{isom}}}
\newcommand{\idisom}{\operatorname{Isom}_0}
\newcommand{\isomplus}{\idisom}
\newcommand{\idcompo}[1]{ {{#1}}^\circ{} }
\newcommand{\foliation}{\mathcal} 
\newcommand{\vecspan}[1]{ \operatorname{span}\left( { { #1 } } \right) } 
\newcommand{\semisum}{\mathrlap{+}{\supset}} 
\newcommand{\SemiR}[2]{ { \underset{ {#2} }{ { {#1} } \rtimes \R} } }
\begin{document}

\maketitle

\begin{abstract}
    We classify the $5$-dimensional homogeneous geometries in the sense of Thurston.
    The present paper (part 3 of 3) classifies those in which the linear isotropy
    representation is nontrivial but reducible. Most of the resulting geometries
    are products.
    Some interesting examples include a countably infinite family
    $L(a;1) \times_{S^1} L(b;1)$ of inequivalent geometries
    diffeomorphic to $S^3 \times S^2$; an uncountable
    family $\SLcover \times_\alpha S^3$ in which only a countable
    subfamily admits compact quotients; and the non-maximal geometry
    $\op{SO}(4)/\op{SO}(2)$ realized by two distinct maximal geometries.
\end{abstract}

\setcounter{tocdepth}{2}
\tableofcontents

\section{Introduction}

Thurston's geometries are a family of eight homogeneous spaces
that form the building blocks of $3$-manifolds
in Thurston's Geometrization Conjecture. Building on
Thurston's classification \cite[Thm.~3.8.4]{thurstonbook} in dimension $3$
and Filipkiewicz's classification \cite{filipk} in dimension $4$,
this paper is part of a series carrying out the classification in dimension $5$.

In the sense of Thurston,
a \keyword{geometry} is a simply-connected Riemannian homogeneous
space $M = G/G_p$, with the additional conditions that $M$ has a finite-volume
quotient---``model''---and $G$ is as large as possible---``maximality''
(details are in Defn.~\ref{defn:geometries3}).
Part I \cite{geng1} outlines the division of the problem into cases,
following the strategy of Thurston and Filipkiewicz,
using the action of point stabilizers $G_p$ on tangent spaces $T_p M$
(the ``linear isotropy representation'').
Part II \cite{geng2} performs the classification for the case
when this representation is trivial or irreducible,
by leveraging other classification results.

The present paper finishes the classification by working out the case
when $T_p M$ is nontrivial and reducible. The decomposition
of $T_p M$ is used to construct a $G$-invariant fiber bundle structure
on $M$---hence the name ``fibering geometries''---and
the classification of these bundles provides a way to access
a classification of geometries.
To cope with the increased richness in fiber bundle structures
compared to what is possible in lower dimensions,
the tools required include conformal geometry, Galois theory,
and Lie algebra cohomology in addition to everything used in
previous classifications.
In particular, extension problems feature much more noticeably
than for the analogous cases in lower dimensions.
The main result is the following.
\begin{thm}[\textbf{Classification of $5$-dimensional maximal model
        geometries with nontrivial, reducible isotropy}]
    \label{thm:main}
    Let $M = G/G_p$ be a $5$-dimensional maximal model geometry, and
    let $V$ be an irreducible subrepresentation
    of $G_p \curvearrowright T_p M$ of maximal dimension.
    \begin{enumerate}
        \item[(i)] If $\dim V = 4$ (Section \ref{chap:fiber4}), then $M$ is one of
            the spaces
            \begin{align*}
                S^4 &\times \Euc & \Hyp^4 &\times \Euc &
                \C P^2 &\times \Euc & \C\Hyp^2 &\times \Euc &
                &{}\widetilde{\op{U}(2,1)/\op{U}(2)}
                &{}\Heis_5 .
            \end{align*}
        \item[(ii)] If $\dim V = 3$ (Section \ref{chap:fiber3}), then $M$ is a product
            of $2$-dimensional and $3$-dimensional constant-curvature geometries.
        \item[(iii)] If $\dim V = 2$ (Section \ref{chap:fiber2}), then $M$ is either
            a product of lower-dimensional geometries or one of the following.
            \begin{enumerate}
                \item The unit tangent bundles,
                    \begin{align*}
                        T^1 \Hyp^3 = \op{PSL}(2,\C)/\op{PSO}(2)  & &
                        T^1 \Euc^{1,2} = \R^3 \rtimes \idcompo{\op{SO}(1,2)} / \op{SO}(2);
                    \end{align*}
                \item The associated bundles
                    (see Defn.~\ref{defn:fiber2_slope} and Table
                    \ref{table:fiber2_associated_bundles}),
                    \begin{align*}
                        \Heis_3 &\times_\R S^3  &
                        \SLcover \times_\alpha S^3 &,
                            \quad 0 < \alpha < \infty \\
                        \Heis_3 &\times_\R \SLcover  &
                        \SLcover \times_\alpha \SLcover &,
                            \quad 0 < \alpha \leq 1 \\
                        && L(a;1) \times_{S^1} L(b;1) &,
                            \quad 0 < a \leq b \text{ coprime in } \Z ;
                    \end{align*}
                \item The line bundles over $\mathbb{F}^4$,
                    \begin{align*}
                        \R^2 \rtimes \SLcover
                            &\cong (\R^2 \rtimes \SLcover) \rtimes \op{SO}(2) / \op{SO}(2) \\
                        \mathbb{F}^5_a
                            &= \Heis_3 \rtimes \SLcover / \{ atz, \gamma(t) \}_{t \in \R} ,
                                \quad a = 0 \text{ or } 1 ;
                    \end{align*}
                \item The indecomposable non-nilpotent solvable Lie groups
                    $\SemiR{\R^4}{\text{polynomials}}$,
                    specified by the list of characteristic polynomials of the Jordan
                    blocks of a matrix $A$ where $t \in \R$ acts on $\R^4$ by $e^{tA}$,
                    \begin{align*}
                        A^{-1,-1}_{5,9}
                            &= \SemiR{\R^4}{(x-1)^2,\, x+1,\, x+1} \\
                        A^{1,-1,-1}_{5,7}
                            &= \SemiR{\R^4}{x-1,\, x-1,\, x+1,\, x+1}  \\
                        A^{1,-1-a,-1+a}_{5,7}
                            &= \SemiR{\R^4}{x-1,\, x-1,\, x-a+1,\, x+a+1}
                            \text{ where } a > 0,\,a \neq 1,\,a \neq 2, \\
                            &\qquad \text{ and }
                                \op{det} (\lambda - e^{tA}) \in \Z[\lambda]
                                \text{ for some } t > 0 ;
                    \end{align*}
                \item and the indecomposable nilpotent Lie groups,
                    \begin{align*}
                        A_{5,1} &= \SemiR{\R^4}{x^2, x^2}  &
                        A_{5,3} &= \SemiR{(\R \times \Heis_3)}{x_3 \to x_2 \to y} .
                    \end{align*}
            \end{enumerate}
    \end{enumerate}
    Moreover, all of the explicitly named spaces above
    are indeed maximal model geometries; and each
    product geometry is a model geometry,
    and maximal if at most one factor is Euclidean.
\end{thm}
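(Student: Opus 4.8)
The plan is to use the $G_p$-invariant splitting $T_pM = V \oplus W$, with $\dim W = 5 - \dim V$, to produce a $G$-invariant fiber bundle structure on $M$, identify its fiber and base with already-classified low-dimensional geometries, solve the resulting extension problem for each fiber/base pair, and then filter by the \emph{model} and \emph{maximality} conditions; this is the content of Sections~\ref{chap:fiber4}--\ref{chap:fiber2}. Since $G$ acts effectively, the linear isotropy representation $G_p \curvearrowright T_pM$ is faithful, so $G_p$ is a closed subgroup of $\op{O}(T_pM)$, hence compact, and $V$ and $W$ give complementary $G$-invariant distributions on $M$. After the preliminary reductions of Part~I \cite{geng1}, which leave us in one of the cases $\dim V = 2,3,4$, one shows (using the structure of $G_p$) that one of these distributions is integrable, so $M$ carries a $G$-invariant bundle $F \hookrightarrow M \to B$ with $F = G'/G_p$, $B = G/G'$, $G'$ the stabilizer of a fiber, and $\{\dim F,\dim B\} = \{\dim V,\dim W\}$; the isotropy representations of $F$ and $B$ are assembled from $V$ and $W$. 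Thurston's and Filipkiewicz's classifications in dimensions $\le 4$ \cite{thurstonbook,filipk}, Part~II \cite{geng2}, and the elementary fact that a geometry of dimension $\le 2$ has constant curvature then restrict $F$ and $B$ to a short list in each case.

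Running through the cases: when $\dim V = 4$, irreducibility of $V$ forces the $4$-dimensional factor to be one of $S^4, \Hyp^4, \C P^2, \C\Hyp^2$ or one of their contact/nilpotent degenerations, which is how the spaces of (i)---including $\Heis_5$ and $\widetilde{\op{U}(2,1)/\op{U}(2)}$---arise; when $\dim V = 3$, both factors are constant-curvature, yielding the products of (ii); and when $\dim V = 2$, the rotated $2$-plane may lie along circle, line, or hyperbolic directions, producing the unit tangent bundles, the circle and associated bundles, the line bundles over $\F^4$, and the indecomposable solvable and nilpotent Lie groups of (iii). For each admissible pair $(F,B)$ one must classify the $G$-invariant bundles realizing it: on the Lie-algebra level $\lie{g} = \tanisom(M)$ is an extension of a transitive subalgebra for $B$ by one for $F$, and such extensions, together with the $G$-invariant metrics on them, are counted by explicit Lie-algebra cohomology in degrees $1$ and $2$ with the appropriate fiber coefficients---this is where the continuous moduli such as the slope $\alpha$ in $\SLcover \times_\alpha S^3$ appear. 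Pinning down the structure group inside $\op{Isom}(F)$ and the induced twisting is where conformal geometry enters (for instance, requiring a $G$-invariant metric that is fiberwise of constant curvature forces conformal constraints on $B$ and on the bundle), and this must be done precisely enough to see that distinct coprime pairs $(a,b)$ give pairwise inequivalent geometries $L(a;1)\times_{S^1}L(b;1)$ rather than merely inequivalent bundles.

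Each surviving homogeneous space must then pass the two defining tests. For maximality one computes $\tanisom(M)$ in full, verifies that $G$ equals $\idisom(M)$, and verifies that $M$ admits no strictly larger homogeneous structure; this removes redundancies and is what exhibits the non-maximal $\op{SO}(4)/\op{SO}(2)$ inside two different maximal geometries. For the model condition one determines which candidates admit a finite-volume quotient, which eliminates most of them: a solvable group $\SemiR{\R^4}{\text{polynomials}}$ admits a lattice only when it is isomorphic to $\R^4 \rtimes_\varphi \Z$ with $\varphi$ conjugate to some $e^{tA}$, forcing $\op{det}(\lambda - e^{tA}) \in \Z[\lambda]$ for some $t > 0$---a Galois-theoretic condition making the eigenvalues of $A$ algebraic units---and the same mechanism cuts the uncountable $\alpha$-families down to countable subfamilies, in exact parallel with the constraints on lattices among $\SLcover$, $\Nil$, and $\Sol$ in dimension $3$.

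For the ``moreover'' clause, one exhibits for each explicitly named space a transitive isometric action, a finite-volume quotient (an arithmetic lattice, or for the nilpotent and solvable groups a concrete integral lattice once the number-theoretic condition is met), and the full isometry algebra---again using conformal geometry to bound $\tanisom(M)$ from above---thereby confirming maximality. For a product $X_1 \times \cdots \times X_k$ of lower-dimensional model geometries, a product of lattices is a lattice of finite covolume, so the product is a model; and it is maximal precisely when at most one $X_i$ is Euclidean, since then the de Rham splitting of isometries gives $\idisom(X_1 \times \cdots \times X_k) = \idisom(X_1) \times \cdots \times \idisom(X_k)$ with no larger transitive group, whereas two flat factors $\Euc^a \times \Euc^b$ combine into $\Euc^{a+b}$ whose isometry group contains rotations mixing the two and so strictly enlarges $G$ (such a product being non-maximal, with its underlying space the listed geometry after merging Euclidean factors). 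The main obstacle should be this extension step together with the lattice analysis: even with the invariant fibration and its fiber and base in hand, one must still carry out honest Lie-algebra-cohomology and Galois-theoretic computations and, most delicately, be certain that no $G$-invariant bundle is overlooked and that the ones found are counted up to the correct notion of equivalence of geometries---the family $L(a;1)\times_{S^1}L(b;1)$ being the warning that this bookkeeping is genuinely subtle.
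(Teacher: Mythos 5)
Your overall roadmap matches the paper's: build a $G$-invariant fibration from the isotropy splitting, identify fiber and base with low-dimensional geometries, solve the resulting Lie-group/Lie-algebra extension problems with second cohomology, and then filter by the model and maximality conditions. But one of your claims would produce a classification that contradicts the theorem you are proving. You assert that the lattice condition ``cuts the uncountable $\alpha$-families down to countable subfamilies.'' It does not: the statement itself lists $\SLcover \times_\alpha S^3$ for \emph{all} $0 < \alpha < \infty$ and $\SLcover \times_\alpha \SLcover$ for all $0 < \alpha \leq 1$, and the paper proves (Prop.~\ref{prop:fiber2_associated_lattice}) that every such slope admits a finite-volume (noncompact) quotient, built from the unit tangent bundle of a punctured hyperbolic surface together with a homomorphism $\pi_1 \to \R$ realizing an irrational twist; rationality of $\alpha$ is needed only for \emph{compact} quotients (Prop.~\ref{prop:fiber2_compact_rationals}). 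The only family that genuinely collapses to integer parameters is $L(a;1) \times_{S^1} L(b;1)$, and there the restriction comes not from lattice existence (these spaces are compact, hence automatically models) but from requiring the point stabilizer $\tau_{a,b}(\R^2)$ to be a \emph{closed} subgroup of $S^3 \times S^3 \times \R$, which forces rational dependence of $a$ and $b$ (Prop.~\ref{prop:fiber2_rational_curvature}). The Galois-theoretic mechanism you describe applies correctly to the solvable family $\SemiR{\R^4}{x-1,\,x-1,\,x-a+1,\,x+a+1}$ and to excluding $\R \rtimes \op{Conf}^+ \Euc^3/\op{SO}(3)$, but transplanting it to the associated bundles is wrong.

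Three further steps are asserted where the real work lies. First, integrability of ``one of the distributions'' is not automatic from reducibility (see Example~\ref{eg:nofiber}: there are isotropy-reducible homogeneous spaces with no intermediate subgroup at all); the paper integrates the \emph{fixed} distribution $TM^H$ of a normal subgroup $H \trianglelefteq G_p$, and in the $G_p = S^1_{m/n}$ case with $m=n$ there is no canonical invariant $2$-plane, so a genuinely nontrivial argument with a divergence-free invariant vector field (and a fallback to an alternative fibering when the quotient carries no invariant conformal structure) is needed. Second, in the $\dim V = 3$ case you cannot simply say both factors are constant-curvature: the essentially conformal fibering over $\Euc^3$ produces the candidate $\R \rtimes \op{Conf}^+ \Euc^3/\op{SO}(3)$ with unimodular isometry group, and part (ii) is only correct because this candidate is shown to admit no lattice. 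Third, for product maximality the Hano/de~Rham splitting of the isometry group applies to a product metric, but maximality quantifies over \emph{all} invariant metrics, which need not be product metrics a priori; the paper needs the extra hypothesis that one factor carries no nonzero invariant vector field (so that the splitting $T_pM \oplus T_qM'$ is orthogonal in every invariant metric), and ``at most one Euclidean factor'' alone is not the full argument.
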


The solvable Lie groups $M$ are given with names from \cite[Table II]{patera}
for Mubarakzyanov's classification of $5$-dimensional
solvable real Lie algebras \cite{muba_solvable5}.
Their isometry groups are $M \rtimes G_p$,
where the point stabilizer $G_p$ is a maximal compact subgroup of $\op{Aut} M$.
Table \ref{table:geoms_by_isotropy} lists these groups $G_p$;
more explicit descriptions for the solvable Lie groups can be found in
Prop.~\ref{prop:fiber2_nilmanifolds} Step 4
and Prop.~\ref{prop:fiber2_essentials}(iii) Step 6.
The lower dimensional geometries
have their usual isometry groups; e.g.\ $\C P^2$ and $\C\Hyp^2$
are as specified in \cite[Thm.~3.1.1]{filipk}.
A full list of point stabilizers in dimension $4$
can be found in \cite[Table 1]{wall86}.

\begin{table}[h!]
    \caption{Non-product fibering geometries by isotropy group $G_p$
        (see also Fig.~\ref{fig:isotropy_poset})}
    \label{table:geoms_by_isotropy}
    \begin{center}\begin{tabular}{c@{\hskip 24pt}l}
        Isotropy &
        Geometries \\
        \hline
        \rule[-9pt]{0pt}{27pt}
        $\op{U}(2)$ &
        $\Heis_5$ and $\widetilde{\op{U}(2,1)/\op{U}(2)}$ \\
        \rule[-15pt]{0pt}{0pt}
        $\op{SO}(2) \times \op{SO}(2)$ &
        $\SemiR{\R^4}{x-1,\,x-1,\,x+1,\,x+1}$
        and the associated bundles (Thm.~\ref{thm:main}(iii)(b)) \\
        \rule[-9pt]{0pt}{0pt}
        $\op{SO}(2)$ &
        The remaining solvable groups from Thm.~\ref{thm:main}(iii)(d) \\
        \rule[-12pt]{0pt}{0pt}
        $S^1_{1/2}$ &
        All line bundles over $\mathbb{F}^4$ (Thm.~\ref{thm:main}(iii)(c)) \\
        \rule[-6pt]{0pt}{0pt}
        $S^1_1$ &
        The two unit tangent bundles (Thm.~\ref{thm:main}(iii)(a)) \\ &
        and the nilpotent Lie groups from Thm.~\ref{thm:main}(iii)(e)
        \\
    \end{tabular}\end{center}
\end{table}

In the spirit of \cite[Cor.~p.~624]{mostow1950}, 
\cite{gorbatsevich1977}, \cite{ishihara1955},
and \cite[Thm.~1.0.3]{ottenburger2009}, one can also give a
classification up to diffeomorphism (Table \ref{table:geoms_by_diffeo}).
Most of the diffeomorphism types are readily guessed
and can be verified by a theorem of Mostow \cite[Thm.~A]{mostow_covariant_1962}.
The exception is the family of associated bundles $L(a;1) \times_{S^1} L(b;1)$;
Ottenburger names them $N^{ab1}$ in \cite[\S{3.1}]{ottenburger2009} and shows that
they are all diffeomorphic to $S^3 \times S^2$
in \cite[Cor.~3.3.2]{ottenburger2009}.

\begin{table}[h!]
    \caption{Non-contractible non-product fibering geometries by diffeomorphism type}
    \label{table:geoms_by_diffeo}
    \begin{center}\begin{tabular}{cl}
        Type & Geometries \\
        \hline
        \rule[-6pt]{0pt}{21pt}
        $S^2 \times \R^3$ &
        $T^1 \Hyp^3$ \\
        \rule[-6pt]{0pt}{0pt}
        $S^3 \times \R^2$ &
        $\Heis_3 \times_\R S^3$ and
        $S^3 \times_\alpha \SLcover$ \\
        \rule[-6pt]{0pt}{0pt}
        $S^3 \times S^2$  &
        $L(a;1) \times_{S^1} L(b;1)$  \\
    \end{tabular}\end{center}
\end{table}

\paragraph{Roadmap.}
Section \ref{chap:background} lists basic definitions and any
external results that need to be used frequently.
Then Section \ref{chap:fiber_bundles} uses the decomposition of the linear
isotropy representation to establish the existence of
invariant fiber bundle structures on geometries
(Prop.~\ref{prop:fibering_description}),
introducing related notations (such as names for invariant distributions) along the way.
The remaining sections each deal with base spaces of a single dimension:
\begin{itemize}
    \item Section \ref{chap:fiber4} handles the case where the fiber bundle
    has a $4$-dimensional base with irreducible isotropy.
    The geometries are classified by curvature, using a strategy
    closely following that of Thurston
    for $3$-dimensional geometries over $2$-dimensional bases
    in \cite[Thm.~3.8.4(b)]{thurstonbook}.
    \item Section \ref{chap:fiber3} handles $3$-dimensional isotropy irreducible bases.
    This case produces one non-product geometry that is shown not to be a model geometry
    by Galois theory (Section \ref{sec:fiber3_solconf}).
    \item Section \ref{chap:fiber2} handles $2$-dimensional bases.
    This case produces the ``associated bundle geometries'',
    a source of examples such as
    an uncountable family of geometries (Prop.~\ref{prop:fiber2_associated_lattice})
    and a geometry whose maximal realization is not unique
    (Rmk.~\ref{rmk:nonunique_maximality}).
\end{itemize}

\section{Background}
\label{chap:background}

\subsection{Geometries, products, and isotropy}

Recall the definition of a geometry, following Thurston and Filipkiewicz in
\cite[Defn.\ 3.8.1]{thurstonbook} and \cite[\S 1.1]{filipk}.
This is given in terms of homogeneous spaces,
since the upcoming classification will rely heavily on them;
\cite[Prop.~\ref{ii:prop:geometries3}]{geng2} in Part II outlines
the equivalence to earlier definitions.
\begin{defn}[\textbf{Geometries}] \label{defn:geometries3} ~
    \begin{enumerate}[(i)]
        \item A \keyword{geometry} is a connected, simply-connected homogeneous
            space $M = G/G_p$ where $G$ is a connected Lie group acting faithfully
            with compact point stabilizers $G_p$.
        \item $M$ is a \keyword{model geometry} if there is some lattice
            $\Gamma \subset G$ that acts freely on $M$.
            Then the manifold $\Gamma \backslash G / G_p$
            is said to be \keyword{modeled on} $M$.
        \item $M$ is \keyword{maximal} if it is not $G$-equivariantly
            diffeomorphic to any other geometry $G'/G'_p$ with $G \subsetneq G'$.
            Any such $G'/G'_p$ is said to \keyword{subsume} $G/G_p$.
    \end{enumerate}
\end{defn}

Many properties of this definition---including the existence of invariant
Riemannian metrics and the correspondence between quotients
$\Gamma \backslash G / G_p$ and complete, finite-volume manifolds
locally isometric to $M$---are taken for granted here
but stated more explicitly in Part II \cite[\S{2}]{geng2}.

Discussion of product geometries was omitted from Part II---but
among the fibering geometries are many (28 and one countable family).
Some shortcuts are possible with their classification, such as the following.
\begin{prop}[\textbf{Products are models}]
    \label{prop:products_are_models}
    If $M = G/G_p$ and $M' G'/G'_q$ are model geometries, then
    their product $M \times M' = (G \times G')/(G_p \times G'_q)$
    is a model geometry.
\end{prop}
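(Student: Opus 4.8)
The plan is to verify each clause of Definition~\ref{defn:geometries3} for the product $M \times M' = (G \times G')/(G_p \times G'_q)$, the only nontrivial one being the existence of a lattice acting freely. First I would note that $M \times M'$ is connected and simply-connected because each factor is, and $G \times G'$ is a connected Lie group acting on it coordinatewise. The point stabilizer of $(p,q)$ is exactly $G_p \times G'_q$, which is compact as a product of compact groups, and the action is faithful because it is faithful in each coordinate separately. So $M \times M'$ is a geometry; it remains to exhibit a lattice $\Lambda \subset G \times G'$ acting freely.

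The natural candidate is $\Lambda = \Gamma \times \Gamma'$, where $\Gamma \subset G$ and $\Gamma' \subset G'$ are lattices acting freely on $M$ and $M'$ respectively, given by the hypothesis that $M$ and $M'$ are model geometries. The key steps are then: (1) $\Gamma \times \Gamma'$ is a lattice in $G \times G'$ --- this follows because a product of discrete subgroups is discrete, and the product of finite-covolume subgroups has finite covolume in the product (the Haar measure on $G \times G'$ is the product measure, so the covolume of $\Gamma \times \Gamma'$ is the product of the two covolumes, which is finite); (2) $\Gamma \times \Gamma'$ acts freely on $M \times M'$ --- if $(\gamma, \gamma')$ fixes $(x, x')$, then $\gamma$ fixes $x$ and $\gamma'$ fixes $x'$, so by freeness of each factor action $\gamma = e$ and $\gamma' = e'$. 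That gives the middle clause, and the quotient $(\Gamma \times \Gamma') \backslash (M \times M')$ is the compact-or-finite-volume manifold $(\Gamma \backslash M) \times (\Gamma' \backslash M')$.

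I do not expect any genuine obstacle here; the statement is essentially a bookkeeping exercise, and the only point requiring a word of care is the covolume computation, i.e.\ confirming that a lattice times a lattice is a lattice in the product group, which rests on Fubini for the product Haar measure. (Note that the proposition does \emph{not} claim maximality of the product, consistent with the final sentence of Theorem~\ref{thm:main}, which flags that a product with two or more Euclidean factors may fail to be maximal; so there is nothing to prove on that front.) One small subtlety worth a remark is that Definition~\ref{defn:geometries3}(ii) phrases the model condition as a lattice in $G$ acting freely on $M$, rather than merely a discrete subgroup of finite covolume; since a lattice in a Lie group acting on a homogeneous space $G/G_p$ with $G_p$ compact automatically has the property that the quotient $\Gamma \backslash G / G_p$ has finite volume, and freeness is what we checked directly, the two formulations match and the argument is complete.
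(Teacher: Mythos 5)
Your proposal is correct and follows essentially the same route as the paper: the paper's one-line proof also takes the product lattice $\Gamma \times \Gamma'$ and observes that $(\Gamma \times \Gamma') \backslash (G \times G') / (G_p \times G'_q)$ is modeled on $M \times M'$. Your version simply spells out the bookkeeping (discreteness, finite covolume via the product Haar measure, and componentwise freeness) that the paper leaves implicit.
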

\begin{proof}
    If $\Gamma \backslash G / G_p$ is modeled on $M$
    and $\Gamma' \backslash G' / G'_q$ is modeled on $M'$,
    then $(\Gamma \times \Gamma') \backslash (G \times G') / (G_p \times G'_q)$
    is modeled on $M \times M'$.
\end{proof}
Maximality is in general a more difficult question;
we prove in Prop.~\ref{prop:products_are_maximal}
that the product of two maximal geometries is maximal,
but under the assumption that
at most one factor admits nonzero invariant vector fields
and at most one factor is itself a product with a Euclidean factor.
This happens to be enough for our usage in dimension $5$
(Prop.~\ref{prop:fiber2_explicit_products}).

There is, however, a shortcut to prove maximality
for geometries realized by solvable Lie groups,
given by the following rephrasing of a theorem of Gordon and Wilson.
\begin{lemma}[\textbf{Maximality for solvable Lie groups}]
    \label{lemma:solvable_maximality}
    Let $M$ be a simply-connected, unimodular, solvable Lie group
    whose adjoint representation acts with only real eigenvalues.
    Then the maximal geometry subsuming $M/\{1\}$ is $M \rtimes K / K$
    where $K$ is a maximal compact subgroup of $\op{Aut} M$.
\end{lemma}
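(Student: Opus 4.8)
The plan is to realize every geometry subsuming $M/\{1\}$ inside the isometry group of a suitably chosen left-invariant metric on $M$, and then to feed in Gordon and Wilson's theorem in the form saying that such isometry groups are as small as the hypotheses allow. Concretely, let $G'/G'_p$ be any geometry subsuming $M/\{1\}$ (Definition~\ref{defn:geometries3}); unwinding the definition, $M \subseteq G'$, and after normalizing the $M$-equivariant diffeomorphism $G'/G'_p \cong M$ we may assume $M$ acts on $G'/G'_p$ by left translations, so $M \cap G'_p = \{\mathbf 1\}$ and $M \cdot G'_p = G'$. Choosing a $G'$-invariant metric (one exists since $G'_p$ is compact) and transporting it along this diffeomorphism yields a left-invariant metric $g$ on $M$; since $G'$ is connected and acts faithfully by isometries of $g$, it embeds into $\idisom(M,g)$ carrying the given copy of $M$ onto the group of left translations. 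So it suffices to understand $\op{Isom}(M,g)$ for left-invariant $g$.

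The heart of the proof --- and the step I expect to be the main obstacle --- is the input from Gordon and Wilson. For $M$ simply connected, unimodular, and with $\op{ad}$ acting with only real eigenvalues, their structure theorem for isometry groups of solvmanifolds can be rephrased as: for every left-invariant metric $g$ on $M$, the group of left translations is normal in $\op{Isom}(M,g)$, the isotropy subgroup $K_g$ at $\mathbf 1$ acts on $M$ by automorphisms, and $\op{Isom}(M,g) = M \rtimes K_g$. Since an automorphism of the connected group $M$ is pinned down by its differential at $\mathbf 1$, one may view $K_g$ as $\op{Aut}(\lie m) \cap \op{O}(\lie m, g_{\mathbf 1})$ inside $\op{Aut} M$, where $\lie m = \tanalg M$; in particular $K_g$ is compact and $\op{Isom}(M,g) \subseteq M \rtimes \op{Aut} M$ for every left-invariant $g$. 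Both hypotheses are genuinely consumed here: unimodularity fails for the solvable Iwasawa factor $AN$ of $\op{Isom}(\Hyp^2)$, which is not even normal in $\op{Isom}(\Hyp^2)$; and reality of the eigenvalues fails for the universal cover $\R^2 \rtimes \R$ of the orientation-preserving Euclidean plane group, whose flat left-invariant metric is that of $\Euc^3$ --- in either case the maximal geometry subsuming $M/\{1\}$ is far larger than any $M \rtimes K/K$. Extracting the displayed clean splitting from Gordon and Wilson's more general machinery (nilshadows, modifications of solvable groups) is the one genuinely substantive point; the rest is formal.

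Given the splitting, combine it with the reduction of the first paragraph: every geometry subsuming $M/\{1\}$ then has the form $M \rtimes L$ for a compact subgroup $L \subseteq \op{Aut} M$ acting by automorphisms fixing $\mathbf 1$. Conversely, choose a maximal compact subgroup $K \subseteq \op{Aut} M$ together with a $K$-invariant inner product on $\lie m$ (average over $K$); the associated left-invariant metric $g_0$ has $K \subseteq K_{g_0}$, and since $K_{g_0}$ is a compact subgroup of $\op{Aut} M$ containing the maximal compact $K$, it equals $K$, whence $\op{Isom}(M,g_0) = M \rtimes K$. This exhibits $M \rtimes K/K$ --- or its identity component, when $\op{Aut} M$ has disconnected maximal compact --- as a geometry subsuming $M/\{1\}$. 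Any geometry that subsumes it is again of the form $M \rtimes L$ with $L$ compact in $\op{Aut} M$ and $L \supseteq K$, so $L = K$ by maximality; hence $M \rtimes K/K$ is maximal. Finally, any maximal geometry subsuming $M/\{1\}$ is $M \rtimes L$ with $L$ a maximal compact subgroup of $\op{Aut} M$; all such $L$ are conjugate in $\op{Aut} M$, and the conjugating automorphism induces an equivalence of the two geometries, giving uniqueness.
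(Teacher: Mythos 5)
Your proof is correct and follows essentially the same route as the paper: both rest entirely on Gordon--Wilson's splitting $\idisom(M,g) = M \rtimes K_g$ with $K_g \subseteq \op{Aut} M$ for left-invariant metrics. The only cosmetic difference is that the paper invokes Filipkiewicz's result that a maximal geometry's group is realizable as a full isometry group, whereas you re-derive the needed embedding directly from the definition of subsumption and explicitly build the $K$-invariant metric; the substance is the same.
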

\begin{proof}
    Under the provided assumptions, in any invariant metric on $M$,
    there is some $K \subseteq \op{Aut} M$ such that the identity component
    of the isometry group of $M$ is $\idisom M = M \rtimes K$
    \cite[Thm.~4.3]{gordonwilsonisometry}.
    Since $K$ is the point stabilizer of the identity, it is compact.

    Since the transformation group of a maximal geometry is realizable
    as the isometry group in some invariant metric \cite[Prop.~1.1.2]{filipk},
    the maximal geometry subsuming $M/\{1\}$ is of the form
    $M \rtimes K / K$, with $K$ not in any larger compact group of automorphisms.
\end{proof}
When $M$ is nilpotent, the adjoint eigenvalues are always $1$; this case
is an earlier result by Wilson in \cite[Thm.~2(3)]{wilson1982isometry}.

The existence of non-product geometries forces a classification to confront
fiber bundle structures. Section \ref{chap:fiber_bundles} details how
the occurrence of these structures is controlled by
the action of the point stabilizer $G_p$ on the tangent space $T_p M$.
Since $G_p$ is compact, it preserves an inner product,
which allows expressing such a representation by a subgroup of $\op{SO}(5)$.
Figure \ref{fig:isotropy_poset} recalls the classification of such subgroups
from Part II in \cite[Prop.~\ref{ii:isotropy_classification}]{geng2}.

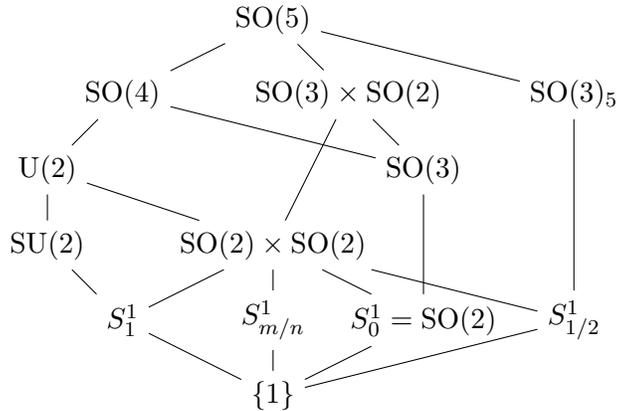
\begin{figure}[h!]
    \caption[Closed connected subgroups of $\op{SO}(5)$.
        (Duplicate of \ref{fig_thesis:isotropy_poset})]{
        Closed connected subgroups of $\op{SO}(5)$, with inclusions.
        $\op{SO}(3)_5$ denotes $\op{SO}(3)$ acting on its $5$-dimensional
        irreducible representation;
        and $S^1_{m/n}$ acts as on the direct sum $V_m \oplus V_n \oplus \R$
        where $S^1$ acts irreducibly on $V_m$ with kernel of order $m$.
    }
    \label{fig:isotropy_poset}
    \begin{center}\begin{tikzpicture}
        \draw (0, 5) node (so5) {$\op{SO}(5)$};
        \draw (-2, 4) node (so4) {$\op{SO}(4)$};
        \draw (1, 4) node (so3so2) {$\op{SO}(3) \times \op{SO}(2)$};
        \draw (4, 4) node (so35) {$\op{SO}(3)_5$};

        \draw (-3, 3) node (u2) {$\op{U}(2)$};
        \draw (-3, 2) node (su2) {$\op{SU}(2)$};

        \draw (2, 3) node (so3) {$\op{SO}(3)$};
        \draw (0, 2) node (t2) {$\op{SO}(2) \times \op{SO}(2)$};

        \draw (-2, 1) node (s11) {$S^1_1$};
        \draw (0, 1) node (s1q) {$S^1_{m/n}$};
        \draw (2, 1) node (so2) {$S^1_0 = \op{SO}(2)$};
        \draw (4, 1) node (s12) {$S^1_{1/2}$};

        \draw (0, 0) node (triv) {$\{1\}$};

        \draw (so5) -- (so4) -- (u2) -- (su2) -- (s11) -- (triv);
        \draw (so5) -- (so3so2) -- (so3) -- (so2) -- (triv);
        \draw (so5) -- (so35) -- (s12) -- (triv);
        \draw (so4) -- (so3);
        \draw (so3so2) -- (t2) -- (so2);
        \draw (u2) -- (t2) -- (s11);
        \draw (t2) -- (s1q) -- (triv);
        \draw (t2) -- (s12);
    \end{tikzpicture}\end{center}
\end{figure}

\subsection{Lie algebra extensions}

If $M$ is a $G$-invariant fiber bundle over a space $B$, then
the isometry group $G$ is an extension of the transformation group of $B$.
Passing to Lie algebras permits use of the well-known classification
of Lie algebra extensions by second cohomology, which this subsection recalls.
For more details,
a survey of Lie algebra cohomology in low degrees can be found in \cite{wagemann}
or \cite[\S 2--4]{alekseevsky_nonsuper}\footnote{
    An almost identical version, generalized to super (i.e.\ $\Z/2\Z$-graded)
    Lie algebras, has been published as \cite{alekseevsky_super}.
}.

\begin{defn}[\textbf{Lie algebra cohomology}, following {\cite[\S 2]{wagemann}}]
    \label{defn:liecoho}
    Let $M$ be a module of a Lie algebra $\lie{g}$ over a field $k$.
    The \keyword{Chevalley-Eilenberg complex} is the cochain complex
	is the cochain complex
		\[ C^p(\lie{g}, M) = \op{Hom}_k(\Lambda^p \lie{g}, M) \]
	with boundary maps
	\begin{align*}
		d_p: C^p &\to C^{p+1} \\
		(d_p c)(x_1, \ldots, x_{p+1})
			&= \sum_{1 \leq i < j \leq p+1} (-1)^{i+j}
            c\left( [x_i, x_j], x_1, \ldots, \hat{x_i}, \ldots, \hat{x_j}, \ldots, x_{p+1} \right) \\
				&\quad + \sum_{1 \leq i \leq p+1} (-1)^{i+1}
                    x_i c\left( x_1, \ldots, \hat{x_i}, \ldots, x_{p+1} \right)
	\end{align*}
    where $\hat{x_i}$ means $x_i$ should be omitted.
    The cohomology of $\lie{g}$ with coefficients in $M$
    is defined to be the cohomology of this complex
    and denoted $H^p(\lie{g}, M)$.
\end{defn}

\begin{thm}[\textbf{Classification of extensions by second cohomology}
        {\cite[Thm.\ 8]{alekseevsky_nonsuper}}] \label{thm:extensions_h2}
    Let $\lie{g}$ and $\lie{h}$ be Lie algebras and let
        \[ \overline{\alpha}: \lie{g} \to \op{out} \lie{h}
            = \op{der}(\lie{h})/\op{ad}(\lie{h}) \]
    be a Lie algebra homomorphism. Then the following are equivalent.
    \begin{enumerate}[(i)]
        \item For one (equivalently: any) linear lift
            $\alpha: \lie{g} \to \op{der} \lie{h}$ of $\overline{\alpha}$ choose
            $\rho: \bigwedge^2 \lie{g} \to \lie{h}$ satisfying
                \[ [\alpha_X, \alpha_Y] - \alpha_{[X,Y]} = \op{ad}_{\rho([X,Y])} . \]
            Then the cohomology class of
            $d \rho$ in $H^3(\lie{g}, Z(\lie{h}))$ vanishes,
            where the action of $\lie{g}$ on $Z(\lie{h})$ is induced by $\alpha$
            and $d$ is defined by the formula in \ref{defn:liecoho}
            (ignoring that $\lie{h}$ may not be a $\lie{g}$-module).
        \item There exists an extension $0 \to \lie{h} \to \lie{e} \to \lie{g} \to 0$
            inducing the homomorphism $\overline{\alpha}$.
    \end{enumerate}
    If either occurs, then the extensions satisfying (ii) are parametrized
    by $[\rho] \in H^2(\lie{g}; Z(\lie{h}))$.
\end{thm}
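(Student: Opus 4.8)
The plan is to reconstruct the extension by hand, mirroring Eilenberg--MacLane's treatment of nonabelian group extensions. I would begin by fixing, once and for all, a $k$-linear lift $\alpha\colon\lie{g}\to\op{der}\lie{h}$ of $\overline{\alpha}$ (no Lie-algebra structure on $\alpha$ is assumed). Since $\overline{\alpha}$ is a Lie algebra homomorphism, the derivation $[\alpha_X,\alpha_Y]-\alpha_{[X,Y]}$ lies in the kernel $\op{ad}(\lie{h})$ of $\op{der}\lie{h}\to\op{out}\lie{h}$ for all $X,Y\in\lie{g}$, so I can choose an alternating bilinear $\rho\colon\Lambda^2\lie{g}\to\lie{h}$ with $[\alpha_X,\alpha_Y]-\alpha_{[X,Y]}=\op{ad}_{\rho(X,Y)}$; this pins down $\rho$ up to an arbitrary alternating map $\lie{g}\times\lie{g}\to Z(\lie{h})$, as $\ker\op{ad}=Z(\lie{h})$. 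On the vector space $\lie{e}:=\lie{h}\oplus\lie{g}$ I would then consider the candidate bracket
\[
 [(A,X),(B,Y)]:=\bigl([A,B]_{\lie{h}}+\alpha_X(B)-\alpha_Y(A)+\rho(X,Y),\ [X,Y]_{\lie{g}}\bigr),
\]
which is alternating and bilinear, restricts to the given bracket of $\lie{h}$, makes $\lie{h}$ an ideal, and realizes $\overline{\alpha}$ on the quotient $\lie{g}$. Everything in the theorem then amounts to understanding when this bracket is a Lie bracket and how much freedom there is.

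Next I would compute the Jacobiator $J\bigl((A,X),(B,Y),(C,Z)\bigr)$. Its $\lie{g}$-component vanishes identically, being the Jacobi identity of $\lie{g}$. For the $\lie{h}$-component I would substitute the derivation property of each $\alpha_X$, the Jacobi identity of $\lie{h}$, and the defining relation $[\alpha_X,\alpha_Y]-\alpha_{[X,Y]}=\op{ad}_{\rho(X,Y)}$; all terms containing some of $A,B,C$ cancel, and what survives depends only on $X,Y,Z$ and is, up to sign, the Chevalley--Eilenberg coboundary $(d\rho)(X,Y,Z)$ computed with the non-associative action $\alpha$. Two observations finish this step. First, applying $\op{ad}$ to $(d\rho)(X,Y,Z)$ and collapsing it with the Jacobi identity in $\op{der}\lie{h}$ and the relation above shows $\op{ad}_{(d\rho)(X,Y,Z)}=0$, so $d\rho$ is valued in $Z(\lie{h})$; hence the candidate bracket satisfies the Jacobi identity if and only if $d\rho=0$. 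Second, on $Z(\lie{h})$ the action $\alpha$ agrees with the genuine $\lie{g}$-module structure induced by $\overline{\alpha}$ (any two representatives of $\overline{\alpha}(X)$ differ by an inner derivation, and inner derivations annihilate $Z(\lie{h})$), so $d\rho\in C^3(\lie{g},Z(\lie{h}))$ and $d(d\rho)=0$ --- most transparently because $d\rho$ is, up to sign, the Jacobiator of the candidate bracket, and the Jacobiator of any alternating bilinear bracket obeys the higher Jacobi identity, which on $Z(\lie{h})$-coefficients is exactly the $3$-cocycle condition (one can also check $d(d\rho)=0$ directly with the same identities). Thus $[d\rho]\in H^3(\lie{g},Z(\lie{h}))$ is defined.

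I would then check that $[d\rho]$ depends only on $\overline{\alpha}$: replacing $\rho$ by $\rho+\beta$ with $\beta\colon\Lambda^2\lie{g}\to Z(\lie{h})$ changes $d\rho$ by the genuine coboundary $d\beta$; and replacing the lift $\alpha$ by $\alpha+\op{ad}\circ\phi$ for linear $\phi\colon\lie{g}\to\lie{h}$ forces a compatible adjustment of $\rho$ and, after a bookkeeping computation, again changes $d\rho$ only by a coboundary. With the class in hand, (i)$\Rightarrow$(ii) is immediate: if $[d\rho]=0$, write $d\rho=d\beta$ with $\beta\in C^2(\lie{g},Z(\lie{h}))$, replace $\rho$ by $\rho-\beta$ so that $d\rho=0$, and the candidate bracket is a Lie bracket, giving an extension $0\to\lie{h}\to\lie{e}\to\lie{g}\to0$ inducing $\overline{\alpha}$. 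Conversely for (ii)$\Rightarrow$(i), given such an extension with linear section $s\colon\lie{g}\to\lie{e}$, I would set $\alpha_X:=\op{ad}_{s(X)}|_{\lie{h}}$ (which lands in $\lie{h}$ since $\lie{h}$ is an ideal, and lifts $\overline{\alpha}$ by hypothesis) and $\rho(X,Y):=[s(X),s(Y)]-s([X,Y])$ (which lands in $\lie{h}$ since it projects to $0$); the relation $[\alpha_X,\alpha_Y]-\alpha_{[X,Y]}=\op{ad}_{\rho(X,Y)}$ follows from the Jacobi identity of $\lie{e}$ applied to $s(X),s(Y),A$, and the Jacobi identity applied to $s(X),s(Y),s(Z)$ is exactly $d\rho=0$.

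Finally, for the $H^2$-parametrization I would fix $\alpha$ and a cocycle $\rho_0$ with $d\rho_0=0$, and work with equivalence classes of extensions inducing $\overline{\alpha}$ (two being equivalent when related by a Lie isomorphism that is the identity on $\lie{h}$ and on $\lie{g}$). Every such extension is, after a suitable change of section, equivalent to the one built from $\alpha$ and some $\rho$ with $d\rho=0$; the residual change-of-section freedom keeping $\alpha$ fixed is precisely $\rho\mapsto\rho+d\gamma$ for $\gamma\colon\lie{g}\to Z(\lie{h})$; and two cocycles with $d\rho=d\rho'=0$ yield equivalent extensions iff $\rho-\rho'$ is such a coboundary. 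This identifies the set of equivalence classes with a torsor under $H^2(\lie{g},Z(\lie{h}))$, with $[\rho]$ the class of $\rho-\rho_0$. The main obstacle is the index- and sign-heavy verification in the second paragraph that the surviving part of the Jacobiator is $d\rho$ and that this $d\rho$ is then a $Z(\lie{h})$-valued $3$-cocycle: conceptually it is transparent, since every failure of $\alpha$ to be a homomorphism (or of the module axioms on $\lie{h}$) is an inner derivation and hence vanishes on $Z(\lie{h})$, but it is the one place where a genuine multi-line computation cannot be sidestepped, with the change-of-lift invariance of $[d\rho]$ the next most delicate piece of bookkeeping.
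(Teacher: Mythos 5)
The paper does not prove this statement at all---it is imported verbatim from the cited reference (Alekseevsky--Michor, Thm.~8)---so there is no internal proof to compare against; your reconstruction is the standard argument and essentially the one in that reference: lift $\overline{\alpha}$ linearly, define the candidate bracket on $\lie{h}\oplus\lie{g}$, show the only surviving part of the Jacobiator is the $Z(\lie{h})$-valued cochain $d\rho$ whose class in $H^3(\lie{g},Z(\lie{h}))$ depends only on $\overline{\alpha}$, and parametrize extensions as an $H^2(\lie{g};Z(\lie{h}))$-torsor. Your sketch is correct; the steps you leave compressed (that $d(d\rho)=0$ and that $[d\rho]$ is unchanged when $\alpha$ is replaced by $\alpha+\op{ad}\circ\phi$) are precisely the computations the reference packages via the graded Nijenhuis--Richardson bracket, and your direct verifications go through.
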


\subsection{Notations}

The naming of Lie groups and Lie algebras requires a number of notations;
the following conventions will be used throughout, with scattered reminders.
\begin{itemize}
    \item $\tanalg G$ denotes the tangent algebra of a Lie group $G$---the
        tangent space at the identity, identified
        with \emph{right}-invariant vector fields on $G$
        so that the resulting flows correspond with the action
        of $1$-parameter subgroups by multiplication on the left.
    \item $\idcompo G$ denotes the identity component of a topological group $G$,
        and $\idisom M$ is short for the identity component of the isometry group
        of $M$.
    \item Fraktur letters usually denote Lie algebras, e.g.\ an occurrence of
        $\lie{g}$ near $G$ probably means $\lie{g} = \tanalg G$;
        and $\tanisom M = \tanalg \op{Isom} M$.
\end{itemize}

\section{Fiber bundle structures on geometries}
\label{chap:fiber_bundles}

If $M = G/G_p$ admits a $G$-invariant fiber bundle structure $F \to M \to B$,
then $F$ and $B$ admit transitive group actions, which
affords some hope of reduction to lower-dimensional problems.
The aim of this section is to prove that
when $\dim M = 5$, reducibility of the isotropy representation
$G_p \curvearrowright T_p M$ both implies the existence of such
a fibering and constrains some of its properties
(Prop.\ \ref{prop:fibering_description}).
This begins with a definition of the structure being sought.
\begin{defn}[\textbf{Fiberings}]
    A geometry $M = G/G_p$ \keyword{fibers over} a smooth manifold $B$
    if $B$ is diffeomoprhic to $M/\foliation{F}$
    for some $G$-invariant foliation $\foliation{F}$
    with closed leaves.
    (Equivalently, $B \cong G/H$
    for some closed subgroup $H \subseteq G$ containing $G_p$.)
    The fibering is described as \keyword{isometric} if $B$ admits a $G$-invariant
    Riemannian metric, \keyword{conformal} if $B$ admits a $G$-invariant
    conformal structure, and \keyword{essentially conformal}
    (or \keyword{essential} for short) if it is conformal but not isometric.
\end{defn}
\begin{rmk}
    \label{rmk:fibering_closed_leaves}
    Closedness of $H$ ensures that $G/H$ has a natural smooth structure
    and smooth action by $G$ \cite[Thm.~II.4.2]{helgasonnew}
    and, by the existence of local cross sections for $B \to G$
    \cite[Lemma~II.4.1]{helgasonnew}, that $M$ is a smooth fiber bundle $F \to M \to B$
    where $F \cong H/G_p$ is a leaf of $\foliation{F}$.
\end{rmk}


This section's main result, guaranteeing the existence of
useful fiberings, is the following Proposition.
Its proof will be given in the last subsection,
after the first two subsections introduce the $G$-invariant distributions
that will be used to work infinitesimally with fiberings.
\begin{prop}[\textbf{The fibering description}] \label{prop:fibering_description}
    Let $M = G/G_p$ be a $5$-dimensional geometry, and let $d$
    be the dimension of the largest irreducible subrepresentation
    of $G_p \curvearrowright T_p M$.
    \begin{enumerate}[(i)]
        \item If $d = 4$ and $M$ is a model geometry
            then $M$ fibers isometrically over a $4$-dimensional
            simply-connected Riemannian symmetric space.
        \item If $d = 3$
            then $M$ fibers conformally over $S^3$, $\Euc^3$, or $\Hyp^3$.
        \item If $d = 2$ and $M$ is a model geometry
            then $M$ fibers conformally over $S^2$, $\Euc^2$, or $\Hyp^2$.
    \end{enumerate}
\end{prop}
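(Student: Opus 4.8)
The plan is to extract the fibering from the decomposition of the isotropy representation. Because $G_p$ is compact, $T_p M$ splits $G_p$-equivariantly as a sum of irreducible pieces; let $V \subseteq T_p M$ be one of maximal dimension $d$. The complement $V^\perp$ is then a $G_p$-invariant subspace, so translating it around by $G$ yields a $G$-invariant distribution $\mathcal{D}$ on $M$ of corank $d$ (this is where the distributions introduced in the first two subsections of Section \ref{chap:fiber_bundles} are used). The first task is to show $\mathcal{D}$ is integrable with closed leaves, so that $M/\mathcal{D}$ is a $d$-dimensional manifold $B$ and $M \to B$ is a $G$-invariant fibering in the sense of the Fiberings definition. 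Integrability should follow from the invariance together with dimension bookkeeping: the Frobenius obstruction $[\mathcal{D},\mathcal{D}] \bmod \mathcal{D}$ is a $G_p$-equivariant map $\Lambda^2 V^\perp \to V$, and in the relevant ranges ($\dim V^\perp \in \{1,2,3\}$) one checks representation-theoretically using Figure \ref{fig:isotropy_poset} that this map must vanish (e.g.\ when $\dim V^\perp = 1$ it is automatic; when $\dim V^\perp \le 3$ the target $V$ carries no suitable equivariant image). Closedness of the leaves is the point where the hypotheses diverge by case, and is the main obstacle (see below).

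Once $M \to B$ is established with $B \cong G/H$, $H = $ stabilizer of a leaf, one reads off the geometry of $B$ from how $H$ acts on $T_{[H]}B \cong V$. The group $G_p \subseteq H$ acts on $V$ by the given irreducible representation of dimension $d$. For (i), $d=4$: the image of $G_p$ in $\op{SO}(4)$ acting on $V$ is one of the irreducibly-acting subgroups from Figure \ref{fig:isotropy_poset}, hence contains $\op{SU}(2)$ or equals $\op{SO}(4)$, $\op{U}(2)$, etc.; in all cases the full isotropy of $H$ on $V$ acts transitively on directions or at least preserves only a metric (no invariant conformal-but-not-metric structure and no invariant lower-dimensional distribution), so $B$ is a $4$-dimensional Riemannian homogeneous space whose isotropy is large enough to force it to be symmetric. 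Here one invokes that $M$ is a model geometry to push $B$ to being a \emph{model}, hence (being simply connected — which follows from simple-connectivity of $M$ and connectedness of the fiber, as in Remark \ref{rmk:fibering_closed_leaves}) one of the standard simply-connected $4$-dimensional symmetric spaces. For (ii) and (iii), $d = 3$ or $2$: the isotropy on $V$ is $\op{SO}(3)$ (resp.\ $\op{SO}(2)$ or larger) acting standardly, which preserves a conformal class on $V$; spreading this around $B$ by $G$-invariance gives a $G$-invariant conformal structure, so the fibering is conformal, and $B$ is a simply-connected conformally homogeneous $3$- (resp.\ $2$-) manifold. A conformally homogeneous simply-connected $n$-manifold with $n \le 3$ is conformally one of $S^n$, $\Euc^n$, $\Hyp^n$ — for $n=2$ this is uniformization plus the identification of the conformal automorphism groups, and for $n=3$ it is the analogous classification of conformally homogeneous Riemannian $3$-manifolds (Liouville-type rigidity). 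This last identification is a second potential obstacle but is essentially a citation.

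The hard part will be \textbf{closedness of the leaves}, which is exactly why the model-geometry hypothesis appears in (i) and (iii) but not (ii). The $G$-invariant foliation $\mathcal{F} = \mathcal{D}$ has leaves that are orbits of the connected subgroup $H^\circ \subseteq G$ integrating $\mathfrak{h} = \tanalg G_p + \mathcal{D}_p$; the leaf is closed iff $H^\circ$ (equivalently its closure's identity component) is closed in $G$, and an immersed-but-not-embedded leaf is the generic danger (a one-parameter subgroup winding densely in a torus). For $d=3$, codimension of the leaf in $M$ is $3$ and the isotropy $G_p$ already acts irreducibly on the normal $3$-space; I would argue that the closure $\overline{H^\circ}$ would have to act on $T_p M$ with a smaller quotient representation, and checking the subgroup lattice in Figure \ref{fig:isotropy_poset} shows the only possibility consistent with the $\op{SO}(3)$ on $V$ forces $\overline{H^\circ} = H^\circ$, so no hypothesis is needed. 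For $d = 4$ and $d = 2$, the analogous argument can fail, and one instead uses the lattice $\Gamma \subseteq G$ coming from the model-geometry hypothesis: a non-closed leaf would have dense closure meeting every $\Gamma$-orbit, contradicting properness of the $\Gamma$-action (or: one passes to the compact quotient $\Gamma\backslash M$, where a dense leaf of the descended foliation would violate the finite-volume/closed-orbit structure, à la a Selberg- or Auslander-type argument). Making this closure argument clean — showing that model-geometry forces the relevant $H^\circ$ to be closed — is the technical crux; the rest is representation theory and invocation of standard classifications.
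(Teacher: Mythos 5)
Your overall architecture is the same as the paper's (an invariant distribution complementary to a maximal irreducible summand, integrability via the equivariant bracket tensor, a conformal structure on the base, then uniformization/Obata-type results), but you have misdiagnosed where the model-geometry hypothesis is needed, and the misdiagnosis hides the two genuine difficulties. Closedness of leaves is not the issue: the paper invokes Filipkiewicz's criterion that any $G$-invariant foliation whose tangent distribution contains $TM^G$ has closed leaves, and every vertical distribution arising here contains the $G_p$-fixed directions, so no lattice argument is needed in any of the three cases (including (i) and (iii)). Where the hypothesis really enters in (i) is in making the fibering \emph{isometric} rather than merely conformal. Your argument only examines the linear action of the compact group $G_p$ on $V$; but the stabilizer in $G$ of a point of the base is the stabilizer of an entire leaf and can act on the horizontal $4$-plane with dilations (compare the $3$-dimensional non-model example $\op{Conf}^+ \Euc^2/\op{SO}(2)$ fibering over $\Euc^2$), so ``no invariant conformal-but-not-metric structure'' is precisely what has to be proved. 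The paper proves it by showing the invariant vertical vector field is divergence-free (this is where finite-volume quotients are used), so its flow preserves volume and the vertical metric, hence by irreducibility and uniqueness of invariant inner products preserves the horizontal metric, which then descends to $B$. Relatedly, in (ii)--(iii) the conformal structure does not descend merely because $G_p$ preserves a conformal class on $V$: the base-point stabilizer may preserve no conformal structure at all (e.g.\ $\R^2 \rtimes \op{SL}(2,\R)$ acting on $\R^2$), and the paper needs an extra step showing that in that situation the two non-coincident copies of $\op{SO}(2)$ force $G$ to surject onto $\op{PSL}(2,\R)$ or cover $\op{PSL}(2,\C)$, yielding an alternative conformal fibering over $\Hyp^2$ or $S^2$. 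Your proposal omits this entirely.

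The second gap is your representation-theoretic vanishing claim in case (iii). It works for $G_p = \op{SO}(2)$, $\op{SO}(2)\times\op{SO}(2)$, and $S^1_{m/n}$ with $m \neq n$, but fails exactly for $S^1_1$ (more generally $S^1_{m/m}$): there $T_pM \cong \R \oplus V_1 \oplus V_1$ with two \emph{isomorphic} $2$-dimensional summands, so the choice of $V$ is not canonical, and for any invariant $2$-plane $V$ in the isotypic part one has $\Lambda^2 V^\perp \cong \R \oplus V_1$, which contains a copy of the same irreducible as the target $V$; Schur's lemma gives no vanishing, and integrability of $V^\perp$ is not automatic. This is the case on which the paper spends most of its effort: it uses the divergence-free invariant vector field (model hypothesis again) to produce a $G_p$-equivariant, hence $\C$-linear, operator on the $4$-dimensional isotypic part, takes an eigen-$2$-plane $V$, and then checks $[v,V] \subseteq V$ and $[V,V] \subseteq (T_pM)^G$ directly to obtain an integrable invariant $3$-plane field with closed leaves. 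Without that construction (or a substitute), your case (iii) does not cover the $S^1_1$-type isotropy, and your appeal to a lattice/properness argument for leaf closedness does not repair it, since the problem there is the existence of a suitable invariant distribution, not the closedness of its leaves.
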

\begin{rmk}[\textbf{Generalizations to higher dimensions}]
    The first two cases generalize to $n$ dimensions without major
    modifications of the proof or conclusions.
    Case (ii) is when the restriction of $T_p M$ to some normal subgroup of $G_p$
    decomposes with exactly one nontrivial irreducible subrepresentation,
    and case (i) is the sub-case when this subrepresentation has codimension $1$.
    In both of these generalizations, the base space is an isotropy irreducible space;
    and if the fibering is essentially conformal,
    the base is Euclidean or a sphere.

    Other fiberings can be produced from the ``natural bundle''
    if $M$ is compact \cite[\S II.5.3.2]{onishchik1},
    or by using the Levi decomposition (see e.g.\ \cite[\S 1.4]{onishchik3})
    as part of something like \cite[Thm.~C]{mostow2005}.
\end{rmk}
\begin{eg}[\textbf{Reducible isotropy does not in general imply fibering}]
    \label{eg:nofiber}
    If $G/G_p$ has an invariant fiber bundle structure with positive-dimensional
    fiber and base, then the point stabilizer of the base is an intermediate group
    $G_p \subsetneq H \subsetneq G$.
    Building on Dynkin's work classifying maximal proper connected subgroups,
    Dickinson and Kerr classified compact Riemannian homogeneous
    spaces with two isotropy summands in \cite[\S{6}]{dickinsonkerr}
    and found examples $G/G_p$ where $G_p$ is maximal.
    So these spaces have reducible isotropy but no nontrivial fiberings.

    The lowest-dimensional counterexample
    (that is compact and has two isotropy summands)
    is $\op{Sp}(3)/\op{Sp}(1)$, in dimension $18$ (Example V.10),
    where the embedding $\op{Sp}(1) \hookrightarrow \op{Sp}(3)$
    is given by the irreducible representation of $\op{Sp}(1) \cong \op{SU}(2)$
    on $\C^6$.
\end{eg}

\subsection{Invariant distributions and foliations}

This subsection introduces the fixed distributions (Defn.~\ref{defn:distros_vertical})
that will usually become the vertical distributions---i.e.\ tangent distributions
to the fibers in a fiber bundle.
Showing that such a distribution can be integrated to a foliation by submanifolds
will require some tools
such as the \emph{integrability tensor} (Lemma \ref{bracket_infinitesimally}).

The action of $G$ on $TM$ induces the correspondence
\[
    \left\{\begin{array}{c}
        G_p\text{-invariant subspaces} \\
        \text{(subrepresentations) of } T_p M
    \end{array}\right\}
    \leftrightarrow
    \left\{\begin{array}{c}
        G\text{-invariant} \\
        \text{distributions on } M = G/G_p
    \end{array}\right\},
\]
which provides a convenient way to refer to $G$-invariant distributions,
including the following.

\begin{defn}[\textbf{Fixed distributions of normal subgroups}]
    \label{defn:distros_vertical}
    Given a geometry $M = G/G_p$ and a normal subgroup $H \trianglelefteq G_p$,
    let $TM^H$ be the $G$-invariant distribution
    defined at $p$ by $T_p M^H$ (the subspace on which $H$ acts trivially).
    Let $TM^G$ denote the case $H = G_p$.
\end{defn}

The most visibly useful property of $TM^H$ is that it can be integrated
to produce the fibers of a fiber bundle. That is
(see also Rmk.~\ref{rmk:fibering_closed_leaves} above),

\begin{prop}
	\label{fixed_distros_integrable}
    $TM^H$ is integrable to a $G$-invariant foliation
    $\foliation{F}^H$ with closed leaves.
\end{prop}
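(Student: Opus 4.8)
The plan is to establish integrability of $TM^H$ by exhibiting it, locally around the base point $p = eG_p$, as the tangent distribution to an explicit $G$-invariant foliation, and then use the correspondence between $G$-invariant foliations with closed leaves and intermediate closed subgroups recorded in the definition of fiberings. Concretely, I would first identify the candidate subgroup. Since $H \trianglelefteq G_p$, the subspace $T_pM^H \subseteq T_pM \cong \lie g / \lie g_p$ pulls back to an $\op{Ad}(G_p)$-invariant subspace $\lie h \subseteq \lie g$ containing $\lie g_p$ (invariance because $H$ is normal in $G_p$, so $G_p$ permutes the $H$-fixed vectors). The first key step is to check that $\lie h$ is a Lie subalgebra of $\lie g$: this is where the integrability tensor / the infinitesimal bracket computation (Lemma \ref{bracket_infinitesimally}, referenced in the text) does its work — one shows that the bracket of two sections of $TM^H$ again lies in $TM^H$ at $p$, equivalently that $[\lie h, \lie h] \subseteq \lie h$, using that $H$ acts trivially on $T_pM^H$ and reasoning $H$-equivariantly. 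Because the distribution is $G$-invariant, checking the Frobenius condition at the single point $p$ suffices.

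Next I would pass from the subalgebra $\lie h$ to a subgroup. Let $H' = \langle \exp \lie h \rangle \subseteq G$ be the connected subgroup with Lie algebra $\lie h$; it contains $\idcompo{G_p}$, and since $\lie h$ is $\op{Ad}(G_p)$-invariant, $G_p$ normalizes $H'$, so $H'' := H' \cdot G_p$ is a subgroup of $G$ with $G_p \subseteq H'' \subseteq G$ and $\op{Lie}(H'') = \lie h$. The delicate point is \emph{closedness} of this intermediate group, which is exactly what is needed to invoke Remark \ref{rmk:fibering_closed_leaves} and conclude that $G/H''$ is a smooth manifold and $M \to G/H''$ a smooth fiber bundle with leaves the $H''$-orbits. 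I expect this closedness to be \textbf{the main obstacle}. The standard resolution: $G_p$ is compact, hence closed, and $H''/H'$ is finite or at least compact-by-connected; more importantly one argues that the leaf through $p$ — the orbit $H'' \cdot p \subseteq M$ — is closed because $TM^H$ is the fixed distribution of a \emph{compact} group action and such leaves can be characterized as connected components of fixed-point sets of the $H$-action on appropriate slices, or alternatively because $\ol{H''}$ is again an intermediate subgroup whose Lie algebra is an $\op{Ad}(G_p)$-invariant subalgebra, and a dimension/maximality argument forces $\ol{H''} = H''$ once one knows $\lie h$ is the full fixed subspace. I would spell this out using that the closure of a subgroup is a subgroup and that $TM^{\ol{H}} \subseteq TM^H$ can't strictly enlarge the relevant data.

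Having produced a closed intermediate subgroup $H''$ with $\op{Lie}(H'') = \lie h$, the foliation $\foliation F^H$ is defined to be the partition of $M = G/G_p$ into the $H''$-orbits, equivalently the images of the cosets $gH''$; its leaves are closed by construction, it is $G$-invariant since $G$ acts by left translation permuting $H''$-cosets, and its tangent distribution at every point is the $G$-translate of $T_pM^H$, i.e.\ it integrates $TM^H$. This gives exactly the statement. For the write-up I would present it in the order: (1) define $\lie h$ and verify $\op{Ad}(G_p)$-invariance; (2) prove $[\lie h,\lie h]\subseteq \lie h$ via the infinitesimal bracket lemma and $H$-equivariance; (3) form $H'' = \exp(\lie h)\cdot G_p$, show it is a subgroup normalized appropriately; (4) prove $H''$ is closed — the crux — using compactness of $G_p$ and a fixed-point-set argument for the $H$-action; (5) conclude $\foliation F^H$ is the orbit foliation and read off integrability, $G$-invariance, and closed leaves. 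The only genuinely nontrivial input beyond bookkeeping is step (4), and to a lesser extent the equivariant bracket computation in step (2), which I would keep brief since the tensor is already set up in Lemma \ref{bracket_infinitesimally}.
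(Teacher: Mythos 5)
Your integrability step is essentially the paper's own argument: the bracket-induced map $\Lambda^2 T_pM^H \to T_pM/T_pM^H$ from Lemma~\ref{bracket_infinitesimally} is $H$-equivariant, its domain is a trivial $H$-representation and its codomain has no trivial $H$-summand (by definition $T_pM^H$ is the full fixed subspace, and compactness of $H$ gives an invariant complement), so it vanishes; $G$-invariance reduces the Frobenius condition to the single point $p$. Whether you phrase this as vanishing of the integrability tensor or as $[\lie{h},\lie{h}]\subseteq\lie{h}$ for the pulled-back subalgebra is immaterial.

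The genuine gap is exactly where you flagged it: closedness of the leaves. The paper does not prove closedness of an intermediate subgroup at all; it quotes Filipkiewicz's result that a $G$-invariant foliation on a geometry whose tangent distribution contains $TM^G$ has closed leaves, which applies here because $H\subseteq G_p$ forces $T_pM^{G_p}\subseteq T_pM^H$. Neither of your proposed substitutes holds up as stated. The closure argument fails: $\overline{H''}$ does have an $\op{Ad}(G_p)$-invariant Lie algebra containing $\lie{h}$, but the fact that $\lie{h}$ corresponds to the \emph{full} $H$-fixed subspace gives no maximality among $G_p$-invariant subalgebras --- an invariant subspace strictly containing $T_pM^H$ simply contains nontrivial $H$-subrepresentations, which nothing rules out, and a priori the closure could even be all of $G$ (a dense leaf); this is precisely the irrational-line phenomenon your argument is supposed to exclude. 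The fixed-point-set argument conflates two different objects: the $G$-invariant distribution at $q=gp$ is the fixed space of $gHg^{-1}\subseteq G_q$, not of $H$, so leaves are not obviously components of $\op{Fix}(H)$ or of fixed sets in slices. One can show (via an invariant metric, since an isometry is determined by its value and derivative at a point) that $H$ acts trivially on the leaf through $p$, so that leaf lies in $\op{Fix}(H)$ and is open in its component; but open is not closed, and upgrading this to equality with the (closed, totally geodesic) component is an argument you do not supply. So the closed-leaves half of the statement needs either the cited result of \cite{filipk} or a genuine replacement for it; as written, your step (4) would not go through.
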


The proof uses a construction known in the context of
Riemannian submersions as the ``integrability tensor''
(see e.g.\ \cite[Lemma 2]{oneill} and the discussion
before \cite[Thm.~3.5.5]{petersen}).
Its definition, its tensoriality, and its compatibility with
the action of point stabilizers are covered by the following rephrasing of
\cite[Lemma 3.2.2]{filipk}.

\begin{lemma}[\textbf{Integrability tensor as a representation homomorphism}]
    \label{bracket_infinitesimally}
	Let $D$ be a distribution on a manifold $M$.
    Then at each point $p$, the Lie bracket of vector fields
    induces a linear map $\mu: \Lambda^2 D_p \to T_p M / D_p$.
    If $D$ is $G$-invariant on a homogeneous space $M = G/G_p$,
    then $\Lambda^2 D_p \to T_p M / D_p$ is a
    $G_p$-representation homomorphism.
\end{lemma}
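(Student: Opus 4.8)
The plan is to separate the statement into two claims: that the Lie bracket genuinely descends to a bilinear map on $D_p$ (\emph{tensoriality}), and that this map is $G_p$-equivariant. Only the first requires a computation, and it is the usual one behind the integrability tensor of a Riemannian submersion.

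For tensoriality, given $u,v \in D_p$ I would extend them to local sections $X,Y$ of $D$ near $p$ and set $\mu(u \wedge v) = [X,Y]_p + D_p$. To see this does not depend on the chosen extensions, let $X'$ be another local section of $D$ with $X'_p = u$ and write $X - X' = \sum_i f_i Z_i$ with the $Z_i$ local sections of $D$ and $f_i \in C^\infty(M)$ vanishing at $p$. The Leibniz rule $[f_i Z_i, Y] = f_i[Z_i,Y] - (Y f_i) Z_i$ then gives $[X-X',Y]_p = -\sum_i (Y f_i)(p)\, Z_i|_p$, since the terms $f_i(p)[Z_i,Y]_p$ vanish; and the right-hand side lies in $D_p$. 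Hence $[X,Y]_p \equiv [X',Y]_p \pmod{D_p}$, and symmetrically in the second argument, so $\mu$ is well defined. It is $\R$-bilinear and, by antisymmetry of the Lie bracket, factors through $\Lambda^2 D_p$.

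For equivariance, note that since each $g \in G_p$ fixes $p$ it acts on $T_p M$ by the differential $(dg)_p$, and $G$-invariance of $D$ gives $(dg)_p(D_p) = D_p$, so $(dg)_p$ induces maps on $\Lambda^2 D_p$ and on $T_p M/D_p$. Regarding $g$ as a diffeomorphism of $M$, pushforward of vector fields commutes with the Lie bracket and carries sections of $D$ to sections of $D$; evaluating at $p$ and using $g^{-1}(p) = p$ yields $(g_* X)_p = (dg)_p(X_p)$ and $(g_*[X,Y])_p = (dg)_p([X,Y]_p)$. Combining these with the definition of $\mu$ shows $\mu\bigl((dg)_p u \wedge (dg)_p v\bigr) = (dg)_p\bigl(\mu(u \wedge v)\bigr)$ in $T_p M/D_p$, which is precisely the assertion that $\mu$ is a homomorphism of $G_p$-representations. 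Since the whole argument is standard — it is a rephrasing of \cite[Lemma~3.2.2]{filipk} — I expect no real obstacle; the only point demanding care is the tensoriality step, where one must check that every correction term coming from the failure of $C^\infty$-bilinearity of the bracket lands back inside $D_p$.
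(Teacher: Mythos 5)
Your proof is correct and follows essentially the same route as the paper: tensoriality via the Leibniz rule in a local frame of $D$ (the paper checks dependence only on the values at $p$ directly, you check independence of the chosen extensions, which is the same computation), and equivariance from the fact that pushforward by a diffeomorphism preserves sections of the invariant distribution and commutes with the Lie bracket. No gaps.
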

\begin{proof}
    Suppose $\{X_1, \ldots, X_k\}$ is a local basis of $D$,
    and $Y_1$ and $Y_2$ are vector fields in $D$ with
    $\vecspan{Y_1(p), Y_2(p)} \subseteq \vecspan{X_1(p), X_2(p)}$.
    Pick functions $\{a_i\}$ and $\{b_i\}$ so that
    \begin{align*}
        Y_1 &= \sum_{i=1}^m a_i X_i  &  Y_2 &= \sum_{i=1}^m b_i X_i
    \end{align*}
    in a neighborhood of $p$. Using the Leibniz rule for Lie brackets,
    \begin{align*}
        [Y_1, Y_2]
            &= \sum_{i,j} a_i b_j [X_i, X_j]
                + a_i X_i(b_j) X_j - b_j X_j(a_i) X_i .
    \end{align*}
    The last two terms are pointwise linear combinations of
    $X_1,\ldots,X_m$, so they add to some vector field $Z$ in $D$.
    At $p$, only $a_1$, $a_2$, $b_1$, and $b_2$ can be nonzero, so
    \begin{align*}
        [Y_1, Y_2](p)
            &= (a_1 b_2 - a_2 b_1)[X_1, X_2](p) + Z(p) ,
    \end{align*}
    which ensures that $\mu$ is well-defined.
    The $G_p$-equivariant version then follows by recalling
    that diffeomorphisms respect Lie brackets.
\end{proof}

\begin{proof}[Proof of Prop.~\ref{fixed_distros_integrable}]
    The integrability tensor
    $\mu: \Lambda^2 T_p M^H \to T_p M / (T_p M)^H$ is zero,
    since the domain is a trivial representation of $H$,
    and the codomain is a representation with no trivial summands.
    So by the Frobenius condition, $TM^H$ is integrable.

    A $G$-invariant foliation on a geometry $M = G/G_p$
    whose tangent distribution contains $TM^G$
    has closed leaves \cite[Prop.~2.1.1, 2.1.2]{filipk}.
    Since $H \subseteq G_p$ implies $T_p M^H \supseteq T_p M^{G_p}$,
    the foliation integrating $TM^H$ has closed leaves.
\end{proof}

\subsection{Complementary distributions and conformal actions}

If the distribution $TM^H$ is interpreted as vertical---i.e.\ tangent to the fibers of
a fibering $M \to B = M/\foliation{F}^H$---then a complementary horizontal distribution
should offer some understanding of the target $B$.
This subsection introduces such a distribution and its applications
to maximality of products (Prop.~\ref{prop:products_are_maximal})
and conformal fiberings (Lemma \ref{conformal_downstairs}).

\begin{defn}[\textbf{Complementary distributions}]
    Given a geometry $M = G/G_p$ and a normal subgroup $H \trianglelefteq G_p$,
    let $(TM^H)^\perp$ denote the distribution defined at $p$
    by the complementary $H$-representation to $(T_p M)^H$ in $T_p M$.
\end{defn}

Conveniently, this always agrees with the other distribution deserving
the same name---the orthogonal complement---which makes $TM^H$ and $(TM^H)^\perp$
useful together for studying invariant metrics on $M$. Explicitly,

\begin{lemma} \label{lemma:perp_orthogonality}
    $(TM^H)^\perp$ is the orthogonal complement to $TM^H$
    in every $G$-invariant metric on $M$.
\end{lemma}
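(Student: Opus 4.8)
We must show that for a normal subgroup $H \trianglelefteq G_p$, the distribution $(TM^H)^\perp$ — defined representation-theoretically as the complementary $H$-subrepresentation to $(T_pM)^H$ — coincides, at each point $p$, with the orthogonal complement of $TM^H$ taken with respect to any $G$-invariant Riemannian metric $g$ on $M$. By $G$-invariance of everything in sight, it suffices to check this at the single point $p$; the claim then propagates to all of $M$.

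**Plan.** The plan is to work entirely inside the fiber $T_pM$ with the inner product $g_p$, which is $G_p$-invariant since $G_p$ fixes $p$. Write $T_pM = (T_pM)^H \oplus W$, where $W = (T_pM^H)^\perp_p$ is the chosen $H$-complement. First I would observe that $(T_pM)^H$ and $W$ are each $H$-invariant (the former by definition; the latter because it is an $H$-subrepresentation chosen as a complement), so the orthogonal complement $\big((T_pM)^H\big)^{\perp_g}$ with respect to $g_p$ is also $H$-invariant — orthogonal complements of subrepresentations under an invariant inner product are subrepresentations. So both $W$ and $\big((T_pM)^H\big)^{\perp_g}$ are $H$-invariant complements to $(T_pM)^H$ in $T_pM$. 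The key point is then that such a complement is \emph{unique}: it must be the isotypic part of $T_pM$ containing no trivial $H$-summand. Concretely, decompose $T_pM$ into $H$-isotypic components; the trivial isotypic component is exactly $(T_pM)^H$, and any $H$-invariant complement must contain all the nontrivial isotypic components and meet the trivial component only in $0$ — hence equals the sum of the nontrivial isotypic components. Since both $W$ and $\big((T_pM)^H\big)^{\perp_g}$ satisfy this, they are equal.

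**Key steps, in order.** (1) Reduce to the statement at $p$ using $G$-invariance of $g$, of $TM^H$, and of $(TM^H)^\perp$, together with transitivity of $G$ on $M$. (2) Note $g_p$ is $G_p$-invariant, hence $H$-invariant, so orthogonal complementation with respect to $g_p$ sends $H$-subrepresentations to $H$-subrepresentations; in particular $\big((T_pM)^H\big)^{\perp_g}$ is an $H$-subrepresentation complementary to $(T_pM)^H$. (3) Invoke the isotypic decomposition of the $H$-module $T_pM$: the trivial isotypic component equals $(T_pM)^H$, and any $H$-invariant complement to it equals the sum of the remaining (nontrivial) isotypic components — i.e.\ there is a \emph{unique} $H$-invariant complement. (4) Conclude $W = \big((T_pM)^H\big)^{\perp_g}$, i.e.\ $(TM^H)^\perp_p$ is the $g$-orthogonal complement of $TM^H_p$; transport back over $M$.

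**Main obstacle.** There is no serious obstacle: the content is the elementary fact that in a completely reducible module the trivial summand has a \emph{unique} invariant complement. The only thing to be slightly careful about is making the reduction to the point $p$ clean — one must note that the chosen complement $W$ in the definition of $(TM^H)^\perp$ is not canonical as "a complement" but \emph{is} canonical once one knows it must equal the sum of nontrivial isotypic components, which is exactly what step (3) supplies; this simultaneously shows that $(TM^H)^\perp$ is well-defined (independent of the choice made in its definition) and that it agrees with the metric complement. I would phrase the proof so that this double payoff is explicit.
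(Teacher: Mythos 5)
Your argument is correct. It differs slightly in packaging from the paper's: the paper fixes the invariant metric, uses it to produce an $H$-equivariant isomorphism $T_pM \cong T_p^*M$, and then applies the Schur-type observation that a representation with no trivial summands admits only the zero map to the trivial representation $\bigl((T_pM)^H\bigr)^*$ — so the pairing of $(TM^H)^\perp_p$ with $(T_pM)^H$ vanishes directly, and equality with the orthogonal complement follows by dimension count. You instead prove a uniqueness statement: the $g$-orthogonal complement of $(T_pM)^H$ is an $H$-invariant complement, and any $H$-invariant complement of the fixed subspace must equal the sum of the nontrivial isotypic components (it can contain no fixed vector, hence no trivial summand, and dimension forces equality). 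The two proofs rest on the same complete-reducibility/Schur input, so neither is deeper than the other; the paper's is a line shorter, while yours has the genuine extra payoff of showing that the distribution $(TM^H)^\perp$ is well defined independently of any choice of complement — a point the paper's Definition leaves tacit. The only spot worth tightening in your write-up is the assertion that an invariant complement "must contain all the nontrivial isotypic components": either argue that the complement has no trivial summand and hence sits inside the sum of nontrivial isotypic components (then count dimensions), or note that each nontrivial isotypic component maps to zero under the $H$-equivariant projection $T_pM \to T_pM/W \cong (T_pM)^H$ and hence lies in $W$. With that one-line justification added, the proof is complete.
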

\begin{proof}
    A $G_p$-invariant metric on $T_p M$
    induces a representation isomorphism $T_p M \cong_H T^*_p M$.
    Since $(TM^H)^\perp_p$ contains no trivial subrepresentation by definition,
    its image in the trivial representation $(T^*_p M)^H$ is zero;
    so the pairing of $(TM^H)^\perp_p$ with $(T_p M)^H$ is zero.
\end{proof}
\begin{rmk}
    This property makes any isometric fibering $M \to M/\foliation{F}^H$
    a \keyword{Riemannian submersion} (see e.g.\ the discussion
    before \cite[Example 1.1.5]{petersen}) for some invariant metric on $M$
    for each invariant metric on $M/\foliation{F}^H$.
\end{rmk}

Our main purposes in defining $(TM^H)^\perp$
are to understand properties of fiberings given properties
of the isotropy representation (Lemma \ref{conformal_downstairs} below,
used to establish the fibering description in Prop.~\ref{prop:fibering_description})
and to prove the following statement about maximality of products.

\begin{prop}[\textbf{Products are often maximal}] \label{prop:products_are_maximal}
    Given maximal geometries $M = G/G_p$ and $M' = G'/G'_q$, their product
        \[ M \times M' = (G \times G')/(G_p \times G'_q) \]
    is maximal provided that both of the following hold.
    \begin{enumerate}[(i)]
        \item At most one of $M$ and $M'$ is itself a product with a Euclidean factor.
        \item At least one of $M$ and $M'$ admits no nonzero invariant vector field.
    \end{enumerate}
\end{prop}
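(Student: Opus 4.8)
\textbf{Proof plan for Proposition~\ref{prop:products_are_maximal}.}
The plan is to argue by contradiction: suppose some connected Lie group $\hat{G} \supsetneq G \times G'$ acts on $M \times M'$ with compact stabilizer, subsuming the product geometry. The strategy is to show that $\hat{G}$ must preserve the two "factor foliations" of $M \times M'$ (the foliations whose leaves are $M \times \{*\}$ and $\{*\} \times M'$), so that $\hat{G}$ splits as a product acting factorwise, and then invoke maximality of each factor to get $\hat{G} = G \times G'$. The two foliations are $\foliation{F}^H$ and $\foliation{F}^{H'}$ for suitable normal subgroups of the stabilizer, so the invariant-distribution machinery of this section applies; the key point is that these distributions are characterized intrinsically (not just in terms of the given $G \times G'$-action), so a larger group is forced to respect them.

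First I would set up the two candidate invariant distributions on $M \times M'$: the tangent distributions $D = T(M \times \{*\})$ and $D' = T(\{*\} \times M')$ to the factor slices, which are complementary and $(G \times G')$-invariant, hence correspond to subrepresentations of the product stabilizer $G_p \times G'_q$ acting on $T_{(p,q)}(M \times M') = T_p M \oplus T_q M'$. The task is to recharacterize $D$ and $D'$ in a way independent of the ambient group. Using hypothesis (ii), say $M$ admits no nonzero invariant vector field, the distribution $D$ (the tangent to the $M$-factor) should be recoverable as the span of all Killing fields of $M \times M'$ that vanish somewhere together with something canonical; more robustly, I would use that a Killing field on $M \times M'$ decomposes and project to each factor. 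The cleanest route: the Lie algebra $\tanisom(M \times M')$ contains $\tanisom M \oplus \tanisom M'$, and I would show any Killing field of the product, restricted along a slice $M \times \{q\}$, is tangent to that slice up to a field coming from $M'$ — using that $M$ has no invariant vector fields to kill the cross terms. This is where hypothesis (ii) does its work and where I expect the main technical effort to lie.

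Next, granting that $\hat{G}$ preserves the splitting $T(M\times M') = D \oplus D'$, the foliations $\foliation{F}^H, \foliation{F}^{H'}$ integrating them are $\hat{G}$-invariant with closed leaves (Prop.~\ref{fixed_distros_integrable}), so $\hat{G}$ acts on the leaf space of each. The leaf space of $\foliation{F}^{H'}$ is $M$ and of $\foliation{F}^H$ is $M'$; thus we get homomorphisms $\hat{G} \to \op{Diff}(M)$ and $\hat{G} \to \op{Diff}(M')$ whose images are transitive with compact stabilizers, i.e.\ geometry actions on $M$ and $M'$ (after passing to identity components and checking faithfulness of the combined map, which holds because $\hat{G}$ acts faithfully on $M \times M'$ and the two leaf-space projections together recover the point). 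Maximality of $M = G/G_p$ and $M' = G'/G'_q$ then forces the image in each factor to be no larger than $G$ and $G'$ respectively, so $\hat{G} \hookrightarrow G \times G'$, giving $\hat{G} = G \times G'$ and the desired contradiction. Hypothesis (i) is needed precisely to exclude the ambiguity in which a Euclidean $\R^k$ could be shuffled between the two factors — if both factors have Euclidean summands, a rotation mixing the two $\R^k$'s gives an enlargement that does not preserve the factor foliations, so the recharacterization in the previous paragraph fails; with at most one Euclidean factor, the Euclidean part is unambiguously assigned and the argument goes through. The main obstacle, as noted, is the decomposition-of-Killing-fields step: making precise that an infinitesimal isometry of $M \times M'$ must be a sum of one tangent to the $M$-factor and one tangent to the $M'$-factor, for which the no-invariant-vector-field hypothesis is exactly the lever that removes the off-diagonal terms.
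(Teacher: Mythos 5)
Your plan has a genuine gap, and it sits exactly where you say you ``expect the main technical effort to lie.'' The paper's proof has two separate steps with two separate hypotheses: first, hypothesis (ii) is used to show that \emph{every} $(G \times G')$-invariant metric on $M \times M'$ is a product metric --- since $TM^G = 0$, one has $\left(T_{(p,q)}(M \times M')\right)^{G_p} = T_q M'$, and Lemma~\ref{lemma:perp_orthogonality} forces $T_p M$ and $T_q M'$ to be orthogonal in any invariant metric; second, hypothesis (i) feeds into Hano's theorem (via the de Rham decomposition, cited from Kobayashi--Nomizu) to conclude $\idisom(M \times M') = \idisom M \times \idisom M'$ for such a product metric, after which maximality of each factor finishes the argument. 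Your proposal omits the first step entirely: a putative larger group $\hat{G}$ preserves \emph{some} invariant metric, and before you can speak of ``Killing fields of the product decomposing'' you must know that metric respects the product structure at all. That is the actual job of hypothesis (ii), and it is not where you have placed it.

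Worse, the step you do assign to (ii) --- that an infinitesimal isometry of the product splits as a sum of fields tangent to the two factors --- is precisely the content of Hano's theorem, and its correct hypothesis is (i), not (ii). The absence of invariant vector fields does not remove the off-diagonal terms: $M = \Euc^2$ and $M' = \Euc^3$ both satisfy (ii) (their isotropy representations have no trivial summand), every invariant metric on the product is the flat product metric, and yet the isometry algebra of $\Euc^5$ contains rotations mixing the two factors, so the factor distributions are not preserved and the geometry is not maximal. Your own closing remark about Euclidean summands shows you sense this, but it means hypothesis (ii) is left with no role in your argument while the role it must play (product metric) is missing, and the decomposition-of-Killing-fields step is left unproved; carrying it out from scratch would amount to re-deriving the de Rham/Hano splitting that the paper simply cites. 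If you repair the plan by (a) proving the invariant metric is a product metric via the representation-theoretic orthogonality argument and (b) invoking Hano's theorem under hypothesis (i), you recover essentially the paper's proof; the foliation-preservation framing then adds nothing beyond what the isometry-group splitting already gives.
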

\begin{proof}
    Hano proved
    (see e.g.\ \cite[Thm.~VI.3.5]{kobayashinomizu})
    that the de Rham decomposition of a complete, simply-connected Riemannian manifold
    (see e.g.\ \cite[Thm.~IV.6.2]{kobayashinomizu})
    also decomposes the isometry group.
    So if $M$ and $M'$ are complete, connected,
    simply-connected Riemannian manifolds,
    one of which is not isometric to any product $M'' \times \Euc^k$, then
        \[ (\op{Isom} (M \times M'))^0 = (\op{Isom} M)^0 \times (\op{Isom} M')^0 . \]
    Since the transformation group of a maximal geometry
    is realizable as the isometry group in some metric \cite[Prop.~1.1.2]{filipk},
    it suffices to know that all invariant metrics on $M \times M'$
    are realizable as product metrics.
    Condition (ii) implies this as follows.

    To say that $M$ admits no invariant vector field means that $TM^G$ is zero.
    This implies that $\left(T_{(p,q)} (M \times M')\right)^{G_p} = T_q M'$,
    whose orthogonal complement in every invariant metric is the complementary
    $G_p$-representation $T_p M$ (Lemma~\ref{lemma:perp_orthogonality}).
\end{proof}

For non-product fiberings, the main use of $(TM^H)^\perp$
is in understanding metrics on $M/\foliation{F}^H$, using the following
slight generalization of part of \cite[Thm.~4.1.1]{filipk}.

\begin{lemma}[\textbf{Irreducible $+$ trivial isotropy
    produces conformal fibering}] \label{conformal_downstairs}
    Let $M = G/G_p$ be a geometry and $H$ a normal subgroup of $G_p$
    whose action on $(TM^H)^\perp_p$ is irreducible.
    Then there is a metric on $M/\foliation{F}^H$ with respect
    to which $G$ acts conformally.
\end{lemma}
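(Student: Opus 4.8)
The plan is to build the desired metric on $B = M/\foliation{F}^H$ from the $G_p$-invariant inner product on $(T_pM^H)^\perp_p$ together with the horizontal distribution that identifies $(T_pM^H)^\perp$ with the pullback of $TB$. First I would fix a $G$-invariant Riemannian metric on $M$ (which exists by compactness of $G_p$). By Lemma \ref{lemma:perp_orthogonality}, $(TM^H)^\perp$ is the orthogonal complement of the vertical distribution $TM^H$, and since $TM^H$ is integrable with closed leaves (Prop.~\ref{fixed_distros_integrable}) the projection $\pi: M \to B$ is a smooth fiber bundle; the differential $d\pi_p$ restricts to a linear isomorphism $(T_pM^H)^\perp_p \xrightarrow{\ \sim\ } T_{\pi(p)}B$. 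Transporting the inner product on $(T_pM^H)^\perp_p$ through this isomorphism gives an inner product $g_B$ on $T_{\pi(p)}B$; I would then check it is independent of the choice of $p$ in the fiber and varies smoothly, using that all the data (metric, distributions, bundle projection) are $G$-invariant and $G$ acts transitively on $M$.

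The central point is that $G$ acts on $B$ by conformal transformations of $g_B$. Since the $G$-action on $M$ descends to an action on $B = G/H$ (because $H \supseteq G_p$ is the point stabilizer of the fibering), it suffices to show that the isotropy action of $H = G_{\pi(p)}$ on $T_{\pi(p)}B$ is by conformal linear maps, i.e.\ lands in $\R_{>0}\cdot O(T_{\pi(p)}B)$. Under the identification $T_{\pi(p)}B \cong (T_pM^H)^\perp_p$ this is exactly the statement that $H$ acts on $(T_pM^H)^\perp_p$ by similarities of the $H$-invariant inner product — but the action is by \emph{irreducible} orthogonal transformations by hypothesis, so the only $H$-equivariant bilinear forms on $(T_pM^H)^\perp_p$ are scalar multiples of each other (Schur's lemma for the real case, noting the inner product is a nondegenerate invariant symmetric form). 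Hence any two $H$-invariant inner products are proportional, which forces every element of $H$ — whose action pulls back the inner product to another $H$-invariant inner product — to act as a similarity. Therefore the $H$-action preserves the conformal class of $g_B$, and spreading this around by the transitive $G$-action shows $G$ acts conformally on $(B, g_B)$.

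I would phrase the Schur-lemma step carefully since it is the only place where genuine content enters: over $\R$, an irreducible representation may have endomorphism algebra $\R$, $\C$, or $\Hyp$, but in all three cases the space of invariant symmetric bilinear forms is at most one-dimensional (the invariant inner product exists because $H$ is compact, and any other invariant symmetric form, composed with the inverse of the inner product, gives a self-adjoint equivariant endomorphism, which must be a real scalar). So uniqueness up to scale holds regardless of the real type, and the argument goes through. The one bookkeeping obstacle is making the "well-defined and smooth" verification for $g_B$ genuinely airtight — in particular checking that moving $p$ within a fiber by an element $h \in H$ changes the transported inner product only by the scalar coming from $h$'s action, so that the \emph{conformal class} (not the metric itself) is canonically defined on each fiber, and then $G$-homogeneity pins down a global smooth conformal structure; choosing any smooth section of the positive scalars then yields an honest metric $g_B$ in that class, as claimed.
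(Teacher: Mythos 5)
Your overall outline matches the paper's (push forward $\mu|_{(TM^H)^\perp}$ through $d\pi$, use uniqueness-up-to-scale of invariant inner products on an irreducible representation, then smooth out to an honest metric), but there is a genuine gap at the one step carrying the real content. Your conformality argument conflates the lemma's $H$ (a compact subgroup normal in $G_p$, acting irreducibly on $(TM^H)^\perp_p$) with the stabilizer $G_{\pi(p)}$ of the point $\pi(p) \in M/\foliation{F}^H$, i.e.\ the stabilizer of the whole leaf. Writing "$B = G/H$ because $H \supseteq G_p$" and "the isotropy action of $H = G_{\pi(p)}$ \ldots is by irreducible orthogonal transformations by hypothesis" is not correct for the lemma's $H$: the hypothesis says nothing about the leaf stabilizer, whose elements in general move $p$ to other points $q$ of the leaf and do not act on $(TM^H)^\perp_p$ at all. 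Concretely, for such an element $g$, the pullback $\bar g^*\omega$ of the pushed-forward inner product $\omega$ on $T_{\pi(p)}B$ is invariant under the image of $g^{-1}Hg \subseteq G_{g^{-1}(p)}$, not under the image of $H$; since $H$ is only normal in $G_p$ (not in the leaf stabilizer), your Schur step "every element pulls back the inner product to another $H$-invariant inner product, hence a scalar multiple" does not follow. For elements of $G_p$ the conclusion is trivial (they preserve $\omega$ exactly), so the argument as written proves nothing about precisely the elements that matter. This is the issue the paper flags explicitly: points of the same fiber need not have the same stabilizer in $G$, so one cannot argue as if all the relevant transformations fixed $p$. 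The claim you defer as a "bookkeeping obstacle" --- that moving $p$ within a fiber changes the transported inner product only by a scalar --- is equivalent to the conformality you are trying to prove, so deferring it is circular.

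The fix (the paper's Step 1) is to replace $H$ by $G_F$, the subgroup of $G$ acting as the \emph{identity} on the leaf $F$ through $p$. One checks $H \subseteq G_F \subseteq G_p$ (an isometry fixing $p$ with trivial derivative along the connected leaf is the identity on it), so $G_F$ is compact and acts irreducibly on $(TM^H)^\perp_q$ for every $q \in F$; and $G_F$ is normal in the full stabilizer of the leaf, since conjugation by a leaf-preserving element preserves the property of fixing $F$ pointwise. With $G_F$ in place of $H$, every inner product on $T_{\pi(p)}B$ obtained by pushing forward $\mu$ from any point of the fiber (equivalently, every pullback under a fiber-preserving element) is $G_F$-invariant, so Lemma \ref{lemma:inner_product_unique} gives proportionality and hence conformality of the whole isotropy action; your Schur discussion over $\R$, $\C$, $\mathbb{H}$ is fine but must be applied to this group. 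The remaining smoothness step also deserves an actual construction (the paper uses transversal discs and a partition of unity) rather than "choose a smooth section of the positive scalars."
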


A key ingredient in similar results such as \cite[Thm.~4.1.1]{filipk}
or the start of the proof of \cite[Thm.~3.8.4(b)]{thurstonbook}
is the observation that all points of the same fiber $F$ have the same
subgroup of $G$ for stabilizers.
Then one can speak of representation isomorphisms,
which the following standard fact turns into conformal maps.

\begin{lemma}[\textbf{Existence} {\cite[Thm.~II.1.7]{brockerdieck}}
    \textbf{and uniqueness} {\cite[App.~5 Thm.~1]{kobayashinomizu}}
    \textbf{of invariant inner products}] \label{lemma:inner_product_unique}
    Every finite-dimensional irreducible representation $V$
    of a compact Hausdorff group $K$ over $\R$
    has a unique $K$-invariant inner product
    (i.e.\ positive-definite symmetric bilinear form)
    up to scaling.
\end{lemma}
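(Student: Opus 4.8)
The plan is to prove existence by an averaging argument over the group and uniqueness by a Schur-type comparison of the two forms. Both steps rely on compactness of $K$: existence uses the normalized Haar measure, while uniqueness uses the spectral theorem applied to an intertwining operator. For existence, I would first fix any inner product $\langle\cdot,\cdot\rangle_0$ on $V$ --- one exists because $V$ is a finite-dimensional real vector space. Since $K$ is compact Hausdorff it carries a two-sided Haar measure $\mu$ normalized to total mass $1$, and the representation map $K \to \op{GL}(V)$ is continuous, so I would define
\[
    \langle u, v \rangle = \int_K \langle k \cdot u,\, k \cdot v \rangle_0 \, d\mu(k).
\]
The integral converges because $K$ is compact and the integrand is continuous in $k$. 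Symmetry and bilinearity are inherited from $\langle\cdot,\cdot\rangle_0$; positive-definiteness holds since for $u \neq 0$ the integrand is everywhere positive and $\mu$ has positive mass; and $K$-invariance follows from right-invariance of $\mu$ under the substitution $k \mapsto kg$.

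For uniqueness, suppose $\langle\cdot,\cdot\rangle_1$ and $\langle\cdot,\cdot\rangle_2$ are both $K$-invariant inner products. Nondegeneracy of $\langle\cdot,\cdot\rangle_1$ yields a unique linear operator $A \colon V \to V$ with $\langle u, v\rangle_2 = \langle Au, v\rangle_1$ for all $u,v$. I would record three properties of $A$: it is self-adjoint with respect to $\langle\cdot,\cdot\rangle_1$ (since both forms are symmetric); it is positive-definite (since $\langle\cdot,\cdot\rangle_2$ is); and it commutes with the $K$-action, because for each $k \in K$,
\[
    \langle A k u, v\rangle_1 = \langle ku, v\rangle_2 = \langle u, k^{-1} v\rangle_2 = \langle Au, k^{-1} v\rangle_1 = \langle k A u, v\rangle_1,
\]
using $K$-invariance of both inner products together with self-adjointness of the $K$-action in each. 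Thus $A$ is a self-adjoint, $K$-equivariant endomorphism of $V$.

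Finally, by the spectral theorem $A$ diagonalizes over $\R$ into eigenspaces, each of which is $K$-invariant because $A$ commutes with $K$. Irreducibility of $V$ forces a single eigenvalue $\lambda$, necessarily positive since $A$ is positive-definite, whence $\langle\cdot,\cdot\rangle_2 = \lambda\,\langle\cdot,\cdot\rangle_1$. The main subtlety to handle carefully is that, over $\R$, the intertwining algebra $\op{End}_K(V)$ need not be one-dimensional --- it may be $\R$, $\C$, or the quaternions --- so irreducibility alone does \emph{not} make every $K$-equivariant endomorphism a scalar. What rescues the argument is that $A$ is not an arbitrary intertwiner but a self-adjoint one; the spectral theorem then forces diagonalizability over the reals, and only there does irreducibility pin $A$ down to a scalar, bypassing any appeal to the classification of real division algebras.
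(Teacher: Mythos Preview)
Your proof is correct and is essentially the standard argument. The paper does not give its own proof of this lemma; it simply cites Br\"ocker--tom Dieck for existence and Kobayashi--Nomizu for uniqueness, so there is no in-paper argument to compare against. Your averaging argument for existence and your Schur-type argument for uniqueness (using self-adjointness of the comparison operator $A$ to invoke the spectral theorem, rather than appealing to Schur's lemma directly) are exactly the classical proofs one finds in those references, and your remark about the real endomorphism algebra possibly being $\C$ or $\mathbb{H}$ correctly identifies why self-adjointness is the crucial ingredient over $\R$.
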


With this control over possible metrics,
Filipkiewicz then constructs the appropriate metric
using a partition-of-unity argument.
Here is the proof, with details of the steps outlined above.

\begin{proof}[Proof of the conformal fibering Lemma (\ref{conformal_downstairs})]
    This proof repeats most of \cite[Thm.~4.1.1]{filipk};
    the new material is mostly the first step, which works around
    the unavailability of the previously mentioned ``key ingredient'':
    points of the same fiber may not have the same stabilizer in $G$.

    \paragraph{Step 1: Find a group $G_F$ to replace $G_p$.}
    Given $p$ (thus $G_p$) and $H \trianglelefteq G_p$,
    let $F$ be the leaf of $\foliation{F}^H$ containing $p$,
	and let $G_F$ denote the subgroup of $G$ that acts as the identity on
    a leaf $F$ of $\foliation{F}^H$.
    Then $H \subseteq G_F \subseteq G_p$,
    so $G_F$ is a compact group acting
    irreducibly on $(TM^H)^\perp_q$ for every $q \in F$.

    \paragraph{Step 2: Isomorphisms of $G_F$-irreps become conformal linear maps.}
    Fix a $G$-invariant metric $\mu$ on $M$.
    Its restriction $\mu|_{(TM^H)^\perp}$ to the (invariant) distribution $(TM^H)^\perp$ is $G$-invariant,
    hence $G_F$-invariant for every leaf $F$ of $M/\foliation{F}^H$.

    The projection $\pi: M \to M/\foliation{F}^H$
    induces $G_F$-representation isomorphisms
        \[ (TM^H)^\perp_q \to T_{\pi(q)} (M/\foliation{F}^H) , \]
    each producing from $\mu|_{(TM^H)^\perp}$
    a $G_F$-invariant inner product on a tangent space to $M/\foliation{F}^H$.
    Since invariant inner products on irreducible representations
    are unique up to scaling (Lemma \ref{lemma:inner_product_unique} above),
    all such pushed-forward inner products on the same tangent space
    are scalar multiples of each other.

    \paragraph{Step 3: Give sufficient conditions for a conformal structure
        to be $G$-invariant.}
    If a metric on $M/\foliation{F}^H$ is pointwise
    a linear combination of the pushforwards described in Step 2,
    then $G$ preserves its conformal class
    since $G$ preserves $\mu$, $\foliation{F}^H$, and $(TM^H)^\perp$.
    What remains is to construct such a metric---i.e.\ to show
    that these pushforwards can be chosen in a smoothly varying way---using
    the partition-of-unity method from
    \cite[Thm.~4.1.1]{filipk}.

    \paragraph{Step 4: Construct the metric on $M/\foliation{F}^H$.}
    Let $k = \dim (TM^H)^\perp$,
    and choose a collection of $k$-discs $V_i \subset M$ that
    \begin{enumerate}
        \item are transverse to $TM^H$,
        \item each map diffeomorphically to $M/\foliation{F}^H$, and
        \item cover $M/\foliation{F}^H$ with their images.
    \end{enumerate}
    The cover of $M/\foliation{F}^H$ can be made locally finite
    since $M/\foliation{F}^H$ is a manifold; so
    there is a partition of unity $\{\phi_i\}$ subordinate to $\{\pi(V_i)\}$.

    Let $\rho: TM \to (TM^H)^\perp$ be the projection with kernel $TM^H$.
    On each $V_i$, let $\mu_i$ be the metric defined pointwise
    as the pullback of $\mu|_{(TM^H)^\perp}$ by $\rho$. Then the sum
        \[ \bar{\mu} = \sum_i \phi_i \pi_* \mu_i \]
    defines a metric on $M/\foliation{F}^H$ with the property
    described in Step 3.
\end{proof}

\subsection{Proof of the fibering description (Proposition \ref{prop:fibering_description})}

Two more facts will be needed for the proof of Prop.~\ref{prop:fibering_description}.
First,
the base is simply-connected in every case of Prop.~\ref{prop:fibering_description}:
by definition, a geometry $M$ is simply-connected and a leaf $F$ of $\foliation{F}$
is connected. So the homotopy exact sequence for $F \to M \to M/\foliation{F}$
implies $M/\foliation{F}$ is simply-connected. Hence the proofs to follow will
skip proving simply-connectedness.

The second fact is a lemma extracted from Thurston's 3-dimensional classification,
used to reason about invariant vector fields in cases (i) and (iii).
\begin{lemma}[see e.g.\ {\cite[Proof of 3.8.4(b)]{thurstonbook}}]
    \label{lemma:divergence_free}
    A $G$-invariant vector field $X$ on a model geometry $M = G/G_p$
    is divergence-free.
\end{lemma}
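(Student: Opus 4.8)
The plan is to exploit the volume form preserved by a model geometry. Since $M = G/G_p$ is a model geometry, it admits a $G$-invariant finite-volume structure: fixing any $G$-invariant Riemannian metric gives a $G$-invariant volume form $\omega$, and the existence of a lattice $\Gamma \subset G$ acting freely with $\Gamma\backslash M$ of finite volume means this $\omega$ descends to a finite-volume form downstairs. First I would recall that a $G$-invariant vector field $X$ on $M$, being invariant under the transitive action, has a flow $\phi_t$ that commutes with $G$; more precisely, each $\phi_t$ is a diffeomorphism of $M$, and the key point is to show it preserves $\omega$, or at least scales it by a constant.

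The core step is the following: the divergence $\op{div} X$ (defined by $\mathcal{L}_X \omega = (\op{div} X)\,\omega$) is a $G$-invariant function on $M$, since both $X$ and $\omega$ are $G$-invariant and Lie derivative is natural. Because $G$ acts transitively, any $G$-invariant function is constant; call this constant $c$. Then $\mathcal{L}_X \omega = c\,\omega$, so the flow satisfies $\phi_t^* \omega = e^{ct}\omega$. Now I would bring in finiteness of volume: $X$ and $\omega$ descend to the quotient $N = \Gamma\backslash M$ (as $X$ is $G$-invariant hence $\Gamma$-invariant, and likewise $\omega$), and $N$ has finite total volume $\op{Vol}(N) = \int_N \omega < \infty$. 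The flow $\bar\phi_t$ on $N$ is volume-multiplying by $e^{ct}$, so $\op{Vol}(N) = \int_N \bar\phi_t^*\omega = e^{ct}\op{Vol}(N)$ for all $t$. Since $0 < \op{Vol}(N) < \infty$, this forces $e^{ct} = 1$ for all $t$, hence $c = 0$, i.e.\ $\op{div} X = 0$.

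The main obstacle — really the only subtlety — is making sure the flow $\phi_t$ of $X$ is complete (so that $\bar\phi_t$ is a genuine diffeomorphism of $N$ for all $t$) and that it descends to $N$. Completeness follows because $X$ is $G$-invariant on a homogeneous space: the flow out of the basepoint $p$ for a short time $(-\varepsilon, \varepsilon)$ translates by $G$ to a flow out of every point for the same time interval, and a uniform existence time gives global existence. Descent to $N = \Gamma\backslash M$ is automatic once $X$ is $\Gamma$-invariant, which it is since $\Gamma \subset G$. With completeness in hand, the volume comparison argument above closes the lemma cleanly; no computation beyond the naturality of $\mathcal{L}_X$ is needed.
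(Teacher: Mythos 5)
Your proof is correct and follows essentially the same route as the paper: descend $X$ and its flow to a finite-volume quotient $\Gamma\backslash M$, observe that the flow cannot rescale the finite total volume, and use transitivity of $G$ to conclude that the (constant) divergence vanishes. Your added remarks on completeness of the flow and the explicit computation $\phi_t^*\omega = e^{ct}\omega$ are fine elaborations of the same argument, not a different method.
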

\begin{proof}
    Let $\phi_t$ be the $G$-equivariant flow on $M$ integrating $X$.
    If $N$ is any finite-volume manifold modeled on $M$,
    then $X$ and $\phi_t$ descend to some $\ol{X}$ and $\ol{\phi_t}$ on $N$,
    along with any $G$-invariant metric.
    Since $\op{vol} N$ is finite,
    $\ol{\phi_t}$ is globally volume-preserving;
    so $\int_N \op{div} \ol{X} \, d\op{vol} = 0$.
    Since $G$ acts transitively on $M$,
    the value of $\op{div} X = \op{div} \ol{X}$ is constant---thus $0$.
\end{proof}

The proof of the fibering description (Prop.~\ref{prop:fibering_description})
can now proceed. The three cases exhibit somewhat different behavior,
so they are handled separately. In particular, case (iii) (fiberings over
$2$-dimensional bases) is a bit more work to prove than the others,
due to isotropy representations $G_p \curvearrowright T_p M$
in which there is not a canonical choice of a $2$-dimensional irreducible summand
to form the horizontal distribution.

\subsubsection{Case (i): over dimension 4}

\begin{proof}[Proof of \ref{prop:fibering_description}(i)]
    Let $M = G/G_p$ be a model geometry for which $G_p \curvearrowright T_p M$
    has irreducible subrepresentations of dimensions $1$ and $4$.
    This proof mostly follows the argument in \cite[Thm.~3.8.4(b)]{thurstonbook}.

    \paragraph{Case (i), Step 1: $M$ fibers over a $4$-dimensional space.}
    The distribution $TM^G$ is $1$-dimensional
    and integrates to a foliation with closed leaves
    (Prop.~\ref{fixed_distros_integrable}), so $M$ fibers
    over a $4$-dimensional space $M/\foliation{F}^G$.

    \paragraph{Case (i), Step 2: $M$ fibers isometrically.}
    A nonzero vector in $(T_p M)^G$ pushes forward by the action of $G$
    to a $G$-invariant vector field $X$ on $M$, with corresponding
    $G$-equivariant flow $\phi_t$. Then $\phi_t$ commutes with the action of $G$,
    so all points in the same $\phi_t$-orbit have the same stabilizer in $G$.
    Therefore $d_p \phi_t: T_p M \to T_{\phi_t(p)} M$
    is an isomorphism of $G_p$-representations.

    Since $M$ is a model geometry, $\phi_t$ is volume-preserving
    (Lemma~\ref{lemma:divergence_free}).
    Combined with the fact that $\phi_t$ preserves the metric
    (the fiberwise inner product) on $TM^G$
    since it preserves its own velocity field $X$,
    this implies $\phi_t$ preserves volume on the orthogonal complement $(TM^G)^\perp$.
    Since $(TM^G)^\perp$ is irreducible, $\phi_t$ preserves the metric
    on it (Lemma \ref{lemma:inner_product_unique}).
    So the metric on $(TM^G)^\perp$ descends to a $G$-invariant
    metric on $M/\foliation{F}^G$.

    \paragraph{Case (i), Step 3: $M/\foliation{F}^G$ is Riemannian symmetric.}
    By the classification of isotropy groups
    (Fig.~\ref{fig:isotropy_poset}),
    $G_p$ is $\op{SO}(4)$, $\op{U}(2)$, or $\op{SU}(2)$.
    In their standard representations on $\R^4$,
    all of these contain the scalar $-1$, which reverses
    tangent vectors---and thus geodesics---at the image of $p$ in $M/\foliation{F}^G$.
\end{proof}

\subsubsection{Case (ii): over dimension 3}

\begin{proof}[Proof of \ref{prop:fibering_description}(ii)]
    Suppose $M = G/G_p$ is a geometry where $G_p \curvearrowright T_p M$
    contains an irreducible 3-dimensional subrepresentation.
    By the classification of isotropy groups
    (Fig.~\ref{fig:isotropy_poset}),
    $G_p$ contains $\op{SO}(3)$ as a characteristic subgroup.
    Therefore $M$ fibers conformally over $B = M/\foliation{F}^{\op{SO}(3)}$
    (Lemma \ref{conformal_downstairs}).

    Then $G$ acts transitively on $B$, with $\op{SO}(3)$ in the point
    stabilizers. If the fibering $M \to B$ is isometric,
    then $B$ is one of the $3$-dimensional constant-curvature spaces.
    Otherwise,
    \begin{enumerate}
        \item a manifold $B$ with a transitive\footnote{
                A stronger version of Obata's theorem
                (i.e.\ without assuming transitivity)
                could instead be used here,
                but its proof involves some analytic subtleties---see
                \cite{ferrand1996} for an overview.
                The transitive action of $G$ provides a way to
                sidestep these issues.
            }
            essential conformal automorphism group $\op{Conf} B$
            is conformally flat \cite[Lemma 1]{obata1973}; and
        \item if $B$ is conformally flat and the identity component of $\op{Conf} B$
            acts essentially, then $B$ is conformally equivalent
            to a sphere or Euclidean space \cite[Thm.~D.1]{lafontaine}.
    \end{enumerate}
    So if the fibering $M \to B$ is essential, then $B$ is $S^3$ or $\Euc^3$.
\end{proof}

\subsubsection{Case (iii): over dimension 2}

\begin{proof}[Proof of \ref{prop:fibering_description}(iii)]
    Suppose $M = G/G_p$ is a model geometry where the irreducible
    subrepresentations of $G_p \curvearrowright T_p M$
    have dimensions $1$ and $2$.

    If $G_p$ is $\op{SO}(2)$ or $\op{SO}(2) \times \op{SO}(2)$, then
    a strategy like that of case (ii) applies:
    since $G_p$ contains $\op{SO}(2)$ as a normal subgroup,
    $M$ fibers conformally over $B = M/\foliation{F}^{\op{SO}(2)}$
    (Lemma \ref{conformal_downstairs}).
    Since $B$ is simply-connected by the homotopy exact sequence,
    the Uniformization Theorem
    (see e.g.\ \cite[Thm.~10-3]{ahlfors})
    implies it is conformally equivalent to
    $S^2$, $\Euc^2$, or $\Hyp^2$.

    The remaining case is when $G_p = S^1_{m/n}$
    (a $1$-parameter subgroup of $\op{SO}(2) \times \op{SO}(2)$).
    By the same application of the Uniformization Theorem, it suffices
    to find a $G$-invariant foliation $\foliation{F}$ of codimension $2$
    such that $M$ fibers conformally over $M/\foliation{F}$.
    The strategy consists of the following two steps.
    First, a foliation $\foliation{F}$ with closed leaves and codimension $2$
    is found by examining Lie brackets of vector fields
    tangent to subrepresentations of $G_p \curvearrowright T_p M$.
    Then if $M$ does not fiber conformally over $M/\foliation{F}$,
    this information is used to find an alternative fibering of $M$.

    \paragraph{Case (iii), Step 1: Finding a foliation with closed leaves.}
    $G_p = S^1_{m/n}$ means that
		\[
			T_p M
			= (T_p M)^G \oplus ((T_p M)^G)^\perp
			= (T_p M)^G \oplus V_m \oplus V_n ,
		\]
	where $(T_p M)^G$
	is a $1$-dimensional trivial representation of $S^1$,
	and $V_m$ is an irreducible representation on which
    $S^1$ acts with kernel of size $m$.

	If $m \neq n$, then $V_m \ncong V_n$.
	The integrability tensor
	(Lemma \ref{bracket_infinitesimally})
    is a $G_p$-representation homomorphism
		\[
			\R \oplus V_m \cong
			\Lambda^2 ((T_p M)^G \oplus V_m)
			\to T_p M / ((T_p M)^G \oplus V_m)
			\cong V_n
		\]
    that agrees with Lie brackets of vector fields.
    This is zero since $\R$, $V_m$, and $V_n$ are non-isomorphic irreducible
    representations;
	so Lie brackets preserve tangency to the $3$-plane $(T_p M)^G \oplus V_m$.
	Hence the corresponding $G$-invariant distribution is integrable.
    The resulting foliation has closed leaves
	since its tangent distribution contains $TM^G$,
    (\hspace{1sp}\cite[Prop.~2.1.2]{filipk}; also used earlier in
    Prop.~\ref{fixed_distros_integrable}).

	If $m = n$ then $V_m$ and $V_n$ are both isomorphic to
	the standard representation $V_1$ of $SO(2)$. This $SO(2)$ action
	induces the structure of a complex vector space on
	$V_m \oplus V_n = ((T_p M)^G)^\perp$.

	Since $TM^G$ is $1$-dimensional, there is a canonical
	(up to rescaling) $G$-invariant vector field $v$ along $TM^G$,
    with zero divergence (Lemma ~\ref{lemma:divergence_free}).
	Fixing $p \in M$, choose $w \in \tanalg G \hookrightarrow \op{Vect} M$
	such that $w(p) = v(p)$ (possible since $G \to M$ is a submersion).
	Since $G$ acts by isometries, $w$ satisfies the following.
	\begin{itemize}
		\item $w$ is also divergence-free.
		\item $\op{exp} tw$ preserves $v$, so $[v,w] = 0$.
		\item $\op{exp} tw$ preserves the $(\op{exp} tv)$-orbit of $p$,
			so it preserves $G_p$. Conjugation induces a map
				\[ t \in \R \to \op{Aut} G_p \cong \{\pm 1\} . \]
			Since $\R$ is connected, this is the trivial homomorphism;
			so $\op{exp} tw$ commutes with $G_p$.
	\end{itemize}
	Then $v-w$ is divergence-free, $G_p$-invariant, and zero at $p$;
	so $\op{exp} t(v-w)$ is a $G_p$-equivariant operator on $T_p M$---i.e.
	a $\C$-linear operator on $((T_p M)^G)^\perp$. It has an eigenvector,
	whose span $V_p$ (over $\C$) is $G_p$-invariant. Let $V$ be the $G$-invariant
	distribution whose $2$-plane at $p$ is $V_p$.

	Since $V_p$ is an eigenspace
	of $\op{exp} t(v-w)$ (thus of $v-w$) and $V$ is $G$-invariant,
		\[ [v, V](p) \subseteq [v-w, V](p) + [w, V](p) \subseteq V_p + V_p = V_p . \]
	The map $\Lambda^2 V \to T_p M / V$ from Lemma \ref{bracket_infinitesimally}
	lands in a trivial representation since $\Lambda^2 V$ is $1$-dimensional;
	so $[V, V]_p \subseteq (T_p M)^G$. Therefore the $G$-invariant
	distribution of $3$-planes $TM^G \oplus V$ is integrable.
    As above, it contains $TM^G$ so the resulting foliation has closed leaves.
    
    \paragraph{Case (iii), Step 2: $M$ fibers conformally over some space.}
    Let $\foliation{F}$ be the foliation tangent to the distribution
    produced in Step 1, and suppose $M$ does not fiber conformally
    over $B = M/\foliation{F}$---i.e.\ there is no metric on $B$
    with respect to which $G$ acts by conformal automorphisms.

    The subgroup $G_b \subset G$ fixing $b \in B$ acts on the tangent space $T_b B$
    by some
		\[ G_b \to \op{GL}(T_b B)) \cong \op{GL}(2,\R) . \]
    Since $G_p \subseteq G_b$ and $G$ preserves no
    conformal structure on $B$,
    there are two non-coincident copies of $\op{SO}(2)$
	acting on $T_b B$. Together they generate
	all of $\op{SL}(2,\R)$ (compare this to rotation groups around two distinct points
	of $\Hyp^2$ generating all of $\isomplus \Hyp^2$),
    so the semisimple part of $G$ contains a subgroup covering $\op{PSL}(2,\R)$.
	Since $G$ is $6$-dimensional, the classification of simple Lie groups
    \cite[Ch. X, \S{}6 (p. 516)]{helgasonnew},
	implies that $G$ either surjects onto $\op{PSL}(2,\R)$
	or covers $\op{PSL}(2,\C)$.

	In both $\op{PSL}(2,\R)$ and $\op{PSL}(2,\C)$,
    the maximal torus of the maximal compact subgroup is $1$-dimensional;
    so all copies of $S^1$ are conjugate, and $G_p \cong S^1$
	lands in some copy. Thus $M$ fibers isometrically over
	(or is) $\op{PSL}(2,\R)/\op{PSO}(2) \cong \Hyp^2$ or $\op{PSL}(2,\C)/\op{PSO}(2)$.
	The latter fibers conformally over $S^2$, as
		\[ S^2
			\cong \op{Conf}^+ S^2/\op{Conf}^+ \Euc^2
			\cong \op{PSL}(2,\C)/\op{Conf}^+ \Euc^2 . \]
	Fibers are closed since the maps used are continuous,
	and $SO(2)$ is in the point stabilizers on the base
	since $G$ surjects onto $\op{PSL}(2,\R)$ or $\op{PSL}(2,\C)$.
\end{proof}

\begin{eg}
    The necessity of Step 2 is demonstrated by the fibering
        \[ \op{SL}(2,\R) \to \R^2 \rtimes \op{SL}(2,\R) \to \R^2 . \]
    Since $\R^2 \rtimes \op{SL}(2,\R)$
    acts on $\R^2$ with $\op{SL}(2,\R)$ point stabilizers,
	it cannot preserve any conformal structure.
    If Step 1 had produced this fibering, then
    Step 2 would find the conformal fibering
        \[ \R^2 \rtimes \op{SO}(2) \to \R^2 \rtimes \op{SL}(2,\R) \to \Hyp^2 . \]
\end{eg}

\section{Geometries fibering over 4D isotropy-irreducible geometries}
\label{chap:fiber4}

This section proves part (i) of Theorem \ref{thm:main}---that is,
\begin{prop}[Thm.~\ref{thm:main}(i)] \label{prop:main:i}
    The $5$-dimensional maximal model geometries $M = G/G_p$
    for which the isotropy representation $G_p \curvearrowright T_p M$
    contains an irreducible $4$-dimensional summand are the product geometries
    \begin{align*}
        S^4 &\times \Euc & \Hyp^4 &\times \Euc &
        \C P^2 &\times \Euc & \C\Hyp^2 &\times \Euc
    \end{align*}
    and the homogeneous spaces
    \begin{align*}
        &{}\widetilde{\op{U}(2,1)/\op{U}(2)} &
        \Heis_5 &= \Heis_5 \rtimes \op{U}(2)/\op{U}(2) .
    \end{align*}
\end{prop}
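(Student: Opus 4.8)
The plan is to combine the fibering description (Prop.~\ref{prop:fibering_description}(i)) with the cohomological classification of extensions (Thm.~\ref{thm:extensions_h2}), and then to resolve the remaining ambiguity at the level of Lie groups.

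First I would reduce to a $1$-dimensional fibering. Since $M$ is a maximal model geometry with a $4$-dimensional irreducible isotropy summand, Prop.~\ref{prop:fibering_description}(i) gives an isometric fibering over a $4$-dimensional simply-connected Riemannian symmetric space $B$, and irreducibility of $G_p$ on $T_bB\cong\R^4$ forces $B$ to be isotropy-irreducible---hence one of $S^4$, $\Hyp^4$, $\C P^2$, $\C\Hyp^2$, $\Euc^4$---with $1$-dimensional fiber. By the isotropy classification (Fig.~\ref{fig:isotropy_poset}), $G_p\in\{\op{SO}(4),\op{U}(2),\op{SU}(2)\}$, and since $G_p$ is isomorphic to the isotropy of the induced transitive action of $\bar G:=G/Z$ on $B$ (where $Z$ is the $1$-parameter group generated by the vertical Killing field), matching isotropy types against the symmetric spaces forces $G_p=\op{SO}(4)$ over $S^4$ and $\Hyp^4$, $G_p=\op{U}(2)$ over $\C P^2$ and $\C\Hyp^2$, while $\Euc^4$ admits all three.

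Next I would set up the extension. As in the proof of Prop.~\ref{prop:fibering_description}(i), the vertical vector field $X$ spanning $TM^G$ is divergence-free (Lemma~\ref{lemma:divergence_free}) and its flow preserves the metric on $TM^G$ and, $(TM^G)^\perp$ being irreducible, on $(TM^G)^\perp$ as well (Lemma~\ref{lemma:inner_product_unique}); hence that flow lies in $G$ and is central, so $\tanisom M$ is a central extension $0\to\R\to\tanisom M\to\lie b\to 0$ of the transformation algebra $\lie b$ of $B$. By Thm.~\ref{thm:extensions_h2} these are governed by $H^2(\lie b;\R)$: for the four non-flat bases $\lie b$ is semisimple so $H^2(\lie b;\R)=0$ and $\tanisom M=\lie b\oplus\R$; for $B=\Euc^4$, writing $\lie b=\R^4\rtimes\lie k$ with $\lie k\in\{\lie{so}(4),\lie u(2),\lie{su}(2)\}$, a Hochschild--Serre computation identifies $H^2(\lie b;\R)$ with the space of $\lie k$-invariant $2$-forms on $\R^4$---of dimension $0$, $1$, $3$ respectively---whose nontrivial classes are realized by the Heisenberg extensions $\Heis_5\rtimes\lie k$. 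In addition one records how the isotropy subalgebra sits in $\tanisom M$: it projects isomorphically onto the base isotropy $\lie k$, hence is the graph of a homomorphism $\lie k\to\R$ which factors through the abelianization of $\lie k$---nonzero precisely when $\lie k\cong\lie u(2)$. One must also dispose of the case where the vertical flow is periodic, $Z\cong S^1$: simple-connectivity of $M$ then fails over $S^4$, $\Hyp^4$, $\Euc^4$, and over $\C P^2$ it forces $M\cong S^5$.

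The main obstacle is the passage to the group level and the pruning. For the four non-flat bases with trivial graph homomorphism, $M\cong B\times\R$ carries a $G$-invariant product metric and, $B$ being de~Rham irreducible and non-flat, Hano's theorem (as used in Prop.~\ref{prop:products_are_maximal}) gives $\idisom M=\idisom B\times\R$, yielding $S^4\times\Euc$, $\Hyp^4\times\Euc$, $\C P^2\times\Euc$, $\C\Hyp^2\times\Euc$. Over $\Euc^4$ the trivial class gives $\Euc^5$ presented by a proper subgroup of $\idisom \Euc^5$, hence non-maximal; the nontrivial class over $\lie{su}(2)$ gives $\Heis_5\rtimes\op{SU}(2)$, subsumed by $\Heis_5\rtimes\op{U}(2)$; and the nontrivial class over $\lie u(2)$ with trivial graph homomorphism gives precisely $\Heis_5=\Heis_5\rtimes\op{U}(2)/\op{U}(2)$, maximal by Lemma~\ref{lemma:solvable_maximality}; the $S^1$-fiber case yields only $S^5$, subsumed by the round geometry $(\op{SO}(6),\op{SO}(5))$. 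The delicate point is the nontrivial graph homomorphism, which can arise only when $\lie k\cong\lie u(2)$, i.e.\ over $\C P^2$ or $\C\Hyp^2$: realizing it on a Lie group requires a nontrivial homomorphism to $\R$ out of the preimage $\widehat{\op{U}(2)}$ of the base isotropy, and this preimage is compact over $\C P^2$ (the inclusion $\op{U}(2)\hookrightarrow\op{SU}(3)$ is $\pi_1$-trivial) but noncompact over $\C\Hyp^2$ (the circle in $\op{U}(2)$ generates $\pi_1(\op{SU}(2,1))$). Hence $\C P^2$ contributes nothing new, whereas over $\C\Hyp^2$ the twisted choice---which makes $(TM^G)^\perp$ non-integrable via the K\"ahler form of $\C\Hyp^2$---produces the extra geometry $\widetilde{\op{U}(2,1)/\op{U}(2)}$, obtained as a discrete central quotient of $\widetilde{\op{SU}(2,1)}\times\R$ in which the would-be noncompact isotropy $\op{SU}(2)\times\R$ folds up into a compact $\op{U}(2)$. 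Showing that this quotient exists, acts faithfully with simply-connected orbit space, and---together with $\op{PU}(2,1)\times\R$---exhausts the connected groups with Lie algebra $\lie{su}(2,1)\oplus\R$ carrying such an action is the concrete incarnation of the ``extension problems'' advertised in the introduction, and I expect it to be the hardest step. The converse---that all six named spaces are maximal model geometries of the stated type---then follows from Prop.~\ref{prop:products_are_models} and Prop.~\ref{prop:products_are_maximal} for the products, Lemma~\ref{lemma:solvable_maximality} for $\Heis_5$, and for $\widetilde{\op{U}(2,1)/\op{U}(2)}$ from a torsion-free lattice in $\widetilde{\op{SU}(2,1)}$ (e.g.\ a Picard modular group) together with the maximality already established in the pruning.
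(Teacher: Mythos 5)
Your route is genuinely different from the paper's. The paper proves a reconstruction statement (Prop.~\ref{prop:fiber4_base_and_curvature}: the base $M/\foliation{F}^G$ together with the curvature of $(TM^G)^\perp$ determines the geometry, via quaternionic normalization of the curvature and an analytic local-isometry argument) and then fills in Table~\ref{table:fiber4_candidates}; you instead classify the central extensions $0\to\R\to\tanisom M\to\lie{b}\to 0$ by $H^2(\lie{b};\R)$ (Thm.~\ref{thm:extensions_h2}) and track how the isotropy sits over $\lie{k}$ as the graph of a character $\lie{k}\to\R$. The dictionary you set up is correct: over the flat base the twisting lives in $H^2$ and produces $\Heis_5$, while over the K\"ahler bases $H^2(\lie{su}(3);\R)=H^2(\lie{su}(2,1);\R)=0$ and the twisting lives entirely in the graph character, producing $S^5$ and $\widetilde{\op{U}(2,1)/\op{U}(2)}$; the group-level ``folding up'' of $\op{SU}(2)\times\R$ into $\op{U}(2)$ is exactly the right issue, and your identification of $\lie{b}$ with the full isometry algebra of the non-flat bases is fine once one notes that irreducibility of the isotropy representation excludes the proper transitive (parabolic-type) subgroups of $\op{Isom}\Hyp^4$ and $\op{Isom}\C\Hyp^2$.

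There is, however, a genuine gap at the maximality of $\widetilde{\op{U}(2,1)/\op{U}(2)}$. Your pruning only compares candidates whose isotropy has an irreducible $4$-dimensional summand, so it cannot rule out subsumption by a geometry with strictly larger isotropy, and ``maximality already established in the pruning'' says nothing about the $\op{SO}(4)$- and $\op{SO}(5)$-isotropy geometries (consult Fig.~\ref{fig:isotropy_poset}: these are the only possible subsumers). The paper spends real effort here: contractibility against $S^5$, the noncompact semisimple $\op{SU}(2,1)$ against $\Euc^5$, and Wang's minimal-volume theorem against $\Hyp^5$. In your framework an algebraic substitute is available --- $\lie{su}(2,1)$ embeds in none of $\lie{so}(6)$, $\R^5\semisum\lie{so}(5)$, $\lie{so}(5)\oplus\R$, $\lie{so}(4,1)\oplus\R$, or $\lie{so}(5,1)$ (the last because the quaternionic structure on the $4$-dimensional representation of $\lie{so}(5,1)\cong\lie{su}^*(4)$ would force $3\oplus 1\cong\bar{3}\oplus 1$ over $\lie{su}(2,1)$) --- but some such argument must actually be supplied; as written the step is missing. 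A second, smaller oversight: you assert the nontrivial graph character ``can arise only over $\C P^2$ or $\C\Hyp^2$,'' but $\lie{k}\cong\lie{u}(2)$ also occurs over the flat base $\C^2$, in both the trivial and the Heisenberg class. There the sheared copies of $\lie{u}(2)$ do exist, but they are removable: the linear map fixing $\lie{h}\oplus\lie{su}(2)$ and sending the central generator $i$ of $\lie{u}(2)$ to $i+cz$ ($z$ the central direction) is an automorphism of $\lie{g}$, so these cases collapse back to $\Euc^5$ and $\Heis_5$; the analogous automorphism fails for $\lie{su}(3)\oplus\R$ and $\lie{su}(2,1)\oplus\R$ because $i$ lies in $[\lie{g},\lie{g}]$, which is precisely why only the K\"ahler bases yield new geometries. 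This case should be stated and disposed of rather than excluded by fiat.
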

Our approach to proving Prop.~\ref{prop:main:i}
closely resembles the classification in \cite[Thm.~3.8.4(b)]{thurstonbook}
of $3$-dimensional geometries with an irreducible $2$-dimensional isotropy summand.
Recall that
$B = M/\foliation{F}^G$ is a $4$-dimensional Riemannian symmetric space
over which $M$ fibers isometrically
(Prop.\ \ref{prop:fibering_description}).
Curvature determines $M$ once $B$ is known. That is:
\begin{prop}[\textbf{Base and curvature determine the geometry}]
    \label{prop:fiber4_base_and_curvature}
    A $5$-dimensional maximal model geometry $M = G/G_p$
    whose isotropy representation contains an irreducible $4$-dimensional summand
    is determined by the following two pieces of information:
    \begin{enumerate}
        \item the geometry $B = M/\foliation{F}^G$; and
        \item whether $(TM^G)^\perp$, as a connection on the fiber bundle
            $M \to B$, has nonzero curvature.
    \end{enumerate}
    Moreover, if $G_p = \op{SO}(4)$, then $(TM^G)^\perp$ has zero curvature.
\end{prop}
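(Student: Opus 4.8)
The plan is to recognize $\pi\colon M \to B := M/\foliation{F}^G$ as a principal $\R$- or $S^1$-bundle whose invariant connection is $(TM^G)^\perp$, to read off its curvature as an invariant $2$-form $\Omega$ on the symmetric base $B$, and then to reconstruct $M$ together with the $G$-action from $B$ and from whether $\Omega$ vanishes. First I would fix $0 \neq v \in (T_p M)^G$, push it forward by $G$ to a nowhere-zero $G$-invariant field $X$ spanning $TM^G$, and observe that its flow $\phi_t$ is complete (the leaves of $\foliation{F}^G$ being connected $1$-manifolds) and commutes with $G$; hence $\phi_t$ acts on the fibers of $\pi$ transitively and by bundle automorphisms, so the fiber is $\R$ or $S^1$ and $\pi$ is principal for this group. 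Since $\phi_t$ is an isometry of a $G$-invariant metric (from the proof of Prop.~\ref{prop:fibering_description}(i)), it preserves the orthogonal complement $(TM^G)^\perp$, which is therefore a $G$-invariant principal connection; as its structure group is abelian, the curvature descends to a closed $2$-form $\Omega$ on $B$ invariant under the transitive $G$-action.

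Next I would pin down $\Omega$. Since $G_p$ fixes $b := \pi(p)$ and acts on $T_b B \cong (TM^G)^\perp_p$ as the $4$-dimensional irreducible summand, $\Omega_b$ is invariant under this action; when $G_p = \op{SO}(4)$ this is the standard representation on $\R^4$, and $\Lambda^2 \R^4 = \Lambda^2_+ \oplus \Lambda^2_-$ has no invariant vector, so $\Omega_b = 0$ and, by transitivity of $G$ on $B$, $\Omega \equiv 0$---which is the final assertion of the proposition. In general $B$ is a $4$-dimensional simply-connected Riemannian symmetric space with irreducible isotropy, so $B \in \{S^4, \Hyp^4, \Euc^4, \C P^2, \C\Hyp^2\}$; the effective isotropy of $G$ acting on $T_b B \cong \R^4$ is $\op{SO}(4)$ or $\op{U}(2)$ (a would-be $\op{SU}(2)$-isotropy enlarges to $\op{U}(2)$ once a complex structure adapted to $\Omega$ is chosen, and yields only $\Euc^5$ when $\Omega = 0$). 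If it is $\op{SO}(4)$ then $\Omega = 0$; if it is $\op{U}(2)$ then $\Omega = c\,\omega$ for the Kähler form $\omega$ and a constant $c$, which for $c \neq 0$ forces $B \in \{\C P^2, \C\Hyp^2, \Euc^4\}$.

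Then I would reconstruct $M$ from $B$ and from whether $\Omega$ vanishes. If $\Omega = 0$ the connection is flat; $B$ being simply-connected trivializes the bundle with a product connection, and simple-connectedness of $M$ then forces the fiber $\R$, so $M \cong B \times \R$ with $G$ acting as $\idisom B$ on the first factor and by translations on the second---i.e.\ the product geometry $B \times \Euc$, determined by $B$. If $\Omega = c\,\omega \neq 0$: over $\C P^2$ a principal line bundle would be trivial with exact curvature, impossible since $[\omega] \neq 0$, so the fiber there is $S^1$ and the bundle is the Hopf bundle $S^5 \to \C P^2$ (simple-connectedness of $M$ forcing its Euler number to $\pm 1$); over the contractible $\C\Hyp^2$ and $\Euc^4$ the bundle is trivial and the invariant connection with curvature $c\,\omega$ is unique up to gauge, giving $\widetilde{\op{U}(2,1)/\op{U}(2)}$ and $\Heis_5$ respectively. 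In every case rescaling the chosen field $X$ absorbs the parameter $c$, so $M$---as a fiber bundle with invariant vertical field and invariant connection---is determined by $B$ and by whether $\Omega = 0$; the transformation group is then recovered as the identity component of the automorphism group of this structure, equivalently (by maximality and the realizability of the transformation group as an isometry group, \cite[Prop.~1.1.2]{filipk}) as the identity component of the isometry group of the natural metric built from $B$, the connection, and a unit fiber length.

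I expect the main obstacle to be this last point: showing that the homogeneous space $M = G/G_p$---and in particular the group $G$, not merely the bundle---is pinned down by the two pieces of data. Reconstructing the bundle-with-connection is routine, but some care is needed to see that no further $G$-equivariant freedom survives, and to handle the one bundle $S^5 \to \C P^2$ whose natural metric is round and hence carries a strictly larger transformation group than a generic fiber length would produce, so that the corresponding geometry is actually subsumed---which is compatible with, but must be checked against, the statement that $M$ is determined by its base and curvature.
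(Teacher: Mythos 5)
Your proposal is correct in outline but reaches the conclusion by a genuinely different route from the paper. The paper's proof has two steps: first a purely representation-theoretic normalization of the curvature---decomposing $\Lambda^2 (TM^G)^\perp_p$ under $\op{SU}(2)$, $\op{U}(2)$, and $\op{SO}(4)$, and using conjugation by unit quaternions together with a rescaling of the fibers to put any nonzero curvature into a single standard value (this is also where the ``$\op{SO}(4)$ implies flat'' claim lives, exactly as in your second paragraph); second, an explicit comparison map between any two geometries with matching data, built from coordinates $(t,\pi(x))$ defined by horizontal lifts of radial geodesics, whose horizontality is checked via the fact that the fiber displacement around a small sector equals the integral of the curvature; this local isometry is then globalized using analyticity and completeness of homogeneous metrics and converted into an equivalence of geometries by maximality \cite[Prop.~1.1.2]{filipk}. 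You instead classify the invariant bundle-with-connection over the simply-connected base directly: flat gives the trivial $\R$-bundle with product connection (the $S^1$ fiber excluded by $\pi_1 M = 0$), nonzero gives the Hopf bundle over $\C P^2$ by integrality and $\pi_1$, and over the contractible bases the trivial bundle with its connection unique up to gauge because closed $1$-forms are exact; your closing appeal to maximality and the realizability of the transformation group as an isometry group is the same move as the paper's. What your route buys is that it avoids the analytic-continuation step and makes the topological dichotomy ($\R$- versus $S^1$-fiber, Euler class) explicit; what the paper's buys is that it never needs to identify the bundles at all, only to compare metrics locally.

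One step needs more care than you give it: the parenthetical assertion that a would-be $\op{SU}(2)$-isotropy ``enlarges to $\op{U}(2)$'' is a maximality claim you do not prove, and proving it essentially amounts to the classification the proposition is meant to feed. Fortunately your argument does not need it. What it does need, when the isotropy image on the base is only $\op{SU}(2)$ (the relevant base being $\Euc^4$, the $\Heis_5$ case), is that the invariant curvature---a priori any nonzero self-dual $2$-form of constant norm, not canonically a multiple of a fixed K\"ahler form---can be carried to a positive multiple of the standard $\omega$ by an isometry of the base and a fiber rescaling; this holds because $\op{SO}(4)$ acts transitively on unit self-dual forms, and it is precisely what the paper's Step~1 supplies via conjugation by unit quaternions. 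With that substitution for the ``enlarges to $\op{U}(2)$'' remark, and noting that the cells with base $S^4$ or $\Hyp^4$ and nonzero curvature are vacuous (no nonzero invariant $2$-form exists there, so they cannot threaten injectivity), your reconstruction argument goes through.
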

This key fact, proven in subsection \ref{sec:fiber4_recipe},
reduces the classification problem to listing pairs $(B,x)$
and checking whether each arises from a maximal model geometry.
Subsection \ref{sec:fiber4_proof} carries this out,
finding and then verifying the candidates
listed in Table \ref{table:fiber4_candidates}.
\begin{table}[h!]
    \caption{Candidate geometries with irreducible $4$-dimensional isotropy summand}
    \label{table:fiber4_candidates}
    \label{table_thesis:fiber4_candidates}
    \begin{center}\begin{tabular}{c|cc}
        Base  &
        Flat (product)  &
        Curved  \\
        \hline
        \rule[0pt]{0pt}{13pt}
        $S^4$  &
        $S^4 \times \Euc$  &
        \\
        $\Euc^4$  &
        non-maximal $\Euc^5$  &
        \\
        $\Hyp^4$  &
        $\Hyp^4 \times \Euc$  &
        \\
        $\C P^2$  &
        $\C P^2 \times \Euc$  &
        non-maximal $S^5$  \\
        $\C^2$  &
        non-maximal $\Euc^5$  &
        $\Heis_5$  \\
        $\C \Hyp^2$  &
        $\C \Hyp^2 \times \Euc$  &
        $\widetilde{\op{U}(2,1)/\op{U}(2)}$  \\
    \end{tabular}\end{center}
\end{table}

\subsection{Reconstructing geometries from base and curvature information}
\label{sec:fiber4_recipe}

This subsection proves Prop.~\ref{prop:fiber4_base_and_curvature} in two steps:
one recovers a connection from its curvature using some general theory,
and one normalizes any nonzero curvature to a single value.
The latter interprets $(TM^G)_p^\perp$ as the quaternions
$\mathbb{H}$ and of $\op{SU}(2)$ as the unit quaternions
in order to write down and work with the isotropy representation, as follows.
Since the action
\begin{align*}
    \widetilde{\op{SO}(4)} \cong
    \op{SU}(2) \times \op{SU}(2) &\curvearrowright \mathbb{H} \\
    (p,q)(x) &= pxq^{-1}
\end{align*}
descends to the standard representation of $\op{SO}(4)$,
all three of the isotropy representations with a $4$-dimensional irreducible
summand---$\op{SU}(2) \subset \op{U}(2) \subset \op{SO}(4)$
(Fig.~\ref{fig:isotropy_poset})---can be written using quaternion multiplication
and subgroups of $\op{SU}(2) \times \op{SU}(2)$.

\begin{proof}[Proof of Prop.~\ref{prop:fiber4_base_and_curvature}]
    Let $M = G/G_p$ be a $5$-dimensional maximal model geometry
    whose isotropy representation contains an irreducible $4$-dimensional summand,
    and let $B = M/\foliation{F}^G$.
    Then $M \to B$ is a $G$-invariant principal $S^1$- or $\R$-bundle, with
    vertical distribution $TM^G$ and a natural connection (horizontal distribution)
    $(TM^G)^\perp$. The curvature is a $G$-invariant $2$-form with values in
    the Lie algebra of $S^1$ or $\R$, i.e.\ an element of
    $(\Omega^2 B \otimes \R)^G \cong (\Lambda^2 (TM^G)_p^\perp)^{G_p}$.

    \paragraph{Step 1: Nonzero curvature can be normalized to a single value.}
    By counting weight vectors,
    $\Lambda^2 (TM^G)_p^\perp \cong_{\op{SU}(2)} 3\R \oplus \R^3$
    (i.e. $\op{SU}(2)$ acts trivially on $3\R$ and as $SO(3)$ on $\R^3$).
    One checks by inspection that the $3\R$ is spanned by
    \begin{align*}
        1 \wedge i &- j \wedge k  &
        1 \wedge j &- k \wedge i  &
        1 \wedge k &- i \wedge j .
    \end{align*}
    Under the action of conjugation by unit quaternions, $(TM^G)_p^\perp$
    decomposes instead as $\R \oplus \R^3$; so its second exterior power
    decomposes as $2\R^3$.
    In fact one of these copies of $\R^3$ is the $3\R$ above, since
    conjugation by $1 + i$ takes $1 \wedge j - k \wedge i$
    to $1 \wedge k - i \wedge j$.

    Then if $G_p \cong SU(2)$, applying some inner automorphism
    of $SU(2)$ makes the curvature a scalar multiple of
    $1 \wedge i - j \wedge k$; and the fibers can be rescaled to make
    the curvature either $0$ or $1 \wedge i - j \wedge k$.

    If instead $G_p \cong U(2)$, take the scalar factor
    to act as multiplication by $e^{i\theta}$ on the right.
    This acts on the span of
    $1 \wedge j - k \wedge i$ and $1 \wedge k - i \wedge j$ by rotation,
    so $1 \wedge i - j \wedge k$ lies in the only invariant direction.

    Finally, if $G_p \cong SO(4)$, then the action of $SU(2)$
    by conjugation of quaternions factors through $G_p$,
    so $\Lambda^2 \R^4$ contains no invariant directions---so
    the curvature is $0$.

    \paragraph{Step 2: Base and curvature determine a geometry.}

    Let $\pi$ be the projection $M \to B$.
    To $x$ in a neighborhood $U$ of $p$, assign the coordinates $(t, \pi(x))$
    where the shortest path from $\pi(x)$ to $\pi(p)$
    lifts to a horizontal path from $x$ to $\phi_t(p)$.
    (For these coordinates to be well-defined, it suffices to
    have the radius of $\pi(U)$ at most the injectivity radius of $B$.)

    If $M$ and $N$ have matching curvature and base,
    then choose $p \in M$ and $q \in N$
    lying over the same point $b_0 \in B$; and define a map $f$
    from $U \ni p$ to $V \ni q$ by $(t,b) \mapsto (t,b)$.
    Since homogeneous spaces are analytic \cite[Prop.~I.4.2]{kobayashinomizu},
    and the isometry type of a complete, connected, simply-connected,
    analytic Riemannian manifold is determined by its local isometry type
    \cite[Cor.~VI.6.4]{kobayashinomizu},
    two geometries are isometric if they contain isometric open sets.
    Since a maximal geometry is determined by any invariant Riemannian
    metric \cite[Prop.~1.1.2]{filipk},
    it suffices to check that $f$ is an isometry.

    Since $f$ descends to the identity on $B$,
    and the metric on $M$ is the direct sum of its restrictions
    to $TM^G$ and $(TM^G)^\perp$,
    it suffices to check that $f$ takes the horizontal distribution
    $(TM^G)^\perp$ on $M$ to the horizontal distribution on $N$.
    (We'll say ``$f$ is horizontal''.)

    Since $f$ takes horizontal lifts of geodesics through $p$
    to horizontal lifts of geodesics through $q$, it's horizontal at $p$.
    At points other than $p$, since $M$ and $N$ have
    matching curvature and base, it suffices to check that there is only
    one $4$-plane distribution with the prescribed $4$-plane at $p$
    and the prescribed curvature.

    Let $\tilde{S}$ be a circular sector in $T_{b_0} B$ with its vertex
    at the origin, and let $S = \op{exp} \tilde{S}$.
    The displacement along the fiber $\mathcal{F}_{p}$
    incurred by traveling around a horizontal lift of $\partial S$
    is the integral of the curvature over $S$.
    So if $s$ denotes the distance along the circular arc in $\partial S$,
    then computing $\frac{dt}{ds}$ in enough directions recovers
    the slope of the horizontal distribution relative to
    the coordinates $(t,b)$.

\end{proof}

\subsection{Classifying and verifying geometries}
\label{sec:fiber4_proof}

Having established that the base and curvature determine a
$5$-dimensional maximal model geometry with irreducible $4$-dimensional isotropy
(Prop.~\ref{prop:fiber4_base_and_curvature}),
this subsection carries out the classification (i.e.\ the proof
of Prop.~\ref{prop:main:i}) according to the plan outlined
at the start of the section.

\begin{proof}[Proof of Thm.~\ref{thm:main}(i)/Prop.~\ref{prop:main:i}]
    Let $M = G/G_p$ be a $5$-dimensional maximal model geometry
    whose isotropy representation contains an irreducible $4$-dimensional summand.
    The base $M/\foliation{F}^G$ and the curvature of $(TM^G)^\perp$
    determine $M$ (Prop.~\ref{prop:fiber4_base_and_curvature});
    so $M$ occurs in Table \ref{table:fiber4_candidates},
    provided that the list of base spaces is exhaustive (Step 1)
    and that every entry has the claimed base and curvature (Step 2).
    Determining which candidates are maximal and model (Step 3)
    finishes the proof.

    \paragraph{Step 1: Classify base spaces for Table \ref{table:fiber4_candidates}.}
    The base $M/\foliation{F}^G$ is a Riemannian symmetric space
    (Prop.~\ref{prop:fibering_description}). The isotropy in $M$ descends
    to irreducible isotropy in $M/\foliation{F}^G$, so $M/\foliation{F}^G$
    is either irreducible or Euclidean. Consulting the classification
    of irreducible Riemannian symmetric spaces in
    \cite[X.6, p.515--518]{helgasonnew}
    yields the following non-Euclidean base spaces.
    \begin{align*}
        \Hyp^4 & &
        S^4 & &
        \C P^2 \cong \op{SU}(3)/\op{S}(\op{U}(2) \times \op{U}(1)) & &
        \C \Hyp^2 \cong \op{SU}(2,1)/\op{S}(\op{U}(2) \times \op{U}(1))
    \end{align*}
    If $M/\foliation{F}^G$ is Euclidean, it may not be a maximal geometry---its
    isotropy may be some proper but irreducibly-acting subgroup of $\op{SO}(4)$.
    Hence Table \ref{table:fiber4_candidates} also lists $\C^2$ as a base,
    to stand for Euclidean space with $\op{SU}(2)$ or $\op{U}(2)$ isotropy.

    \paragraph{Step 2: Verify the candidates in Table \ref{table:fiber4_candidates}.}
    This step proves that every spot in Table \ref{table:fiber4_candidates}
    is occupied by a space with the correct base and curvature
    (or empty if no such space exists).

    For the product spaces, there is nothing to prove.
    For the bases with $\op{SO}(4)$ isotropy, no non-product
    geometries occur (Prop.~\ref{prop:fiber4_base_and_curvature}).
    For the remaining bases, geometries with nonzero curvature of $(TM^G)^\perp$
    are realized by the following.
    \begin{itemize}
        \item Over $\C P^2$ is its tautological circle bundle $S^5$,
            since $\C P^2$ is the quotient of $S^5 \subset \C^3$
            by the action of norm-$1$ scalars. Its curvature is nonzero
            since a flat $S^1$-bundle over the simply-connected $\C P^2$
            is the product bundle---which, unlike $S^5$, has nontrivial $\pi_1$.
        \item Over $\C^2$ is $\Heis_5$, the $5$-dimensional Heisenberg group,
            which can be written as the set $\C^2 \times \R$ with the product
                \[ (v_1,t_1) (v_2, t_2)
                    = (v_1 + v_2, t_1 + t_2 + \op{Im} \left<v_1, v_2\right>) , \]
            where $\left<\cdot,\cdot\right>$ is the standard Hermitian product.
            Dropping the $\R$ coordinate is a fibering over $\C^2$,
            for which an invariant connection with nonzero curvature
            is given by the kernel of the invariant contact form
                \[ \alpha_{(v,t)} = dt - \op{Im} \left<v, dv\right> . \]
        \item Over $\C \Hyp^2 \cong U(2,1)/(U(2) \times U(1))$
            is the line bundle $\widetilde{U(2,1)/U(2)}$.
            That its curvature is nonzero is Prop.~\ref{prop:fiber4_hyp_circle_bundle}
            below.
    \end{itemize}

    \paragraph{Step 3: Determine maximal model geometries.}
    Except when multiple Euclidean factors are involved,
    products of maximal model geometries are maximal model geometries
    (Prop.~\ref{prop:products_are_maximal} and surrounding discussion);
    so it suffices to consider the three non-product geometries.
    \begin{itemize}
        \item $S^5$ as a circle bundle over $\C P^2$ is not maximal---it
            is a homogeneous space $\op{SU}(3)/\op{SU}(2)$ or $\op{U}(3)/\op{U}(2)$,
            both of which are subsumed by the geometry $\op{SO}(6)/\op{SO}(5)$.
        \item $\Heis_5$ is a nilpotent Lie group.
            Its maximal realization is $\Heis_5 \rtimes K / K$
            where $K \subset \op{Aut} \Heis_5$ is maximal compact
            (Lemma \ref{lemma:solvable_maximality}).
            Since $\op{Aut} \Heis_5$ must preserve $Z(\Heis_5)$,
            it is block triangular in some basis with blocks of size $4$ and $1$;
            so its maximal compact subgroup
            is conjugate to a subgroup of $\op{SO}(4) \times \op{SO}(1)$
            \cite[Lemma \ref{ii:lemma:compacts_in_aut}]{geng2}.
            Moreover, $\op{Aut} \Heis_5$ has to preserve the antisymmetric
            pairing on $\Heis_5/Z(\Heis_5)$ used to define $\Heis_5$ in Step 2.
            Then
                \[ K \subseteq \op{SO}(4) \cap \op{Sp}(4,\R) = \op{U}(2) \]
            since $\op{SO}(4)$ preserves the real part of a Hermitian form
            and $\op{Sp}(4,\R)$ preserves the imaginary part.
            Conversely, $K \supseteq \op{U}(2)$
            since $\op{U}(2)$ preserves the Hermitian
            product used to define $\Heis_5$.
            So $\Heis_5 \rtimes \op{U}(2) / \op{U}(2)$ is maximal.
        \item $\widetilde{\op{U}(2,1)/\op{U}(2)}$ is a model geometry since
            $\C \Hyp^2$ is---the deck group $\Gamma$ of any compact
            $\Gamma \backslash \C \Hyp^2$ acts by elements of
            $\op{SU}(2,1) \subset \op{U}(2,1)$, so the circle bundle
            $\op{U}(2,1)/\op{U}(2)$ over $\C \Hyp^2$ descends to a circle bundle
            $N$ over $\Gamma \backslash \C \Hyp^2$.
            To prove maximality it suffices to distinguish
            $\widetilde{\op{U}(2,1)/\op{U}(2)}$ from geometries
            with larger isotropy group; consulting Figure \ref{fig:isotropy_poset},
            these are just $\op{SO}(5)$ and $\op{SO}(4)$.
            Geometries $M = G/G_p$ with irreducible $4$-dimensional isotropy
            are distinguished from each other by $M/\foliation{F}^G$
            and the curvature of $(TM^G)^\perp$
            (Prop.~\ref{prop:fiber4_base_and_curvature}),
            so only the constant-curvature spaces need to be checked.
            \begin{itemize}
                \item $M = \widetilde{\op{U}(2,1)/\op{U}(2)}$ is not $S^5$
                    since $M$ is contractible,
                    being a line bundle over the contractible space $\C \Hyp^2$.
                \item $M$ is not $\Euc^5$ since $\op{SU}(2,1)$
                    is semisimple and noncompact,
                    whereas every semisimple subgroup of $\isomplus \Euc^5$ is
                    compact as its semisimple part is $\op{SO}(5)$.
                \item $M$ is not $\Hyp^5$: a circle bundle $N$ over
                    a compact $\C \Hyp^2$ manifold has quotients of arbitrarily
                    small volume
                    (by the scalar action of $e^{2 \pi i / m}$ for large $m$);
                    whereas for fixed $n \geq 4$, there is a minimum
                    volume for hyperbolic $n$-manifolds by a
                    theorem of Wang \cite[Thm.~E.3.2]{bp}.
            \end{itemize}
    \end{itemize}
    Hence the maximal model geometries in Table \ref{table:fiber4_candidates}
    are the products other than $\Euc^5$ and the two non-product geometries
    $\Heis_5$ and $\widetilde{\op{U}(2,1)/\op{U}(2)}$.
\end{proof}

\begin{prop} \label{prop:fiber4_hyp_circle_bundle}
    For the fibering of $M = \op{U}(1,n)/\op{U}(n)$ over $\C \Hyp^n$,
    the curvature of the connection $(TM^{\op{U}(n)})^\perp$
    is nonzero.
\end{prop}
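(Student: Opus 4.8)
The plan is to reduce the nonvanishing of the curvature to a single Lie-bracket computation in $\lie{u}(1,n)$, exploiting the fact that the curvature of the Ehresmann connection $(TM^{\op{U}(n)})^\perp$ on the bundle $M \to \C\Hyp^n$ is, on horizontal fields, exactly the integrability tensor $\mu$ of Lemma~\ref{bracket_infinitesimally}. Recall that $\op{U}(n)$ is the point stabilizer $G_p$, so $TM^{\op{U}(n)} = TM^{G_p} = TM^G$, and $M = \op{U}(1,n)/\op{U}(n)$ is a principal bundle over $B = \C\Hyp^n = \op{U}(1,n)/(\op{U}(1)\times\op{U}(n))$ with structure group $(\op{U}(1)\times\op{U}(n))/\op{U}(n) \cong \op{U}(1)$ (or $\R$ after passing to universal covers; the curvature is unchanged).

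First I would fix the reductive decomposition. Realizing $\lie{g} = \lie{u}(1,n)$ as the matrices $\begin{pmatrix} a & v^* \\ v & B \end{pmatrix}$ with $a \in i\R$, $v \in \C^n$, $B \in \lie{u}(n)$ relative to the form $\op{diag}(-1,I_n)$, one has $\lie{g} = \lie{u}(n) \oplus \lie{m}$ with $\lie{m} = \lie{m}_0 \oplus \lie{m}_1$, where $\lie{m}_0 = \{a\} \cong \lie{u}(1)$ is the one-dimensional $\op{U}(n)$-trivial summand $= TM^G_p$ (tangent to the fiber), and $\lie{m}_1 = \{v\} \cong \C^n$ is the $\op{U}(n)$-standard summand $= (TM^G)^\perp_p$, which maps isomorphically onto $T_b B$.

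Next I would recall why curvature zero is equivalent to $\mu = 0$ and identify $\mu$ algebraically. By the proof of Prop.~\ref{prop:fiber4_base_and_curvature} the curvature is a $G$-invariant $\R$-valued $2$-form on $B$, i.e.\ an element of $(\Lambda^2 (TM^G)^\perp_p)^{\op{U}(n)}$; concretely it is the Ehresmann curvature $(X,Y) \mapsto -[X,Y]^{\op{vert}}$ on horizontal fields, which is precisely the integrability tensor $\mu\colon \Lambda^2 (TM^G)^\perp_p \to T_pM/(TM^G)^\perp_p$ of Lemma~\ref{bracket_infinitesimally}, under the identification $T_pM/(TM^G)^\perp_p \cong TM^G_p$ of Lemma~\ref{lemma:perp_orthogonality}. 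Being computed from Lie brackets of vector fields, $\mu$ is, up to a nonzero scalar, the $\op{U}(n)$-equivariant map $\Lambda^2 \lie{m}_1 \to \lie{m}_0$ sending $X \wedge Y$ to the $\lie{m}_0$-component of $[X,Y] \in \lie{g}$; the $\lie{u}(n)$-part of the bracket plays no role in this projection. Then the computation is immediate: for $X_v = \begin{pmatrix} 0 & v^* \\ v & 0 \end{pmatrix}$, $X_u = \begin{pmatrix} 0 & u^* \\ u & 0 \end{pmatrix} \in \lie{m}_1$ one gets $[X_v,X_u] = \begin{pmatrix} v^*u - u^*v & 0 \\ 0 & vu^* - uv^* \end{pmatrix}$, whose $\lie{m}_0$-component is the scalar $v^*u - u^*v = 2i\op{Im}\langle v,u\rangle$. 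Taking $v = e_1$, $u = ie_1$ makes this $2i \neq 0$, so $\mu \neq 0$ and the curvature is nonzero. (Since $\op{U}(n)$-invariant $2$-forms on $\C^n$ form a one-dimensional space spanned by the standard Kähler form, this in fact identifies the curvature with a nonzero multiple of the Kähler form of $\C\Hyp^n$, but only nonvanishing is needed.)

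I expect the only delicate point to be the bookkeeping in the third step — matching the abstract ``curvature of the connection $(TM^G)^\perp$'' with the concrete bracket in $\lie{g}$, including the sign and normalization conventions and confirming that the $\lie{u}(n)$-component of $[X_v,X_u]$ is irrelevant. None of these subtleties affects the conclusion, since what is at stake is merely the nonvanishing of the manifestly nonzero $\op{U}(n)$-equivariant quantity $X \wedge Y \mapsto [X,Y]_{\lie{m}_0}$.
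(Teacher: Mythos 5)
Your proposal is correct, but it takes a genuinely different route from the paper. The paper argues globally: it embeds $M$ in $\C^{n+1}$ as the level set of the $(1,n)$-Hermitian form, identifies the horizontal distribution as $(TM^G)^\perp_p = p^\perp$, and writes down an explicit horizontal path $\gamma(\theta)$ whose endpoints lie in the same fiber but differ --- i.e.\ it exhibits nontrivial holonomy of a closed loop in $\C\Hyp^n$ directly, which is what ``nonzero curvature'' is taken to mean there. You instead argue infinitesimally: nonzero curvature is equivalent to non-integrability of $(TM^{\op{U}(n)})^\perp$, which by Lemma~\ref{bracket_infinitesimally} is detected by the integrability tensor $\mu$, and you compute $\mu$ via the reductive decomposition $\lie{u}(1,n) = \lie{u}(n) \oplus \lie{m}_0 \oplus \lie{m}_1$, where the $\lie{m}_0$-component of $[X_v,X_u]$ is $2i\op{Im}\langle v,u\rangle \neq 0$. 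The bracket computation itself is right, and so is the identification of $T_pM^{\op{U}(n)}$ with $\lie{m}_0$ and $(TM^{\op{U}(n)})^\perp_p$ with $\lie{m}_1$. The only step you wave at --- that $\mu$ is, up to a nonzero scalar, $X \wedge Y \mapsto [X,Y] \bmod (\lie{u}(n) \oplus \lie{m}_1)$ --- does need a short argument, since Killing fields for $X \in \lie{m}_1$ are not tangent to the distribution away from $p$; the clean justification is to pull the distribution back under $\op{U}(1,n) \to M$, where it becomes the left-invariant distribution spanned by $\lie{u}(n) \oplus \lie{m}_1$, compute the integrability tensor there with left-invariant fields (using its tensoriality), and push back down, or equivalently to invoke the standard curvature formula for the canonical connection of a homogeneous fibration. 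With that supplied, your proof is complete and arguably slicker: it reduces the proposition to one line of linear algebra and identifies the curvature as a multiple of the K\"ahler form, whereas the paper's computation is more elementary and self-contained, trading the homogeneous-space formalism for an explicit exhibition of holonomy.
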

\begin{proof}
    Let $\{e_0,\ldots,e_n\}$ be the standard basis of $\C^{n+1}$.
    Embed $M$ in $\C^{n+1}$ as the preimage of $1$
    under the $(1,n)$ Hermitian form
        \[ \sum_{i=0}^n z_i e_i \mapsto |z_0|^2 - \sum_{i=1}^n |z_i|^2 , \]
    so that $M$ fibers as a circle bundle over $\C \Hyp^2$
    where the fibers are the intersections of $M$ with complex lines.

    Embed $U(n)$ in $U(1,n)$ as the copy of $U(n)$
    fixing $e_0 \in \C^{n+1}$,
    and let $V$ be the span of $\{e_1,\ldots,e_n\}$.
    Then $(TM^G)^\perp_{e_0} = V$;
    and we can find $(TM^G)^\perp$
    elsewhere by translating by elements of $U(1,n)$---explicitly,
    for $p \in M \subset \C^{n+1}$,
        \[ (TM^G)^\perp_p = p^\perp . \]

    If $v \in V$ has unit length, then there is a
    $1$-parameter subgroup of $U(1,n)$ defined by
    \begin{align*}
        e_0 &\mapsto (\cosh t) e_0 + (\sinh t) v \\
        v &\mapsto (\sinh t) e_0 + (\cosh t) v \\
        w &\mapsto w \text{ if } w \in V \text{ and } w \perp v .
    \end{align*}
    Fix a small $t > 0$ and define a path $\gamma(\theta)$ in $M$ by
        \[ \gamma(\theta)
            = (\cosh t) e^{i \theta (\tanh t)^2} e_0
            + (\sinh t) e^{i\theta} e_1 . \]
    Then
        \[ \gamma'(\theta)
            = i (\tanh t) \left(
                (\sinh t) e^{i\theta (\tanh t)^2} e_0
                + (\cosh t) e^{i\theta} e_1 \right) \]
    so $\gamma$ is horizontal.
    At $\theta = \frac{\pi}{1 - (\tanh t)^2}$,
    both $\gamma(\theta)$ and $\gamma(0)$ are in the same fiber,
    but $\gamma(0) \neq \gamma(\theta)$. So horizontal lifts
    of closed paths in $\C \Hyp^n$ are not necessarily closed.
\end{proof}


\section{Geometries fibering over 3D isotropy-irreducible geometries}
\label{chap:fiber3}

This section proves part (ii) of Theorem \ref{thm:main}---that is,
the $5$-dimensional maximal model geometries $M = G/G_p$ for which
the isotropy representation $G_p \curvearrowright T_p M$ contains
an irreducible $3$-dimensional summand are products.
Major ingredients of the proof include
some reasoning about fiber bundles (set up in Section \ref{chap:fiber_bundles}),
Lie group extension problems,
and some facts about transformation groups of spaces of constant curvature
(postponed to Section \ref{sec:fiber3_constk}).

While the classification seeks only model geometries,
it does encounter one geometry that satisfies the weaker condition
of having a unimodular isometry group.
This geometry is the homogeneous space
    \[ \R \rtimes \op{Conf}^+ \Euc^3 / \op{SO}(3) , \]
where the action on $\R$ is chosen to make $\R \rtimes \op{Conf}^+ \Euc^3$
unimodular. Its failure to be a model geometry is proven using
Galois theory; see Section \ref{sec:fiber3_solconf} for details.

The proof splits into four parts with the following preparation.
Since $M$ is a model geometry, $G$ must admit a lattice
(Defn.~\ref{defn:geometries3}),
which requires $G$ to be unimodular \cite[Prop.~1.1.3]{filipk}.
Under the assumption that
$G_p \curvearrowright T_p M$ has an irreducible $3$-dimensional summand,
$G_p$ contains a characteristic copy of $\op{SO}(3)$
(Fig.~\ref{fig:isotropy_poset}).
So the fibering $M \to B = M/\foliation{F}^{\op{SO}(3)}$
is conformal, with $B = S^3$, $\Euc^3$, or $\Hyp^3$
(Prop. \ref{prop:fibering_description}(ii)).
Hence to prove Theorem \ref{thm:main}(ii) it will suffice
to show the following.
\begin{prop} \label{prop:main:ii}
    Let $M = G/G_p$ be a $5$-dimensional maximal geometry
    where $G$ is unimodular and $G_p = \op{SO}(3)$ or $\op{SO}(3) \times \op{SO}(2)$;
    and let $B = M/\foliation{F}^{\op{SO}(3)}$.
    \begin{enumerate}[(i)]
        \item If $\pi_B: M \to B$ is an isometric fibering,
            then $M$ is a product of $2$-dimensional geometries
            and $3$-dimensional constant-curvature geometries.
        \item The products in (i) that are maximal model geometries
            are those for which both factors have constant curvature
            and at least one factor is not Euclidean.
        \item If $\pi_B$ is essential, then
            $M = \R \rtimes \op{Conf}^+ \Euc^3 / \op{SO}(3)$,
            where $A \in \op{Conf}^+ \Euc^3$ acts on $\R$ as dilation by $(\det A)^{-1}$.
        \item $\R \rtimes \op{Conf}^+ \Euc^3 / \op{SO}(3)$
            is not a model geometry.
    \end{enumerate}
\end{prop}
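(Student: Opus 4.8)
The plan is to dispatch the four parts separately: the product-recognition results of Section~\ref{chap:fiber_bundles} handle (i)--(ii), the extension theory of Section~\ref{chap:background} handles (iii), and a Galois-theoretic obstruction handles (iv). Throughout, $\foliation F^{\op{SO}(3)}$ has $2$-dimensional leaves, $TM^{\op{SO}(3)}$ is the $3$-codimensional vertical distribution, and $(TM^{\op{SO}(3)})^\perp$ the $3$-dimensional horizontal one.

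\textbf{Parts (i) and (ii).} Assume $\pi_B$ isometric, so $B \in \{S^3, \Euc^3, \Hyp^3\}$. I would first show $(TM^{\op{SO}(3)})^\perp$ is integrable: its integrability tensor (Lemma~\ref{bracket_infinitesimally}) is a $G_p$-homomorphism $\Lambda^2 (TM^{\op{SO}(3)})^\perp_p \to T_pM/(TM^{\op{SO}(3)})^\perp_p$, whose source is $\Lambda^2\R^3 \cong \R^3$ (the standard $\op{SO}(3)$-representation) and whose target is a trivial $\op{SO}(3)$-representation, so it vanishes. An identical computation shows the second fundamental form of $\foliation F^{\op{SO}(3)}$---a $G_p$-invariant element of $(\op{Sym}^2 TM^{\op{SO}(3)}_p)^* \otimes (TM^{\op{SO}(3)})^\perp_p \cong (\text{trivial}) \otimes \R^3$---vanishes. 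Thus $\pi_B$, a Riemannian submersion for suitable invariant metrics (Lemma~\ref{lemma:perp_orthogonality}), has totally geodesic fibers and integrable horizontal distribution, so both O'Neill tensors vanish, $M$ is locally a metric product, and being complete and simply-connected it splits as $M \cong F^2 \times B^3$ with $B$ of constant curvature (and $F$, being $2$-dimensional, automatically so). Hano's theorem, as in the proof of Prop.~\ref{prop:products_are_maximal}, then identifies $G$ with $\idisom F \times \idisom B$ unless $M = \Euc^5$, which the isotropy hypothesis excludes; this is (i). For (ii), combine Prop.~\ref{prop:products_are_models} with Prop.~\ref{prop:products_are_maximal}: both hypotheses of the latter hold precisely when at least one factor is non-Euclidean (such a factor admits no invariant vector field and is not a Euclidean product), while $\Euc^2 \times \Euc^3 = \Euc^5$ is subsumed by the $\op{SO}(5)$-geometry.

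\textbf{Part (iii).} Now $\pi_B$ is essential, so $B$ is conformally $S^3$ or $\Euc^3$ (Prop.~\ref{prop:fibering_description}(ii)). Let $\bar G \subseteq \op{Conf}^+ B$ be the image of $G$ and $N$ the kernel. Since $\op{SO}(3) \subseteq G_p$ acts faithfully on $T_bB \cong \R^3$ and the action is not isometric, the linear isotropy of $\bar G$ at $b$ is all of $\op{CO}(3)^\circ = \R_{>0}\times\op{SO}(3)$; together with transitivity this makes $\bar G = \op{Conf}^+\Euc^3$ when $B = \Euc^3$, and forces $\bar G = \op{SO}_0(1,4)$ when $B = S^3$---impossible, as $\dim \bar G \le \dim G \le 9 < 10$; this last point requires a grip on the subgroups of $\op{SO}_0(1,4)$ transitive on $S^3$. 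So $B = \Euc^3$ and $\bar G = \op{Conf}^+\Euc^3$, which has maximal compact subgroup $\op{SO}(3)$. As $\dim N = \dim G_p - 2 \in \{1,2\}$, $N$ is solvable, so $G$ also has maximal compact subgroup $\op{SO}(3)$; hence $G_p = \op{SO}(3)$ and $\dim N = 1$. A short argument rules out $N \cong S^1$: $N \cap G_p$ is a normal subgroup of $\op{SO}(3)$ inside a $1$-dimensional group, hence trivial, so $N$ acts freely on $M$; if $N$ were $S^1$, then $M \to M/N \cong \R^4$ would be a principal circle bundle over a contractible space, hence trivial, contradicting simple-connectivity. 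The same idea gives connectedness of $N$, so $N \cong \R$. To finish, classify the extensions $0 \to \R \to \lie g \to \lie{conf}^+\Euc^3 \to 0$ via Thm.~\ref{thm:extensions_h2}: the outer action $\lie{conf}^+\Euc^3 \to \op{der}(\R) = \R$ factors through the abelianization (the dilation line), and unimodularity of $\lie g$ (given) forces it to be the action in which $A$ scales $\R$ by $(\det A)^{-1}$; the residual freedom lies in $H^2(\lie{conf}^+\Euc^3;\R)$ with that coefficient, which a Hochschild--Serre computation over the ideal $\R^3$, using the vanishing of $\lie{so}(3)$-cohomology with the relevant coefficients, shows to vanish. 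So the extension splits, $G \cong \R \rtimes \op{Conf}^+\Euc^3$, and $M = \R \rtimes \op{Conf}^+\Euc^3 / \op{SO}(3)$.

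\textbf{Part (iv).} Write $G = R \rtimes \op{SO}(3)$ with solvable radical $R = (\R \oplus \R^3) \rtimes_A \R$, the generator acting by $A = \operatorname{diag}(-3,1,1,1)$. Since $G$ is amenable every lattice is uniform, and since $R$ is normal and cocompact in $G$ a Fubini-type argument shows a lattice in $G$ meets $R$ in a lattice. But a lattice in $\R^4 \rtimes_A \R$ requires some $e^{tA}$ with $t \neq 0$ to be conjugate into $\op{GL}(4,\Z)$, hence $\det(\lambda I - e^{tA}) = (\lambda - e^{-3t})(\lambda - e^t)^3 \in \Z[\lambda]$. The Galois step closes the problem: a monic degree-$4$ integer polynomial cannot have an irrational root of multiplicity $3$, since its Galois conjugates would contribute more than $4$ roots; so $e^t \in \Q$, whence $e^t$ and $e^{-3t}$ are rational roots of a monic integer polynomial, hence positive integers, which forces $t = 0$. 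Thus $G$ has no lattice and $M$ is not a model geometry. The bulk of the work is in (iii)---pinning down $\bar G$, especially excluding $B = S^3$, and then verifying that unimodularity plus the $H^2$-vanishing leaves exactly one group $G$; parts (i)--(ii) are routine given the submersion machinery, and the Galois step in (iv) is short once $G$ is known.
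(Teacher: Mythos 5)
Your parts (i)--(ii) are correct and take a genuinely different route from the paper: instead of analyzing the stabilizer $G_f$ of a leaf via an extension problem and the vanishing of $H^2(\tanisom B;\R)$ (the paper's Steps 3--4, using Lemma \ref{cohozero} and Corollary \ref{quotients}), you observe that the second fundamental form of the fibers lands in $\op{Hom}_{\op{SO}(3)}(\mathrm{trivial},\R^3)=0$, build one invariant metric making $\pi_B$ a Riemannian submersion, and invoke O'Neill/de Rham plus Hano. That works (maximality forces $G$ to be the identity component of the isometry group of \emph{every} invariant metric, so the splitting of that one metric suffices), and it is arguably cleaner; it also silently absorbs the paper's footnoted warning, since the Riemannian-submersion identity is exactly what kills the symmetric part of $\mathcal{V}\nabla_XY$, which is the part not controlled by representation theory alone.

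In (iii) there are two gaps. First, the elimination of $G_p=\op{SO}(3)\times\op{SO}(2)$ rests on ``$N$ is solvable, so $G$ also has maximal compact subgroup $\op{SO}(3)$''; that inference is false as stated, because a solvable kernel can contain a circle (and in the $\op{SO}(3)\times\op{SO}(2)$ case the $\op{SO}(2)$ factor \emph{must} land in $N$, since its image in $\op{Conf}^+\Euc^3$ centralizes $\op{SO}(3)$ inside $\op{SO}(3)\times\R_{>0}$ and is therefore trivial). Ruling this out needs a dedicated argument --- e.g.\ the paper's Step 2, or the observation that a circle in $N$ is characteristic, hence central in the connected $G$, contradicting faithfulness. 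Second, excluding $B=S^3$ requires knowing that a connected transitive essential subgroup of $\op{Conf}^+S^3$ with $\op{SO}(3)$ in its stabilizers is all of $\op{Conf}^+S^3$; you flag this but do not prove it, and it is genuinely nontrivial (it is Lemma \ref{essential_subgroups_of_conf}, whose $S^k$ case needs the paired-antipodal-points argument). The Hochschild--Serre computation of $H^2(\tanalg\op{Conf}^+\Euc^3;\R_{-3})=0$ is fine in outline, and the only other omission (which cover of $\R\rtimes\widetilde{\op{Conf}^+\Euc^3}$ contains $\op{SO}(3)$ rather than $\op{SU}(2)$) is minor.

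Part (iv) has a genuine gap. The claim that a lattice in $G$ meets the solvable radical $R$ in a lattice because $R$ is normal and cocompact is false as a general principle: in $\R\times\op{SO}(3)$ the cyclic group generated by $(1,g)$, with $g$ of infinite order, is a cocompact lattice meeting the radical $\R$ trivially. Nothing in your argument rules out the analogous behavior here --- an element of a putative lattice can project to a nontrivial dilation \emph{and} a nontrivial rotation, so it need not lie in $R$, and $\Gamma\cap R$ may have rank only $4$. The paper instead applies Mostow's lemma to the \emph{nilradical} $\R^4$ (its hypothesis, that compact semisimple subgroups act nontrivially on the radical, is exactly what excludes the $\R\times\op{SO}(3)$ pathology), and then the integer matrix obtained has eigenvalues $\lambda,\lambda e^{\pm i\theta},\lambda^{-3}$; your multiplicity-three Galois argument no longer applies to that eigenvalue pattern, which is why the paper's Step 3 argues instead through the rationality of $\lambda^{-3}$ and the rational root theorem. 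Your Galois step is correct and slick for the pure-dilation matrix, but the reduction to that matrix is unjustified.
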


The rest of this section is devoted to proving Prop.~\ref{prop:main:ii}.

\subsection{Geometries fibering isometrically are products}

This section classifies the isometrically fibering geometries,
in two steps as delineated by the first two parts of Prop.~\ref{prop:main:ii}.
The first is to show that all isometrically fibering geometries
are products, and the second is to determine which products
are maximal model geometries. Standard facts about groups
acting on constant-curvature spaces will be stated where used,
with references either to the literature or to proofs deferred to
Section \ref{sec:fiber3_constk}.

\begin{proof}[Proof of Prop.~\ref{prop:main:ii}(i)]
    Let $M = G/(\op{SO}(3) \times \op{SO}(2))$ or $M = G/\op{SO}(3)$
    be a $5$-dimensional maximal geometry with $G$ unimodular,
    and suppose $\pi_B: M \to B = M/\foliation{F}^{\op{SO}(3)}$ is isometric.

    \paragraph{Step 1: $M \to B$ is a product bundle $F \times B$.}
    The $G_p$-invariant integrability tensor
    (Lemma \ref{bracket_infinitesimally})
        \[ \Lambda^2 \left((TM^{\op{SO}(3)})_p^\perp\right) \to T_p M^{\op{SO}(3)} \]
    is zero since the left side is the standard representation of $\op{SO}(3)$
    while the right side is a trivial representation; so
    $(TM^{\op{SO}(3)})^\perp$ is a flat connection on $M \to B$.
    Since $B$ is simply-connected
    and flat bundles are classified by the monodromy representation
    $\pi_1(B) \to \op{Diff} F$, 
    the fiber bundle $M \to B$ admits an isomorphism
    to a product bundle $F \times B$ taking $(TM^{\op{SO}(3)})^\perp$
    to the $3$-planes tangent to copies of $B$.
    This produces a second projection $\pi_F : M \to F$.\footnote{
        This alone does not make $M$ a product geometry,
        as demonstrated in dimension $3$ by the (non-model) geometry
        $\op{Conf}^+ \Euc^2 / \op{SO}(2)$;
        so there still remains something nontrivial to prove.
    }

    \paragraph{Step 2 (some general theory):
        It suffices to show that $F$ has a $G$-invariant metric.}
    If $G$-invariant metrics on $F$ exist, they correspond one-to-one with
    invariant inner products on $TM^{\op{SO}(3)}$, since both are determined
    by an inner product on a single $2$-plane.
    The same holds for $(TM^{\op{SO}(3)})^\perp$ and $B$.
    Then since $TM^{\op{SO}(3)}$ and $(TM^{\op{SO}(3)})^\perp$ are
    orthogonal in any invariant metric (Lemma~\ref{lemma:perp_orthogonality}),
    every invariant metric on $M$ is a direct sum of invariant inner products
    on $TM^{\op{SO}(3)}$ and $(TM^{\op{SO}(3)})^\perp$.
    So if both $\pi_B$ and $\pi_F$ are both isometric fiberings,
    then any invariant metric on $M$ is isometric to
    some invariant metric on $F \times B$.

    \paragraph{Step 3 (an extension problem):
        Describe point stabilizers $G_f$ of $G \curvearrowright F$.}
    Let $G_f \subseteq G$ be the subgroup fixing a point $f \in F$.
    Identifying $B$ with $\pi_F^{-1}(f)$,
    the image of $G_f$ in $\op{Diff} B$ is all of $\isomplus B$,
    since it contains $\op{SO}(3)$ in the point stabilizers and $G$
    acts transitively.
    If $G_p \cong \op{SO}(3)$, then counting dimensions shows that
    $G_f \cong \isomplus B$.

    Otherwise, the kernel of $G_f \to \isomplus B$
    is the $\op{SO}(2)$ factor in $G_p$;
    so passing to Lie algebras, $\tanalg G_f$ is an extension
        \[ 0 \to \lie{so}_2 \R \to \tanalg G_f \to \tanisom B \to 0 . \]
    A $1$-dimensional representation of $\tanisom B$ must be trivial
    for $B = \Euc^3$, $S^3$, or $\Hyp^3$ (Corollary \ref{quotients}(iii)),
    so $\tanisom B$ acts trivially on $\lie{so}_2 \R \cong \R$.
    Then since $H^2(\tanalg \isomplus B; \R) = 0$ (Lemma \ref{cohozero}),
    the classification of Lie algebra extensions by second cohomology
    (Thm.~\ref{thm:extensions_h2}) implies $G_f$ is covered by
    the product $\R \times \widetilde{\isomplus B}$.

    \paragraph{Step 4 (a study of transformation groups):
        $G_f$ preserves a metric on $F$.}
    The action of $G_f \subseteq G$ on $F$ defines a homomorphism
        \[ \phi: \widetilde{G_f} 
        \to \op{Aut} T_f F \cong \op{GL}(2,\R) . \]
    Since $\widetilde{\isomplus B}$ has no quotients of dimension $1$ or $2$
	(Corollary \ref{quotients}(i)), it acts
    with determinant $1$ on $\R^2$---and thus as a subgroup of $\op{SL}(2,\R)$
    of dimension $0$ or $3$. In fact $\widetilde{\isomplus B}$ does not admit
    $\op{SL}(2,\R)$ as a quotient, since:
	\begin{enumerate}
        \item proper quotients of $\widetilde{\isomplus \Euc^3}$
            factor through $\op{SU}(2)$
            (Lemma \ref{isomeuc_normalsubs});
        \item $\widetilde{\isomplus S^3} \cong \op{SU}(2) \times \op{SU}(2)$; and
        \item $\isomplus \Hyp^3 = \op{SO}(3,1)$ is simple.
	\end{enumerate}
    Therefore $\phi\left(\widetilde{\isomplus B}\right) = \{1\}$.
    If $G_p \cong \op{SO}(3)$, then this means $\phi$ has trivial image.
    If $G_p \cong \op{SO}(3) \times \op{SO}(2)$, then this implies
    $\phi$ factors through the $\op{SO}(2) \subset G_f$
    covered by the $\R$ factor
    in $\widetilde{G_f} \cong \R \times \widetilde{\isomplus B}$.
    Either way, the action of $G_f$ on $T_f F$ factors through a compact
    group, so it preserves an inner product on $T_f F$.
    Then since $G$ acts transitively, it preserves a metric on $F$,
    which finishes the proof by Step 2.
\end{proof}

Completing the classification of geometries fibering
isometrically over $3$-dimensional constant curvature geometries
requires determining which of the products $F \times B$ from (i)
are actually maximal model geometries.
So far, $B$ is known to be $\Euc^3$, $S^3$, or $\Hyp^3$,
while $F$ is just a geometry---a manifold with a smooth transitive group
action with compact point stabilizers.
The first step will be to determine all possibilities for $F$;
then general statements such as the maximality of most products
(Prop.~\ref{prop:products_are_maximal}) will finish the proof.

\begin{proof}[Proof of Prop.~\ref{prop:main:ii}(ii)
    (determination of maximal model geometries)]
    The $2$-dimensional geometries with $\op{SO}(2)$ isotropy
    have constant curvature; and those with trivial isotropy
    are the two simply-connected real Lie groups in dimension $2$,
    namely $\R^2$ and $\op{Aff}^+ \R$. Since $\R^2$ is a non-maximal $\Euc^2$
    and $\op{Aff}^+ \R$ is a non-maximal $\Hyp^2$,\footnote{
        Take $(x,y) \in \R \rtimes \R$ to the upper-half plane by
        $(x,y) \mapsto (x, e^y)$.
    } the only maximal $F \times B$ from part (i)
    are those where both factors are maximal model constant-curvature geometries.
    Since $\Euc^2 \times \Euc^3$ is a non-maximal $\Euc^5$,
    only those products with at most one Euclidean factor remain.

    Conversely, suppose $F \times B$ is a product of maximal model constant-curvature
    geometries of dimensions $2$ and $3$, at most one of which is Euclidean.
    Then $F \times B$ is a model geometry since products are models
    (Prop.~\ref{prop:products_are_models});
    and $F$, having $\op{SO}(2)$ isotropy, has no invariant vector fields,
    so $F \times B$ is maximal (Prop.~\ref{prop:products_are_maximal}).
\end{proof}

\subsection{Case study: The geometry fibering essentially conformally}
\label{sec:fiber3_solconf}

Let $M = G/G_p$ be a $5$-dimensional maximal geometry with $G$ unimodular,
and suppose the fibering $\pi_B: M \to B = M/\foliation{F}^{\op{SO}(3)}$ is
essentially conformal. This section proves
Prop.~\ref{prop:main:ii}(iii)--(iv): that
$M = \R \rtimes \op{Conf}^+ \Euc^3 / \op{SO}(3)$
and that $M$ is not a model geometry.

\begin{proof}[Proof of Prop.~\ref{prop:main:ii}(iii)]
    We will determine $M$ by solving the following
    extension problem to find $G$.
        \[ 0 \to \R \to \tanalg G \to \tanalg \op{Conf}^+ \Euc^3 \to 0 \]
	Such an extension is determined by a homomorphism
    $\phi: \tanalg \op{Conf}^+ \Euc^3 \to \op{Der} \R$
	(realized by lifting to $\lie{g}'$ and taking brackets,
    where $\op{Der} \R$ denotes the algebra of derivations of $\R$)
    and a class in $H^2(\tanalg \op{Conf}^+ \Euc^3; \R)$.

    \paragraph{Step 1: $G$ is an abelian extension of $\op{Conf}^+ \Euc^3$.}
    In an essential fibering, $B$ is $\Euc^3$ or $S^3$
    (since $\op{Conf}^+ \Hyp^3 = \isomplus \Hyp^3$ \cite[Thm.~A.4.1]{bp}).
    Since the only connected, transitive, essential subgroup of $\op{Conf}^+ B$
    containing $\op{SO}(3)$ in the point stabilizers is the entire group
    (Lemma \ref{essential_subgroups_of_conf}),
    the image of $G$ in $\op{Diff} B$ is either
	$\op{Conf}^+ \Euc^3 \cong \R^3 \rtimes (SO(3) \times \R)$ ($7$-dimensional)
	or $\op{Conf}^+ S^3 \cong \isomplus \Hyp^4$ ($10$-dimensional).
    Since $G_p$ is $\op{SO}(3)$ or $\op{SO}(3) \times \op{SO}(2)$,
    the dimension of $G$ is $8$ or $9$; so $B = \Euc^3$, and
	$G$ is an extension of $\op{Conf}^+ \Euc^3$ by a
	unimodular group $H$ of dimension $1$ or $2$.

    \paragraph{Step 2: $G_p$ is $\op{SO}(3)$.}
    As a $G_p$-representation, $\tanalg G \cong \tanalg G_p \oplus T_p M$.
    Since $\tanalg H \subset \tanalg G$ is an ideal, it is also a subrepresentation.
    Then letting $V$ denote the standard representation of
    $\op{SO}(3)$ and $\R$ the trivial representation,
        \[ \tanalg G
        \cong_{\op{SO}(3)} \tanalg \op{Conf}^+ \Euc^3 \oplus \tanalg H
        \cong_{\op{SO}(3)} 2V \oplus \R \oplus \tanalg H. \]
    The $\R$ is tangent to a group of dilations in $\op{Conf}^+ \Euc^3$;
    and the two copies of $V$ are $\lie{so}_3 \R \subset \tanalg G_p$
    and the $3$-dimensional subspace of $T_p M$ on which it acts.

    If $G_p = \op{SO}(3) \times \op{SO}(2)$, then $\R + \tanalg H$
    must consist of a $2$-dimensional subspace of $T_p M$ and the $\lie{so}_2 \R$
    acting on it. Then since $\tanalg H$ is an abelian ideal of $\tanalg G$,
    $\R + \tanalg H$ is a subalgebra of $\tanalg G$ isomorphic to
    $\tanalg \isomplus \Euc^2$ with the $\R$ corresponding to $\lie{so}_2 \R$.
    This cannot occur since this $\R$ is the tangent algebra to
    a group of dilations in $\op{Conf}^+ \Euc^3$,
    while $\lie{so}_2 \R$ is the tangent algebra to
    the compact group $\op{SO}(2) \subset G_p$.
    Hence $G_p = \op{SO}(3)$, and $\dim H = 1$; so $\tanalg G$ is an extension
        \[ 0 \to \R \to \tanalg G \to \tanalg \op{Conf}^+ \Euc^3 \to 0 . \]


    \paragraph{Step 3: Unimodularity determines the action of $\op{Conf}^+ \Euc^3$ on $H$.}
    The above extension problem induces, via Lie brackets,
    an action $\phi: \tanalg \op{Conf}^+ \Euc^3 \to \op{Der} \R$.
    Every ideal in
    $\tanalg \op{Conf}^+ \Euc^3 \cong \R^3 \semisum (\lie{so}_3 \oplus \R)$ either:
    \begin{itemize}
        \item is contained in $\R^3$, in which case it's $0$ or all of $\R^3$
            since $\lie{so}_3$ acts irreducibly; or
        \item contains some $v$ projecting nontrivially
            to $\lie{so}_3 \oplus \R$, in which
            case it still contains $\R^3$ since $[v, \R^3]$ is
            a nonzero subspace of $\R^3$.
    \end{itemize}
    Since $\lie{so}_3$ is simple, the ideals of $\tanalg \op{Conf}^+ \Euc^3$ are
    $0$, $\R^3$, $\R^3 \semisum \lie{so}_3$, $\R^3 \semisum \R$,
    and $\tanalg \op{Conf}^+ \Euc^3$, corresponding to the quotients
    $\tanalg \op{Conf}^+ \Euc^3$, $\lie{so}_3 \oplus \R$, $\R$, $\lie{so}_3$, and $0$.
    Of these, only $\R$ and $0$ can occur as images in $\op{Der} \R \cong \R$;
    so $\phi$ is either zero or of the form
        \[ \R^3 \semisum (\lie{so}_3 \R \oplus \R) \ni (v,r,s) \mapsto ks \]
    for some $k \in \R$.
    Since $\op{ad} (0,0,1)$ is diagonal with three $1$s and four $0$s,
    unimodularity of $G$ requires $k = -3$.
    Then $G$ is an extension
        \[ 1 \to \R \to G \to \op{Conf}^+ \Euc^3 \to 1 \]
    where $A \in \op{Conf}^+ \Euc^3$ acts on $\R$ as dilation by $(\det A)^{-1}$.
    ($H \cong \R$ since $S^1$ admits no dilations.)

    \paragraph{Step 4: The extension splits---i.e.\ $G$ is the semidirect product.}
    A spanning set for $\tanalg \op{Conf}^+ \Euc^3$
    is given by translations $t_i \in \R^3$ ($1 \leq i \leq 3$),
    rotations $r_{ij} \in \lie{so}_3$ ($[r_{ij}, t_i] = t_j = [-r_{ji}, t_i]$),
    and a scaling $s \in \R$.
    Of these, only $s$ acts nontrivially; so given a $2$-cocycle $c$,
    we can subtract a coboundary to make
    its restriction to $\R^3 \semisum \lie{so}_3$ zero using
    Lemma \ref{cohozero}.

    Then when we apply the cocycle condition to
    $c(t_i, s) = c(r_{ij}, t_j], s)$
    and $c(r_{ij}, s) = c([r_{ik}, r_{kj}], s)$,
    we obtain a sum of terms of the following forms
    where the blanks are in $\R^3 \semisum \lie{so}_3$.
    \begin{itemize}
        \item $sc(*,*)$, which is zero since $c|_{\R^3 \semisum \lie{so}_3} = 0$;
        \item $*c(*,s)$, which is zero since $\R^3 \semisum \lie{so}_3$
            acts trivially; and
        \item $c(*, [*, s])$, which is zero since $[s, \tanalg \op{Conf}^+ \Euc^3] = \R^3$.
    \end{itemize}
    Thus $c = 0$, so $H^2 = 0$.

    By the classification of abelian extensions by second cohomology
    (Thm.~\ref{thm:extensions_h2}),
    the Lie algebra extension splits. Then $G$ is covered by
    $\R \rtimes \widetilde{\op{Conf}^+ \Euc^3}$,
    whose center
    (consisting of the elements lying over the identity in
    $\op{SO}(3) \subset \R \rtimes \op{Conf}^+ \Euc^3$)
    has order $2$.
    So $G$ is either $\R \rtimes \widetilde{\op{Conf}^+ \Euc^3}$
    or $\R \rtimes \op{Conf}^+ \Euc^3$; these deformation
    retract to their maximal compact subgroups $\op{SU}(2)$ and $\op{SO}(3)$,
    respectively.
    Since $G$ must contain $G_p \cong \op{SO}(3)$
    and all maximal compact subgroups are conjugate, 
    the geometry $M$ is $\R \rtimes \op{Conf}^+ \Euc^3 / \op{SO}(3)$.
\end{proof}

To show that
$\R \rtimes \op{Conf}^+ \Euc^3 / \op{SO}(3)$ is not a model geometry,
it suffices to show that $\R \rtimes \op{Conf}^+ \Euc^3$ admits no lattice.

\begin{proof}[Proof of Prop.~\ref{prop:main:ii}(iv)
    ($\R \rtimes \op{Conf}^+ \Euc^3 / \op{SO}(3)$ is not a model geometry)]
    Suppose there were a lattice
    \[ \Gamma \subset G = \R \rtimes \op{Conf}^+ \Euc^3
    \cong (\R \times \R^3) \rtimes (\op{SO}(3) \times \R) . \]

    \paragraph{Step 1: By general theory, produce an integer matrix $A$.}
    Recall the following facts.
    \begin{enumerate}
        \item For a connected Lie group in which every
            compact semisimple subgroup acts nontrivially on the solvable radical,
            the intersection of the nilradical with a lattice
            is a lattice in the nilradical \cite[Lemma 3.9]{mostow}.
        \item For a closed normal subgroup $H$ of $G$,
            the group $\Gamma/(\Gamma \cap H)$ is a lattice in $G/H$
            if and only if $\Gamma \cap H$ is a lattice in $H$
            \cite[Thm.~I.1.4.7]{onishchik2}.
    \end{enumerate}
    The nilradical $N$ of $G$ is $\R \times \R^3$;
    so $\Gamma \cap N \cong \Z^4$, and
    $\Gamma / (\Gamma \cap N)$ is a lattice in $G/N \cong \op{SO}(3) \times \R$.
    Then some $g \in \Gamma$ projects nontrivially to the $\R$ factor
    (the dilation part) in $\op{SO}(3) \times \R$.
    This $g$ acts by conjugation on $\Gamma \cap N$
    as an integer matrix $A$ acting on $\Z^4$.

    \paragraph{Step 2: $A$ has a real eigenvalue $\lambda \neq \pm 1$.}
    Since $G$ is unimodular, $\det A = 1$. Then the action
    of $\op{SO}(3) \times \R$ on $\R \times \R^3$ requires the eigenvalues to be
    of the form $\lambda$, $\lambda e^{i\theta}$, $\lambda e^{-i\theta}$,
    and $\lambda^{-3}$ where $\lambda \in \R$.
    Since Step 1 selected $A$ to act with nontrivial dilation,
    $\lambda$ is not $1$ or $-1$.

    \paragraph{Step 3: Using Galois theory, conclude $\lambda = \pm 1$.}
    Since $\lambda$ is real and not $\pm 1$ (and not $0$ as $A$ is invertible),
    the magnitudes of $\lambda$, $\lambda^{-3}$, and $\lambda^9$ are all distinct;
    so $A$ and $A^{-3}$ share only the eigenvalue $\lambda^{-3}$.
    Since each eigenvalue of an integer matrix must occur with all of its
    Galois conjugates, $\lambda^{-3} \in \Q$.
    Applying the rational root theorem to the characteristic polynomial
    of $A$ implies $\lambda^{-3} = \pm 1$.
    Then $\lambda = \pm 1$, which contradicts the conclusion
    of Step 2.
\end{proof}

\subsection{Groups acting on constant-curvature spaces}
\label{sec:fiber3_constk}

This subsection collects some facts used above,
concerning groups acting on spaces of constant curvature.
First, the classification required some results
about low-dimensional representations of $\isomplus \Euc^n$.
Their proof begins with the following observation, which will
be reused later to classify some extensions of $\tanisom \Euc^2$.
\begin{lemma} \label{isomeuc_normalsubs}
    Every nonzero ideal of $\tanisom \Euc^n \cong \R^n \semisum \lie{so}_n$
    contains the translation subalgebra $\R^n$.
\end{lemma}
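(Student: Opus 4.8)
The plan is to show that any nonzero ideal $\lie{a} \trianglelefteq \tanisom \Euc^n = \R^n \semisum \lie{so}_n$ must contain $\R^n$, by a case split on whether $\lie{a}$ meets $\R^n$ nontrivially. First I would recall that $\R^n$ is an abelian ideal, that $\lie{so}_n$ acts on it as the standard (hence irreducible, for $n \geq 1$, and in fact for $n \geq 3$ this is the key case) representation, and that $\lie{so}_n$ is semisimple for $n \geq 3$ (the small cases $n = 1, 2$ are trivial or direct to check by hand, since $\tanisom \Euc^1 = \R$ and $\tanisom \Euc^2 = \R^2 \semisum \lie{so}_2$, whose ideals one lists directly). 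The argument below is written for the generic case; the exceptional low dimensions are handled separately with a one-line remark.

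Case 1: $\lie{a} \cap \R^n \neq 0$. Since $\lie{a}$ is an ideal, $\lie{a} \cap \R^n$ is an $\lie{so}_n$-submodule of $\R^n$ (it is stable under $[\lie{so}_n, -]$ because $\lie{a}$ is), and by irreducibility of the standard representation it must be all of $\R^n$. Hence $\R^n \subseteq \lie{a}$.

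Case 2: $\lie{a} \cap \R^n = 0$. I would derive a contradiction (forcing $\lie{a} = 0$) as follows. Pick any $0 \neq v \in \lie{a}$ and write $v = u + w$ with $u \in \R^n$, $w \in \lie{so}_n$; since $v \notin \R^n$ we have $w \neq 0$. For any translation $t \in \R^n$, the bracket $[v, t] = [w, t] \in \R^n$ lies in $\lie{a} \cap \R^n = 0$, so $w$ annihilates every $t \in \R^n$ under the standard action—i.e. $w$ acts as zero on $\R^n$. But the standard representation of $\lie{so}_n$ is faithful, so $w = 0$, a contradiction. Therefore $\lie{a} = 0$, and the case does not arise; every nonzero ideal falls under Case 1.

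The main obstacle, such as it is, is simply making sure the degenerate dimensions behave: for $n = 2$, $\lie{so}_2 \cong \R$ still acts faithfully and irreducibly enough on $\R^2$ (as a rotation, with no proper nonzero invariant subspace over $\R$), so the Case 2 argument goes through verbatim and Case 1 still uses irreducibility; for $n = 1$ the statement is vacuous content-wise since $\tanisom \Euc^1 = \R^1$ and the only nonzero ideal is everything. So the proof is really the two-line dichotomy above, and there is no serious difficulty—the only thing to be careful about is invoking faithfulness of the standard representation (to kill $w$) and irreducibility of the standard representation (to inflate $\lie{a} \cap \R^n$ up to all of $\R^n$), both of which are standard.
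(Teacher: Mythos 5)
Your proof is correct and follows essentially the same route as the paper: if the ideal meets $\R^n$ nontrivially, irreducibility of the standard $\lie{so}_n$-action inflates the intersection to all of $\R^n$, and if it doesn't, bracketing with the translation ideal lands in the (zero) intersection, so faithfulness of the standard representation forces the ideal to vanish. The explicit handling of $n=1,2$ is a harmless extra; the argument never actually needs semisimplicity of $\lie{so}_n$, only irreducibility and faithfulness, exactly as in the paper.
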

\begin{proof}
    An ideal $\lie{n}$ of $\tanisom \Euc^n$ containing no nonzero elements of $\R^n$
    acts trivially on $\R^n$---since $\R^n$ is also an ideal,
        \[ [\lie{n}, \R^n] \subseteq \lie{n} \cap \R^n = 0. \]
    Since $\R^n$ is a faithful representation of $\lie{so}_n$,
    any ideal containing no nonzero elements of $\R^n$ is zero.

    Since $\R^n$ is an irreducible representation of $\lie{so}_n$,
    any ideal that does contain some nonzero $v \in \R^n$
    also contains $[\lie{so}_n, v] \cong \R^n$.
\end{proof}
Since $\op{SO}(k)$ ($k \neq 4$) is simple, Lemma \ref{isomeuc_normalsubs} above
implies that for $n \neq 4$, the connected normal subgroups
of $\isomplus \Euc^n$ are $\{1\}$, $\R^n$, and $\isomplus \Euc^n$.
Since both $\isomplus S^n \cong \op{SO}(n+1)$
and $\isomplus \Hyp^n \cong \op{SO}(n,1)$ are simple,
statments about quotients and representations of the isometry groups
can be made for all of $\Euc^n$, $S^n$, and $\Hyp^n$ at once, as follows.
\begin{cor} \label{quotients} Let $G = \widetilde{\isomplus M}$
    where $M$ is $\Euc^n$, $S^n$, or $\Hyp^n$.
    \begin{enumerate}[(i)]
        \item If $n \geq 3$, then $G$ has no quotient groups of dimension $1$ or $2$.
        \item If $n > 4$, then $G$ has no
            nontrivial quotient groups of dimension less than $\binom{n}{2}$.
        \item All $1$-dimensional real representations of $\tanalg G$ are trivial.
    \end{enumerate}
\end{cor}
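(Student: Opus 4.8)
The plan is to reduce all three parts to an enumeration of the ideals of $\tanalg G$. A connected quotient $G/N$ of the connected group $G$ has Lie algebra $\tanalg G/\lie{n}$ with $\lie{n}=\tanalg N$ an ideal, so $\dim(G/N)=\dim\tanalg G-\dim\lie{n}$, and conversely every ideal arises this way (a discrete $N$ realizing $\lie{n}=0$). Thus (i) and (ii) become lower bounds on the dimensions of the nontrivial quotient Lie algebras of $\tanalg G$, while (iii) becomes the claim that $\tanalg G$ admits no codimension-one ideal: a $1$-dimensional real representation is a homomorphism into the abelian $\lie{gl}(1,\R)\cong\R$, and if nonzero its kernel is precisely such an ideal. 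I would then handle separately the cases $M=S^n$ or $\Hyp^n$ and $M=\Euc^n$.

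For $M=S^n$ or $\Hyp^n$ the group $\isomplus M$ is $\op{SO}(n+1)$ or $\op{SO}(n,1)$, whose Lie algebra is simple for $n\geq 3$. So $\tanalg G$ has no proper nonzero ideal, its only nontrivial quotient Lie algebra is itself, and the only nontrivial quotient group of $G$ has dimension $\binom{n+1}{2}$. Since $\binom{n+1}{2}>2$ for $n\geq 3$ and $\binom{n+1}{2}-\binom{n}{2}=n>0$, this settles (i) and (ii); and a simple Lie algebra of dimension $\binom{n+1}{2}\geq 6$ has no proper nonzero ideal, in particular none of codimension one, so (iii) holds.

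For $M=\Euc^n$ I would invoke Lemma \ref{isomeuc_normalsubs}: every nonzero ideal of $\tanalg G\cong\R^n\semisum\lie{so}_n$ contains the translation subalgebra $\R^n$, so the ideals of $\tanalg G$ are exactly $0$, $\R^n$, and the preimages of the ideals of $\lie{so}_n$. For $n\geq 3$ the Lie algebra $\lie{so}_n$ is semisimple with every simple summand of dimension at least $3$, and is simple unless $n=4$; hence its ideals have dimension $0$, $\binom{n}{2}$, or---only when $n=4$---$3$. Therefore the nontrivial quotient groups of $G$ have dimension $n+\binom{n}{2}$, $\binom{n}{2}$, or (when $n=4$ only) $3$, each of which is at least $3>2$; this gives (i), and for $n>4$ each is at least $\binom{n}{2}$, giving (ii). Finally $\R^n$ has codimension $\binom{n}{2}\geq 3>1$, so $\tanalg G$ has no codimension-one ideal and (iii) follows.

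I do not anticipate a genuine obstacle: once Lemma \ref{isomeuc_normalsubs} and the simplicity of $\lie{so}(n+1)$ and $\lie{so}(n,1)$ are in hand, the rest is a dimension count. The one delicate point is the exceptional isomorphism $\lie{so}_4\cong\lie{so}_3\oplus\lie{so}_3$, which produces the extra $3$-dimensional quotient $\tanalg G/(\R^4\semisum\lie{so}_3)$ when $M=\Euc^4$ and is exactly why part (ii) must require $n>4$; it is also why I would state the group-level conclusions via the dimension formula above rather than via a correspondence between quotient groups and ideals.
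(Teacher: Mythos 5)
Your approach is the paper's: the corollary is stated there without a separate proof precisely because it is meant to follow from Lemma \ref{isomeuc_normalsubs} plus (semi)simplicity of $\isomplus S^n$ and $\isomplus\Hyp^n$ by exactly the dimension count you give, so the structure of your argument matches the intended one.

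One inaccuracy to fix: your claim that the Lie algebra of $\isomplus S^n$ is simple for all $n\geq 3$ fails at $n=3$, since $\lie{so}_4\cong\lie{so}_3\oplus\lie{so}_3$ --- the same exceptional isomorphism you correctly flagged for $\Euc^4$, but here it occurs as the isometry algebra of $S^3$ itself (and $n=3$ is the case the paper actually uses, e.g.\ via $\widetilde{\isomplus S^3}\cong\op{SU}(2)\times\op{SU}(2)$). The conclusions survive: $\lie{so}_4$ is semisimple with two $3$-dimensional simple ideals, so its quotient algebras have dimension $0$, $3$, or $6$, none of which is $1$, $2$, or of codimension one; but the sentence ``simple for $n\geq 3$'' should be replaced by this case analysis (for $\Hyp^n$, $\lie{so}_{n,1}$ is genuinely simple for all $n\geq 2$, so no issue there). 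A smaller point: in the Euclidean part of (iii) you justify the absence of a codimension-one ideal only by citing $\R^n$; strictly you should also note that the remaining proper ideals (the preimages of the ideals of $\lie{so}_n$, including the two $\R^4\semisum\lie{so}_3$ when $n=4$) have codimension at least $3$, which your earlier enumeration of quotient dimensions already shows.
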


The remainder of this section's facts are
only used in the above classification of geometries
fibering over $\Euc^3$, $S^3$, and $\Hyp^3$---starting
with the following classification of sufficiently large
groups acting by conformal automorphisms on $S^k$ and $\Euc^k$.

\begin{lemma} \label{essential_subgroups_of_conf}
	Let $B = \Euc^k$ or $S^k$ for some $k \geq 2$.
	The only connected, transitive, essential subgroup $H$ of $\op{Conf}^+ B$
	with a copy of $SO(k)$ in its point stabilizer $H_p$ is $\op{Conf}^+ B$.
\end{lemma}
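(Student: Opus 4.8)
The plan is to exploit the well-understood structure of $\op{Conf}^+ B$ and its subgroups. Recall that $\op{Conf}^+ S^k \cong \idcompo{\op{SO}(k+1,1)}$ and $\op{Conf}^+ \Euc^k \cong \R^k \rtimes (\op{SO}(k) \times \R_{>0})$, which embeds as the stabilizer of a point of $S^k$ inside $\op{Conf}^+ S^k$ (the ``parabolic'' subgroup). So it suffices to work inside $G = \op{Conf}^+ S^k = \idcompo{\op{SO}(k+1,1)}$ and analyze its connected subgroups $H$ that are transitive on the relevant base, contain a copy of $\op{SO}(k)$ in a point stabilizer, and act essentially (i.e.\ do not preserve any metric in the conformal class). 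The first step is to set up the Lie-algebra picture: $\lie{g} = \lie{so}(k+1,1)$ has an Iwasawa-type decomposition $\lie{g} = \lie{n}^- \oplus (\lie{so}(k) \oplus \R) \oplus \lie{n}^+$ where $\lie{n}^\pm \cong \R^k$ are abelian and $\R$ is the dilation direction; I would identify the subalgebra $\lie{h} = \tanalg H$ as an $\op{SO}(k)$-submodule (since $H_p \supseteq \op{SO}(k)$ acts on $\lie{h}$ by the adjoint action) and use that $\R^k$ is an irreducible $\op{SO}(k)$-module (for $k \geq 2$, $k \neq 4$; the $k=4$ case splits into two irreducibles but both are the standard-type action for this purpose, and one can handle it by the same submodule bookkeeping or simply note the conclusion is about $k=3$ in our application) together with the fact that $\lie{so}(k)$ and $\R$ are inequivalent to $\R^k$.

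The second step is the case analysis on which $\op{SO}(k)$-submodules $\lie{h}$ can be. Transitivity forces $\lie{h}$ to project onto all of $T_b B \cong \R^k$, i.e.\ $\lie{h}$ must contain a copy of the standard module complementary to the isotropy; in the $\Euc^k$ picture this is the translation $\R^k$, so $\lie{h} \supseteq \R^k \oplus \lie{so}(k)$ already accounts for $\tanisom \Euc^k$. The essentialness hypothesis is what rules out $\lie{h}$ being exactly $\tanisom \Euc^k$ or $\tanisom S^k$: if $H$ preserved no metric it must contain a genuinely conformal (non-isometric) element, which upstairs means $\lie{h}$ meets the dilation direction $\R$ or one of the $\lie{n}^\pm$ transversally to the isometry subalgebra. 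Once $\lie{h}$ contains $\R^k \oplus \lie{so}(k)$ and one more direction $v$ with nonzero component in $\R \oplus \lie{n}^+$ (after possibly conjugating), I would take brackets: $[\lie{so}(k), v]$ and $[\R^k, v]$ generate further submodules, and using the $\op{SO}(k)$-irreducibility of $\R^k$ together with the bracket relations of $\lie{so}(k+1,1)$ (e.g.\ $[\lie{n}^+, \lie{n}^-] \supseteq \lie{so}(k) \oplus \R$ and $[\R, \lie{n}^\pm] = \lie{n}^\pm$), one shows $\lie{h}$ is forced to contain $\R \oplus \lie{n}^+$ and then all of $\lie{g}$. Hence $\lie{h} = \lie{g}$ and $H = \op{Conf}^+ B$.

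The main obstacle I anticipate is the bookkeeping in the bracket-generation step: making sure that ``transitive $+$ contains $\op{SO}(k)$ in a stabilizer $+$ essential'' really does force the extra generator to lie outside the isometry subalgebra in a way that bracket-generates everything, rather than landing in some intermediate subalgebra. A clean way to organize this is to list the $\op{SO}(k)$-submodules of $\lie{g}$ — the only irreducible types appearing are the trivial $\R$ (dilation), the adjoint $\lie{so}(k)$, and two copies of the standard $\R^k$ (namely $\lie{n}^+$ and $\lie{n}^-$) — and then enumerate the subalgebras containing $\lie{so}(k)$ and a standard copy; parabolic theory (the only such subalgebras are $\lie{so}(k)\oplus\R^k$, $\lie{so}(k)\oplus\R\oplus\R^k = \lie{p}$, the two ``isometry'' conjugates $\tanisom\Euc^k$ and $\tanisom S^k$, and $\lie{g}$ itself) narrows it down immediately, and essentialness kills everything but $\lie{g}$. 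One should also record that transitivity on $B$ is automatic once $\lie{h} \supseteq \R^k \oplus \lie{so}(k)$ for $B = \Euc^k$, and follows from $\lie{h} = \lie{g}$ for $B = S^k$, so the three hypotheses are used exactly once each.
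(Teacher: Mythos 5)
Your route---decompose $\lie{so}(k+1,1)$ into $\op{SO}(k)$-modules, enumerate the subalgebras containing $\lie{so}(k)$ together with a standard copy, and eliminate candidates---is genuinely different from the paper's proof, which works at the group level: for $\Euc^k$ it shows $H_p$ is connected (homotopy exact sequence) and hence equals $\op{SO}(k)\times\R$, and for $S^k$ it rules out stabilizers $\op{SO}(k)\times\R$ by showing such an $H$ would preserve a pairing of points of $S^k$ whose joining geodesics in $\Hyp^{k+1}$ cannot all intersect. Your skeleton can be made to work, but the elimination step as written has genuine errors.

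Concretely: (1) the stated endpoint ``$\lie{h}=\lie{g}$ and $H=\op{Conf}^+B$'' conflates the two cases. For $B=\Euc^k$ the ambient group is $\op{Conf}^+\Euc^k$ itself, so $\lie{h}$ lies in the parabolic $\lie{p}=\lie{n}^+\oplus\lie{so}(k)\oplus\R$ and the correct target is $\lie{h}=\lie{p}$, not $\lie{g}$; here the repair is easy (the only standard copy inside $\lie{p}$ is the translations, and essentialness then forces the dilation direction). (2) Your subalgebra list is incomplete: the $\op{SO}(k)$-invariant standard copies in $\lie{n}^-\oplus\lie{n}^+$ form a one-parameter family of graphs, and $\lie{so}(k)$ plus such a graph is \emph{always} a subalgebra, isomorphic to $\lie{so}(k+1)$ or to $\lie{so}(k,1)=\tanisom\Hyp^k$ according to the sign; the $\lie{so}(k,1)$ copies are missing from your enumeration. (3) Most importantly, your accounting of hypotheses in the sphere case is wrong: $\isomplus\Euc^k$, $\isomplus\Hyp^k$, and the parabolic $\lie{p}$ are all \emph{essential} as subgroups of $\op{Conf}^+S^k$ (a nontrivial translation of $\Euc^k$ extends to $S^k$ fixing the point at infinity with derivative the identity there, so it lies in no isometry group of any metric on the compact sphere), and each contains $\op{SO}(k)$ in suitable point stabilizers. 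So essentialness does not ``kill everything but $\lie{g}$''; these candidates are excluded only by transitivity on $S^k$, and your closing remark that transitivity ``follows from $\lie{h}=\lie{g}$'' is circular. The correct division of labor is: transitivity yields the standard copy and excludes $\tanisom\Euc^k$, $\lie{so}(k,1)$, and $\lie{p}$ in the sphere case, while essentialness excludes $\lie{so}(k+1)$ there and excludes $\tanisom\Euc^k$ in the Euclidean case. (Minor: for $k=4$ it is the adjoint representation of $\op{SO}(4)$ that is reducible; the standard representation $\R^4$ is irreducible for all $k\geq 2$, so no special case arises where you flagged one.)
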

\begin{proof}
    The two cases are handled separately but with broadly the same strategy:
    the goal is to show that $H_p$ is the entirety of a point stabilizer of
    $\op{Conf}^+ B$.

    \paragraph{Case 1: $B = \Euc^k$.}
    A point stabilizer of
    $\op{Conf}^+ \Euc^k \cong \R^k \rtimes (\op{SO}(k) \times \R)$ \cite[Thm.~A.3.7]{bp}
    is $\op{SO}(k) \times \R$.
    Since $H$ acts essentially, $H_p \subset \op{SO}(k) \times \R$
    projects nontrivially to the $\R$ factor.
    The homotopy exact sequence for $H_p \to H \to B$,
    along with the assumption that $B$ is simply-connected
    and $H$ is connected, implies $H_p$ is connected;
    so $H_p = \op{SO}(k) \times \R$.
    Then $H$ has the same point stabilizers as $\op{Conf}^+ \Euc^k$,
    so $H = \op{Conf}^+ \Euc^k$.

    \paragraph{Case 2, preparatory claim:
        No transitive subgroup $H \subset \op{Conf}^+ S^k$
        acts on $S^k$ with
        point stabilizers $H_p \cong \op{SO}(k) \times \R$.}
    A point stabilizer of $\op{Conf}^+ S^k \curvearrowright S^k$
    is $\op{Conf}^+ \Euc^k$ \cite[Cor.~A.3.8]{bp}.
    Up to conjugacy, $H_p$ is the standard
    $\op{SO}(k) \times \R \subset \op{Conf}^+ \Euc^k$,
    since $\op{SO}(k)$ is maximal compact and $\R$ is its centralizer.
    This $\op{SO}(k) \times \R$ fixes \emph{two} points on $S^k$,
    whose stabilizers in $H$ coincide
    since point stabilizers of a transitive action are isomorphic.
    Then $H$ preserves a pairing of points in $S^k$.
    In particular, if $\op{SO}(k) \times \R$ preserves $p$ and $q$
    then it acts transitively on $S^k \smallsetminus \{p,q\}$
    while preserving this pairing.
    So paired points
    \begin{enumerate}
        \item lie in the same $S^{k-1}$ in
            $S^k \smallsetminus \{p,q\} \cong S^{k-1} \times \R$
            since they are exchanged by an order-$2$ element; and
        \item are antipodal in this $S^{k-1}$
            since the $\op{SO}(k-1)$ fixing one member of the pair
            must fix the other.
    \end{enumerate}
    Interpret $S^k$ as the boundary at infinity of $\Hyp^{k+1}$,
    following \cite[Prop.~A.5.13(4)]{bp}.
    Any two geodesics in $\Hyp^{k+1}$ joining paired points of $S^k$
    must intersect, since $H$ acts transitively
    and they all intersect the geodesic joining $p$ and $q$.
    However, only geodesics whose endpoints lie in the same
    $S^{k-1} \subset S^k \smallsetminus \{p,q\}$ can intersect
    since each $S^{k-1}$ bounds a totally geodesic $\Hyp^k \subset \Hyp^{k+1}$.
    Therefore the pairing of points required by an action
    with $\op{SO}(k) \times \R$ stabilizers cannot be defined on all of $S^k$.

    \paragraph{Case 2: $B = S^k$.}
    Since $H$ acts essentially, it preserves no Riemannian metric;
    so some point stabilizer $H_p$ preserves no inner product on the tangent space at $p$.
    So the quotient map $\pi: \op{Conf}^+ \Euc^k \to \op{SO}(k) \times \R$
    is surjective when restricted to $H_p$.
    Since $H_p \ncong \op{SO}(k) \times \R$, it cannot also
    be injective.
    Then $H_p$ meets the translation subgroup
    $\op{ker} \pi \cong \R^k \subset \op{Conf}^+ \Euc^k$ nontrivially;
    so it contains all of $\R^k$
    since $\op{SO}(k) \curvearrowright \R^k$ is irreducible.
    Then $H_p = \op{Conf}^+ \Euc^k$,
    which implies as in Case 1 that $H = \op{Conf}^+ S^k$.
\end{proof}

Finally, the classification of geometries fibering over $3$-dimensional
isotropy-irreducible geometries required the following computation of
second cohomology.
\begin{lemma} \label{cohozero}
    $H^2(\tanisom M; \R) = 0$
    for $M = \Euc^3$, $S^3$, or $\Hyp^3$.
\end{lemma}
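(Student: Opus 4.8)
The plan is to compute $H^2(\tanisom M;\R)$ directly for each of the three cases. For $M=S^3$ and $M=\Hyp^3$, the isometry algebra is semisimple ($\lie{so}_4 \cong \lie{so}_3 \oplus \lie{so}_3$ and $\lie{so}(3,1)$ respectively), so Whitehead's lemma gives $H^2(\lie{g};\R)=0$ immediately once one knows the cohomology is taken with trivial coefficients—and Corollary \ref{quotients}(iii) guarantees the only $1$-dimensional representation is trivial, though here the coefficient module $\R$ is trivial by hypothesis anyway. So the only case requiring real work is $M=\Euc^3$, where $\tanisom \Euc^3 \cong \R^3 \semisum \lie{so}_3$ is not semisimple.

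\textbf{The Euclidean case.} For $\lie{g} = \R^3 \semisum \lie{so}_3$ with trivial coefficients, I would use the Hochschild--Serre spectral sequence for the ideal $\lie{a} = \R^3$, with quotient $\lie{s} = \lie{so}_3$. The relevant $E_2$ terms contributing to $H^2(\lie{g};\R)$ are $E_2^{p,q} = H^p(\lie{s}; H^q(\lie{a};\R))$ with $p+q=2$. Since $\lie{s} = \lie{so}_3$ is semisimple, Whitehead's lemma kills $H^1(\lie{s};-)$ and $H^2(\lie{s};-)$ for \emph{any} finite-dimensional module, so $E_2^{1,1} = E_2^{2,0} = 0$ automatically. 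The only possibly-surviving term is $E_2^{0,2} = H^2(\lie{a};\R)^{\lie{s}} = (\Lambda^2(\R^3)^*)^{\lie{so}_3}$, the $\lie{so}_3$-invariants in $\Lambda^2$ of the dual standard representation. But $\Lambda^2 V \cong V$ as $\lie{so}_3$-representations (via the Hodge star / cross product), and the standard representation $V$ has no nonzero invariants, so $E_2^{0,2}=0$. Hence every $E_2^{p,q}$ with $p+q=2$ vanishes, and therefore $H^2(\lie{g};\R)=0$.

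\textbf{Alternative for the Euclidean case.} If one prefers to avoid the spectral sequence, I would instead argue directly with the Chevalley--Eilenberg complex: a $2$-cocycle $c\colon \Lambda^2\lie{g}\to\R$ restricted to $\Lambda^2\lie{so}_3$ is a coboundary by Whitehead's lemma applied to $\lie{so}_3$, so after subtracting a coboundary we may assume $c|_{\Lambda^2\lie{so}_3}=0$; then the cocycle condition on triples drawn from $\R^3$ and $\lie{so}_3$ forces the remaining components $c(\lie{so}_3,\R^3)$ and $c(\R^3,\R^3)$ to vanish using irreducibility of $\R^3$ under $\lie{so}_3$ and the fact that $\Lambda^2(\R^3)\cong\R^3$ has no $\lie{so}_3$-invariant functionals. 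This is essentially the same bookkeeping that appears in Step 4 of the proof of Prop.~\ref{prop:main:ii}(iii), so it could be cross-referenced rather than repeated.

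\textbf{Main obstacle.} The only subtle point is the Euclidean case, and within it the verification that $(\Lambda^2 \R^3)^{\lie{so}_3} = 0$—everything else is a black-box invocation of Whitehead's lemma. This verification is genuinely routine (it is the statement that $SO(3)$ acting on $\R^3$ fixes no nonzero bivector, equivalently no nonzero vector after applying Hodge duality), so I expect no real difficulty; the proof should be only a few lines.
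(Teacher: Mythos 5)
Your proposal is correct, but it takes a genuinely different route from the paper. The paper proves the lemma by a single uniform computation in the Chevalley--Eilenberg complex, using the spanning set $\{r_{ij}, t_i\}$ with $[t_i,t_j]=Kr_{ij}$ so that all three curvatures $K=0,\pm1$ are handled at once: the cocycle relations force $c(r_{ij},t_k)=0$, show $c(r_{ij},t_i)$ and $c(r_{jk},r_{ki})$ depend only on the right indices, and an explicit $1$-cochain $f$ is then written down with $c=df$. (The paper notes in passing that for $S^3$ and $\Hyp^3$ one could instead quote vanishing of $H^2$ for semisimple algebras, which is exactly your first step.) You instead split by case: Whitehead's lemma disposes of $\lie{so}_4$ and $\lie{so}_{3,1}$, and for $\tanisom\Euc^3\cong\R^3\semisum\lie{so}_3$ you run the Hochschild--Serre spectral sequence for the ideal $\R^3$, where $E_2^{2,0}$ and $E_2^{1,1}$ die by Whitehead and $E_2^{0,2}=(\Lambda^2(\R^3)^*)^{\lie{so}_3}=0$ because $\Lambda^2\R^3\cong\R^3$ is irreducible and nontrivial. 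That argument is complete and correct, and it buys a cleaner conceptual proof (and an easy generalization to $n\geq 3$), at the cost of invoking heavier machinery; the paper's hands-on computation is more elementary, treats the three bases uniformly, and its intermediate relations are reused in spirit elsewhere. One caution about your ``alternative'' sketch: you propose cross-referencing Step 4 of the proof of Prop.~\ref{prop:main:ii}(iii), but that step explicitly invokes Lemma \ref{cohozero} to kill the restriction of the cocycle to $\R^3\semisum\lie{so}_3$, so citing it here would be circular; if you want the elementary version you must carry out that bookkeeping directly, which is essentially what the paper does. Since your main (spectral sequence) argument stands on its own, this does not affect the correctness of the proposal.
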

\begin{proof}
    This is a computation using a spanning set, though for $S^3$ and $\Hyp^3$
    one could instead appeal to the vanishing of $H^2$ for
    semisimple algebras \cite[Thm.~1.3.2]{onishchik3}.
    The spanning elements of $\tanisom M$ will be denoted
    $r_{ij}$ and $t_i$ ($1 \leq i, j \leq n = \dim M$);
    the linear dependency relations are $r_{ij} = -r_{ji}$,
    and the nonzero brackets are
    \begin{alignat*}{2}
        [r_{ij}, r_{jk}] &= r_{ki}  &\quad& \text{if } i,j,k \text{ distinct} \\
        [r_{ij}, t_i] &= t_j        && \text{if } i \neq j \\
        [t_i, t_j] &= Kr_{ij}       && \text{if } i \neq j ,
    \end{alignat*}
    where $K$ is $0$ or $\pm 1$ (the sectional curvature of $M$).

    The action of $\tanisom M$ on $\R$ is trivial (Corollary \ref{quotients}(iii));
    so the cocycle condition for a $2$-cocycle $c: \Lambda^2 \tanisom M \to \R$ becomes
        \[ c([x_1,x_2],x_3) + c([x_2,x_3],x_1) + c([x_3,x_1],x_2) = 0 , \]
    and the coboundaries are of the form
        \[ c(x_1,x_2) = f([x_1,x_2]) , \quad f \in (\tanisom M)^* . \]

    Suppose $c$ is a $2$-cocycle, and $i$, $j$, and $k$ are distinct.
    Applying the cocycle condition to the spanning elements yields
    \begin{align*}
        c(r_{ij},t_k)
            &= c([r_{jk}, r_{ki}], t_k) \\
            &= c(r_{jk},t_i) + c(r_{ki},t_j) \\
        c(r_{ij},t_i)
            &= c([r_{jk}, r_{ki}], t_i) \\
            &= c(r_{kj},t_k) \\
        Kc(r_{jk},r_{ki})
            &= c(Kr_{jk}, r_{ki}) \\
            &= c([t_j,t_k], r_{ki}) \\
            &= c(t_i,t_j) .
    \end{align*}
    A linear combination of cyclic permutations of the first equality is
    $c(r_{ij},t_k) = 3c(r_{ij},t_k)$, so $c(r_{ij}, t_k) = 0$.
    The second equality implies that $c(r_{ij}, t_i)$ only depends on $i$.
    Since $n = 3$, distinctness of $i$, $j$, and $k$
    implies $c(r_{jk}, r_{ki})$ only depends on $i$ and $j$,
    which permits defining\footnote{
        Independence of $c(r_{jk}, r_{ki})$ from $k$
        can be proven for $n > 3$
        by computing $c([r_{k\ell}, r_{\ell j}], r_{ki})$
        and using the $n = 3$, $K = 1$ case to show that
        $c(r_{ij}, r_{k\ell}) = 0$
        for distinct $i$, $j$, $k$, $\ell$.
        This extends Lemma \ref{cohozero} to dimensions other than $3$.
    }
    \begin{align*}
        f: \tanisom M &\to \R \\
            r_{ij} &\mapsto c(r_{jk}, r_{ki}) \\
            t_i &\mapsto c(r_{ji}, t_j) .
    \end{align*}
    This definition and the third equality
    imply $c(x,y) = f([x,y])$ for all $x$ and $y$
    in the spanning set, and therefore on all of $\tanisom M$.
    So every cocycle $c$ is a coboundary, i.e.\ $H^2(\tanisom M; \R) = 0$.
\end{proof}

\section{Geometries fibering over 2D geometries}
\label{chap:fiber2}

This section carries out the (unfortunately long)
task of proving part (iii) of Theorem \ref{thm:main}.
That is, it classifies the $5$-dimensional maximal model geometries
$M = G/G_p$ in case (iii) of the fibering description
(Prop.~\ref{prop:fibering_description})---those for which the
irreducible subrepresentations of $G_p \curvearrowright T_p M$
have dimensions $1$ and $2$.
The first step is to set up an extension problem that can be solved to find $G$.
\begin{lemma}[\textbf{The isometry group as an extension}]
    \label{lemma:fiber2_extension_formulation}
	If $M = G/G_p$ is a $5$-dimensional model geometry
    for which $G_p \curvearrowright T_p M$ decomposes into $1$-dimensional
    and $2$-dimensional summands,
	then $G$ is an extension
		\[ 1 \to H \to G \to Q \to 1 \]
	where:
    \begin{enumerate}[(i)]
		\item $Q$ is $\op{Conf}^+ S^2$, $\op{Conf}^+ \Euc^2$,
			$\isomplus S^2$, $\isomplus \Euc^2$, or $\isomplus \Hyp^2$;
		\item if $G_p \cong S^1$, then the identity component of
			$H$ is covered by $\R$, $\R^2$,
			$S^3$, $\widetilde{\op{SL}(2,\R)}$,
			$\op{Sol}^3$, $\widetilde{\isomplus \Euc^2}$,
			$\Heis_3$, or $\R^3$; and
        \item if $G_p = \op{SO}(2) \times \op{SO}(2)$, then
            the identity component of $H$ is either:
			\begin{itemize}
				\item one of $SO(3)$, $\op{PSL}(2,\R)$, and $\isomplus \Euc^2$; or
				\item covered by
					$S^3 \times \R$, $\widetilde{\op{SL}(2,\R)} \times \R$,
					$\widetilde{\isomplus \Euc^2} \times \R$, or
					$\widetilde{\isomplus \Heis_3}$.
			\end{itemize}
	\end{enumerate}
\end{lemma}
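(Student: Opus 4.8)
The plan is to read off the extension directly from the conformal fibering furnished by Prop.~\ref{prop:fibering_description}(iii) and then to classify its two ends separately. First I would apply that proposition to get a $G$-invariant conformal fibering $\pi\colon M \to B$ with $B$ conformally $S^2$, $\Euc^2$, or $\Hyp^2$ and with $3$-dimensional fibers. Since $G$ acts on $B$ by orientation-preserving conformal maps, set $H = \ker(G \to \op{Conf}^+ B)$ (the subgroup fixing every fiber) and $Q = G/H$ (the image); this gives the extension $1 \to H \to G \to Q \to 1$ with $H$ closed and normal. Write $F = \pi^{-1}(b_0)$ for the fiber over $b_0 = \pi(p)$ and $G_{b_0}$ for its stabilizer, so that $F \cong G_{b_0}/G_p$ and $G_{b_0}$ is itself an extension of $Q_{b_0}$ by $H$.

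For part (i) I would classify the connected transitive subgroups $Q \subseteq \op{Conf}^+ B$ whose point stabilizer contains $\op{SO}(2)$. That $\op{SO}(2)$ is forced: $G_p$ fixes $b_0$ and acts on $T_{b_0}B$ exactly as it acts on the horizontal $2$-plane in $T_pM$, one of the nontrivial $2$-dimensional isotropy summands, so its image in $Q_{b_0}$ is a nontrivial connected compact --- hence a circle --- inside the orientation-preserving conformal linear group $\op{SO}(2)\times\R_{>0}$. Over $\Hyp^2$, $\op{Conf}^+\Hyp^2 = \isomplus\Hyp^2 = \op{PSL}(2,\R)$ and transitivity forces $\dim Q \ge 3$, so $Q = \isomplus\Hyp^2$. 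Over $\Euc^2 = \C$, the only $\op{SO}(2)$-invariant connected closed subgroups of the translation group are $0$ and $\R^2$; transitivity excludes $0$, and $Q/\R^2$ is then a connected subgroup of $\C^*$ containing $\op{SO}(2)$, i.e.\ $\op{SO}(2)$ or $\C^*$, giving $\isomplus\Euc^2$ or $\op{Conf}^+\Euc^2$. Over $S^2 = \C P^1$, the connected subgroups of $\op{PSL}(2,\C)$ acting transitively on $\C P^1$ are only $\op{SO}(3)$ and $\op{PSL}(2,\C)$ --- the $4$-dimensional subgroups are conjugate to a parabolic and fix a point, and $\op{PSL}(2,\C)$ has no $5$-dimensional subgroup --- so $Q$ is $\isomplus S^2$ or $\op{Conf}^+ S^2$. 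This is list (i).

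For parts (ii)--(iii) I would determine $H^0$ from the action on $F$. The case $Q = \op{Conf}^+ S^2$ collapses at once: then $\dim G = \dim Q$, so $H$ is discrete and central, and since a geometry admits no nontrivial normal subgroup inside $G_p$, this forces $G = \op{PSL}(2,\C)$, $G_p = \op{SO}(2)$, $H$ trivial, and $M = T^1\Hyp^3$; the same remark excludes this possibility when $G_p = \op{SO}(2)\times\op{SO}(2)$, since then $\dim H = 1$ while the $2$-torus $G_p$ has at most a $1$-dimensional image in $\op{PSL}(2,\C)$, forcing $H^0 \subseteq G_p$. In all other cases $Q$ is isometric or $\op{Conf}^+\Euc^2$, and $H^0$ acts on $F$ with orbit space $Q_{b_0}/\op{SO}(2)$ --- a point when $Q$ is isometric, the dilation line $\R$ when $Q = \op{Conf}^+\Euc^2$. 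Counting dimensions by cases fixes $\dim H^0$ (for $G_p \cong S^1$: $3$ or $2$; for $G_p = \op{SO}(2)\times\op{SO}(2)$: $4$ or $3$), and then $\widetilde{H^0}$ must be a simply-connected unimodular Lie group of that dimension, since $G$ --- carrying a lattice --- is unimodular and $H$ is normal. In dimension $3$ these are $\R^3$, $\Heis_3$, $\widetilde{\isomplus\Euc^2}$, $\op{Sol}^3$, $\widetilde{\op{SL}(2,\R)}$, and $\op{SU}(2) = S^3$; in dimension $2$, only $\R^2$; in dimension $4$, the $\R$-products of the dimension-$3$ ones together with $\widetilde{\isomplus\Heis_3}$. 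Which quotient of $\widetilde{H^0}$ actually appears is read off from the compact subgroup $H^0 \cap G_p$ and the kernel of the $G_{b_0}$-action on $F$; this is what collapses the dimension-$3$ entries of (iii) to $\op{SO}(3)$, $\op{PSL}(2,\R)$, $\isomplus\Euc^2$.

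The main obstacle will be this $H^0$-classification: reducing the a priori possibilities for $\widetilde{H^0}$ (every simply-connected Lie group of dimension $2$, $3$, or $4$) to the stated short list requires combining unimodularity of $G$ to discard the affine-type candidates, properness of the homogeneous fibering to keep $H^0$ and its orbits embedded, and --- the subtlest input --- in the $Q = \op{Conf}^+$ cases the fact that an invariant vector field on a model geometry is divergence-free (Lemma~\ref{lemma:divergence_free}), which pins down how the dilation direction acts on $F$ and rules out otherwise-plausible solvable fibers. The bookkeeping of covers in the $\op{SO}(2)\times\op{SO}(2)$ case is fiddly but routine.
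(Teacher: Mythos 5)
Your setup and parts (i)--(ii) are sound and essentially follow the paper's route: you take the conformal fibering from Prop.~\ref{prop:fibering_description}(iii), let $H$ be the kernel and $Q$ the image in $\op{Conf}^+ B$, prove $Q$ is $\isomplus B$ or $\op{Conf}^+ B$ (your hands-on classification of transitive circle-stabilizer subgroups of $\op{Conf}^+B$ replaces the paper's appeal to Lemma~\ref{essential_subgroups_of_conf}, which is fine), and for $G_p \cong S^1$ you get $\dim H \le 3$ and invoke unimodularity of $G$ (lattice) plus normality of $H$ and the Bianchi list, exactly as in the paper.

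The gap is in part (iii), the case $\dim H = 4$. You assert that the simply-connected unimodular Lie groups of dimension $4$ are ``the $\R$-products of the dimension-$3$ ones together with $\widetilde{\isomplus \Heis_3}$.'' That is false: $\Nil^4$, $\Sol^4_0$, $\Sol^4_{m,n}$ (all of the form $\R^3 \rtimes \R$ with traceless action) and nontrivial extensions $\Heis_3 \rtimes \R$ are unimodular and simply connected, so unimodularity alone does not produce your list. Moreover your list is not the lemma's list either: it would wrongly admit $\Sol^3 \times \R$, $\Heis_3 \times \R$, and $\R^4$, which statement (iii) excludes. The missing idea is the paper's Step 3: since the maximal compact subgroup of each admissible $Q$ has rank $1$ while $G_p$ is a $2$-torus, $H \cap G_p$ has identity component $\op{SO}(2)$, and faithfulness of the isotropy action (Lemma~\ref{lemma:faithful_conjugation}) forces this circle to act \emph{faithfully by conjugation on $H$}. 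That single condition both kills the stray $3$-dimensional candidates ($\R^3$, $\Heis_3$, $\Sol^3$, whose relevant quotients contain no circle with faithful conjugation action) and, in dimension $4$, exhibits $H/(H\cap G_p)$ as a $3$-dimensional homogeneous space with $\op{SO}(2)$ isotropy and unimodular transformation group $H$; Thurston's classification (proof of Thm.~3.8.4(b), Table~\ref{table:homogeneous_3d}) then yields exactly the groups in (iii) after noting that semidirect products with inner action are isogenous to direct products. Your proposal only uses $H^0 \cap G_p$ afterwards, to decide which cover appears, and leans on the divergence-free lemma for the hard step, which is not the relevant tool here; as written, the dimension-$4$ classification is unsupported and the stated conclusion would not follow.
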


To organize the solution of what could be fifty extension problems,
the classification proceeds by skimming off classes of geometries
until only those with $\tilde{G} = \tilde{H} \times \tilde{Q}$ remain;
these either are product geometries or can be described as associated
bundles (Section \ref{sec:assoc_bundles}).
The plan is illustrated in Figure \ref{fig:fiber2_flowchart}.
\begin{figure}[h!]
    \caption{Classification strategy for geometries fibering over 2-D spaces.}
    \label{fig:fiber2_flowchart}
    \label{fig_thesis:fiber2_flowchart}
    \begin{center}
    \begin{tikzpicture}[%
        >=triangle 60,              
        start chain=going below,    
        node distance=6mm and 60mm, 
        scale=0.8,
        ]
    \tikzset{
      base/.style={draw, on chain, on grid, align=center, minimum height=4ex},
      proc/.style={base, rectangle, text width=12em},
      test/.style={base, diamond, aspect=2, text width=5em},
      term/.style={proc, rounded corners},
      coord/.style={coordinate, on chain, on grid, node distance=6mm and 25mm},
      nmark/.style={draw, cyan, circle, font={\sffamily\bfseries}},
      it/.style={font={\small\itshape}}
    }
        \node[test] (tess) {$M \to B$ essential?};
        \node[test] (tnil) {Extension of $\op{Isom} \Euc^2$ by $\R^3$?};
        \node[test] (tlevi) {Levi action nontrivial?};
        \node[term] (gprod) {Products and associated bundles (\S\ref{sec:fiber2_product_groups}).};

        \node[term, right=of tess] (gess)
            {$T^1 \Hyp^3$ and some non-nilpotent solvable Lie groups (\S\ref{sec:fiber2_essentials}).};
        \node[term, right=of tnil] (gnil) {Some nilpotent Lie groups (\S\ref{sec:fiber2_nilmanifolds}).};
        \node[term, right=of tlevi] (glevi) {$T^1 \Euc^{1,2}$ and the line bundles over $\mathbb{F}^4$ (\S\ref{sec:fiber2_semidirect}).};

        \path (tess.east) to node [near start, yshift=1em] {yes} (gess);
            \draw[*->] (tess.east) -- (gess);
        \path (tnil.east) to node [near start, yshift=1em] {yes} (gnil);
            \draw[*->] (tnil.east) -- (gnil);
        \path (tlevi.east) to node [near start, yshift=1em] {yes} (glevi);
            \draw[*->] (tlevi.east) -- (glevi);

        \path (tess.south) to node [anchor=west, xshift=0.5em] {no---isometric fibering} (tnil);
            \draw[o->] (tess.south) -- (tnil);
        \path (tnil.south) to node [anchor=west, xshift=0.5em] {no---then $\tilde{G}$ is a split extension} (tlevi);
            \draw[o->] (tnil.south) -- (tlevi);
        \path (tlevi.south) to node [anchor=west, xshift=0.5em] {no---$\tilde{G}$ is a direct product} (gprod);
            \draw[o->] (tlevi.south) -- (gprod);
    \end{tikzpicture}
    \end{center}
\end{figure}
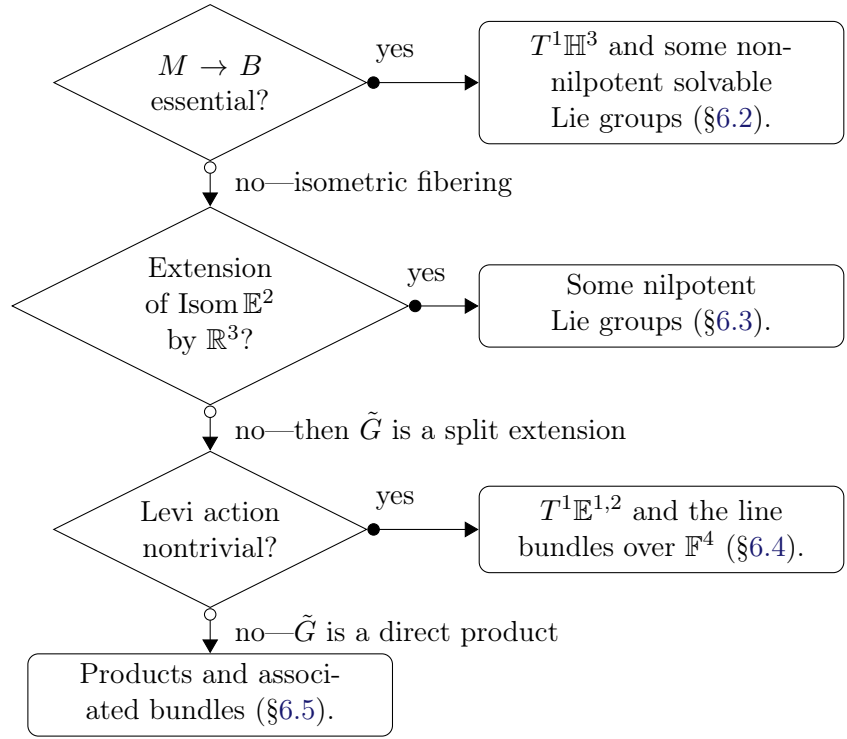

\subsection{Setting up the extension problem}

This section proves Lemma \ref{lemma:fiber2_extension_formulation},
which describes the extension problem that will be solved to determine
the transformation groups $G$ of the geometries $G/G_p$.
Having obtained a fibering over a 2-dimensional space
in Prop.~\ref{prop:fibering_description},
the bulk of the proof is in establishing the lists of
quotients $Q$ and kernels $H$.
One lemma is needed, in the form of
the following observation about geometries with
abelian isotropy---which will also be useful later in recovering
a faithfully-acting $G$ from its universal cover $\tilde{G}$.

\begin{lemma} \label{lemma:faithful_conjugation}
    If $M = G/G_p$ be a connected homogeneous space
    where $G$ is connected and $G_p$ is compact and abelian,
    then the following are equivalent.
    \begin{enumerate}[(i)]
        \item $G$ acts faithfully on $M$ (one of the requirements for a geometry)
        \item $G_p$ acts faithfully on $T_p M$
        \item $G_p$ acts faithfully by conjugation on $G$
    \end{enumerate}
\end{lemma}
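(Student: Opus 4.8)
The plan is to observe that, under the hypotheses, each of the three conditions is the triviality of one and the same subgroup. Write $N = \bigcap_{g \in G} g G_p g^{-1}$ for the kernel of $G \curvearrowright M = G/G_p$: this is the largest normal subgroup of $G$ contained in $G_p$, so (i) says $N = \{1\}$. Let $K$ be the kernel of the linear isotropy representation $G_p \curvearrowright T_p M$, so (ii) says $K = \{1\}$; and since $h \in G_p$ centralizes $G$ precisely when $h \in Z(G)$, condition (iii) says $G_p \cap Z(G) = \{1\}$. I would prove $N = K = G_p \cap Z(G)$, which gives the equivalence in one stroke.

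The identity $N = K$ does not use abelianness. The inclusion $N \subseteq K$ is immediate, as $N$ fixes $p$ and acts trivially on $M$, hence on $T_p M$. For $K \subseteq N$: compactness of $G_p$ lets us average an inner product on $T_p M$ over $G_p$ and spread it by $G$ to a $G$-invariant Riemannian metric, so $G$ acts by isometries; then any element of $K$ is an isometry fixing $p$ with identity differential there, so it is the identity on the connected manifold $M$ by the usual rigidity argument (the locus where the map and its differential are trivial is closed, and open because the map commutes with $\exp_p$ there). Hence $K$ is contained in the kernel $N$.

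For $N = G_p \cap Z(G)$, the inclusion $G_p \cap Z(G) \subseteq N$ is automatic: $G_p \cap Z(G)$ is a subgroup of the center, hence normal in $G$, and it lies in $G_p$. The reverse inclusion $N \subseteq Z(G)$ is the one place abelianness of $G_p$ enters, and I expect it to be the crux. Since $N$ is closed in the compact abelian group $G_p$, it is a compact abelian Lie group, and conjugation defines a continuous homomorphism $G \to \op{Aut}(N)$. The identity component $\idcompo{N}$ is a torus, characteristic in $N$; the combined restriction $\op{Aut}(N) \to \op{Aut}(\idcompo{N}) \times \op{Aut}(N/\idcompo{N})$ lands in a discrete group (the first factor is $\op{GL}(k,\Z)$, the second the automorphism group of a finite group) and has kernel the finite group $\op{Hom}(N/\idcompo{N}, \idcompo{N})$. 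So $\op{Aut}(N)$ is discrete; as $G$ is connected, the conjugation homomorphism is trivial, i.e.\ $N \subseteq Z(G)$, and therefore $N \subseteq G_p \cap Z(G)$.

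The only real obstacle is this last paragraph: showing $\op{Aut}(N)$ is totally disconnected. Abelianness of $G_p$---hence of $N$---is exactly what prevents a positive-dimensional inner-automorphism part. A more hands-on variant, if one prefers to avoid the component-group bookkeeping, is to note that the conjugation action of $G$ on $N$ differentiates to an action on $\lie{n} = \tanalg N$ preserving the lattice $\ker(\exp\colon \lie{n} \to \idcompo{N})$; the stabilizer of a lattice in $\op{GL}(\lie{n})$ is discrete, so $G$ connected acts trivially on $\lie{n}$ and hence on $\idcompo{N}$, and the residual action on the finite group $N/\idcompo{N}$, together with the finite obstruction to extending triviality over all of $N$, are killed the same way.
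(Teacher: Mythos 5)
Your proof is correct, and in the half where abelianness actually matters it takes a genuinely different route from the paper's. The (i)$\iff$(ii) step — average an inner product over the compact $G_p$, spread it by $G$, and use that an isometry of a connected manifold is determined by its value and differential at one point — is essentially the paper's argument. For the link with (iii), however, the paper stays infinitesimal: it decomposes $\tanalg G \cong \tanalg G_p \oplus T_p M$ as a $G_p$-representation, notes that abelianness of $G_p$ makes the action on $\tanalg G_p$ trivial (so faithfulness on $T_pM$ is faithfulness of the adjoint action on $\tanalg G$), and then invokes the fact that an automorphism of a connected Lie group is determined by its derivative at the identity. You instead argue globally: you identify all three kernels with the single closed subgroup $N=\bigcap_{g\in G} gG_pg^{-1}$, the crux being the rigidity fact that a compact abelian normal subgroup of a connected Lie group is central, because the automorphism group of a compact abelian Lie group is essentially discrete. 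Your version buys a slightly stronger conclusion — the three kernels literally coincide, rather than merely vanishing simultaneously — at the price of the torus/component-group bookkeeping that the paper's Lie-algebra computation avoids. The one place you gloss, ``the finite obstruction \dots{} killed the same way,'' does require the observation that $g \mapsto \left(\bar n \mapsto g n g^{-1} n^{-1}\right)$ is a continuous map from the connected group $G$ into the finite discrete set $\op{Hom}\bigl(N/\idcompo{N}, \idcompo{N}\bigr)$, hence constant and trivial; equivalently, your first variant's claim that $\op{Aut}(N)$ is discrete in the compact-open topology and that $G \to \op{Aut}(N)$ is continuous. Both fillings are routine, so this is a gloss rather than a gap.
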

\begin{proof}
    There are two equivalences to verify.

    \paragraph{(i) $\iff$ (ii):}
    Since $G_p$ is compact, $M$ has an invariant Riemannian metric
    \cite[Prop.~3.4.11]{thurstonbook}.
    An isometry of a connected Riemannian manifold
    is determined by its value and derivative at a point \cite[Prop.~A.2.1]{bp},
    so the action of $G_p$ on $M$
    is determined by the action of $G_p$ on $T_p M$.
    Then a nontrivial $g \in G$ acts as the identity on $M$
    if and only if it lies in $G_p$ and acts as the identity on $T_p M$;
    so $G$ acts faithfully on $M$ if and only if $G_p$ acts faithfully on $T_p M$.

    \paragraph{(ii) $\iff$ (iii):}
    As a $G_p$-representation, the Lie algebra of $G$ decomposes as
        \[ \tanalg G = \tanalg G_p \oplus T_p M . \]
    Since $G_p$ is abelian, $\tanalg G_p$ is trivial; so
    $\tanalg G$ under the adjoint action
    $T_p M$ is a faithful $G_p$-representation
    if and only if $\tanalg G$ under the adjoint action is too.
    The equivalence for the conjugation action on $G$ follows
    since a homomorphism from a connected Lie group
    is determined by its derivative at the identity.
\end{proof}

\begin{rmk} \label{rmk:quotient_by_center}
    Condition (iii) is equivalent to $G_p \cap Z(G) = \{1\}$.
    So an abelian-isotropy homogeneous space $G/G_p$
    satisfying all the conditions for a geometry except
    faithfulness of the $G$-action
    can be made into a geometry by passing from $G$ to $G/(G_p \cap Z(G))$.
\end{rmk}

\begin{proof}[Proof of Lemma \ref{lemma:fiber2_extension_formulation}
    (the extension problem for $G$)]
	Suppose $M = G/G_p$ is a $5$-dimensional model geometry
    for which $G_p \curvearrowright T_p M$ decomposes into $1$-dimensional
    and $2$-dimensional summands. Then there is a conformal fibering
    (Prop.~\ref{prop:fibering_description}(iii)) $M \to B$
    where $B$ is $S^2$, $\Euc^2$, or $\Hyp^2$.
    If $Q$ denotes the image of $G$ in $\op{Conf} B$,
    then $G$ is an extension
        \[ 1 \to H \to G \to Q \to 1 . \]

    \paragraph{Step 1: The image $Q$ is $\isomplus B$ or $\op{Conf}^+ B$.}
    If $G_p = \op{SO}(2)$, then $B = M/\foliation{F}^{\op{SO}(2)}$
    so $Q$ contains a copy of $\op{SO}(2)$.
    Otherwise, the trivial subrepresentation of $G_p \curvearrowright T_p M$
    is $1$-dimensional, so $G_p$ acts nontrivially on $B$.
    In either case, $Q$ acts transitively on $B$
    and contains a copy of $\op{SO}(2)$ fixing a point.
    
    Then $Q = \isomplus B$ if $B$ admits a $G$-invariant metric;
    otherwise, $Q$ is a connected, transitive, essential subgroup of
    $\op{Conf}^+ B$ and must therefore be all of $\op{Conf}^+ B$
    (Lemma \ref{essential_subgroups_of_conf}).
    The list in the original statement (Lemma \ref{lemma:fiber2_extension_formulation})
    omits $\op{Conf}^+ \Hyp^2$ since
    $\op{Conf}^+ \Hyp^2 = \isomplus \Hyp^2$ \cite[Thm.~A.4.1]{bp}.

    \paragraph{Step 2: If $G_p \cong S^1$, then $H$ is unimodular
        of dimension at most $3$.}
	If $G_p \cong S^1$, then $\dim G = 6$.
	Since $\dim Q \geq 3$ by the previous step, $\dim H \leq 3$.
    Since $G$ is unimodular \cite[Prop.~1.1.3]{filipk} and
	$H$ is a closed normal subgroup, $H$ is itself unimodular
    \cite[Prop.~1.1.4]{filipk}.
    The explicit list in Lemma \ref{lemma:fiber2_extension_formulation}(ii)
    can be found by consulting Bianchi's classification of all $3$-dimensional
    real Lie algebras; see e.g.\ \cite[Lec.~10]{fultonharris},
    \cite[Table I]{patera}, or \cite[Table 21.3]{maccallum}.\footnote{
        Alternatively, to carry out Bianchi's classification from scratch,
        notice that the only two real semisimple Lie algebras
        of dimension at most $3$ are $\lie{so}_3 \R$ and $\lie{sl}_2 \R$.
        The other $3$-dimensional real Lie algebras are therefore solvable.
        A nilradical in a solvable Lie algebra is at least half the dimension
        \cite[Thm.~2.5.2, attributed to Mubarakzyanov]{onishchik3},
        so these algebras are of the form $\R^2 \rtimes \R$,
        which can be systematically handled using Jordan forms.
    }

    \paragraph{Step 3: If $G_p = \op{SO}(2) \times \op{SO}(2)$,
        then some $\op{SO}(2) \subset H$
        acts faithfully by conjugation.}
	For each $Q$ in Step 1,
	the maximal compact subgroup has rank $1$;
	so $H \cap G_p$ is a closed subgroup of dimension at least $1$.
    Since $G$ acts with $\op{SO}(2)$ stabilizers on $B$,
	$H \cap G_p$ has dimension at most $1$.
    Therefore the identity component of $H \cap G_p$ is isomorphic to $\op{SO}(2)$.

	Since $G_p$ is a point stabilizer of a geometry,
    it acts faithfully on $\tanalg G$ (Lemma \ref{lemma:faithful_conjugation}).
    Since $H \cap G_p \subseteq H$,
    it acts trivially on $\tanalg Q \cong_{G_p} \tanalg G / \tanalg H$;
	so its action on $\tanalg H$---and therefore $H$---must be faithful.

    \paragraph{Step 4: Classify possible $H$ when $G_p = \op{SO}(2) \times \op{SO}(2)$.}
    First, $\dim H \leq 4$ since
    $\dim Q \geq 3$ and $\dim G = \dim M + \dim (\op{SO}(2) \times \op{SO}(2)) = 5 + 2 = 7$.
	If $\dim H \leq 3$, applying the restriction from Step 3
    to the list from Step 2 yields the $3$-dimensional groups $H$
    in Lemma \ref{lemma:fiber2_extension_formulation}(iii).

	The remainder of the groups occur if $\dim H = 4$.
    Then $H/(H \cap G_p)$ is a $3$-dimensional homogeneous
	space, with $H \cap G_p = SO(2)$ point stabilizers by Step 3
	and unimodular isometry group $H$.
	The proof of \cite[Thm.~3.8.4(b)]{thurstonbook}
	(classifying $3$-dimensional geometries with $SO(2)$ point stabilizer),
    finds the spaces listed in Table \ref{table:homogeneous_3d}.
    To obtain the final list in Lemma \ref{lemma:fiber2_extension_formulation}(iii),
    eliminiate duplicates by observing that
	semidirect products with inner action are isogenous with direct products.
    \begin{table}[h!]
        \caption[3-dimensional model geometries with $\op{SO}(2)$ isotropy.]{3-dimensional simply-connected homogeneous spaces $H/\op{SO}(2)$
            with unimodular $H$}
        \label{table:homogeneous_3d}
        \begin{center}\begin{tabular}{cc}
            \rule[-6pt]{0pt}{0pt}
            $H/(H \cap G_p)$  &  $H$ \\
            \hline
            \rule[-7pt]{0pt}{21pt}
            $S^3$  &
                $S^3 \rtimes S^1$ \\
            \rule[-9pt]{0pt}{0pt}
            $\widetilde{\op{PSL}(2,\R)}$  &
                $\widetilde{\op{PSL}(2, \R)} \rtimes \op{SO}(2)$  \\
            \rule[-9pt]{0pt}{0pt}
            $\Heis_3$  &
                $\Heis_3 \rtimes \op{SO}(2)$  \\
            \rule[-9pt]{0pt}{0pt}
            $S^2 \times \R$  &
                $\op{SO}(3) \times \R$  \\
            \rule[-9pt]{0pt}{0pt}
            $\Hyp^2 \times \R$  &
                $\op{PSL}(2, \R) \times \R$  \\
            \rule[-7pt]{0pt}{0pt}
            $\widetilde{\isomplus \Euc^2}$  &
                $\widetilde{\isomplus \Euc^2} \rtimes \op{SO}(2)$ \\
        \end{tabular}\end{center}
    \end{table}
\end{proof}

\begin{rmk}
	The list of $4$-dimensional groups in Step 4
	may also be obtained by computing with a convenient basis in the Lie algebra:
	let $r$ generate $H \cap G_p$, let $x$ and $y$ span the nontrivial
	$(H \cap G_p)$-subrepresentation of $\tanalg H$,
	and let $z$ (together with $r$) span the trivial subrepresentation.
	Then one can work out the values of $[x,y]$, $[x,z]$, and $[y,z]$
	that satisfy the Jacobi identity (and rescale the basis if it helps).
\end{rmk}

\subsection{Geometries fibering essentially}
\label{sec:fiber2_essentials}

This section handles the first side branch of Figure \ref{fig:fiber2_flowchart}:
proving the following classification of geometries that fiber essentially
over $2$-dimensional spaces.

\begin{prop}
    \label{prop:fiber2_essentials}
	Suppose $M = G/G_p$ is a $5$-dimensional model geometry
    for which $G_p \curvearrowright T_p M$ decomposes into $1$-dimensional
    and $2$-dimensional summands. Furthermore suppose that the fibering
    $M \to B$ from Prop.~\ref{prop:fibering_description}(iii) is essential.
    \begin{enumerate}[(i)]
        \item If $B = S^2$, then $M = T^1 \Hyp^3 \cong \op{PSL}(2,\C)/\op{PSO}(2)$,
            with isotropy $G_p = S^1_1$
            in the notation of Figure \ref{fig:isotropy_poset}.
        \item $T^1 \Hyp^3$ is a maximal model geometry.
        \item If $B = \Euc^2$, then $M$ is a solvable Lie group of the form
            $\R^4 \rtimes_{e^{tA}} \R$.
            Moreover, $M$ is maximal if and only if the multiset
            of characteristic polynomials
            of the Jordan blocks of $A$ is one of the following.
            \begin{enumerate}
                \item $\{x-1$, $x-1$, $x+1$, $x+1\}$
                \item $\{(x-1)^2$, $x+1$, $x+1\}$
                \item $\{x-1$, $x-1$, $x$, $x+2\}$ (This is $\Sol^4_0 \times \Euc$.)
                \item $\{x-1$, $x-1$, $x - a + 1$, $x + a + 1\}$;\; $a > 0$, $a \neq 1$,
                    $a \neq 2$
                    (This is a family of geometries.)
            \end{enumerate}
        \item The geometries in (iii).(a)--(c) are model geometries;
            and (iii).(d) is a model geometry if and only if $e^{tA}$
            has a characteristic polynomial in $\Z[x]$ for some $t > 0$.
    \end{enumerate}
\end{prop}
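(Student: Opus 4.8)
Since the fibering is essential over $B=S^2$, Lemma~\ref{lemma:fiber2_extension_formulation} gives $Q=\op{Conf}^+S^2\cong\op{PSL}(2,\C)$, which is $6$-dimensional, so $\dim H=\dim G_p-1\in\{0,1\}$. To rule out $\dim H=1$: the simple group $Q$ acts trivially on the $1$-dimensional $\tanalg H$, so $H^\circ$ is central; then $H^2(\lie{sl}_2\C;\R)=0$ (Lemma~\ref{cohozero}, as $\tanisom\Hyp^3\cong\lie{sl}_2\C$) forces $\tanalg G\cong\R\oplus\lie{sl}_2\C$, and any $2$-torus in such a group contains the central circle, which would act trivially on $M$ against faithfulness — or, more cheaply, Remark~\ref{rmk:quotient_by_center} collapses this to the $\dim H=0$ case. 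Hence $G=\op{PSL}(2,\C)$, $G_p\cong S^1$; all maximal tori of the maximal compact $\op{PSO}(3)$ being conjugate, $G_p=\op{PSO}(2)$ and $M=\op{PSL}(2,\C)/\op{PSO}(2)=T^1\Hyp^3$. The $\op{PSO}(2)$-adjoint decomposition of $\lie{sl}_2\C$ is $T_pM\cong\R\oplus V_1\oplus V_1$, i.e.\ $G_p=S^1_1$.

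\textbf{Part (ii).} $T^1\Hyp^3$ is a model geometry because a torsion-free lattice $\Gamma\subset\op{PSL}(2,\C)$ (the fundamental group of a closed hyperbolic $3$-manifold) acts freely on $\op{PSL}(2,\C)/\op{PSO}(2)$ — the compact point stabilizer forbids fixed points — with finite-volume quotient. For maximality, a subsuming geometry $G'/G'_p$ has $\op{PSL}(2,\C)$ acting transitively, so $G'=\op{PSL}(2,\C)\cdot G'_p$ and $\tanalg G'=\lie{sl}_2\C+\tanalg G'_p$, with $G'_p\supsetneq S^1_1$ one of $\op{SO}(2)\times\op{SO}(2)$, $\op{SU}(2)$, $\op{U}(2)$, $\op{SO}(4)$ by Figure~\ref{fig:isotropy_poset}. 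In each case the Levi factor of $\tanalg G'$ must contain $\lie{sl}_2\C$ and, counting dimensions, equals it with an abelian (or trivial) radical acted on trivially, so $\tanalg G'\cong\R^{\le 2}\oplus\lie{sl}_2\C$; then any copy of $G'_p$ either lands in $\op{SU}(2)\subset\op{SL}(2,\C)$ — too small for the required $\dim G'$ — or meets the central $\R^{\le 2}$, producing a central element acting trivially on $M$, contradicting faithfulness.

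\textbf{Part (iii).} With $B=\Euc^2$, $Q=\op{Conf}^+\Euc^2\cong\C\rtimes\C^*$ is solvable (commutator subgroup the abelian $\R^2$) and non-unimodular, and $\dim H\in\{2,3\}$. Of the kernels in Lemma~\ref{lemma:fiber2_extension_formulation}, the only non-solvable ones in these dimensions are $\op{SO}(3)$ and $\op{PSL}(2,\R)$; if $H^\circ$ were such a semisimple ideal then $\tanalg G\cong\tanalg H\oplus\lie q$ by Levi, so $G$ is isogenous to $H^\circ\times Q$, hence non-unimodular — impossible for a model geometry. So $G$ is solvable and unimodular, $G_p$ a torus. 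Lifting the dilation $\R^+\subset Q$ to an $\R\subset G$ whose complement is a unimodular codimension-$1$ ideal, and using that the isotropy summands have dimensions $1$ and $2$, one identifies a simply-transitive $M\cong\R^4\rtimes_{e^{tA}}\R$ (essentialness forces the lifted dilation to act nontrivially, so $A$ has only real eigenvalues; unimodularity gives $\op{tr}A=0$). For the "maximal iff" clause, realize the geometry as $M\rtimes K/K$ with $K$ a maximal compact of $\op{Aut}M$ (Lemma~\ref{lemma:solvable_maximality}); conjugated into $\op{SO}(4)$, $K^\circ$ centralizes $A$ and acts trivially on the $\R$-factor, hence is a product of $\op{SO}(2)$'s, one for each $\ge 2$-dimensional genuine eigenspace of $A$. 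The geometry then lies in case (iii) (not (ii)) exactly when no genuine eigenspace has dimension $\ge 3$, and is maximal exactly when its isotropy is the full $K^\circ$; with $\op{tr}A=0$, $\dim\R^4=4$ and the exclusions $a\ne 1,2$, rescaling the generator leaves precisely the multisets (a)--(d).

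\textbf{Part (iv).} By Lemma~\ref{lemma:solvable_maximality} and Prop.~\ref{prop:products_are_models}, the geometry attached to $M=\R^4\rtimes_{e^{tA}}\R$ is a model geometry iff $G=M\rtimes K$ has a lattice. For (a) and (b), choosing $t>0$ with $2\cosh t\in\Z_{\ge3}$ makes the monodromy conjugate over $\Q$ to a block matrix in $\op{SL}(4,\Z)$ built from the companion matrix of $x^2-3x+1$ (for (b) the Jordan block is accommodated by the usual block-companion trick); and (c), being the product $\Sol^4_0\times\Euc$ of lower-dimensional model geometries, is a model geometry by Prop.~\ref{prop:products_are_models}. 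For (d) the crucial point is that a lattice in $G$ may project densely onto the compact isotropy factor, so the relevant monodromy is a twisted rotation composed with $e^{tA}$ rather than $e^{tA}$ itself; unwinding this, existence of a lattice is equivalent to $\det(\lambda I-e^{tA})\in\Z[\lambda]$ for some $t>0$. Proving this equivalence and extracting from it — via the Galois-conjugacy constraints on eigenvalues of integer matrices — that the integrality condition carves out only a countable subfamily of parameters $a$ is the Galois-theoretic heart of the argument, and the main obstacle both here and in the proposition as a whole.
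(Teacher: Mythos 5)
The central gap is in part (iii). You jump from ``$G$ is solvable and unimodular'' to ``one identifies a simply-transitive $M \cong \R^4 \rtimes_{e^{tA}} \R$ with $A$ real-eigenvalued,'' but that identification is exactly what most of the paper's proof has to establish by actually solving the extension problem $0 \to \lie{h} \to \lie{g} \to \tanalg \op{Conf}^+ \Euc^2 \to 0$. Concretely: $H^2(\tanalg \op{Conf}^+\Euc^2;\R^2)$ is \emph{nonzero} when the dilation $s$ acts with eigenvalue $0$ or $2$ (Lemma~\ref{lemma:fiber2_essential_coho}), so non-split extensions exist and must be ruled out --- one case because no compact subgroup of $G$ lies over $\op{SO}(2)$ (hence no point stabilizer exists at all), the other because the resulting $(\Heis_3\times\R)\rtimes\R^2$ admits no lattice; and the split extensions where $s$ acts as a complex scalar do produce geometries, which are discarded only because they are subsumed by the $\op{SO}(2)^2$-isotropy geometry (a). Your sentence ``essentialness forces the lifted dilation to act nontrivially, so $A$ has only real eigenvalues'' is a non sequitur: complex eigenvalues are compatible with an essential fibering and are eliminated by a maximality argument, not by essentialness. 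The kernel $H = \isomplus\Euc^2$, which is what actually produces case (a), is also not treated. Without these steps neither the normal form $\R^4\rtimes\R$ nor the ``maximal iff (a)--(d)'' clause is established.

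Two further gaps. In part (ii) your maximality argument asserts that for any subsuming $G'/G'_p$ the Levi factor equals $\lie{sl}_2\C$ with an at most $2$-dimensional radical ``by counting dimensions''; this is only forced when $\dim G' \le 8$, whereas $G'_p = \op{U}(2)$ or $\op{SO}(4)$ gives $\dim G' = 9$ or $11$ (e.g.\ $\lie{sl}_2\C \oplus \lie{su}_2$ has dimension $9$ and $\lie{so}_{3,2} \supset \lie{so}_{3,1} \cong \lie{sl}_2\C$ has dimension $10$), and $\op{SO}(5)$ is missing from your list of possible isotropy groups altogether. The paper disposes of $\dim G' > 7$ differently, by appealing to the already-completed classification of geometries with larger isotropy and comparing diffeomorphism types ($T^1\Hyp^3 \cong S^2\times\R^3$, and $\op{PSL}(2,\C)$ has no nontrivial image in $\op{SO}(3)$). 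In part (iv) you state yourself that the equivalence ``lattice exists iff $\det(\lambda - e^{tA})\in\Z[\lambda]$ for some $t>0$'' and the ensuing Galois analysis are ``the main obstacle'' and you do not prove them --- but that equivalence (via Mostow's theorem applied to $\tilde G$, allowing the monodromy to involve the $S^1$ factor) together with the Galois computation \emph{is} the content of (iv) for family (d); likewise case (b) is settled only by the phrase ``usual block-companion trick,'' with no construction of a lattice whose monodromy has the required non-semisimple structure, where the paper invokes Filipkiewicz's Cor.~6.4.3 with $t$ the logarithm of a quadratic unit. As written, parts (iii) and (iv) are outlines of intent rather than proofs.
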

\begin{rmk}
    In (iii), the proof of maximality will reveal that
    the isotropy consists of an $\op{SO}(2)$ acting by automorphisms on $M$
    for every pair of identical Jordan blocks in $A$.
\end{rmk}

The case when $B = S^2$ is considerably easier than that of $B = \Euc^2$,
which has some complexities not immediately apparent in the above statement;
we treat these cases separately.

\subsubsection{Over the sphere}

This section contains the proof that $T^1 \Hyp^3$
is a maximal model geometry (ii), and that it is the only one
fibering essentially conformally over the sphere (i).
The extension problem from Lemma \ref{lemma:fiber2_extension_formulation}
is used only to determine that the isometry group covers
$\op{PSL}(2,\C)$; the rest of the proof is built on properties of $\op{PSL}(2,\C)$.

\begin{proof}[Proof of Prop.~\ref{prop:fiber2_essentials}(i)]
    If $M = G/G_p$ fibers essentially over $B = S^2$, then $G$
    is an extension
        \[ 1 \to H \to G \to \op{Conf}^+ S^2 \to 1 \]
    where $H$ is as named in Lemma \ref{lemma:fiber2_extension_formulation}.
    Since $\dim \op{Conf}^+ S^2 = 6$, either $G_p \cong S^1$ and $\dim H = 0$
    or $G_p = \op{SO}(2)^2$ and $\dim H = 1$.
    The list of possible $H$ for $\op{SO}(2)^2$ includes no $1$-dimensional
    entries; so $\dim H = 0$, $G_p \cong S^1$, and $G$ covers $Q$.

    As maximal tori in maximal compact subgroups,
    all copies of $S^1$ in $\op{Conf}^+ S^2 \cong \op{PSL}(2,\C)$ are conjugate,
    as are all copies of $S^1$ in the $2$-sheeted universal cover $\op{SL}(2,\C)$.
    Hence $M \cong \op{SL}(2,\C)/\op{SO}(2) \cong \op{PSL}(2,\C)/\op{PSO}(2)$.
    Since $\op{PSL}(2,\C)$ is centerless, the geometry $M$ expressed with
    faithful transformation group (Rmk.~\ref{rmk:quotient_by_center})
    is indeed $\op{PSL}(2,\C)/\op{PSO}(2)$.

    Since $\op{PSL}(2,\C) \cong \isomplus \Hyp^3$,
    choosing a basepoint in $T^1 \Hyp^3$
    identifies $\op{PSL}(2,\C)/\op{PSO}(2)$ with $T^1 \Hyp^3$ (hence the name).
    The point stabilizers have slope $1$ because $\op{PSO}(2)$
	rotates $\Hyp^3$ and a tangent space the same way
	(or one can explicitly decompose $\lie{sl}_2 \C$
    into $S^1$-representations).
\end{proof}

\begin{proof}[Proof of Prop.~\ref{prop:fiber2_essentials}(ii)]
	$T^1 \Hyp^3$ is a model geometry since it models
	the unit tangent bundle of any finite-volume hyperbolic $3$-manifold.

	For maximality, suppose $G$ were a larger connected group of isometries
	for $T^1 \Hyp^3$ under some metric.

    If $\dim G = 7$, then $G_p = \op{SO}(2)^2$.
	From the classification of simple Lie groups
    \cite[Ch. X, \S{}6 (p. 516)]{helgasonnew},
    the only connected semisimple Lie group containing $\op{PSL}(2,\C)$
    of dimension at most $7$ is $\op{PSL}(2,\C)$.
    Then using the Levi decomposition \cite[\S 1.4]{onishchik3}
    and the fact that $\op{PSL}(2,\C)$ is centerless,
    $G$ admits $\op{PSL}(2,\C)$ as a quotient.
    Since $\op{PSO}(2)$ is a maximal torus in $\op{PSL}(2,\C)$, it contains
    the image of $G_p$.
    Then $G/G_p$ fibers essentially over
    $S^2 \cong \op{Conf}^+ S^2 / \op{Conf}^+ \Euc^2$,
    which by part (i) is incompatible with $\dim G = 7$.

	If $\dim G > 7$, then $\dim G_p > 2$, and previous sections already listed
	maximal geometries with $\dim G_p > 2$. Of those, only $S^2 \times \Euc^3$
	and $S^2 \times \Hyp^3$ have the same diffeomorphism type.
    Since $\op{PSL}(2,\C)$ admits no nontrivial image in $\op{SO}(3) = \isomplus S^2$
    (both are simple and the domain has larger dimension),
	it cannot act transitively by isometries on either of these products.
\end{proof}

\subsubsection{Over the plane}

Suppose $M = G/G_p$ fibers essentially over $\Euc^2$.
The description of $G$ as an extension (Lemma \ref{lemma:fiber2_extension_formulation})
is
    \[ 1 \to H \to G \to \op{Conf}^+ \Euc^2 \to 1 \]
where, since $\op{Conf}^+ \Euc^2$ is $4$-dimensional,
$H$ is $\R^2$, $SO(3)$, $\op{PSL}(2,\R)$ or $\isomplus \Euc^2$.
This section classifies the resulting geometries
(Prop.~\ref{prop:fiber2_essentials}(iii)--(iv)),
all of which will be solvable Lie groups of the form $\R^4 \rtimes_{e^{tA}} \R$.
The geometry named in Prop.~\ref{prop:fiber2_essentials}(iii).(a)
occurs when $H = \isomplus \Euc^2$; the rest will come from $H = \R^2$.

Passing to Lie algebras, we aim to solve the
corresponding extension problem
	\[ 0 \to \lie{h} \to \lie{g} \to \lie{q} \to 0 . \]

The proof relies on the following three computations:
the outer derivation algebra of $\lie{h}$ (Lemma \ref{lemma:fiber2_essential_out}),
the action $\lie{q} \to \op{out} \lie{h}$ (Lemma \ref{lemma:fiber2_essential_action}),
and the cohomology $H^2(\lie{q}; \R^2)$ with the action
in Lemma \ref{lemma:fiber2_essential_action} (Lemma \ref{lemma:fiber2_essential_coho}).
    For the Lie algebra $\lie{q}$ of $\op{Conf}^+ \Euc^2$,
	we will use the basis $\{x, y, r, s\}$
	where $x$ and $y$ generate the translations,
	$r$ generates rotations ($[r,x] = y$),
	and $s$ generates scaling ($[s,x] = x$).

\begin{lemma}[\textbf{Outer derivation algebras}] \label{lemma:fiber2_essential_out}
	The Lie algebras of the above groups $H$ have the following
	algebras of outer derivations:
	\begin{align*}
		\op{out} \lie{so}_3 &= 0  &
		\op{out} \R^2 &= \lie{gl}_2 \R \\
		\op{out} \lie{sl}_2 &= 0  &
		\op{out} \left(\tanisom \Euc^2\right) &= \R
	\end{align*}
\end{lemma}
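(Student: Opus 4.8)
The plan is to split the four algebras into the semisimple cases, the abelian case, and $\tanisom \Euc^2$. For $\lie{so}_3 \R$ and $\lie{sl}_2 \R$ I would invoke Whitehead's first lemma: over a field of characteristic zero every derivation of a semisimple Lie algebra is inner (equivalently $H^1(\lie{g};\lie{g}) = 0$, which lies in the same circle of results as the vanishing of $H^2$ used in Lemma~\ref{cohozero}). Hence $\op{out}\lie{so}_3\R = \op{out}\lie{sl}_2\R = 0$ with no computation. For $\R^2$ the bracket is identically zero, so every linear endomorphism is a derivation and there are no nonzero inner derivations; thus $\op{out}\R^2 = \op{der}\R^2 = \lie{gl}_2\R$.

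The only case requiring real work is $\tanisom \Euc^2$, which I would handle directly in the basis $\{x,y,r\}$ with $x,y$ spanning the translations and $[r,x]=y$, $[r,y]=-x$, $[x,y]=0$. First note that $\R^2 = \langle x,y\rangle$ is the derived subalgebra (and the unique minimal nonzero ideal by Lemma~\ref{isomeuc_normalsubs}), hence preserved by every derivation $D$; so $D|_{\R^2}$ is a $2\times 2$ matrix $M$ and $Dr = v + \lambda r$ for some $v \in \R^2$. Imposing the Leibniz rule on the relations $[r,x]=y$ and $[r,y]=-x$ forces $\lambda = 0$ and forces $M$ to be conformal-linear on $\R^2 \cong \C$, i.e.\ $M = pI + s\left(\begin{smallmatrix}0&-1\\1&0\end{smallmatrix}\right)$, while $v = Dr$ remains unconstrained. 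So $\op{der}(\tanisom \Euc^2)$ is $4$-dimensional with coordinates $(p,s,v)$. A short check shows $\op{ad}_x$ and $\op{ad}_y$ span the $v$-directions and $\op{ad}_r$ gives the $s$-direction, so $\op{ad}$ is exactly the $3$-dimensional subspace $\{p=0\}$; therefore $\op{der}/\op{ad}$ is spanned by the class of the dilation $I$, and $\op{out}(\tanisom \Euc^2) \cong \R$.

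The only step with any friction is this last one, and even there the work is purely mechanical: writing down the four derivation constraints and then identifying which parameter survives modulo $\op{ad}$. There is no conceptual obstacle — the semisimple and abelian cases are immediate, and the solvable case is a three-variable linear computation.
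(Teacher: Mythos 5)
Your proposal is correct and follows essentially the same route as the paper: the semisimple and abelian cases are dispatched by the standard facts (derivations of semisimple algebras are inner; every endomorphism of $\R^2$ is a derivation with no inner ones), and the $\tanisom \Euc^2$ case is the same basis computation in $\{x,y,r\}$. The only cosmetic difference is that the paper normalizes by subtracting inner derivations first and then deduces $dr=0$ and uniform scaling on the translations, whereas you compute the full $4$-dimensional derivation algebra and then quotient by the $3$-dimensional $\op{ad}$ image — both land on the same $\R$ of outer derivations generated by the dilation of the translation ideal.
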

\begin{proof}
	$\R^2$ is abelian,
    and the outer derivation algebra of a semisimple Lie algebra
    is zero \cite[\S 5.3]{humphreys}, so only the last one needs
	any computation.

	Let $\{x, y, r\}$ be a basis for $\tanisom \Euc^2$
	where $[r,x] = y$ and $x$ and $y$ generate the translations.
    For a derivation $d$,
    the Leibniz rule $d[v,w] = [dv, w] + [v, dw]$
    implies that $d$ preserves the lower central series and
	the derived series---so
	in this case it takes translations to translations.
	Subtract inner derivations to ensure
	$dr = ar$ and $dx = bx$ for some $a$ and $b$; then
	\begin{align*}
		dy &= [dr, x] + [r, dx] = ay + by \\
		bx &= dx = [dy, r] + [y, dr] = (2a + b)x .
	\end{align*}
	The second line implies $a = 0$; then $d$ is zero on $r$
	and scales by $b$ on the translations.
\end{proof}

\begin{lemma}[\textbf{Restrictions on actions on $H$}]
    \label{lemma:fiber2_essential_action}
    In the action of $\lie{q} = \tanalg \op{Conf}^+ \Euc^2$ on $\lie{h}$,
	$s$ acts with trace $-2$, $x$ and $y$ act trivially,
	and $r$ generates a compact subgroup of $\op{Out} \lie{h}$.

    In particular, if $H = \R^2$,
    then up to conjugacy in $\op{GL}(2,\R) = \op{Aut} \lie{h}$,
    $s$ acts by one of the following
    matrices where $a$ is a real parameter.
    \begin{align*}
        &\begin{pmatrix}
            -1 & a \\ -a & -1
        \end{pmatrix} &
        &\begin{pmatrix}
            -1 & 1 \\ 0 & -1
        \end{pmatrix} &
        &\begin{pmatrix}
            -1+a & 0 \\ 0 & -1-a
        \end{pmatrix}
    \end{align*}
    And if $r$ acts nontrivially by rotations (or some conjugate thereof),
    then only the first of these can commute with $r$.
\end{lemma}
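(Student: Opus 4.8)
The plan is to squeeze the whole statement out of the induced outer action $\bar\alpha\colon\lie q\to\op{out}\lie h$ of the extension $0\to\lie h\to\lie g\to\lie q\to0$, whose target was computed in Lemma~\ref{lemma:fiber2_essential_out}, together with the unimodularity of $G$. First I would record that in the basis $\{x,y,r,s\}$ the only nonzero brackets of $\lie q=\tanalg\op{Conf}^+\Euc^2$ are $[r,x]=y$, $[r,y]=-x$, $[s,x]=x$, $[s,y]=y$, so $\op{tr}\op{ad}^{\lie q}_x=\op{tr}\op{ad}^{\lie q}_y=\op{tr}\op{ad}^{\lie q}_r=0$ while $\op{tr}\op{ad}^{\lie q}_s=2$. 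Since $M$ is a model geometry, $G$ has a lattice and hence is unimodular, so $\op{tr}\op{ad}^{\lie g}_V=0$ for every $V\in\lie g$; splitting this trace along the ideal $\lie h$ gives $\op{tr}(\op{ad}_V|_{\lie h})=-\op{tr}\op{ad}^{\lie q}_{\bar V}$, which is $-2$ when $\bar V=s$ and $0$ when $\bar V\in\{x,y,r\}$. This is the trace-$(-2)$ assertion for $s$.

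For ``$x$ and $y$ act trivially'' I would use that $x,y\in[\lie q,\lie q]=\R^2$ (the translations), so $\bar\alpha(x),\bar\alpha(y)$ lie in the derived subalgebra of the solvable algebra $\bar\alpha(\lie q)\subseteq\op{out}\lie h$. When $\op{out}\lie h$ is $0$ (for $H=\op{SO}(3)$ or $H=\op{PSL}(2,\R)$) or abelian $\cong\R$ (for $H=\isomplus\Euc^2$), this derived subalgebra is zero and the claim is immediate. The substantive case is $H=\R^2$, where $\op{out}\lie h=\op{der}\lie h=\op{Aut}\lie h=\op{GL}(2,\R)$ and $\bar\alpha$ is an honest homomorphism. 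Here I would write $\bar\alpha(s)=-I+A_0$ with $A_0\in\lie{sl}_2\R$ (from the trace computation), observe that $[\bar\alpha(s),\bar\alpha(x)]=\bar\alpha(x)$ forces $\bar\alpha(x)$ into the $1$-eigenspace of $\op{ad}_{A_0}$ on $\lie{gl}_2\R$, and recall that the eigenvalues of $\op{ad}_{A_0}$ there are $0,0,\pm2\mu$ where $\pm\mu$ are the eigenvalues of $A_0$. So $1$ occurs only if $A_0$ is diagonalizable over $\R$ with $\mu=\tfrac12$, in which case the $1$-eigenspace is the line spanned by a single nilpotent matrix; but then $\bar\alpha(x),\bar\alpha(y)$ both lie on that line and the rotation relations $[\bar\alpha(r),\bar\alpha(x)]=\bar\alpha(y)$, $[\bar\alpha(r),\bar\alpha(y)]=-\bar\alpha(x)$ would produce a real operator on a line squaring to $-1$ --- impossible unless $\bar\alpha(x)=\bar\alpha(y)=0$. (If $A_0$ is nilpotent or zero, $\op{ad}_{A_0}$ has no eigenvalue $1$ and $\bar\alpha(x)=0$ outright.)

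For the compactness of $\bar\alpha(r)$: $G_p$ is compact, and in the $H=\R^2$ case $G_p\cap H=\{1\}$ because $\R^2$ has no nontrivial compact subgroup, so $G_p$ injects into $Q=\op{Conf}^+\Euc^2$ with image the maximal compact subgroup $\op{SO}(2)\subset Q$ (the one with Lie algebra $\R r$); conjugation by this $\op{SO}(2)$ is a compact group of automorphisms of $\lie h$, so $\bar\alpha(r)$ generates a compact subgroup of $\op{GL}(2,\R)$, hence (up to conjugacy and rescaling) the rotation subgroup. Finally, the three normal forms for $\bar\alpha(s)$ come from classifying trace-zero real $2\times2$ matrices up to $\op{GL}(2,\R)$-conjugacy --- diagonalizable over $\R$ with eigenvalues $\pm\mu$, elliptic with eigenvalues $\pm i\nu$, or a single nonzero nilpotent block --- and adding back $-I$, which gives exactly $\op{diag}(-1+a,-1-a)$, $\left(\begin{smallmatrix}-1&a\\-a&-1\end{smallmatrix}\right)$, and $\left(\begin{smallmatrix}-1&1\\0&-1\end{smallmatrix}\right)$. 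For the last assertion, $[r,s]=0$ forces $\bar\alpha(s)$ to centralize $\bar\alpha(r)$; when $\bar\alpha(r)$ is a nonzero rotation its centralizer in $\op{GL}(2,\R)$ consists of the matrices $\left(\begin{smallmatrix}p&-q\\q&p\end{smallmatrix}\right)$, and among the three normal forms only the elliptic one (the first) has this shape once its off-diagonal part is nonzero.

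I expect the eigenvalue bookkeeping in the second paragraph --- determining exactly when $\op{ad}_{A_0}$ can have $1$ as an eigenvalue and then eliminating the single surviving configuration using the rotation relations --- to be the one place requiring genuine care; the remaining steps are short trace and centralizer computations once the bracket structure of $\lie q$ and the compactness of $G_p$ are on the table.
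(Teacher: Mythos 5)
Your proposal is sound, and on two of the three ingredients it coincides with the paper's proof: the trace of $\bar\alpha(s)$ is obtained exactly as in the paper (unimodularity of $G$ plus splitting the trace along the ideal $\lie{h}$), and the final listing of trace~$-2$ Jordan forms together with the centralizer of a rotation is the same bookkeeping the paper does. Where you genuinely diverge is the claim that $x$ and $y$ act trivially. The paper restricts the outer action to the copy of $\tanisom \Euc^2$ spanned by $x,y,r$ inside $\lie{q}$, invokes Lemma~\ref{isomeuc_normalsubs} to see that the kernel is either zero or contains the translations, and then rules out an injective image inside any of the outer derivation algebras by a dimension/structure argument ($\lie{gl}_2 \cong \lie{sl}_2 \oplus \R$ contains no copy of the solvable $\tanisom \Euc^2$). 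You instead dispatch the cases $\op{out}\lie{h} = 0$ or $\R$ with the derived-subalgebra remark, and for $\lie{h} = \R^2$ you run the eigenvalue analysis of $\op{ad}_{\bar\alpha(s)} = \op{ad}_{A_0}$ on $\lie{gl}_2\R$ together with the rotation relations; the surviving configuration ($A_0$ real-diagonalizable with eigenvalues $\pm\tfrac12$, one-dimensional $1$-eigenspace) is correctly killed by the observation that a real operator on a line cannot square to $-\mathrm{id}$. This trades the paper's structural lemma for explicit computation; both are valid, and yours is self-contained at the cost of a case analysis on the form of $\bar\alpha(s)$.

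One point needs patching: you argue compactness of the one-parameter subgroup generated by $\bar\alpha(r)$ only for $H = \R^2$, using $G_p \cap H = \{1\}$ and the conjugation action of the compact $G_p$. The first paragraph of the lemma asserts this for every candidate kernel, and the paper later uses it for $H = \isomplus \Euc^2$ (Step~2 of the proof of Prop.~\ref{prop:fiber2_essentials}(iii)) to force $r \mapsto 0$ in $\op{out}\tanisom\Euc^2 \cong \R$. Your argument does not literally apply there, since in that case $G_p = \op{SO}(2)\times\op{SO}(2)$ meets $H$ nontrivially. Either add the remark that conjugation by $G_p \cap H$ is inner on the connected $H$, so the compact image of $G_p$ in $\op{Out}H$ is still generated by the class of a conjugate $r + v$ of $r$ ($v$ a translation), which equals $\bar\alpha(r)$ once you know translations act trivially; or argue as the paper does, namely that the Lie algebra extension is induced by a group extension, so $\lie{q} \to \op{out}\lie{h}$ is induced by $Q \to \op{Out}H$, which carries the compact rotation subgroup $\op{SO}(2) \subset \op{Conf}^+\Euc^2$ to a compact subgroup of $\op{Out}H$. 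The same remark about $r+v$ versus $r$ also tidies your $H=\R^2$ argument, where the image of $G_p$ in $Q$ is a priori the rotation circle about the image of the base point rather than the standard one; this is harmless precisely because $\bar\alpha(v)=0$.
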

\begin{proof}
    There are three claims to prove in the first sentence of the Lemma statement:
    \begin{enumerate}
        \item Since $s$ acts with trace $2$ on $\lie{q}$
            and $\lie{g}$ is unimodular, $s$ must act with trace $-2$ on $\lie{h}$.
        \item
            The restriction of $\lie{q} \to \op{out} \lie{h}$
            to $\tanisom \Euc^2$ is either injective or
            contains the translation ideal (Lemma \ref{isomeuc_normalsubs}).
            Of the outer derivation algebras (Lemma \ref{lemma:fiber2_essential_out}),
            only $\lie{gl}_2 \cong \lie{sl}_2 \oplus \R$
            has high enough dimension to
            contain an injective image of $\tanisom \Euc^2$.
            For the kernel not to contain the translation ideal,
            the projection to at least one of the summands must be
            injective; but $\R$ has too low dimension,
            and $\lie{sl}_2$ contains no subalgebras of dimension
            $3$ other than itself.
            Hence $\tanisom \Euc^2 \to \op{out} \lie{h}$
            always factors through $\lie{so}_2$---i.e.\ $x$ and $y$ act trivially.
        \item Since the Lie algebra extension
            is induced by a Lie group extension, 
            the homomorphism $\lie{q} \to \op{out} \lie{h}$
            is induced by some $\op{Conf}^+ \Euc^2 = Q \to \op{Out} H$, which sends
            the compact $\op{SO}(2) \subset \op{Conf}^+ \Euc^2$ generated by $r$
            to a compact subgroup of $\op{Out} H \subseteq \op{Out} \lie{h}$.
    \end{enumerate}
    The claims when $H = \R^2$ then follow from listing $2 \times 2$
    Jordan forms with trace $-2$ and the fact that the centralizer
    of $\op{SO}(2) \curvearrowright \R^2$ is generated by itself and
    the real scalars.
\end{proof}

Since $\R^2$ is abelian, $\op{out} \R^2 = \op{der} \R^2$;
so the only additional data needed for $H = \R^2$
is second cohomology.

\begin{lemma} \label{lemma:fiber2_essential_coho}
	The Lie algebra cohomology $H^2(\tanalg \op{Conf}^+ \Euc^2; \R^2)$
	has the following values.
	\begin{itemize}
		\item If $s$ acts with an eigenvalue $2$, then $H^2$ is $1$-dimensional,
			represented by cocycles $c$ with $c(x,y)$ in the $2$-eigenspace.
		\item If $s$ acts with an eigenvalue $0$, then $H^2$ is $1$-dimensional,
			represented by cocycles $c$ with $c(r,s)$ in the $0$-eigenspace.
		\item Otherwise, $H^2 = 0$.
	\end{itemize}
\end{lemma}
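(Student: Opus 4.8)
The plan is to compute $H^2(\lie{q};\R^2)$ directly from the Chevalley--Eilenberg complex (Definition \ref{defn:liecoho}), using the basis $\{x,y,r,s\}$ of $\lie{q} = \tanalg\op{Conf}^+\Euc^2$ fixed above, so that the only nonzero brackets among basis vectors are $[r,x]=y$, $[r,y]=-x$, $[s,x]=x$, $[s,y]=y$. Write the action on the coefficient module $\R^2$ as: $x$ and $y$ acting by $0$, $s$ acting by an operator $S$ with $\op{tr} S=-2$, and $r$ acting by an operator $R$ commuting with $S$. By Lemma \ref{lemma:fiber2_essential_action}, $R$ is either $0$ or, after a change of basis, a nonzero skew-symmetric matrix; in the latter case $S$ must differ from $-I$ by a skew-symmetric matrix, so $S$ has no eigenvalue $0$ or $2$. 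Hence $S$ can have eigenvalue $0$ or $2$ only when $R=0$.

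First I would write out the cocycle and coboundary conditions. A $2$-cochain is the datum of six vectors $c(x,y),c(x,r),c(x,s),c(y,r),c(y,s),c(r,s)\in\R^2$, and $d_2 c=0$ need only be checked on the four triples $\{x,y,r\}$, $\{x,y,s\}$, $\{x,r,s\}$, $\{y,r,s\}$. Expanding the formula gives $R c(x,y)=0$ and $(S-2I)c(x,y)=0$; two vector relations coming from the mixed triples which (via $S-I$ and $R$) determine $c(x,s)$ and $c(y,s)$ from $c(x,r)$ and $c(y,r)$; and no constraint at all on $c(r,s)$. For the coboundaries, $d_1\phi$ with $\phi\in\op{Hom}(\lie{q},\R^2)$ has vanishing $c(x,y)$-component, has $(c(x,r),c(y,r))$-components running over the image of $(\phi_x,\phi_y)\mapsto(\phi_y-R\phi_x,\,-\phi_x-R\phi_y)$ with the matching $c(x,s),c(y,s)$ forced, and has $c(r,s)$-component running over $\op{im}(R)+\op{im}(S)$.

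Then I would assemble the dimension count. When $R=0$, the cocycle space has dimension $\dim\ker(S-2I)+6$ and the coboundary space has dimension $4+\op{rank} S$, and the $4$-dimensional $(c(x,r),c(y,r),c(x,s),c(y,s))$-block of cocycles is already a coboundary; so $H^2\cong\ker(S-2I)\oplus\bigl(\R^2/\op{im} S\bigr)$, of dimension $\dim\ker(S-2I)+\dim\ker S$. Since $\op{tr} S=-2$, the eigenvalues $0$ and $2$ are each simple and cannot coexist, so this dimension is $1$ when $S$ has eigenvalue $2$, with the class represented by $c(x,y)$ in the $2$-eigenspace; is $1$ when $S$ has eigenvalue $0$, with the class represented by $c(r,s)$ in the $0$-eigenspace (a natural complement of $\op{im} S$); and is $0$ otherwise. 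When $R\neq 0$, invertibility of $R$ forces $c(x,y)=0$, and a short check---branching on whether $I+R^2$ vanishes---shows both the cocycle and coboundary spaces have dimension $6$, whence $H^2=0$; this falls under ``otherwise'' because such $S$ has no eigenvalue $0$ or $2$.

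The hard part will be the bookkeeping in matching cocycles against coboundaries: verifying that the entire $(c(x,r),c(y,r),c(x,s),c(y,s))$-block is a coboundary, so that only $c(x,y)$ and $c(r,s)$ can survive in $H^2$, and handling the rank drop of the relevant coboundary map in the subcase $R\neq 0$, $I+R^2=0$. Keeping straight the special roles of the eigenvalues $0$ and $2$, together with their incompatibility with $R\neq 0$, is what produces the three-way dichotomy in the statement.
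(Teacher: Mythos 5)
Your computation is correct, and it is at bottom the same proof as the paper's: both work directly in the Chevalley--Eilenberg complex with the basis $\{x,y,r,s\}$, and both hinge on the same cocycle identities, namely $Rc(x,y)=0$, $(S-2I)c(x,y)=0$, and the two mixed relations tying $c(x,s),c(y,s)$ to $c(x,r),c(y,r)$. The difference is organizational: the paper fixes a cocycle and explicitly subtracts a coboundary $d\beta$ to normalize it, splitting on whether $r$ acts trivially or by rotation and complexifying $\R^2\cong\C$ in the rotation case, whereas you count dimensions of $Z^2$ and $B^2$, check that the $(c(x,r),c(y,r),c(x,s),c(y,s))$-block of any cocycle is exactly a coboundary, and dispose of the rotation case by the branch on $I+R^2$ together with the observation (from Lemma \ref{lemma:fiber2_essential_action}) that a nonzero $R$ forces $S=-I+\text{(skew)}$, hence no eigenvalue $0$ or $2$. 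Your route buys a cleaner closed form $H^2\cong\ker(S-2I)\oplus(\R^2/\op{im}S)$ when $R=0$ and avoids the complexification, at the cost of the rank/kernel bookkeeping you flag (including the degenerate subcase $I+R^2=0$, which you handle correctly since $I-S$ is invertible there); the paper's normalization produces the explicit representing cocycles slightly more directly. Either way the three-way dichotomy of the statement comes out the same.
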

\begin{proof}
	If $\beta$ is a $1$-cochain, then the coboundary $d\beta$ has values
	\begin{align*}
		d\beta(x,y) &= x\beta(y) - y\beta(x) - \beta([x,y]) = 0 \\
		d\beta(r,x) &= r\beta(x) - x\beta(r) - \beta([r,x]) = r\beta(x) - \beta(y) \\
		d\beta(r,y) &= r\beta(y) - y\beta(r) - \beta([r,y]) = r\beta(y) + \beta(x) \\
		d\beta(s,x) &= s\beta(x) - x\beta(s) - \beta([s,x]) = (s - 1)\beta(x) \\
		d\beta(s,y) &= s\beta(y) - y\beta(s) - \beta([s,y]) = (s - 1)\beta(y) \\
		d\beta(r,s) &= r\beta(s) - s\beta(r) - \beta([s,r]) = r\beta(s) - s\beta(r) .
	\end{align*}
	If $c$ is a cocycle, then
	\begin{align*}
		c(x,y) &= -c(x, [y, s]) \\
			&= c(y, [s, x]) + c(s, [x, y]) + xc(y,s) + yc(s,x) + sc(x,y) \\
			&= c(y, x) + 0 + 0 + 0 + sc(x,y) \\
		2c(x,y) &= sc(x,y) .
	\end{align*}
	Thus either $c(x,y) = 0$ or $s$ acts with an eigenvalue $2$.
	Similarly, applying the cocycle condition to each of the equalities
	\begin{align*}
		c(x,s) &= -c(s, [y, r]) \\
		c(y,s) &= -c(s, [r, x]) \\
		c(x,y) &= -c(x, [x, r]) \\
		c(x,y) &= -c(y, [y, r]) \\
		c(r,x) &= -c(r, [x, s]) \\
		c(r,y) &= -c(r, [y, s])
	\end{align*}
	yields mostly vacuous equalities, except for the following.
	\begin{align}
		c(x,s) &= -rc(y,s) + (1-s) c(r,y) \notag\\
		c(y,s) &= rc(x,s) + (s-1) c(r,x)
		\label{conf_euc_cocycle_condition}
	\end{align}
	Using this information,
	we will define $\beta$ to match $d\beta$ as closely as possible with $c$.
	\begin{itemize}
		\item If $r$ acts as $0$, then set
			$\beta(x) = c(r,y)$ and $\beta(y) = -c(r,x)$.
			Then $c - d\beta$ is zero on $r \wedge x$, $r \wedge y$,
			$x \wedge s$, and $y \wedge s$
			(by equations \ref{conf_euc_cocycle_condition}).
			\begin{itemize}
				\item If $s$ acts with eigenvalues $0$ and $-2$,
					then $c(x,y) = 0$. Set $\beta(r) = \frac{1}{2} c(r,s)$,
					so that the only potentially nonzero value of $c - d\beta$
					is on $r \wedge s$ and lies in the $0$-eigenspace of $s$.
				\item If $s$ acts with eigenvalues $2$ and $-4$,
					setting $\beta(r) = -s^{-1} c(r,s)$ makes $c - d\beta$ zero
					on $r \wedge s$. The only nonzero contribution to $H^2$
					is from $c(x,y)$ lying in the $2$-eigenspace of $s$.
				\item Otherwise, setting $\beta(r) = -s^{-1} c(r,s)$
					makes $c - d\beta = 0$, so $H^2 = 0$.
			\end{itemize}
		\item If $r$ acts by rotation, then since $s$ commutes with it,
			we reinterpret $\R^2$ as $\C$ on which $r$ acts by $ik$
			for some real $k \neq 0$. Defining
			\begin{align*}
				z_s &= c(x,s) + ic(y,s) \\
				z_r &= c(r,x) + ic(r,y) \\
				w_s &= c(x,s) - ic(y,s) \\
				w_r &= c(r,x) - ic(r,y) ,
			\end{align*}
			equations \ref{conf_euc_cocycle_condition} become
			\begin{align*}
				z_s &= -kz_s + i(s-1)z_r \\
				w_s &= kw_s - i(s-1)w_r .
			\end{align*}
			Since $s \neq 1$ (by having to act with trace $-2$),
			$\beta(x)$ and $\beta(y)$ can be selected to make $d\beta$
			reproduce $z_s$ and $w_s$; then $z_r$ and $w_r$ follow
			dependently.

			We further set $\beta(r) = 0$ and $\beta(s) = r^{-1} c(r,s)$
			to make $(c - d\beta)(r,s) = 0$. Finally, $c(x,y) = 0$
			since $s$ has no eigenvalue $2$ (both its eigenvalues have
			real part $1$).
	\end{itemize}
\end{proof}

\begin{proof}[Proof of Prop.~\ref{prop:fiber2_essentials}(iii)]
    The proof begins by determining the possibilities for $H$.

    \paragraph{Step 1: $H$ is $\isomplus \Euc^2$ or $\R^2$.}
    The outer derivation algebras for the Lie algebras of $\op{SO}(3)$
    and $\op{PSL}(2,\R)$ are zero
    \cite[Cor.\ to Thm.~1.3.2]{onishchik3}; and being unimodular, they have no inner
    derivations acting with trace $-2$. Since $\tanalg \op{Conf}^+ \Euc^2$
    contains an element acting on $\lie{h}$ with trace $-2$
    (Lemma \ref{lemma:fiber2_essential_action}),
    this rules out $\op{SO}(3)$ and $\op{PSL}(2,\R)$ as candidates for $H$.
    Then $G$ is an extension of the solvable group $\op{Conf}^+ \Euc^2$
    by either $\R^2$ or $\isomplus \Euc^2$. 

    \paragraph{Step 2: If $H = \isomplus \Euc^2$, then
    $M = \left[\left(\R^4 \rtimes \R \right) \rtimes \op{SO}(2)^{\times 2}\right] / \op{SO}(2)^{\times 2}$.}
	Since $Z(\tanisom \Euc^2) = 0$, the Lie algebra cohomology
	determining extensions is identically $0$;
	so every homomorphism
		\[ \tanalg \op{Conf}^+ \Euc^2 \to \op{out} \tanisom \Euc^2 \cong \R \]
	can be realized by an extension, and every such extension splits
    on the Lie algebra level (Thm.~\ref{thm:extensions_h2}).
    Since $r$ has to generate a compact subgroup
	and the nonzero part of $\op{out} \tanalg \isomplus \Euc^2$ scales
    the translation subalgebra, $r$ maps to $0$ in $\op{out} \tanisom \Euc^2$.
    Since $s$ acts with trace $-2$, it maps to the scalar $-1$.

    Then
        \( \tilde{G} \cong \C^2 \rtimes \R^3 , \)
    where $(x,y,z) \in \R^3$ acts on $\C^2$ by the matrix
        \[ \begin{pmatrix} e^{x+iy} & 0 \\ 0 & e^{-x+iz} \end{pmatrix} ; \]
    and $Z(\tilde{G}) = \{0\} \rtimes (0 \oplus 2 \pi \Z \oplus 2 \pi \Z)$.

    The point stabilizer $G_p$ is compact and acts faithfully by conjugation on $G$
    (Lemma \ref{lemma:faithful_conjugation}),
    so it maps injectively to a compact subgroup of
    $\op{Inn} G = \tilde{G}/Z(\tilde{G})
        \cong \C^2 \rtimes (R \times \op{SO}(2) \times \op{SO}(2))$,
    in which $\op{SO}(2) \times \op{SO}(2)$ is maximal compact since
    the quotient by it is contractible.
    Then $G_p$ covers this $\op{SO}(2) \times \op{SO}(2)$,
    but not nontrivially due to the faithful conjugation action requirement.
    So by conjugacy of maximal compact subgroups,
        \[ M = \left[\left(\R^4 \rtimes \R \right) \rtimes \op{SO}(2)^{\times 2}\right] / \op{SO}(2)^{\times 2} , \]
    the geometry named in Prop.~\ref{prop:fiber2_essentials}(iii)(a).

    \paragraph{Step 3:
	Non-split extensions of $\op{Conf}^+ \Euc^2$ by $\R^2$
    produce no model geometries.}
	The non-split extensions are with $r$ acting trivially,
	and with $s$ acting diagonally with an eigenvalue of $0$ or $2$.
    We'll show that if $0$ is an eigenvalue of $s$, then $\op{SO}(2)$
	fails to extend to a compact point stabilizer;
	and if $2$ is an eigenvalue of $s$, then $G$
	fails to admit lattices.

	When $s$ acts with an eigenvalue of $0$, a non-split extension
	is given by a nonzero value of $c(r,s)$ in the $0$-eigenspace;
	i.e. $[r,s]$ is nonzero in $\R^2 \subset \lie{g}$
    (Lemma \ref{lemma:fiber2_essential_coho}).
    Then $r$---indeed, any element of $\lie{g}$ lying over
    $r \in \tanalg \op{Conf}^+ \Euc^2$---acts nontrivially and nilpotently
    on a subalgebra of $\lie{g}$ and therefore cannot generate a compact
    subgroup (which must act semisimply).
    The same holds for an element of the $2$-eigenspace of $s$,
    while an element of the $0$-eigenspace generates a central subgroup of $G$;
    so no nontrivial compact subgroup of $G$ lies over
    $\op{SO}(2) \subset \op{Conf}^+ \Euc^2$.
    The point stabilizer $G_p$, being compact, must be such a subgroup;
    so the group $G$ resulting from this case produces no geometries $G/G_p$.

	When $s$ acts with an eigenvalue of $2$, a non-split
	extension is given by a nonzero value of $c(x,y)$
	in the $2$-eigenspace. So in $\lie{g}$,
	the elements $x$ and $y$ generate a copy of the
	Heisenberg algebra. Inspecting the actions of the
	other basis elements shows that this is an ideal,
	and in fact $\tilde{G}$ can be written as
		\[ (\Heis_3 \times \R) \rtimes (\R \times \R), \]
	where the first $\R$ rotates the $xy$-plane of the Heisenberg group
	and the second $\R$ scales diagonally with exponents
	$1$, $1$, $2$, and $-4$.

	Since $\R \times \R$ acts non-nilpotently, the nilradical
	of this is $\Heis_3 \times \R$. Since $G$ is solvable,
	a theorem of Mostow (see \cite[Prop 6.4.2]{filipk}) ensures
	that any lattice $\Gamma$ in its universal cover $\tilde{G}$
	intersects the nilradical in a lattice and projects
	to the quotient as a closed cocompact group.
	Then since $[\Heis_3, \Heis_3] = Z(\Heis_3)$
	and $Z(\Heis_3)$ is a copy of $\R$, cocompactness of $\Gamma$ ensures
	$[\Gamma, \Gamma] \cap Z(\Heis_3)$ is nontrivial.
	Since $\Gamma$ has cocompact image in $\R \times \R$,
	some $g \in \Gamma$ acts by nontrivial scaling on $\Heis_3 \times \R$;
	in particular, this action on $Z(\Heis_3)$
	implies $[\Gamma, \Gamma] \cap Z(\Heis_3)$ is not discrete;
	so $\Gamma$ fails to be a lattice.

    \paragraph{Step 4: Split extensions $\tilde{G} = \R^2 \rtimes \op{Conf}^+ \Euc^2$ where $s$
        acts by a complex scalar produce non-maximal geometries.}
	In this case, we assume $s$ acts as
	$\begin{pmatrix} -1 & a \\ -a & -1 \end{pmatrix}$ for some real $a$.
    Then $\tilde{G} \cong \C^2 \rtimes \R^2$
    where, for some real $b$, the action of $(x,y) \in \R^2$ on $\C^2$ is by the matrix
        \[ \begin{pmatrix} e^{-x+iax+iby} & 0 \\ 0 & e^{x+iy} \end{pmatrix} . \]
	To have a compact subgroup we can use as the point stabilizer,
	$0 \oplus \R$ must intersect the center nontrivially; so
	$G$ is covered by
		\[ ( \C^2 \rtimes \R ) \rtimes S^1 \]
	where the $S^1$ action on each $\C$ may have some degree other than $1$.
	Since this has trivial center, this actually is $G$ (and $G_p$
	is its maximal compact subgroup $S^1$).
	Whatever geometry it produces is subsumed by the geometry
        \[ [ ( \C^2 \rtimes \R ) \rtimes \op{SO}(2)^{\times 2} ] / \op{SO}(2)^{\times 2} \]
    from Step 2.

    \paragraph{Step 5: Identify the geometries that remain.}
    Similarly to Step 4,
	the remaining groups $G$ are all described as the semidirect product
		\[ ((\R^4) \rtimes \R) \rtimes S^1 \]
	(again, to obtain $G$ from the universal cover,
	we replaced a second $\R$ factor by $S^1$ and noted that
	the result has trivial center),
	where $t \in \R$ acts by one of the matrices below (omitted entries are zero)
    and $S^1$ acts as $\op{SO}(2)$ on the last two coordinates.
	\begin{align*}
		&\begin{pmatrix}
			e^t & e^t & &\\
			& e^t & &\\
			& & e^{-t} & \\
			& & & e^{-t} \\
		\end{pmatrix} &
		&\begin{pmatrix}
			e^{(1 + a)t} & & & \\
			& e^{(1 - a)t} & & \\
			& & e^{-t} & \\
			& & & e^{-t} \\
		\end{pmatrix}
	\end{align*}
	Step 4 eliminates $a = 0$; and since $S^1 \subset G$ is maximal
    compact, $M \cong \R^4 \rtimes \R$ where $t \in \R$ acts by one of the above matrices.
    The case $a = 1$ is $\op{Sol}^4_0 \times \Euc$;
    and the case $a = 2$ is a non-maximal form of
    $\R \rtimes \op{Conf}^+ \Euc^3 / \op{SO}(3)$
    (Section \ref{sec:fiber3_solconf}).

    \paragraph{Step 6: Maximality.}
    The maximal geometry realizing a solvable Lie group $M$ with real roots
    is of the form $M \rtimes K / K$, where $K \subseteq \op{Aut} M$
    is maximal compact (Lemma \ref{lemma:solvable_maximality}).

    So for these $M = \R^4 \rtimes_{e^{tA}} \R$, it suffices to determine the maximal
    compact subgroup of $\op{Aut} M \cong \op{Aut} \tanalg M$.
    An automorphism of the Lie algebra $\tanalg M$ must preserve its nilradical $\R^4$,
    each of the generalized eigenspaces
    by which $\R \cong \tanalg M / \R^4$ acts on $\R^4$,
    and the filtration by rank on each generalized eigenspace.
    So in coordinates the matrices in $\op{Aut} \tanalg M$
    are block upper-triangular, and the maximal compact subgroup
    $K \subset \op{Aut} \tanalg M$ is conjugate into a group
    whose only nonzero entries are in the diagonal blocks
    (Part II, \cite[Lemma \ref{ii:lemma:compacts_in_aut}]{geng2}).
    Since $K$ matches the dimension of the isotropy groups
    computed above in the classification, we conclude that the
    maximal geometries are
    \begin{align*}
        \SemiR{\R^4}{(x-1)^2,\, x+1,\, x+1} &\rtimes \op{SO}(2) / \op{SO}(2) \\
        \SemiR{\R^4}{x-1,\, x-1,\, x+1,\, x+1} &\rtimes \op{SO}(2)^2 / \op{SO}(2)^2 \\
        \SemiR{\R^4}{x-1,\, x-1,\, x-a+1,\, x+a+1} &\rtimes \op{SO}(2) / \op{SO}(2) .
            \qedhere
    \end{align*}
\end{proof}

\begin{proof}[Proof of Prop.~\ref{prop:fiber2_essentials}(iv) (model geometries)]
    Let $M = G/G_p \cong \R^4 \rtimes_{e^{tA}} \R$ be a geometry named in
    Prop.~\ref{prop:fiber2_essentials}.
    The proof splits into cases
    according to characteristic polynomials of Jordan blocks of $A$.

    \paragraph{Case (a): $x-1$, $x-1$, $x+1$, $x+1$.}
	This geometry models the solvmanifold $\C^2 \rtimes \R / (\Lambda \rtimes \Z)$
	where $1 \in \Z$ acts by a matrix conjugate to
	$\begin{pmatrix} 2 & 1 \\ 1 & 1 \end{pmatrix}$
	and $\Lambda \cong \Z^4$ is preserved by this action.

    \paragraph{Case (b): $(x-1)^2$, $x+1$, $x+1$.}
	When $A$ is the matrix below, \cite[Corollary 6.4.3]{filipk} ensures the
	existence of a lattice, if we choose $t$ to be the logarithm of an
	invertible quadratic integer, e.g. $\ln (3 + 2 \sqrt{2})$.
		\[ \begin{pmatrix}
			e^t & e^t & &\\
			& e^t & &\\
			& & e^{-t} & \\
			& & & e^{-t} \\
        \end{pmatrix} \]

    \paragraph{Cases (c, d): $x-1-a$, $x-1+a$, $x+1$, $x+1$.}
	We need a slightly stronger version of the
	same result; backtracking to Mostow's theorem \cite[Prop 6.4.2]{filipk},
	any lattice in $\tilde{G}$ intersects the nilradical $\R^4$
	in a lattice and projects to $\R \times \R$ as a closed cocompact subgroup
	(hence a lattice). So if $G$ admits a lattice $\Gamma$, then
	it lifts to a lattice in $\tilde{G}$
	containing the elements of $\R = \tilde{S^1}$
	lying over the identity in $S^1$; furthermore $\Gamma \cap (\R^2 \times \C)$
	is a lattice in $\R^2 \times \C$ preserved by two independent elements
	of $\R \times \R$. One of these can be chosen to lie over the identity in $S^1$
	so the other just needs to be independent from the $S^1$ direction.

	Hence $G$ of the second form admits a lattice if and only if, for some
	real $\theta$ and $t \neq 0$, the matrix with diagonal entries
	$e^{(1+a)t}$, $e^{(1-a)t}$, $e^{-t + i\theta}$, and $e^{-t - i\theta}$
	has a characteristic polynomial with integer coefficients---i.e.
	these four numbers are roots of an integer polynomial $p \in \Z[x]$.
	Then the extension $L$ of $\Q$ containing these roots is Galois.
	Let $\lambda = e^t$ and $z = e^{i\theta}$,
	so that these roots can be written as $\lambda^{1+a}$,
	$\lambda^{1-a}$, $\lambda^{-1} z$, and $\lambda^{-1} \overline{z}$.

	If $z$ is real, then $\lambda$ appears twice but the other roots appear
	only once, so $\lambda$ has no Galois conjugates---i.e. $\lambda \in \Q$.
	By Gauss's lemma, $x - \lambda \in \Z[x]$ so $\lambda \in \Z$.
	then $\lambda = 1$, which contradicts $t \neq 0$.
	Hence $z$ is not real.

	If $\lambda^{1-a} \in \Q$, then by the rational root theorem,
	$\lambda^{1-a} = 1$; so $a = 1$ since $\lambda \neq 1$.
	Then the resulting geometry is $\op{Sol}^4_0 \times \Euc$
	(in Filipkiewicz's notation, $G_5 \times \Euc$).

	If $\lambda^{1-a}$
	has degree $2$ over $\Q$, then its Galois conjugate must
	be the other real root of $p$, so
	$p$ factors---again in $\Z[x]$ due to Gauss's lemma---as
		\[
			\left( x^2 - (\lambda^{1+a} + \lambda^{1-a})x + \lambda^2 \right)
			\left( x^2 - (2 \lambda^{-1} \cos \theta)x + \lambda^{-2} \right) ,
		\]
	which again implies that $\lambda = 1$.
	If $\lambda^{1-a}$ has degree $3$ over $\Q$, then
	$\lambda^{1+a}$ has degree $1$ over $\Q$, again yielding $\op{Sol}^4_0 \times \Euc$.

	The last case is when $\lambda^{1-a}$ has degree $4$ over $\Q$. Then
		\[ p(x) = x^4 + ax^3 + bx^2 + cx + 1 \]
	is irreducible with exactly two real roots $\phi_1 = \lambda^{1-a}$
	and $\phi_2 = \lambda^{1+a}$, both positive.
	From the values of $\phi_1$ and $\phi_2$, we can recover $a$ by
		\[ a = \left|\frac{\ln \phi_1 - \ln \phi_2}{\ln (\phi_1 \phi_2)}\right| . \]
	The condition on $p$ having exactly two real roots is that its
	discriminant is negative.


    One can use Sturm's theorem or computer assistance to derive the condition
    that the two real roots are positive when $a + c < 0$.
\end{proof}

\subsection{Geometries requiring non-split extensions}
\label{sec:fiber2_nilmanifolds}

Having handled the essentially-fibering geometries in the previous section,
the remaining geometries $M = G/G_p$ fiber isometrically.
So in the description of $G$ as an extension $1 \to H \to G \to Q \to 1$
(Lemma \ref{lemma:fiber2_extension_formulation}),
$Q = \isomplus B$ where $B = S^2$, $\Euc^2$, or $\Hyp^2$---thus
reducing the number of extension problems from fifty to thirty.

This section's contribution is the second branch in
Figure \ref{fig:fiber2_flowchart}---that is, it proves
the following two claims.

\begin{lemma} \label{lemma:fiber2_nonsplit_oneext}
    If $M = G/G_p$ is a maximal model geometry for which
    \begin{enumerate}[(i)]
        \item $G_p \curvearrowright T_p M$ decomposes into $1$-dimensional
            and $2$-dimensional summands, and
        \item $\tilde{G}$ is not a split extension of
            $\widetilde{\isomplus B}$ ($B = S^2$, $\Euc^2$, or $\Hyp^2$),
    \end{enumerate}
    then $G$ is an extension
        \[ 1 \to \R^3 \to G \to \isomplus \Euc^2 \to 1 . \]
\end{lemma}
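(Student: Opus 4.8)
The plan is to enumerate the kernels $H$ permitted by Lemma~\ref{lemma:fiber2_extension_formulation}, shorten the list using the isometric-fibering hypothesis (which forces $Q\cong\isomplus B$ rather than $\op{Conf}^+B$) and a dimension count, and then rule out everything except $\lie h=\R^3$, $\lie q=\tanisom\Euc^2$. Since $\dim G=5+\dim G_p$ and $\dim\isomplus B=3$, the kernel satisfies $\dim H=3$ when $G_p\cong S^1$ and $\dim H=4$ when $G_p\cong\op{SO}(2)\times\op{SO}(2)$. Intersecting with the lists in Lemma~\ref{lemma:fiber2_extension_formulation}(ii)--(iii) leaves $\lie h^{0}$ among $\R^3$, $\Heis_3$, $\op{Sol}^3$, $\widetilde{\op{SL}(2,\R)}$, $S^3$, $\widetilde{\isomplus\Euc^2}$ in the first case, and $\tilde H^{0}$ among $S^3\times\R$, $\widetilde{\op{SL}(2,\R)}\times\R$, $\widetilde{\isomplus\Euc^2}\times\R$, $\widetilde{\isomplus\Heis_3}$ in the second.

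Passing to Lie algebras: the assumption that $\tilde G$ has no presentation as a split extension of a $\widetilde{\isomplus B}$ means in particular that the presentation at hand is non-split, so by Thm.~\ref{thm:extensions_h2} the class governing it in $H^2(\lie q;Z(\lie h))$ is nonzero, whence $Z(\lie h)\neq 0$ and $H^2(\lie q;Z(\lie h))\neq 0$. The first condition kills the semisimple kernels $S^3$, $\widetilde{\op{SL}(2,\R)}$ (all derivations inner, so the unique extension is a semidirect product) and the centerless kernels $\op{Sol}^3$, $\widetilde{\isomplus\Euc^2}$ (then $H^2(\lie q;Z(\lie h))=H^2(\lie q;0)=0$, and again the unique extension splits). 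The second condition is incompatible with $\lie q$ semisimple, by Whitehead vanishing (\cite[Thm.~1.3.2]{onishchik3}), so $B=\Euc^2$ and $\lie q=\tanisom\Euc^2=\R^2\semisum\lie{so}_2$.

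What survives is $\lie h=\R^3$ --- the desired conclusion, with $G_p\cong S^1$ then forced by the dimension count --- together with $\lie h\in\{\Heis_3,\ \lie{su}_2\oplus\R,\ \lie{sl}_2\oplus\R,\ \tanisom\Euc^2\oplus\R,\ \tanalg\widetilde{\isomplus\Heis_3}\}$, which I would dispatch one at a time. In each, the image of the compact generator $r\in\tanisom\Euc^2$ in $\op{out}\lie h$ must generate a compact subgroup; a direct computation of $\op{out}\lie h$ shows its only nonzero outer derivations are non-compact (scalings, or nilpotent derivations in the solvable cases), except that $\op{out}\Heis_3\cong\lie{gl}_2\R$ contains the rotation of $\Heis_3/Z(\Heis_3)$. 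Together with Lemma~\ref{isomeuc_normalsubs} (forcing the translations $x,y$ to act by inner derivations), this pins the outer action down: trivial outside the $\Heis_3$ case, and a rotation in it. Choosing a homomorphic lift $\alpha$ of that action together with the normal-form cocycle $\rho$ with $\rho(x,y)\in Z(\lie h)\setminus\{0\}$ and $\rho$ zero elsewhere, and then performing the obvious inner change of basis, one recognizes $\lie g$ explicitly: for $\lie h=\Heis_3$ it is $\tanalg(\Heis_5\rtimes\op{SO}(2))$ and for $\lie h=\tanalg\widetilde{\isomplus\Heis_3}$ it is $\tanalg(\Heis_5\rtimes\op{SO}(2)^2)$, so $M\cong\Heis_5$ --- but $\Heis_5$'s maximal transformation group is $\Heis_5\rtimes\op{U}(2)$, whose isotropy has a $4$-dimensional irreducible summand, contradicting maximality together with hypothesis (i); while for $\lie h=\lie{su}_2\oplus\R$, $\lie{sl}_2\oplus\R$, $\tanisom\Euc^2\oplus\R$ it is the direct sum $\lie{su}_2\times\tanalg\widetilde{\isomplus\Heis_3}$, $\lie{sl}_2\times\tanalg\widetilde{\isomplus\Heis_3}$, $\tanisom\Euc^2\times\tanalg\widetilde{\isomplus\Heis_3}$, each presenting $\tilde G$ as a split extension of $\widetilde{\isomplus S^2}$, $\widetilde{\isomplus\Hyp^2}$, resp.\ $\widetilde{\isomplus\Euc^2}$ by the permitted kernel $\widetilde{\isomplus\Heis_3}$, contradicting hypothesis (ii). Hence $\lie h=\R^3$, i.e.\ $1\to\R^3\to G\to\isomplus\Euc^2\to 1$.

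The main obstacle is the last paragraph. The cohomological bookkeeping (Thm.~\ref{thm:extensions_h2}, Whitehead vanishing) is routine, but one must compute the outer-derivation algebra of each of the five leftover Lie algebras, check that the action forced by the compactness of $r$ is removable by an inner change of basis, and correctly recognize the resulting $\lie g$ as the claimed $\Heis_5$-extension or direct sum; it is exactly the phenomenon that a non-split presentation can become a split presentation over a different base after such a change of basis that makes hypothesis (ii) --- rather than the non-splitting of one chosen presentation --- the right assumption.
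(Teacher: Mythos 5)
Your proposal is correct and follows essentially the same route as the paper's own proof: Whitehead vanishing forces $\lie{q} = \tanisom \Euc^2$, the nonzero class in $H^2(\lie{q}; Z(\lie{h}))$ forces $Z(\lie{h}) \neq 0$, and the surviving non-$\R^3$ kernels are dispatched exactly as in the paper --- the Heisenberg-type kernels give $\Heis_5 \rtimes (\text{rotations})$ and hence non-maximal geometries, while the $\lie{h}' \oplus \R$ kernels give $\lie{h}' \oplus \tanalg \isomplus \Heis_3$, i.e.\ direct products split over another $\widetilde{\isomplus B}$, which the paper defers to the product-group section and you phrase as a contradiction of hypothesis (ii) (the same logical move). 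The only differences are cosmetic: you rederive the action constraints and assert the $H^2$ normal form where the paper cites Lemma \ref{fiber2_isometric_actions} and computes $H^2(\lie{q}; Z(\lie{h})) \cong \R$ explicitly.
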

\begin{prop}
    \label{prop:fiber2_nilmanifolds}
    The $5$-dimensional
    maximal model geometries $M = G/G_p$ where $\tanalg G$ is an extension
        \[ 0 \to \R^3 \to \tanalg G \to \tanisom \Euc^2 \to 0 \]
    are the nilpotent Lie groups
    \begin{enumerate}[(i)]
		\item $M = \R^4 \rtimes_A \R$
			with $A = \begin{pmatrix}
				1 & 1 & & \\ & 1 & & \\ & & 1 & 1 \\ & & & 1 \\
			\end{pmatrix}$
		\item $M$ has Lie algebra with basis $\{x,y,u,v,w\}$ and
			\begin{align*}
				[x,w] &= u  &  [y,w] &= v  &  [x,y] &= w .
			\end{align*}
	\end{enumerate}
	On both of these, the point stabilizer is an $S^1$'s worth of
    Lie group automorphisms at the identity,
    acting as the diagonal circle in $\op{SO}(2) \times \op{SO}(2)$;
    and the full isometry group is $M \rtimes S^1$.
\end{prop}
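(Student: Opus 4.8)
The plan is to solve the Lie-algebra extension problem $0 \to \R^3 \to \tanalg G \to \tanisom \Euc^2 \to 0$ directly, via Theorem~\ref{thm:extensions_h2}. Since $\R^3$ is abelian we have $\op{out}\R^3 = \op{der}\R^3 = \lie{gl}_3\R$, so an extension is specified by a homomorphism $\phi\colon\tanisom \Euc^2\to\lie{gl}_3\R$ together with a class in $H^2(\tanisom \Euc^2;\R^3_\phi)$. Throughout, $G_p\cong S^1$ and $\dim G=6$ (Lemma~\ref{lemma:fiber2_extension_formulation}); write $\xi$ for a generator of $\tanalg G_p$, so that $\op{ad}_\xi$ is semisimple (because $G_p$ is compact) and acts faithfully on $\tanalg G$ (Lemma~\ref{lemma:faithful_conjugation}). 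Because the image of $G_p$ in $\isomplus \Euc^2$ is a nontrivial compact point stabilizer, one may take $\xi$ to lie over the generator $r$ of $\lie{so}_2\subseteq\tanisom \Euc^2$; then $\phi(r)=\op{ad}_\xi|_{\R^3}$ is semisimple with purely imaginary eigenvalues, hence is $0$ or a planar rotation.

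First I would pin down $\phi$. Unimodularity of $G$ puts $\phi$ into $\lie{sl}_3\R$, and Lemma~\ref{isomeuc_normalsubs} says $\ker\phi$ is $0$, the translation subalgebra $\R^2$, or all of $\tanisom \Euc^2$, leaving three cases: (a) $\phi=0$; (b) $\phi$ factors through $\lie{so}_2$ with $\phi(r)$ a nonzero planar rotation; (c) $\phi$ injective, in which case matching the eigenvalues of $\op{ad}_{\phi(r)}$ on $\lie{gl}_3\R$ against the bracket $[r,x]=y$ forces $\phi$, up to conjugacy and reparametrizing $r$, to be the affine action of $\tanisom \Euc^2$ on $\R^3=\R^2_{\mathrm{aff}}$ (translations acting by commuting nilpotents; $r$ rotating and fixing a line $L=\langle\ell\rangle$). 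In case (a) the relevant cohomology is $H^2(\tanisom \Euc^2;\R)\cong\R$ per summand, so $\tanalg G$ is $\R^2\times\tanalg(\isomplus \Heis_3)$ or abelian $\times\,\tanisom \Euc^2$, and the geometry is $\Heis_3\times \Euc^2$ or $\Euc^5$ — whose maximal realizations have $\dim G\ge 7$ (isotropy $\op{SO}(2)^2$ or $\op{SO}(5)$), so neither is maximal with a $6$-dimensional transformation group. In case (b), irrational rotation speed makes $G_p$ noncompact, and for rational speed a computation of $H^2(\tanisom \Euc^2;\R^3_\phi)$ shows the rotation summand contributes nothing unless its speed matches that of $r$, in which case the extra classes are killed by keeping $\op{ad}_\xi$ semisimple; the only bracket that can be added is $[x,y]\in L$, giving $\lie m\cong\Heis_3\oplus\R^2$, i.e.\ $\Heis_3\times \Euc^2$ once more.

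The substance is case (c). Here the subspace $\lie m:=\R^3\oplus(\text{translation lift})$ is a nilpotent ideal and $\tanalg G=\lie m\rtimes\R\xi$, so $G=M\rtimes S^1$ with $M$ the simply-connected nilpotent Lie group of $\lie m$. I would compute $H^2(\tanisom \Euc^2;\R^3_{\mathrm{aff}})$, finding it two-dimensional: one class is represented by $[x,y]=\ell$, and demanding that $\op{ad}_\xi$ remain semisimple (equivalently, that $G_p$ be a genuine compact circle) rules out the other direction, whose $\op{ad}_\xi$ acquires a Jordan block — the obstruction being that the coupling matrix is not of the form $[J,X]$ for $J=\op{ad}$ of a planar rotation. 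So exactly two Lie algebras survive: the split extension, which after relabeling is $\R^4\rtimes_A\R$ with $A$ the unipotent matrix of~(i) (two equal Jordan blocks), and the $[x,y]=\ell$ extension, which after setting $w:=\ell$ is the class-$3$ nilpotent algebra of~(ii). Since both $M$ are nilpotent, Lemma~\ref{lemma:solvable_maximality} gives their maximal realizations as $M\rtimes K/K$ with $K$ maximal compact in $\op{Aut} M$; automorphisms of $\lie m$ preserve the filtration of its nilradical by rank, hence are block upper-triangular, so $K$ is conjugate into the diagonal blocks \cite[Lemma~\ref{ii:lemma:compacts_in_aut}]{geng2}, and a short check identifies $K$ with the diagonal circle in $\op{SO}(2)\times\op{SO}(2)$ — exactly the $\exp(\R\xi)$ found above. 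Thus $G=M\rtimes S^1$ with the stated isotropy; modelhood follows from Malcev's criterion, both algebras having rational structure constants (one can write down $\Z$-lattices explicitly).

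I expect the two cohomology computations in case (c) to be the bulk of the work — in particular, extracting from $H^2(\tanisom \Euc^2;\R^3_{\mathrm{aff}})$ only the classes admitting a semisimple $\op{ad}_\xi$, that is, separating the cocycles that yield a genuine circle of isometries from those that yield a non-semisimple shear. The eigenvalue bookkeeping that pins $\phi$ to the affine representation in case (c), and the $\op{Aut}\lie m$ computation behind maximality, should be comparatively routine.
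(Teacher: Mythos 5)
Your overall scheme (classify the action $\phi\colon\tanisom\Euc^2\to\lie{sl}_3\R$, compute $H^2$, and discard the cocycle classes for which no lift of the rotation generator $r$ can act semisimply) is viable and differs from the paper, which never computes $H^2(\tanisom\Euc^2;\R^3)$: it instead restricts the extension to the translation subalgebra, identifies the resulting $\lie{m}$ as the nilradical via $H^2(\R^2;\R^3)$, and lets Wilson's theorem (Lemma~\ref{lemma:solvable_maximality}) together with a computation of $\op{Aut}\lie{m}$ decide which candidates carry an $S^1$ of isotropy. Your class-B elimination in the affine case is in fact correct: the second generator of $H^2(\tanisom\Euc^2;\R^3_{\mathrm{aff}})$ forces a Jordan block on every element lying over $\R r$, so no compact circle exists there, and the two surviving algebras are exactly (i) and (ii).

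The genuine gap is in your case (c): the claim that eigenvalue bookkeeping forces $\phi$, ``up to conjugacy and reparametrizing $r$,'' to be the affine action is false. There are \emph{two} inequivalent faithful actions of $\tanisom\Euc^2$ on $\R^3$ --- the affine one and its negative transpose (equivalently, the dual affine action, with translations acting by $x\cdot u=w$, $y\cdot v=w$) --- exactly as recorded in Lemma~\ref{fiber2_isometric_actions} via the classification of $2$-dimensional abelian subalgebras of $\lie{sl}_3\R$. They are not related by an automorphism of $\tanisom\Euc^2$ (any automorphism preserves the translation ideal, and the sum of the images of the translations has dimension $2$ in one representation and $1$ in the other), and no reparametrization of $r$ fixes this. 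Your proof therefore never meets the branch in which the split extension yields $\lie{m}\cong\tanalg\Heis_5$ and the non-split class yields the algebra with $[x,u]=w$, $[y,v]=w$, $[x,y]=u$; both admit lifts of $r$ acting semisimply, so your semisimplicity criterion does not remove them, and they must be excluded by separate arguments --- $\Heis_5$ because its maximal realization has isotropy $\op{U}(2)$ (so the $6$-dimensional-$G$ version is non-maximal), and the second algebra because its automorphisms are forced to be upper-triangular by a filtration of characteristic ideals, so $\op{Aut}$ has no positive-dimensional compact subgroup and no $S^1$ isotropy exists. These eliminations are exactly Step~3 of the paper's proof; your framework can absorb them, but as written the case analysis is incomplete and the classification would erroneously appear to follow from the affine action alone.
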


The proofs will set up the tools for solving all thirty extension problems---i.e.\ for
classifying extensions of $Q$ by $H$ for all combinations of the three
remaining values of $Q$ and the ten values of $H$
(See Lemma \ref{lemma:fiber2_extension_formulation}).
The strategy is as follows.
Passing to Lie algebras, a (non-split) extension
    \[ 0 \to \lie{h} \to \lie{g} \to \lie{q} \to 0 \]
is given by the homomorphism $\lie{q} \to \op{out} \lie{h}$
and a (nonzero) class in $H^2(\lie{q}; Z(\lie{h}))$
(Thm.~\ref{thm:extensions_h2}).
Since each $Q$ is unimodular and a model geometry's transformation group
is unimodular \cite[Prop.~1.1.3]{filipk}, the action on $\lie{h}$
must be by traceless derivations
(Part II, \cite[Lemma \ref{ii:lemma:tracelessly}]{geng2}).
Hence a starting point 
would be to compute $Z(\lie{h})$ and the traceless outer derivation algebra
$\op{sout}(\lie{h})$ for each $\lie{h}$
(Table \ref{table:fiber2_derivations}).

This data and the actions of $\lie{q}$ on $\lie{h}$ are computed
in Section \ref{sec:fiber2_nonsplit_data}.
The reduction to extensions of $\tanisom \Euc^2$ by $\R^3$
(Lemma \ref{lemma:fiber2_nonsplit_oneext}) is proven in
Section \ref{sec:fiber2_nonsplit_oneext}).
Finally, Section \ref{sec:fiber2_nonsplit_geometries})
classifies the geometries resulting from this extension
(Prop.~\ref{prop:fiber2_nilmanifolds}).

\begin{table}[h!]
    \caption[Centers and traceless outer derivation algebras.
        ]{Centers and traceless outer derivation algebras.
        $\tanalg \Heis_3$ is written with basis $\{x,y,z\}$ where $[x,y]=z$.
        See Prop.~\ref{prop:fiber2_derivations} for details.}
    \label{table:fiber2_derivations}
    \begin{center}\begin{tabular}{ccc}
        $\lie{h}$ & $Z(\lie{h})$ & $\op{sout}(\lie{h})$ \\
        \hline
        $\lie{so}_3 \oplus \R$ & $0 \oplus \R$ & 0 \\
		$\lie{sl}_2 \oplus \R$ & $0 \oplus \R$ & 0 \\
		$\tanalg \isomplus \Euc^2 \oplus \R$ & $0 \oplus \R$ & $\R \semisum \R$ \\
		$\tanalg \isomplus \Heis_3$ & $\R z$ & $\R$ \\
		$\lie{so}_3$ & 0 & 0 \\
		$\lie{sl}_2$ & 0 & 0 \\
		$\tanalg \op{Sol}^3$ & 0 & 0 \\
		$\tanalg \isomplus \Euc^2$ & 0 & 0 \\
		$\tanalg \Heis_3$ & $\R z$ & $\lie{sl}_2$ \\
		$\R^3$ & $\R^3$ & $\lie{sl}_3$ \\
    \end{tabular}\end{center}
\end{table}

\subsubsection{Data for the extension problem}
\label{sec:fiber2_nonsplit_data}

This section establishes the list of traceless outer derivation algebras
in Table \ref{table:fiber2_derivations}
and classifies the homomorphisms $\lie{q} \to \op{sout} \lie{h}$.

\begin{lemma} \label{prop:fiber2_derivations} 
	The centers and traceless outer derivation algebras
	for the Lie algebras $\lie{h}$ are as given in
    Table \ref{table:fiber2_derivations}.
\end{lemma}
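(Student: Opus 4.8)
The statement is a finite check, so the plan is to run through the ten algebras $\lie{h}$ in a few groups, using throughout the standard facts that every derivation $d$ of $\lie{h}$ preserves each characteristic ideal --- in particular the center $Z(\lie{h})$ and the derived subalgebra $[\lie{h},\lie{h}]$ (and, where useful, the nilradical and the terms of the lower central series). The centers need no work: reading off the given bracket relations shows the $\R$-summand is central in the first three rows, $\R z$ in the Heisenberg rows, all of $\R^3$ in the last row, and $0$ otherwise. For the derivation algebras, preservation of characteristic ideals forces $d$ into a block form; one then reduces modulo $\op{ad}(\lie{h})$ to obtain $\op{out}\lie{h}$, and finally imposes tracelessness to pass to $\op{sout}\lie{h}$.

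\textbf{The semisimple rows.} For $\lie{h}=\lie{so}_3$ or $\lie{sl}_2$, every derivation is inner (e.g.\ \cite[\S 5.3]{humphreys}), so $\op{out}\lie{h}=0$ and hence $\op{sout}\lie{h}=0$; the center is $0$. For $\lie{h}=\lie{s}\oplus\R$ with $\lie{s}$ simple, the derived subalgebra is $\lie{s}$ and the center is the $\R$ summand, so $d$ consists of a derivation of $\lie{s}$ (inner) together with a scalar $\lambda$ on $\R$; subtracting the inner part, $d$ acts as $0$ on $\lie{s}$ and as $\lambda$ on $\R$, so its trace is exactly $\lambda$. Thus tracelessness forces $\lambda=0$, and $\op{sout}(\lie{s}\oplus\R)=0$ while $Z=0\oplus\R$.

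\textbf{The remaining rows.} For $\R^3$ everything is immediate: $\op{Der}=\lie{gl}_3$, nothing is inner, and the traceless derivations form $\lie{sl}_3$. For $\tanalg\Heis_3$ (basis $x,y,z$ with $[x,y]=z$) a derivation preserves $\vecspan{z}=Z(\tanalg\Heis_3)=[\tanalg\Heis_3,\tanalg\Heis_3]$, hence is given by an arbitrary $2\times2$ block $A$ on $\tanalg\Heis_3/\vecspan{z}$ plus a map $\vecspan{x,y}\to\vecspan{z}$; the relation $[x,y]=z$ forces $dz=(\op{tr}A)z$, the $\vecspan{x,y}\to\vecspan{z}$ part is exactly $\op{ad}(\tanalg\Heis_3)$, and the trace of $d$ is $2\op{tr}A$, so $\op{sout}(\tanalg\Heis_3)=\lie{sl}_2$. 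For $\tanalg\isomplus\Euc^2$ and $\tanalg\op{Sol}^3$ the outer derivation algebra is already known to be $1$-dimensional --- for $\tanalg\isomplus\Euc^2$ by Lemma \ref{lemma:fiber2_essential_out} (the outer class scales the translations), and for $\tanalg\op{Sol}^3$ the analogous computation shows the only outer class scales the two root spaces by a common factor --- and in both cases that class has nonzero trace, so $\op{sout}=0$. Finally, for $\tanalg\isomplus\Euc^2\oplus\R$ and $\tanalg\isomplus\Heis_3$ one does the block analysis in a basis $x,y,r$ ($[r,x]=y$, $[r,y]=-x$) extended by $z$ --- central for the first algebra, with $[x,y]=z$ for the second: $d$ preserves $\vecspan{z}$ and the derived subalgebra, the rotation relation forces the $2\times2$ block on $\vecspan{x,y}$ to commute with the rotation and $dr$ to have no $r$-component, and modulo $\op{ad}(\lie{h})$ the surviving directions are a scaling of $\vecspan{x,y}$, a scaling of $\vecspan{z}$, and the nilpotent derivation $d_{r\mapsto z}$ sending $r\mapsto z$ and everything else to $0$ (for $\tanalg\isomplus\Heis_3$ the relation $[x,y]=z$ ties the $\vecspan{z}$-scaling to twice the $\vecspan{x,y}$-scaling, so only two of these are independent). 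Imposing tracelessness: for $\tanalg\isomplus\Euc^2\oplus\R$ it merely relates the two scalings, leaving a $2$-dimensional $\op{sout}$ with $[d_{\mathrm{scale}},d_{r\mapsto z}]=-2\,d_{r\mapsto z}$, i.e.\ the non-abelian $\R\semisum\R$; for $\tanalg\isomplus\Heis_3$ the scaling now has nonzero trace and is killed, leaving $\op{sout}(\tanalg\isomplus\Heis_3)=\R$.

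\textbf{Expected obstacle.} The only step with genuine bookkeeping is the last one --- separating truly outer derivations from inner ones for $\tanalg\isomplus\Euc^2\oplus\R$ and $\tanalg\isomplus\Heis_3$, and in particular confirming that $\op{sout}(\tanalg\isomplus\Euc^2\oplus\R)$ is the non-abelian $\R\semisum\R$ rather than $\R^2$, which requires computing the bracket of the two surviving generators and not just their span. Everything else is routine linear algebra with the given brackets.
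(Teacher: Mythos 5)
Your proposal is correct and follows essentially the same route as the paper: centers by inspection, derivations constrained by preservation of characteristic ideals (center, derived subalgebra), reduction modulo inner derivations using Lemma \ref{lemma:fiber2_essential_out} where applicable, and finally the trace-zero condition, with your explicit bracket $[d_{\mathrm{scale}},d_{r\mapsto z}]=-2\,d_{r\mapsto z}$ settling the $\R\semisum\R$ entry just as the paper does by writing matrices on the span of $\R$ and $r$. The only cosmetic difference is that the paper treats $\tanalg\isomplus\Heis_3$ by pushing an outer derivation to the quotient $\tanisom\Euc^2$ rather than by your direct block analysis, but the computation is equivalent.
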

\begin{proof}
	Explicit calculation of the centers is omitted;
    they are verifiable in any convenient basis.

    Calculation of the derivation algebras uses the following shortcut:
	A derivation $d$ satisfies a Leibniz rule with respect to the Lie bracket;
    so if $d$ preserves some subset $A$ of $\lie{h}$, then it also preserves
    the derived series, the lower central series, and the centralizer of $A$.

	Lemma \ref{lemma:fiber2_essential_out} provides the derivation
    algebras for $\lie{so}_3$, $\lie{sl}_2$, and $\tanalg \isomplus \Euc^2$.
    The entry for $\lie{so}_3 \oplus \R$ follows since $\lie{so}_3$
    is the first term of the derived series and $\R$ is the center;
    $\lie{sl}_2 \oplus \R$ is handled likewise.
    Since $\R^3$ is abelian,
    $\op{sout} \R^3 = \lie{sl}(\R^3)$.
    Only four cases then remain.

    \paragraph{Case 1: $\tanalg \isomplus \Euc^2 \times \R$.}
    The first term of the derived
	series is the translation subalgebra, and the center is $\R$.
	The same calculation as in Lemma \ref{lemma:fiber2_essential_out}
	ensures that an outer derivation can be represented by $d$ which
	scales the translation subalgebra uniformly and has $dr \in \R$.
	Then if $d$ acts as the scalar $b$ on the translation subalgebra,
	tracelessness requires it to act as $-2b$ on $\R$. Writing this algebra
	as $\R \semisum \R$ is now just writing the matrices by which it acts
	on the span of $\R$ and $r$.

    \paragraph{Case 2: $\tanalg \op{Sol}^3$.}
	For the basis $\{x,y,z\}$ of $\tanalg \op{Sol}^3$ where $[z,x] = x$
	and $[z,y] = -y$, the first term of the derived series is $\R x + \R y$.
	Subtracting inner derivations, we may assume $dz = az$ for some $a$.
	Then
		\[ dx = d[z,x] = [dz, x] + [z,dx] = ax + [z,dx] ; \]
	so $(d-a)x = [z,dx]$, which implies $dx \in \R x$. Then $[z, dx] = dx$
	so the above implies $a = 0$, and we can subtract a multiple of
	$\op{ad} z$ to ensure $dx = 0$. Similarly, $dy \in \R y$,
	and tracelessness then implies $dy = 0$.

    \paragraph{Case 3: $\tanalg \Heis_3$.}
	(This calculation is outlined quickly in \cite[\S 6.3]{filipk}.)
    With the basis $\{x,y,z\}$
	in which $[x, y] = z$, inner derivations account for the
	$\R x + \R y \to \R z$ component; so a derivation $d$ can be taken to preserve
	$\R x + \R y$. Since $\R z$ is central, $d$ preserves $\R z$. Then
		\[ (\op{tr} d) z = [dx, y] + [x, dy] + dz = d[x,y] + dz = 2 dz , \]
	so $dz = 0$ and $d$ acts with trace zero on $\R x + \R y$.

    \paragraph{Case 4: $\tanalg \isomplus \Heis_3$.}
	Since $\tanalg \isomplus \Heis_3$ admits $\tanalg \isomplus \Euc^2$
	as a quotient, any outer derivation of the former induces an outer derivation
    of the latter. Hence (consulting Lemma \ref{lemma:fiber2_essential_out})
    any outer derivation of the former can be
	represented by a derivation $d$ such that
	\begin{align*}
		dx &= bx + c_1 z \\
		dy &= by + c_2 z \\
		dr &= c_3 z \\
		dz &= c_4 z
	\end{align*}
	for some real $b$ and $c_i$. Tracelessness implies $c_4 = -2b$, and
		\[ -2bz = dz = [dx, y] + [x, dy] = 2b[x, y] = 2bz \]
	implies $b = 0$. Then
		\[ dx = d[r,-y] = [dr, -y] + [r, -dy] = 0 \]
	and similarly $dy = 0$; so all that remains is $c_3$.
\end{proof}

\begin{lemma} \label{fiber2_isometric_actions}
	If $0 \to \lie{h} \to \lie{g} \to \lie{q} \to 0$ arises from
	a $5$-dimensional geometry, then
	$\lie{q} \to \op{sout} \lie{h}$ is nonzero only in the following cases,
	all of which lift to maps $\lie{q} \to \op{Der} \lie{h}$.
	\begin{align*}
		\lie{sl}_2
            &\overset{\sim}{\to} \op{sout} \tanalg \Heis_3 \\
		\lie{sl}_2
            &\hookrightarrow \lie{sl}_3 \cong \op{sout} \R^3 \\
		\lie{sl}_2 \cong \lie{so}_{2,1}
			&\hookrightarrow \lie{sl}_3 \cong \op{sout} \R^3 \\
		\lie{so}_3
            &\hookrightarrow \lie{sl}_3 \cong \op{sout} \R^3 \\
		\tanalg \isomplus \Euc^2 \cong \R^2 \semisum \lie{so}_2
			&\hookrightarrow \lie{sl}_3 \cong \op{sout} \R^3
            \text{ and its negative transpose} \\
		\tanalg \isomplus \Euc^2 \to \lie{so}_2
			&\overset{t \neq 0}{\hookrightarrow} \lie{sl}_2
            \cong \op{sout} \tanalg \Heis_3 \\
		\tanalg \isomplus \Euc^2 \to \lie{so}_2
			&\overset{t > 0}{\hookrightarrow} \lie{sl}_3 \cong \op{sout} \R^3 \\
	\end{align*}
\end{lemma}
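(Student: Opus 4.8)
The plan is to classify the homomorphisms $\lie{q} \to \op{sout}\lie{h}$ one target algebra at a time, reading off $\op{sout}\lie{h}$ from Table~\ref{table:fiber2_derivations}, where $\lie{q}$ is now one of $\tanisom S^2 \cong \lie{so}_3$, $\tanisom\Euc^2 \cong \R^2 \semisum \lie{so}_2$, or $\tanisom\Hyp^2 \cong \lie{sl}_2\R$, since the fibering is isometric. Two structural facts do most of the work. First, tracelessness --- the reason $\op{sout}$, rather than $\op{out}$, is the target --- follows from unimodularity of $G$ and is already built into the table. Second, the point-stabilizer of $Q \curvearrowright B$ is a circle $Q_b \cong \op{SO}(2)$, onto which $G_p$ surjects; hence the image of $Q_b$ under the extension's outer action $Q \to \op{Out} H$ is a compact subgroup. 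Passing to Lie algebras, the generator $r$ of $\lie{so}_2 \subset \lie{q}$ (indeed, when $\lie{q}$ is semisimple, the whole image, since $Q$ is then the compact group $\op{SO}(3)$) must land in a maximal compact subalgebra of $\op{sout}\lie{h}$.

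Six of the ten algebras $\lie{h}$ have $\op{sout}\lie{h} = 0$ and need no discussion. For the two with $\op{sout}$ nonzero but solvable --- $\tanalg\isomplus\Euc^2\oplus\R$ with $\op{sout} = \R\semisum\R$, and $\tanalg\isomplus\Heis_3$ with $\op{sout} = \R$ --- I would argue that $\lie{so}_3$ and $\lie{sl}_2\R$, being perfect, admit only the zero homomorphism into a solvable algebra; while a nonzero homomorphism from $\tanisom\Euc^2$ has a proper nonzero kernel, which by Lemma~\ref{isomeuc_normalsubs} (together with a dimension count, the proper ideals being exactly $0$ and $\R^2$) must be the translation ideal $\R^2$, so the map factors through $\lie{so}_2 = \langle r\rangle$; but a direct inspection shows that no nonzero element of $\R$ or $\R\semisum\R$ generates a relatively compact one-parameter group (each acts on $\lie{h}$ with a real nonzero, or a non-semisimple, eigenvalue), so $r \mapsto 0$. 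Hence these two also contribute nothing, leaving the targets $\op{sout}\tanalg\Heis_3 = \lie{sl}_2\R$ and $\op{sout}\R^3 = \lie{sl}_3$.

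For $\op{sout}\tanalg\Heis_3 = \lie{sl}_2\R$: a nonzero map from $\lie{so}_3$ or $\lie{sl}_2\R$ is injective by simplicity, hence an isomorphism; $\lie{so}_3 \ncong \lie{sl}_2\R$ (compact versus noncompact) rules out the first and yields the second listed case. A nonzero map from $\tanisom\Euc^2$ factors through $\lie{so}_2$ as in the previous paragraph and, by compactness, lands in a conjugate of the maximal compact subalgebra $\lie{so}_2 \subset \lie{sl}_2\R$, giving the one-parameter family $\tanisom\Euc^2 \to \lie{so}_2 \hookrightarrow \lie{sl}_2\R$, nonzero exactly when the slope parameter is. For $\op{sout}\R^3 = \lie{sl}_3$, a homomorphism is a $3$-dimensional real representation of $\lie{q}$ (automatically traceless here); I would enumerate these: for $\lie{so}_3$, the only nonzero one is the standard (irreducible) representation $\lie{so}_3 \hookrightarrow \lie{sl}_3$; for $\lie{sl}_2\R$, either $\mathrm{std}\oplus\mathrm{triv}$ (image $\lie{sl}_2\R$) or $\mathrm{Sym}^2(\mathrm{std}) \cong \lie{so}_{2,1}$ (image $\lie{so}_{2,1}$); and for $\tanisom\Euc^2$, either a faithful one, where by Lie's theorem $\rho(\R^2)$ is abelian and nilpotent and $\rho(\lie{so}_2)$ must rotate it, which forces $\rho$ to be the affine representation on $\R^3$ or its dual (negative transpose), or --- when the kernel is $\R^2$ --- one factoring through $\lie{so}_2 \hookrightarrow \lie{sl}_3$. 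Matching against Table~\ref{table:fiber2_derivations} and the list of $\lie{h}$ in Lemma~\ref{lemma:fiber2_extension_formulation}, these are precisely the seven cases in the statement, and in each one the image of $r$ is indeed compactly embedded.

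Lifting to $\op{Der}\lie{h}$ is then immediate. For $\lie{h} = \R^3$ there are no inner derivations, so $\op{out}\R^3 = \op{Der}\R^3 = \lie{gl}_3$ and the maps already take values in $\op{Der}$. For $\lie{h} = \tanalg\Heis_3$, the derivation algebra is a split semidirect product $\op{Der}\tanalg\Heis_3 \cong \lie{gl}_2 \ltimes \R^2$ whose complement $\lie{gl}_2$ is exactly $\op{out}\tanalg\Heis_3$ (the derivations acting by $\bar d \in \lie{gl}_2$ on $\langle x,y\rangle$ and by $\op{tr}\bar d$ on $z$), so composing $\lie{q} \to \lie{sl}_2\R = \op{sout}\tanalg\Heis_3 \hookrightarrow \lie{gl}_2 \hookrightarrow \op{Der}\tanalg\Heis_3$ gives a lift, again by traceless derivations. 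I expect the main obstacle to be not any single computation but making the two reductions fully rigorous: confirming that $G_p$ surjects onto the base circle $Q_b$ (using that $H \cap G_p$ is the complementary $\op{SO}(2)$ factor) so that the rotation-compactness constraint is legitimate, and carrying out the $3$-dimensional representation theory of $\tanisom\Euc^2$ exhaustively so that no exotic embedding into $\lie{sl}_3$ slips through.
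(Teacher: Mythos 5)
Your proposal is correct and follows essentially the same route as the paper: tracelessness from unimodularity, compactness of the image of the rotation generator in $\op{Out} H$, the dichotomy (faithful versus factoring through $\lie{so}_2$) coming from Lemma~\ref{isomeuc_normalsubs}, dimension and representation counts that leave only $\lie{sl}_2 \cong \op{sout} \tanalg\Heis_3$ and $\lie{sl}_3 \cong \op{sout}\R^3$ as possible targets, and the same observation about lifting to $\op{Der}\lie{h}$. The one step you flag as needing exhaustive care---excluding exotic embeddings $\tanisom \Euc^2 \hookrightarrow \lie{sl}_3$---is precisely where the paper invokes Part II's classification of $2$-dimensional abelian subalgebras of $\lie{sl}_3\R$, using that the two translation generators map to commuting, similar matrices (so, e.g., a plane containing a regular nilpotent is ruled out), which supplies the exhaustiveness your Lie's-theorem sketch leaves open.
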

\begin{proof} 
	If $\lie{q}$ is semisimple then only $\op{sout} \tanalg \Heis_3 \cong \lie{sl}_2 \R$
	and $\op{sout} \R^3 = \lie{sl}_3 \R$ have high enough dimension to admit
	nonzero images of $\lie{q}$. By counting representations, the only such are
	the first four maps listed above.

	Otherwise, $\lie{q} = \tanalg \isomplus \Euc^2$.
    The proof proceeds similarly to Lemma \ref{lemma:fiber2_essential_action},
    making note of the following facts.
    \begin{enumerate}
        \item Any homomorphism from $\tanalg \isomplus \Euc^2$ either is faithful or
            factors through the rotation part $\lie{so}_2$ (Lemma \ref{isomeuc_normalsubs}).
        \item Since $\lie{q} \to \op{sout} \lie{h}$ comes from the conjugation action
            $\isomplus \Euc^2 \to \op{Out} H$, the generator $r \in \lie{q}$ of $\op{SO}(2)$
            generates a compact subgroup of $\op{Out} H$.
    \end{enumerate}
    Lifting to $\op{Der} \lie{h}$ will follow from observing that
	$\op{out} \R^3 = \op{Der} \R^3$, and $\op{out} \tanalg \Heis_3$
	embeds in $\op{Der} \tanalg \Heis_3$ as the subalgebra preserving
	$\R x + \R y$.

    \paragraph{Case 1: $\rho$ is faithful.}
	Only $\op{sout} \tanalg \Heis_3 \cong \lie{sl}_2$ (dimension $3$)
	and $\op{sout} \R^3 \cong \lie{sl}_3$ (dimension $8$) have high enough dimension
    to admit a faithful image of $\tanalg \isomplus \Euc^2$.
	Since $\lie{sl}_2 \ncong \tanalg \isomplus \Euc^2$ (only the latter is solvable),
	a faithful image must land in $\lie{sl}_{3}$.

    Let $\tanalg \isomplus \Euc^2$ have basis $\{x,y,r\}$ with nonzero brackets
    $[r,x] = y$ and $[r,y] = -x$.
	An embedding sends $x$ and $y$ to commuting, similar matrices.
    Consulting the list of $2$-dimensional abelian subalgebras of $\lie{sl}_3 \R$
	(Part II, Lemma \ref{ii:lemma:r2_actions})
	yields exactly two embeddings
    $\tanalg \isomplus \Euc^2 \hookrightarrow \lie{sl}_3$ up to conjugacy
	(omitted entries are zero):
	\begin{align*}
		ax + by + cr &\mapsto
			\begin{pmatrix}
				& -c & a \\
				c & & b \\
				& & 0
			\end{pmatrix}
			\text{ or }
			\begin{pmatrix}
				& -c &  \\
				c & & \\
				-a & -b & 0
			\end{pmatrix} .
	\end{align*}

    \paragraph{Case 2: $\rho$ factors through $\lie{so}_2$.}
    From Table \ref{table:fiber2_derivations}, the volume-preserving parts of
    $\op{Out} (\widetilde{\isomplus \Euc^2} \times \R)$
    and $\op{Out} (\widetilde{\isomplus \Heis_3})$ are
    isomorphic to $\R \rtimes \R$ and $\R$ respectively,
    neither of which has nontrivial compact subgroups.
	So for a map factoring through $\lie{so}_2$ by which $r$ generates a compact
	subgroup of $\op{Out} H$, still only $\lie{sl}_2$ and $\lie{sl}_3$ admit
	nonzero images.
	In both of these, all images of $\lie{so}_2$ are conjugate,
	so the only flexibility is in how $\lie{so}_2$ maps to this image.
\end{proof}

\subsubsection{Reduction to one extension problem}
\label{sec:fiber2_nonsplit_oneext}

Using the data computed in the previous section,
this section proves Lemma \ref{lemma:fiber2_nonsplit_oneext}---the
claim that it suffices to consider extensions of $\isomplus \Euc^2$
by $\R^3$. The idea is to show that the maximal model geometries
produced from all other non-split extensions can be produced from
split extensions that will be handled in later sections.

\begin{proof}[Proof of Lemma \ref{lemma:fiber2_nonsplit_oneext}]
    Under the standing assumptions, $G$ is a extension of
    $Q = \isomplus B$ where $B = S^2$, $\Euc^2$, or $\Hyp^2$;
    and passing to Lie algebras yields a non-split extension
        \[ 0 \to \lie{h} \to \lie{g} \to \lie{q} \to 0 . \]

    \paragraph{Step 1: $\lie{q}$ is $\tanalg \isomplus \Euc^2$.}
    The other possibilities for $\lie{q}$ are semisimple.
    The first and second cohomology of a semisimple Lie algebra
    vanish in any coefficient system \cite[Thm.~1.3.2]{onishchik3}.
    So if $\lie{q}$ were semisimple, the classification of these extensions
    by $H^2(\lie{q}; Z(\lie{h}))$ (Thm.~\ref{thm:extensions_h2}) would imply
    that $\lie{g}$ is a split extension of $\lie{q}$.

    \paragraph{Step 2: If $\lie{h} \neq \R^3$,
        then $\lie{q}$ acts trivially on a $1$-dimensional $Z(\lie{h})$.}
    Since $H^2(\lie{q}; Z(\lie{h}))$ must be nonzero for a non-split extension
    to exist, $Z(\lie{h})$ is nonzero. Consulting the list of derivation algebras
    (Table \ref{table:fiber2_derivations})
    and the nontrivial actions (Lemma \ref{fiber2_isometric_actions}),
    the dimension of $Z(\lie{h})$ is $1$, and the action of $\lie{q}$ on it is trivial.
    We now aim to show that the resulting geometries are either non-maximal
    or realized with direct-product transformation groups $\tilde{G}$,
    which are handled later in Section \ref{sec:fiber2_product_groups}.

    \paragraph{Step 3: $H^2(\lie{q}; Z(\lie{h})) \cong \R$.}
	With trivial action, the cocycle condition reduces to
		\[ \sum_{\text{cyc}} c(x_1, [x_2, x_3]) = 0 . \]
	This expression is trilinear, so to check that it is zero,
    it suffices to check basis elements.
	If two of the basis elements are the same,
	then antisymmetry of $c$ and the Lie bracket imply the value is zero.
	Since $\lie{q}$ is $3$-dimensional, only one combination then needs
	to be checked.
    Letting $\lie{q}$ have basis $\{x,y,r\}$ where $[r,x] = y$ and $[r,y] = -x$,
	\begin{align*}
		c(x,[y,r]) + c(y,[r,x]) + c(r,[x,y])
			&= c(x,x) + c(y,y) + c(r,0) = 0 + 0 + 0 = 0 ;
	\end{align*}
	so \emph{every} linear map $\Lambda^2 \lie{q} \to Z(\lie{h})$ is a cocycle.

	With trivial action, coboundaries are maps
	$\Lambda^2 \lie{q} \to Z(\lie{h})$ which factor through
	the Lie bracket; so coboundaries account for any nonzero values
	on (and only on) $r \wedge x$ and $r \wedge y$.
    Therefore $H^2(\lie{q}; Z(\lie{h})) \cong \R$,
    generated by a cocycle that is nonzero on $x \wedge y$
    and zero on $r \wedge x$ and $r \wedge y$.

    \paragraph{Step 4: Describe the resulting groups $G$.}
	Since $\lie{q} \to \op{sout} \lie{h}$ lifts
	to $\op{Der} \lie{h}$ (Lemma \ref{fiber2_isometric_actions}),
    the Lie algebra structure of $\lie{g}$
    can be recovered from by interpreting
    the cocycle as a $Z(\lie{h})$-valued bracket on $\lie{q}$,
    as follows. On the vector space $\lie{h} \oplus \lie{q}$,
    define, following \cite[Eqn.~5.5]{alekseevsky_nonsuper},
        \[ [h_1 + q_1, h_2 + q_2]
            = q_1(h_2) - q_2(h_1) + c(q_1, q_2) + [q_1, q_2]_{\lie{q}} . \]
    If $\lie{h}$ is one of the direct sums $\lie{h}' \oplus \R$,
    then $Z(\lie{h})$ is this $\R$ summand. Using the above formula,
        \[ \lie{g}
        \cong \lie{h}' \oplus (\lie{n}_3 \semisum \lie{so}_2)
        \cong \lie{h}' \oplus \tanalg \isomplus \Heis_3 . \]
    Then $\tilde{G}$ is one of the direct products below,
    handled later in Section \ref{sec:fiber2_product_groups}.
	\begin{align*}
		S^3 &\times \widetilde{\isomplus \Heis_3}  &
		\widetilde{\op{PSL}(2,\R)} &\times \widetilde{\isomplus \Heis_3}  &
		\widetilde{\isomplus \Euc^2} &\times \widetilde{\isomplus \Heis_3}
	\end{align*}
    Otherwise, $\lie{h}$ is $\tanalg \Heis_3$ or $\tanalg \op{Isom} \Heis_3$.
	Then rescaling the $x,y$ plane gives an isomorphism of
    the resulting simply-connected group $\tilde{G}$ with either
	$\Heis_5 \rtimes \R$ or $\Heis_5 \rtimes \R^2$, with the action
	by rotations.
    Point stabilizers $\tilde{G}_p$
    act semisimply since they act by some sort of rotations,
    while the nilradical $\Heis_5$ acts nilpotently;
    so $\Heis_5$ meets the point stabilizer trivially
    and therefore acts freely on any resulting geometry $\tilde{G}/\tilde{G}_p$.
    Then $\tilde{G}/\tilde{G}_p \cong \Heis_5$, which is
    non-maximal since $\Heis_5 \rtimes \op{U}(2) / \op{U}(2)$ subsumes it.
\end{proof}

\subsubsection{The geometries}
\label{sec:fiber2_nonsplit_geometries}

Having established that geometries requiring non-split extensions
in the extension problem (Lemma \ref{lemma:fiber2_extension_formulation})
are accounted for by extensions of $\tanisom \Euc^2$ by $\R^3$
(Lemma \ref{lemma:fiber2_nonsplit_oneext}),
we now classify the resulting geometries.

\begin{proof}[Proof of Prop.~\ref{prop:fiber2_nilmanifolds}
    (Classification when $\tanalg G$ is an extension
    of $\tanisom \Euc^2$ by $\R^3$)]
    The key observation is the first step, that $M$ is a nilpotent Lie group;
    after that the problem reduces to computing some cohomology (Step 2)
    and some automorphism groups (Step 3).

    \paragraph{Step 1: $M$ is the simply-connected nilpotent Lie group
        whose Lie algebra is the induced extension of $\R^2$ by $\R^3$.}
    The inclusion of $\R^2$ as the translation subalgebra of $\tanisom \Euc^2$
    induces an inclusion of extensions
    \begin{center}
    \begin{tikzpicture}[description/.style={fill=white,inner sep=2pt}]
        \matrix (m) [matrix of math nodes, row sep=1em,
            column sep=2.5em, text height=1.5ex, text depth=0.25ex]
            { 0 & \R^3 & \tanalg G & \tanisom \Euc^2 & 0 \\
                0 & \R^3 & \lie{m} & \R^2 & 0 . \\ };
        \path[->]
            (m-1-1) edge (m-1-2)
            (m-1-2) edge (m-1-3)
            (m-1-3) edge (m-1-4)
            (m-1-4) edge (m-1-5);
        \path[->]
            (m-2-1) edge (m-2-2)
            (m-2-2) edge (m-2-3)
            (m-2-3) edge (m-2-4)
            (m-2-4) edge (m-2-5);
        \path[right hook->]
            (m-2-3) edge (m-1-3)
            (m-2-4) edge (m-1-4);
        \path
            (m-2-2) -- node[rotate=90] {$\cong$} (m-1-2);
    \end{tikzpicture}
    \end{center}
	Since $\R^2$ and $\R^3$ are abelian and
    $\R^2$ acts nilpotently
	(see the list of actions in Lemma \ref{fiber2_isometric_actions}),
    $\lie{m}$ is nilpotent.
    Moreover $\R^2$ is an ideal in $\tanisom \Euc^2$,
    so $\lie{m}$ is an ideal in $\tanalg G$, with codimension $1$.
    Since $\tanalg G$ is not nilpotent, having the non-nilpotent $\tanisom \Euc^2$
    as a quotient, $\lie{m}$ is the nilradical of $\tanalg G$.
    Then $\tanalg G$ is the extension
        \[ 0 \to \lie{m} \to \tanalg G \to \lie{so}_2 \to 0 , \]
    which splits since any linear map from
    $\lie{so}_2 \cong \R$ is a Lie algebra homomorphism.

    Since $G_p$ is compact, it acts semisimply in the adjoint representation on
    $\tanalg G$; while the nilradical of $\tanalg G$ acts nilpotently.
    Then $\tanalg G_p \cap \lie{m} = 0$---so $\tanalg G_p$ lies over $\lie{so}_2$
    in the above extension, and $\lie{m}$ is tangent to a group acting
    transitively on $M = G/G_p$.
    Since $\pi_1 (M) = 0$ and $\dim M = \dim \lie{m}$,
    this action is free, which identifies $M$
    with the simply-connected group whose Lie algebra is $\lie{m}$
    once a basepoint is chosen.

    \paragraph{Step 2: List candidate nilpotent Lie algebras $\lie{m}$.}
    This step proceeds using the classification of extensions by second cohomology
    (Thm.~\ref{thm:extensions_h2}).
	Since $\R^2$ is $2$-dimensional, it has no $3$-cocycles;
	so every $\Lambda^2 \R^2 \to \R^3$ is a cocycle.
    Let $\{x,y\}$ be a basis for $\R^2$; then
	coboundaries take the form $x \wedge y \mapsto x \cdot \beta(y) - y \cdot \beta(x)$,
    where $\cdot$ denotes the action of $\R^2$ on $\R^3$.
	Then (like in Part II, Lemma \ref{ii:lemma:solvable_r2_by_r3_cohomology})
	coboundaries account for values lying in $x \cdot \R^3 + y \cdot \R^3$, and
		\[ H^2(\R^2; \R^3) \cong \R^3/(x \cdot \R^3 + y \cdot \R^3) . \]
    Using the list of actions of $\tanisom \Euc^2$ on $\R^3$ in
    Lemma \ref{fiber2_isometric_actions}), one action has $x$ and $y$
    both acting as zero. In that case,
    every cocycle represents its own class, and $\lie{g}'$ is either isomorphic
	to $\R^5$ (zero class) or $R^2 \oplus \tanalg \Heis^3$ (nonzero class,
	normalized by scaling and conjugating in $\op{GL}(3,\R) = \op{Aut} \R^3$).

	Letting $\{u,v,w\}$ denote a basis of $\R^3$,
    the two nontrivial actions are
    \[
        \left\{\begin{array}{r}
            [x, w] = u \\\relax 
            [y, w] = v
        \end{array}\right\} \text{ and }
        \left\{\begin{array}{r}
            [x, u] = w \\\relax
            [y, v] = w
        \end{array}\right\}.
    \]
	On the left, the split extension can be re-expressed as the
	semidirect sum of $\R w$ acting on the span of the other four basis elements
    with two $2 \times 2$ Jordan blocks of eigenvalue $0$---call this
    $\R^4 \semisum_{x^2, x^2} \R$.
	On the right, the split extension is the $5$-dimensional
	Heisenberg algebra $\tanalg \Heis_5$.

	Up to changes of coordinates in the $x,y$ and $u,v$
	planes, the non-split extensions are accounted for by, respectively,
	\begin{align*}
		[x, y] &= w  & [x, y] &= u .
	\end{align*}

    \paragraph{Step 3: Eliminate $\lie{m}$ whose automorphism group
        is either too large or too small.}
    The maximal geometry realizing $M$ is $M \rtimes K / K$ where $K$ is maximal
    compact in $\op{Aut} M$ (Lemma \ref{lemma:solvable_maximality}).
    Since the assumptions imply $\dim G = 6$,
    we require the maximal compact subgroup of $\op{Aut} M$ to be $S^1$.

    In three of the groups $M$ produced in Step 2,
    the maximal compact subgroup of $\op{Aut} M$ is too large,
	which makes $G/G_p$ non-maximal.
	\begin{align*}
        \op{Aut} \R^5 &\cong \op{GL}(5,\R) \supset \op{SO}(5) \\
		\op{Aut} (\R^2 \times \Heis_3) &\supseteq \op{SL}(2,\R) \times \op{SL}(2,\R)
            \supset \op{SO}(2) \times \op{SO}(2) \\
		\op{Aut} \Heis_5 &\supseteq U(2)
			\text{ (see Section \ref{sec:fiber4_proof}, Step 3)}
	\end{align*}
	If $\lie{m}$ has basis $\{x,y,u,v,w\}$ with
	\begin{align*}
		[x,u] &= w  &  [y,v] &= w  &  [x,y] &= u ,
	\end{align*}
	then any automorphism preserves the following filtration
	of characterstic ideals.
	\begin{align*}
        Z(\lie{m}) &= \R w \\
		[\lie{m}, \lie{m}] &= \R u + \R w \\
		\text{The preimage of } Z(\lie{m}/Z(\lie{m})) &= \R u + \R v + \R w \\
		\text{The centralizer of } [\lie{m}, \lie{m}] &= \R y + \R u + \R v + \R w
	\end{align*}
	Therefore every automorphism is upper-triangular, so
	$\op{Aut} \lie{m}$ contains no compact subgroups of positive dimension.
    Then $M \rtimes S^1$ cannot be defined with a nontrivial action by $S^1$.

    \paragraph{Step 4: Both remaining geometries are maximal.}
	What remains are the two possibilities for $M$
    claimed in the statement of this Proposition.
    Their Lie algebras can be expressed with basis $\{x,y,u,v,w\}$ and
	\begin{align*}
		[x,w] &= u  &  [y,w] &= v  &  [x,y] &= w \text{ or } 0 .
	\end{align*}
	Similarly to Step 3 above,
	\begin{align*}
		Z(\lie{m}) &= \R u + \R v \\
		[\lie{m}, \lie{m}] &= \R u + \R v + \R w \text{ if } [x,y] = w \\
		\{a \mid \op{rank} \op{ad} a \leq 1\} &= \R x + \R y + \R u + \R v
			\text{ if } [x,y] = 0 .
	\end{align*}
	Either way, every automorphism is block upper-triangular
	with two $2 \times 2$ blocks and one $1 \times 1$ block. The group
	of such block upper-triangular matrices has a torus $T^2$
	(in each $2 \times 2$ block, something conjugate to a rotation)
	as its maximal compact subgroup
    (Part II, \cite[Lemma \ref{ii:lemma:compacts_in_aut}]{geng2});
	so $\op{Aut} \lie{m}$ has maximal compact subgroup inside that.

	Up to scaling $x$, $y$, and possibly $w$, any automorphism in this $T^2$
	merely rotates the $x,y$ plane. Since $\op{ad} w$ takes this plane
	to the $u,v$ plane, the rotation on the $u,v$ plane is determined;
	so $K \cong S^1$, acting by rotation at equal rates on these planes.

    \paragraph{Step 5: Both remaining candiates are model geometries.}
    A nilpotent Lie algebra with rational structure constants in some basis
    admits a cocompact lattice $\Gamma$ \cite[Thm 2.12]{raghunathan}.
    In the bases used above, the structure constants are all integers;
    so in each case
    $\Gamma \backslash M / \{1\} \cong \Gamma \backslash M \rtimes S^1 / S^1$
    is a compact manifold modeled on $M$.
\end{proof}

\subsection{Line bundles over \texorpdfstring{$\mathbb{F}^4$}{F4}
    and \texorpdfstring{$T^1 \Euc^{1,2}$}{T\^{}1 E(1,2)}
    (semidirect-product isometry groups)}
\label{sec:fiber2_semidirect}

Having just classified geometries $G/G_p$
where $\tanalg G$ is an extension of $\tanisom \Euc^2$ by $\R^3$
or any non-split extension,
this section classifies the next cluster of geometries---those
where $\tanalg G$ is a split extension,
i.e.\ a semidirect sum $\lie{h} \semisum \lie{q}$
with one of the remaining combinations of $\lie{h}$ and $\lie{q}$
that has a nontrivial action. Five combinations remain
of the list from Lemma \ref{fiber2_isometric_actions},
producing the five groups in Table \ref{table:fiber2_semidirect_centers}.
in terms of which this section's classification result is stated.
We will verify that these are model geometries in
Prop.~\ref{prop:fiber2_semidirect_model},
but maximality is deferred to Section \ref{sec:fiber2_maximality}
to take advantage of results from the remainder of the classification
(Section \ref{sec:fiber2_product_groups}).

\begin{prop} \label{semidirect_product_geometries}
    Let $M = G/G_p$ be a maximal model geometry such that
	$\tilde{G}$ is one of the semidirect products listed in
    Table \ref{table:fiber2_semidirect_centers}.
    Let $\gamma: \R \to \SLcover$ send $t$ to a rotation by $2\pi t$ radians,
    and fix a nontrivial $z \in Z(\Heis_3)$.
    Then $M$ is one of
	\begin{align*}
        \op{SAff} \R^2 = \R^2 \rtimes \SLcover
            &\cong (\R^2 \rtimes \SLcover) \rtimes \op{SO}(2) / \op{SO}(2) \\
		\Euc \times \mathbb{F}^4
            &= \R \times \R^2 \rtimes \op{SL}(2,\R) / \op{SO}(2) \\
        \mathbb{F}^5_a
            &= \Heis_3 \rtimes \SLcover / \{ atz, \gamma(t) \}_{t \in \R} ,
                \quad a = 0 \text{ or } 1 \\
		T^1 \Euc^{1,2}
            &= \R^3 \rtimes \op{SO}(1,2)/\op{SO}(2) .
	\end{align*}
\end{prop}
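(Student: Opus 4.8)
The plan is to work through the five semidirect products $\tilde{G}$ of Table~\ref{table:fiber2_semidirect_centers} one at a time. Every connected group $G$ with $\tanalg G \cong \tanalg\tilde{G}$ is a quotient $\tilde{G}/N$ by a discrete central subgroup $N$, so the first task for each $\tilde{G}$ is to pin down the pairs $(G, G_p)$ that yield a geometry. In all five cases the point stabilizer $G_p$ must be a circle — the maximal torus of the $\op{SU}(2)$ factor, or the (unique up to conjugacy) ``rotation'' $\R$ inside the $\SLcover$ or $\widetilde{\isomplus\Euc^2}$ factor — and two constraints cut the possibilities down sharply: faithfulness of $G$ on $G/G_p$ is equivalent to $G_p \cap Z(G) = \{1\}$ (Lemma~\ref{lemma:faithful_conjugation} and Remark~\ref{rmk:quotient_by_center}, using that $G_p$ is abelian), and simple-connectedness of $M = G/G_p$ forces $\pi_1(G_p) \to \pi_1(G) = N$ to be onto, so that $N$ is cyclic and generated by the monodromy element of the stabilizer circle. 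Feeding in the center data from Table~\ref{table:fiber2_semidirect_centers}, one sees that for the $\SLcover$-based and $\widetilde{\isomplus\Euc^2}$-based entries a nontrivial quotient is needed merely to make any stabilizer circle close up, whereas for $\R^3 \rtimes \op{SU}(2)$ the quotient is forced by faithfulness; and any coarser choice of $N$ re-introduces an intersection $G_p \cap Z(G) \neq \{1\}$. This leaves $G$, $G_p$, and hence $M$ essentially determined, the only real freedom being the $Z(\Heis_3)$- or extra-$\R$-component of the monodromy element.

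It remains to name $M = G/G_p$ in each case. For $\tilde{G} = \Heis_3 \rtimes \SLcover$ the monodromy element is $(z^a, \gamma(1))$; its $\SLcover$-part forces $G_p$ to double-cover $\op{SO}(2) \subset \op{PSL}(2,\R)$ (giving the $S^1_{1/2}$ isotropy), and after rescaling $z$ the parameter $a$ may be taken to be $0$ or $1$, so $M = \mathbb{F}^5_a = \Heis_3 \rtimes \SLcover / \{atz, \gamma(t)\}_{t\in\R}$. For $\tilde{G} = \op{SAff}\R^2 \times \R = (\R^2 \rtimes \SLcover) \times \R$, tilting $\gamma(1)$ against the extra $\R$ factor makes the stabilizer circle a splitting of $1 \to \op{SAff}\R^2 \to G \to \op{SO}(2) \to 1$ — the splitting exists because conjugation by $\gamma(1)$ is conjugation by a central element of $\op{SAff}\R^2$, hence trivial — so $G = \op{SAff}\R^2 \rtimes \op{SO}(2)$ acting by rotations and $M = \op{SAff}\R^2$; taking $N$ inside the $\SLcover$ factor alone leaves the extra $\R$ as a Euclidean direction and yields $M = \Euc \times \mathbb{F}^4$. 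For $\tilde{G} = \R^3 \rtimes \SLcover$ with $\SLcover$ acting through $\op{SO}(1,2)$, faithfulness forces $G = \R^3 \rtimes \op{SO}(1,2) = \isomplus\Euc^{1,2}$ and $G_p = \op{SO}(2) \subset \op{SO}(1,2)$, so $M = \R^3 \rtimes \op{SO}(1,2)/\op{SO}(2) = T^1\Euc^{1,2}$, the bundle of unit timelike vectors.

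The remaining two entries produce no maximal geometry. For $\tilde{G} = \R^3 \rtimes \op{SU}(2)$ (with $\op{SU}(2)$ acting through $\op{SO}(3)$), the circle $\op{SO}(2) \subset \op{SU}(2)$ meets the center $\{\pm I\}$, so faithfulness forces $G = \isomplus\Euc^3$ and $M = \R^3 \rtimes \op{SO}(3)/\op{SO}(2) = T^1\Euc^3 \cong \R^3 \times S^2$; but the diagonal embedding $\isomplus\Euc^3 \hookrightarrow \isomplus(\Euc^3 \times S^2)$, $(a,g) \mapsto ((a,g),g)$, carries this action to the restriction of the $\Euc^3 \times S^2$ action, so $T^1\Euc^3$ is subsumed by the (maximal, by Prop.~\ref{prop:products_are_maximal}) product geometry $\Euc^3 \times S^2$. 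For $\tilde{G} = (\Heis_3 \times \R^2) \rtimes \widetilde{\op{SO}(2)}$ — the extension of $\tanisom\Euc^2$ by $\tanalg\Heis_3$ acting through the rotation subalgebra — the nilradical $\Heis_3 \times \R^2$ meets $G_p$ trivially (it acts unipotently while $G_p$ acts semisimply), so it acts simply transitively and $M \cong \Heis_3 \times \R^2$ as a Lie group; this product geometry is realized with isotropy $\op{SO}(2) \times \op{SO}(2)$ in dimension $7$, which properly contains the dimension-$6$ group obtained here, so the present realization is non-maximal. As these cases are exhaustive, a maximal model geometry whose $\tilde{G}$ appears in Table~\ref{table:fiber2_semidirect_centers} is one of the five spaces listed.

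The step I expect to be the main obstacle is the concrete identification of $M = \tilde{G}/\widetilde{G_p}$ when the circle $\widetilde{G_p}$ is not normal in $\tilde{G}$ — choosing the right transversal and checking that it is invariant — together with correctly matching the monodromy parameter to the listed geometries. The $\op{SAff}\R^2$ case is the delicate one: there the stabilizer $\op{SO}(2)$ acts by conjugation by a lift of a rotation in $\op{SL}(2,\R)$, and one must see that this genuinely produces the standard isometry group $\op{SAff}\R^2 \rtimes \op{SO}(2)$ and the Lie group $\op{SAff}\R^2$ itself, rather than a direct product or some other central quotient. The dimension counts and the faithfulness criterion do most of the remaining work.
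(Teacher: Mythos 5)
Your proposal is correct and follows essentially the same route as the paper's own proof: a case-by-case pass through the five groups of Table \ref{table:fiber2_semidirect_centers}, using faithfulness ($G_p \cap Z(G) = \{1\}$, via Lemma \ref{lemma:faithful_conjugation} and Rmk.~\ref{rmk:quotient_by_center}) and simple-connectedness to pin the stabilizer down to a one-parameter subgroup over the maximal torus with a central ``tilt'' parameter, normalizing that parameter to $a \in \{0,1\}$, identifying the resulting coset spaces, and discarding $T^1\Euc^3$ and the $\Heis_3 \rtimes \widetilde{\isomplus \Euc^2}$ case as subsumed (by $\Euc^3 \times S^2$ and by $(\Heis_3 \times \R^2) \rtimes \op{SO}(2)^2/\op{SO}(2)^2$, respectively), exactly as the paper does. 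The one step to state more carefully is the sign normalization in the $\Heis_3 \rtimes \SLcover$ case: ``rescaling $z$'' by a negative factor is not an automorphism of the semidirect product (any automorphism scaling $Z(\Heis_3)$ negatively has plane part of negative determinant and so also reverses $\gamma$), so the reduction from $a \in \R$ to $a \geq 0$ should be justified as in the paper, by conjugating with a reflection in $\op{O}(2)$ acting on both factors at once, rather than by an automorphism of $\Heis_3$ alone.
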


The proof requires knowing the center of the semidirect product $\tilde{G}$;
the following formula computes the centers listed in Table
\ref{table:fiber2_semidirect_centers}.
\begin{lemma} \label{semidirect_product_centers}
	If $C_x$ is conjugation by $x$, then
	\[
		Z(A \rtimes_\phi B)
		= \{ (x,y) \in A^B \times Z(B) \mid \phi(y) = C_{x^{-1}} \} .
	\]
	In particular, if $\phi(B) \cap \op{Inn} A = \{1\}$, then
	\[
		Z(A \rtimes_\phi B)
		= Z(A)^B \times (Z(B) \cap \ker \phi) .
	\]
\end{lemma}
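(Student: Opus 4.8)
The plan is to compute $Z(A \rtimes_\phi B)$ directly from the group law. Writing elements as pairs $(a,b)$, the multiplication is $(a_1,b_1)(a_2,b_2) = \big(a_1\,\phi(b_1)(a_2),\, b_1 b_2\big)$. I would fix $(x,y)$ and expand both $(x,y)(a,b)$ and $(a,b)(x,y)$, then read off the conditions under which they agree for every $(a,b)\in A\times B$. Comparing the $B$-coordinates gives $yb=by$ for all $b$, i.e.\ $y\in Z(B)$. Comparing the $A$-coordinates gives $x\,\phi(y)(a) = a\,\phi(b)(x)$ for all $a\in A$ and $b\in B$.

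The next step is to specialize this last equation. Taking $b=1$ yields $\phi(y)(a) = x^{-1}ax = C_{x^{-1}}(a)$ for all $a$, hence $\phi(y)=C_{x^{-1}}$ as automorphisms of $A$; taking $a=1$ yields $x=\phi(b)(x)$ for all $b$, hence $x\in A^B$. Conversely, if $y\in Z(B)$, $x\in A^B$, and $\phi(y)=C_{x^{-1}}$, then $x\,\phi(y)(a) = x\,x^{-1}ax = ax = a\,\phi(b)(x)$ for all $a,b$, so $(x,y)$ is central. This proves the first displayed formula.

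For the \emph{in particular} clause, suppose $\phi(B)\cap\op{Inn} A = \{1\}$. Then $\phi(y)=C_{x^{-1}}$ exhibits $\phi(y)$ as an element of both $\phi(B)$ and $\op{Inn} A$, so $\phi(y)=\mathrm{id}$; thus $y\in\ker\phi$, and $C_{x^{-1}}=\mathrm{id}$ forces $x\in Z(A)$. Together with $x\in A^B$ and $y\in Z(B)$ this says $x\in Z(A)^B$ and $y\in Z(B)\cap\ker\phi$ (note $Z(A)$ is preserved by every automorphism of $A$, so $Z(A)^B$ makes sense). The converse is immediate: a central $x$ and a $y\in\ker\phi$ make all three conditions from the first part hold. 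Finally, since $(x,1)$ and $(1,y)$ commute whenever $y\in\ker\phi$, the subgroup obtained is the internal direct product $Z(A)^B\times(Z(B)\cap\ker\phi)$.

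There is no real obstacle here; this is a routine manipulation of the semidirect-product law. The only points requiring a little care are the placement of inverses, which depends on the convention $C_x(a)=xax^{-1}$ (so that the centrality equation reads $\phi(y)=C_{x^{-1}}$ rather than $C_x$), and checking that the description in the \emph{in particular} case is genuinely a direct product rather than merely a set-theoretic identity.
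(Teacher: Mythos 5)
Your proof is correct and follows essentially the same route as the paper: a direct computation with the semidirect-product law, extracting $y\in Z(B)$, $x\in A^B$, and $\phi(y)=C_{x^{-1}}$ from commutation, and then observing that the hypothesis $\phi(B)\cap\op{Inn}A=\{1\}$ forces $\phi(y)=\mathrm{id}$ and $x\in Z(A)$. The only cosmetic difference is that you compare coordinates of $(x,y)(a,b)$ and $(a,b)(x,y)$ for a general element, whereas the paper checks commutation against $a\in A$ and $b\in B$ separately and invokes that these generate the group.
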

\begin{proof}
	Suppose $xy = (x,y) \in Z(A \rtimes_\phi B)$. Then
	to commute with all $b \in B$,
	\begin{align*}
		b &= xyby^{-1}x^{-1} \\
		bx = \phi(b)(x) b &= xyby^{-1} ;
	\end{align*}
	so $\phi(b)(x) = x$ and $b = yby^{-1}$ for all $b \in B$, i.e.
	$x \in A^B$ and $y \in Z(B)$.
	To commute with all $a \in A$,
	\begin{align*}
		a
			&= xyay^{-1}x^{-1} \\
			&= x \phi(y)(a) x^{-1} \\
            &= C_x \phi(y)(a).
	\end{align*}
	Since $A$ and $B$ generate $A \rtimes_\phi B$, these
	conditions suffice.

	If $\phi(B) \cap \op{Inn} A = \{1\}$, then $\phi(y)$ is identity,
	and the second condition reduces to $x \in Z(A)$.
\end{proof}

\begin{table}[h!]
    \caption{Nontrivial semidirect products covering candidate isometry groups $G$}
    \label{table:fiber2_semidirect_centers}
    \begin{center}\begin{tabular}{ccl}
        $\tilde{G} = \tilde{H} \rtimes \tilde{Q}$ & center & notes on action \\
        \hline
        \rule[0pt]{0pt}{18pt}
        $\R^3 \rtimes \widetilde{\op{SO}(3)}$ &
            $\{0\} \times \Z/2\Z$ &
            standard representation \\
        \rule[0pt]{0pt}{16pt}
        $\R^3 \rtimes \widetilde{\op{SO}(2,1)}$ &
            $\{0\} \times \Z$ &
            standard representation \\
        \rule[0pt]{0pt}{16pt}
        $\Heis_3 \rtimes \widetilde{\op{SL}(2,\R)}$ &
            $\R \times \Z$ &
            as $\op{SL}(2,\R)$ on $x,y$ plane \\
        \rule[0pt]{0pt}{16pt}
        $\Heis_3 \rtimes \widetilde{\isomplus \Euc^2}$ &
            $\R \times \Z$ &
            family of actions,
            factors through $\widetilde{\op{SO}(2)}$ \\
        \rule[0pt]{0pt}{16pt}
        $\R \times \R^2 \rtimes \widetilde{\op{SL}(2,\R)}$ &
            $\R \times \Z$ &
            trivial action on $\R$ \\
    \end{tabular}\end{center}
\end{table}

\begin{proof}[Proof of Prop.~\ref{semidirect_product_geometries}
    (Classification when the isometry group is a semidirect product)]
    Suppose that $M = G/G_p$ is such a geometry (a maximal model geometry
    with $\tilde{G}$ listed in Table \ref{table:fiber2_semidirect_centers}),
    and let $\tilde{G}_p$ be the preimage in $\tilde{G}$ of $G_p$.

    \paragraph{Preparatory step: $\tilde{G}_p$
        is a $1$-parameter subgroup lying over a maximal torus
        of $\tilde{G}/Z(\tilde{G})$.}
    Since every candidate $\tilde{G}$ in Table
    \ref{table:fiber2_semidirect_centers} is $6$-dimensional,
    $G_p$ and $\tilde{G}_p$ are $1$-dimensional.
    The homotopy exact sequence for $\tilde{G}_p \to \tilde{G} \to M$
    and the assumption that $\pi_1(M) \cong \pi_0(M) \cong 0$
    imply that $\tilde{G}_p$ is connected.
    Moreover, $G_p = \tilde{G}_p / (\tilde{G}_p \cap Z(\tilde{G}))$
    (Rmk.~\ref{rmk:quotient_by_center}); 
    so $G_p$ becomes a copy of $S^1$ in $\tilde{G}/Z(\tilde{G})$.
    For every candidate $\tilde{G}$ listed, $\tilde{G}/Z(\tilde{G})$
    has maximal torus $\op{SO}(2) \cong S^1$.

    For every candidate in Table \ref{table:fiber2_semidirect_centers},
    the quotient $\tilde{G}/Z(\tilde{G})$
    so in the contractible $\tilde{G}$,
    where $1$-parameter subgroups are isomorphic to $\R$,
    the intersection of $\tilde{G}_p$ and $Z(\tilde{G})$ is nontrivial.

    \paragraph{Case 1: If $\dim Z(\tilde{G}) = 0$
        then $M = T^1 \Euc^{1,2}$.}
    If $\dim Z(\tilde{G}) = 0$ then $\tilde{G}$ covers $\tilde{G}/Z(\tilde{G})$;
    so $\tilde{G}_p$ is unique up to conjugacy in $\tilde{G}$.
    The resulting spaces, and the names chosen for them, are
    \begin{align*}
        \R^3 \rtimes \op{SO}(3) / \op{SO}(2) &= T^1 \Euc^3 \\
        \R^3 \rtimes \op{SO}(2,1) / \op{SO}(2) &= T^1 \Euc^{1,2} .
    \end{align*}
    The final classification omits $T^1 \Euc^3$ since it is non-maximal,
    being subsumed by $\Euc^3 \times S^2$.
    An interpretation of $T^1 \Euc^{1,2}$,
    by analogy to the Euclidean (i.e.\ positive-definite) case,
    is as one connected component of the sub-bundle of
    $T\R^3$ consisting of the vectors $v$ where $\left<v,v\right> = 1$
    and $\left<\cdot,\cdot\right>$ has signature $(+,-,-)$.

    \paragraph{Case 2: If $\tilde{G} = \Heis_3 \rtimes \widetilde{\idisom \Euc^2}$
        then $M$ is a non-maximal $\Heis_3 \times \Euc^2$.}
    In this case, $\tilde{G}$ is an extension
        \( 1 \to \Heis_3 \times \R^2 \to \tilde{G} \to \widetilde{\op{SO}(2)} \to 1 , \)
    with $\tilde{G}_p$ surjecting onto $\widetilde{\op{SO}(2)}$; so
        \[ G/G_p
            \cong (\Heis_3 \times \R^2) \rtimes \widetilde{\op{SO}(2)}
            / \widetilde{\op{SO}(2)} . \]
    Since $\Heis_3 \times \R^2$ admits at least a torus's worth of automorphisms
    (one $\op{SO}(2)$ acting by rotation on $\R^2$ and another acting by
    rotation on $\Heis_3$), this geometry is subsumed by
    $(\Heis_3 \times \R^2) \rtimes \op{SO}(2)^2 / \op{SO}(2)^2$.

    \paragraph{Case 3: If $\tilde{G} = \Heis_3 \rtimes \widetilde{\op{SL}(2,\R)}$
        then $M$ is $F^5_0$ or $F^5_1$.}
    Let $\gamma: \R \to \widetilde{\op{SO}(2)} \subset \widetilde{\op{SL}(2,\R)}$
    send $t$ to a rotation by $2\pi t$, and put coordinates on $\Heis_3$ so that
        \[ (x,y,z) (x'y'z') = (x+x', y+y', z+z'+xy') . \]
    Since $Z(\tilde{G})$ is $1$-dimensional, the
    groups $\tilde{G}_p$ lying over $\widetilde{\op{SO}(2)}$
    form a $1$-dimensional family: for $a \in \R$, let
        \[ F^5_a  = \Heis_3 \rtimes \widetilde{\op{SL}(2,\R)} /
            \{ (0,0,at), \gamma(t) \}_{t \in \R} . \]
    Conjugating $\widetilde{\op{SL}(2,\R)}$ by a reflection in $O(2)$
    would exchange $\gamma(t)$ with $\gamma(-t)$; so up to this conjugation
    we may assume $a \geq 0$. If $a > 0$, conjugating $\Heis_3$ by
    the automorphism
        \[ (x,y,z) \mapsto \left(xa^{-1/2}, ya^{-1/2}, za^{-1}\right) \]
    allows assuming $a = 1$; so $M = G/G_p$ is
    equivariantly diffeomorphic to either $F^5_0$ or $F^5_1$.
    The two cases are distinguished by whether a Levi subgroup of $G$ is
    isomorphic to $\op{SL}(2,\R)$ ($a = 0$) or its universal cover ($a = 1$).

    \paragraph{Case 4: If $\tilde{G} = \R \times \R^2 \rtimes \widetilde{\op{SL}(2,\R)}$
        then $M$ is $\Euc \times \mathbb{F}^4$ or $\R^2 \rtimes \widetilde{\op{SL}(2,\R)}$.}
    As in Case $3$, conjugation by an element of $\op{O}(2)$ and a rescaling of
    the $\R = \idcompo{Z(\tilde{G})}$ factor allow assuming
        \[ G/G_p \cong \R \times \R^2 \rtimes \widetilde{\op{SL}(2,\R)} /
            \{ (at, 0, 0), \gamma(t) \}_{t \in \R} \]
    with $a = 0$ ($M = \Euc \times \mathbb{F}^4$)
    or $a = 1$ ($M = \R^2 \rtimes \widetilde{\op{SL}(2,\R)}$);
    and the two are again distinguished by Levi subgroups.
\end{proof}

\begin{prop} \label{prop:fiber2_semidirect_model}
    All geometries listed in Prop.~\ref{semidirect_product_geometries}
    are model geometries.
\end{prop}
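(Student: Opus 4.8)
The plan is to exhibit, for each geometry $M = G/G_p$ in the list, a torsion-free lattice in $G$ (or in the faithful transformation group, in those cases where $G$ above is written as a non-faithful cover). This suffices: since $G_p$ is compact, a nontrivial element of a discrete subgroup of $G$ that fixes a point of $M$ lies in a conjugate of $G_p$ and hence is torsion, so a torsion-free lattice automatically acts freely. In every case the construction reduces to finding a lattice in a semidirect product $H \rtimes Q$, with $H$ a simply-connected nilpotent group ($\R^2$, $\R^3$, or $\Heis_3$) and $Q$ one of $\op{SL}(2,\R)$, $\idcompo{\op{SO}(1,2)}$, $\SLcover$ acting on $H$ through an integral representation; an arithmetic lattice in $Q$ together with a compatible integer lattice in $H$ produces such a lattice, and torsion is removed using Selberg's lemma, or is absent from the start because the ambient group is contractible.

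For $\Euc \times \mathbb{F}^4$ the quickest route is that $\mathbb{F}^4 = \R^2 \rtimes \op{SL}(2,\R)/\op{SO}(2)$ is a model geometry --- a torsion-free finite-index subgroup $\Gamma \subset \op{SL}(2,\Z)$ preserves $\Z^2$, so $\Gamma \ltimes \Z^2$ is a torsion-free lattice in $\R^2 \rtimes \op{SL}(2,\R)$ acting freely on $\mathbb{F}^4$ --- whence $\Euc \times \mathbb{F}^4$ is a model by Prop.~\ref{prop:products_are_models}. For $T^1 \Euc^{1,2} = \R^3 \rtimes \idcompo{\op{SO}(1,2)}/\op{SO}(2)$, the Borel--Harish-Chandra theorem gives the lattice $\op{SO}(1,2;\Z)$ in $\idcompo{\op{SO}(1,2)}$, which preserves an integer lattice in $\R^3$; a torsion-free finite-index subgroup $\Gamma_0$ of it then yields the torsion-free lattice $\Gamma_0 \ltimes \Z^3$ in $G$. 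For $\op{SAff} \R^2 = \R^2 \rtimes \SLcover$, with isometry group $(\R^2 \rtimes \SLcover) \rtimes \op{SO}(2)$, the normal subgroup $L = \R^2 \rtimes \SLcover$ has no nontrivial compact subgroup (because $\SLcover$ is contractible) and has the same dimension as $M$, so it acts simply transitively; hence it is enough to find a lattice in $L$, and the preimage $\tilde\Gamma_0 \subset \SLcover$ of $\op{SL}(2,\Z)$ --- a lattice, since $\tilde\Gamma_0 \backslash \SLcover \cong \op{SL}(2,\Z) \backslash \op{SL}(2,\R)$ --- gives $\tilde\Gamma_0 \ltimes \Z^2$, which is torsion-free (as $L$ is diffeomorphic to $\R^5$) and is a lattice in $G$ because $G/L \cong \op{SO}(2)$ is compact.

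The remaining geometries $\mathbb{F}^5_a = \Heis_3 \rtimes \SLcover / \{(0,0,at), \gamma(t)\}_{t\in\R}$ ($a = 0, 1$) are handled on $\tilde G = \Heis_3 \rtimes \SLcover$, where $\SLcover$ acts on $\Heis_3$ through the $\op{SL}(2,\R)$-action on the $x,y$-plane. For $a = 0$ the faithful transformation group is $G = \Heis_3 \rtimes \op{SL}(2,\R)$, and a torsion-free finite-index $\Gamma \subset \op{SL}(2,\Z)$ preserving a lattice $\Lambda \subset \Heis_3$ gives the torsion-free lattice $\Gamma \ltimes \Lambda$, which acts freely. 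For $a = 1$, I would take $\tilde\Gamma = \tilde\Gamma_0 \ltimes \Lambda$ with $\tilde\Gamma_0 \subset \SLcover$ the preimage of a torsion-free finite-index subgroup $\Gamma_0 \subset \op{SL}(2,\Z)$ preserving a lattice $\Lambda \subset \Heis_3$ with $\{0\} \times \{0\} \times \Z \subseteq \Lambda$ (available after rescaling $\Lambda$ in the center direction); then $\tilde\Gamma$ is a torsion-free lattice in the contractible group $\tilde G$, and it contains the kernel $Z = \{((0,0,n), \gamma(n)) : n \in \Z\}$ of the $\tilde G$-action on $M$ --- because $\gamma(n) \in \ker(\SLcover \to \op{SL}(2,\R)) \subset \tilde\Gamma_0$ and $(0,0,n) \in \Lambda$ --- so it descends to a lattice $\Gamma$ in the faithful group $G = \tilde G / Z$ (Rmk.~\ref{rmk:quotient_by_center}). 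Freeness of $\Gamma$ on $M = \tilde G / \tilde G_p$ then follows because any element of $\tilde\Gamma$ lying in a conjugate of the stabilizer $\tilde G_p$ has $\SLcover$-component conjugate into $\gamma(\R)$, so its image in $\op{SL}(2,\R)$ is an elliptic element of $\Gamma_0$ and hence trivial; that image therefore lies in $\ker(\SLcover \to \op{SL}(2,\R))$, and a short computation then forces the element itself to lie in $Z$.

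The step I expect to be the main obstacle is the $a = 1$ case just sketched: identifying the ``diagonal'' central kernel $Z$ of the non-faithful $\Heis_3 \rtimes \SLcover$-action, arranging the lattice to contain $Z$ so that it descends to the faithful transformation group, and verifying that the descent still acts freely. This is compounded by the routine but slightly fussy point that $\op{SL}(2,\Z)$ preserves the integer Heisenberg lattice only after rescaling its center or passing to a finite-index subgroup. All the other geometries are routine once the arithmetic-lattice pattern is in place.
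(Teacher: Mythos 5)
Your proposal is correct, and at its core it is the same strategy as the paper: build explicit torsion-free lattices in the semidirect products from an integral structure on the nilpotent normal subgroup together with a torsion-free finite-index subgroup of $\op{SL}(2,\Z)$ (or its preimage in $\SLcover$), and use compactness of point stabilizers to convert torsion-freeness into freeness. The differences are in the inputs for two of the five cases. For $T^1\Euc^{1,2}$ the paper avoids Borel--Harish-Chandra and Selberg's lemma by pushing the already-torsion-free congruence subgroup $\Gamma_3 \subset \op{SL}(2,\Z)$ through the $\op{Sym}^2\R^2$ representation, which factors through the standard representation of $\op{SO}(2,1)$ and preserves the integer symmetric matrices; your appeal to $\op{SO}(1,2;\Z)$ plus Selberg is heavier machinery but equally valid (and your torsion-freeness check for $\Gamma_0\ltimes\Z^3$ is fine). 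For $\Euc\times\mathbb{F}^4$ you shortcut via Prop.~\ref{prop:products_are_models}, where the paper writes the lattice directly; both work. In the delicate case $\mathbb{F}^5_1$ your treatment matches the paper's in substance: you identify the same central kernel $Z=\tilde G_p\cap Z(\tilde G)=\{((0,0,n),\gamma(n))\}_{n\in\Z}$, arrange the lattice upstairs to contain it so it descends, and then use the fact that elliptic elements of a torsion-free lattice in $\op{SL}(2,\R)$ are trivial; you phrase this as freeness via conjugates of $\tilde G_p$ (and the ``short computation'' does close, since an element of $g\tilde G_p g^{-1}$ with central $\SLcover$-component $\gamma(n)$ is exactly $((0,0,n),\gamma(n))$), whereas the paper phrases it as torsion-freeness of the image in $G$. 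The one point you flag---that $\op{SL}(2,\Z)$ only preserves a suitable Heisenberg lattice after passing to a finite-index subgroup or rescaling the center---is handled no more carefully in the paper (which simply uses $\Heis_3(\Z)\rtimes\Gamma_3$), so it is not a gap relative to the paper's own level of detail.
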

\begin{proof}
    The kernel $\Gamma_3$ of $\op{SL}(2,\Z) \to \op{SL}(2,\Z/3\Z)$
    is a torsion-free lattice in $\op{SL}(2,\R)$ \cite[Thm.\ 4.8.2 Case 1]{dwm};
    so its lift $\tilde{\Gamma}_3$ is a torsion-free lattice in
    the infinite cyclic cover $\widetilde{\op{SL}(2,\R)}$.
    Then $\Gamma_3$ (resp.\ $\tilde{\Gamma}_3$) acts without fixed points
    (i.e. freely) anywhere that $\op{SL}(2,\R)$ (resp.\ $\widetilde{\op{SL}(2,\R)}$)
    acts faithfully.
    So finite-volume manifolds modeled on three of the geometries from
    Prop.~\ref{semidirect_product_geometries} can be constructed using
    these groups and are listed in
    Table \ref{table:fiber2_semidirect_lattices}.
    \begin{table}[h!]
        \caption{Finite-volume manifolds $\Gamma\backslash G / K$ modeled on $G/K$
            fibering over $\mathbb{F}^4$}
        \label{table:fiber2_semidirect_lattices}
        \begin{center}\begin{tabular}{cccc}
            $\Gamma$ & $G$ & $K$ & $G/K$ \\
            \hline
            $\Z^2 \rtimes \tilde{\Gamma}_3$ &
                $(\R^2 \rtimes \widetilde{\op{SL}(2,\R)}) \rtimes \op{SO}(2)$ &
                $\op{SO}(2)$ &
                $\R^2 \rtimes \SLcover$ \\
            $\Z \times \Z^2 \rtimes \Gamma_3$ &
                $\R \times \R^2 \rtimes \SLcover$ &
                $\op{SO}(2)$ &
                $\Euc \times \mathbb{F}^4$ \\
            $\Heis_3(\Z) \rtimes \Gamma_3$ &
                $\Heis_3 \rtimes \op{SL}(2,\R)$ &
                $\op{SO}(2)$ &
                $\mathbb{F}^5_0$ \\
        \end{tabular}\end{center}
    \end{table}

    \paragraph{Special case 1: $\mathbb{F}^5_1$.}
    There is one complication:
    while $\Lambda = \Heis_3(\Z) \rtimes \tilde{\Gamma}_3$ is a torsion-free lattice
    in $\tilde{G}$, we must verify that it descends to a torsion-free lattice in $G$.
    Observe that
        \[ \tilde{G}_p = \{(0,0,t), \gamma(t)\}_{t \in \R} , \]
    where $\gamma(t)$ is rotation by $2\pi t$,
    meets $\Lambda$ in
    $\{(0,0,t), \gamma(t)\}_{t \in \Z} = \tilde{G}_p \cap Z(\tilde{G})$.
    Then $\Lambda$ remains discrete in
    $G = \tilde{G}/(\tilde{G}_p \cap Z(\tilde{G}))$.

    Only elliptical elements can become torsion in a quotient of
    $\widetilde{\op{SL}(2,\R)}$, and the elliptical elements of
    the torsion-free $\tilde{\Gamma}_3$
    all lie over the identity.
    Then anything in $\Lambda$ that becomes torsion in $G$
    lies over the identity of $\R^2 \rtimes \op{SL}(2,\R)$---i.e.\ in
    $Z(\tilde{G}) \cong \R \times \Z$.
    Since $Z(\tilde{G})$ has image in $G$
    isomorphic to $Z(\tilde{G})/(\tilde{G}_p \cap Z(\tilde{G})) \cong \R$,
    the image in $G$ of $\Lambda$ is torsion-free.

    Then $\Lambda \backslash \mathbb{F}^5_1$ is a manifold,
    with finite volume since $\Lambda$ is a lattice.

    \paragraph{Special case 2: $T^1 \Euc^{1,2}$.}
    By the classification of representations of $\widetilde{\op{SL}(2,\R)}$
    \cite[11.8]{fultonharris},
    the action of $\op{SL}(2,\R)$ on
    the space $\op{Sym}^2 \R^2$ of symmetric $2 \times 2$ matrices
    given by
        \[ g \cdot M = gMg^T \]
    factors through the standard representation of $\op{SO}(2,1)$.
    Since elements $\Gamma_3$ have integer entries,
    the action by $\Gamma_3$ preserves the subgroup $\Lambda$
    consisting of symmetric integer matrices.
    Then $\Lambda \rtimes \Gamma_3$ descends to a lattice in
    $\R^3 \rtimes \op{SO}(2,1)$.
    This lattice is torsion-free: since $\op{SL}(2,\R)$
    double covers $\op{SO}(2,1)$, an $n$-torsion element of
    $\R^3 \rtimes \op{SO}(2,1)$ lifts to a $2n$-torsion element
    of $\R^3 \rtimes \op{SL}(2,\R)$, and $\Lambda \rtimes \Gamma$
    was constructed to be torsion-free.

    Therefore $(\Lambda \rtimes \Gamma_3) \backslash T^1 \Euc^{1,2}$
    is a finite-volume manifold modeled on $T^1 \Euc^{1,2}$.
    %
\end{proof}

\begin{rmk}
    Maximality will need to wait until the classification is more complete,
    since the proof depends on knowing all geometries with point stabilizer
    containing $S^1_1$ or $S^1_{1/2}$. The proof is eventually given in
    Prop.~\ref{prop:fiber2_semidirect_maximality}.
\end{rmk}

\subsection{Product geometries and associated bundles}
\label{sec:fiber2_product_groups}

The last piece of the classification, necessarily a sort of catch-all
for the leftovers, is this section's main result.

\begin{prop} \label{prop:fiber2_bundle_list}
    The $5$-dimensional maximal model geometries $G/G_p$
    for which $\tilde{G}$ occurs in Table \ref{table:fiber2_product_groups}
    are
    \begin{enumerate}[(i)]
        \item products of $2$- and $3$-dimensional geometries and
        \item the following ``associated bundle'' geometries.
            \begin{align*}
                \Heis_3 &\times_\R S^3  &
                \SLcover \times_\alpha S^3 &, \quad 0 < \alpha < \infty \\
                \Heis_3 &\times_\R \SLcover  &
                \SLcover \times_\alpha \SLcover &, \quad 0 < \alpha \leq 1 \\
                && L(a;1) \times_{S^1} L(b;1) &, \quad 0 < a \leq b \text{ coprime in } \Z
            \end{align*}
    \end{enumerate}
\end{prop}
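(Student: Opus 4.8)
By the flowchart of Figure~\ref{fig:fiber2_flowchart} we have already disposed of the essentially-fibering geometries (Prop.~\ref{prop:fiber2_essentials}), the non-split extensions (Lemma~\ref{lemma:fiber2_nonsplit_oneext}, Prop.~\ref{prop:fiber2_nilmanifolds}), and the split extensions with nontrivial Levi action (Prop.~\ref{semidirect_product_geometries}); so in the cases of Table~\ref{table:fiber2_product_groups} we may assume $\tilde G = \tilde H \times \tilde Q$ is a \emph{direct} product, with $\tilde Q = \widetilde{\idisom B}$ for $B \in \{S^2,\Euc^2,\Hyp^2\}$ and $\tilde H$ one of the $3$- or $4$-dimensional groups from Lemma~\ref{lemma:fiber2_extension_formulation}. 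First I would identify the point stabilizer: writing $\tilde G_p$ for the preimage of $G_p$, the homotopy exact sequence of $\tilde G_p \to \tilde G \to M$ forces $\tilde G_p$ to be connected, hence a torus of dimension $1$ or $2$; by Lemma~\ref{lemma:faithful_conjugation} it acts faithfully by conjugation, so $\op{Ad}$ carries it into $\op{Inn}\tanalg G = \op{Inn}\tanalg H \times \op{Inn}\tanalg Q$ inside a maximal compact subgroup; and the faithfully-acting transformation group is $G = \tilde G/(\tilde G_p \cap Z(\tilde G))$ (Rmk.~\ref{rmk:quotient_by_center}).

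Next I would classify $\tilde G_p$ by its two projections and by how it is tilted into $Z(\tilde G)$. The maximal compact of $\op{Inn}\tanalg Q$ is $\op{SO}(2)$ in every case; that of $\op{Inn}\tanalg H$ is $\op{SO}(3)$ when $\tilde H$ involves $S^3$, $\op{SO}(2)$ when it involves $\SLcover$, $\widetilde{\idisom\Euc^2}$, or $\widetilde{\idisom\Heis_3}$, and trivial when $\tilde H$ is $\R^3$, $\op{Sol}^3$, $\Heis_3$, or abelian. If $\tilde G_p$ lies ``straight'' in a product of subgroups of the two factors---automatic when one $\op{Inn}$ factor has no circle---then $G_p$ acts trivially by conjugation on a whole tensor factor, $TM^{G_p}$ contains that factor's worth of directions, and Lemma~\ref{lemma:perp_orthogonality} makes $M$ a Riemannian product of a $3$-dimensional and a $2$-dimensional constant-curvature geometry; enumerating the combinations against Thurston's $3$-dimensional list yields exactly the products in~(i). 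Otherwise $\tilde G_p$ is \emph{tilted} into a central $\R$ of $\tilde H$---possible precisely for $\tilde H$ among $S^3 \times \R$, $\SLcover \times \R$, and $\widetilde{\idisom\Heis_3}$, the central $\R$ playing the role of structure group---so that $\tilde G_p$ is a $2$-torus carrying a \emph{slope} $\alpha$ (Defn.~\ref{defn:fiber2_slope}) between the compact fiber-rotation circle (in $S^3$, $\SLcover$, or $\Heis_3$) and the central $\R$, with $\tilde Q = \widetilde{\idisom B}$ giving the base. This is exactly the associated-bundle construction, producing $\Heis_3 \times_\R S^3$, $\SLcover \times_\alpha S^3$, $\Heis_3 \times_\R \SLcover$, $\SLcover \times_\alpha \SLcover$, and---when both fibers are $S^3 = \widetilde{\idisom S^2}$---the lens-space bundles $L(a;1)\times_{S^1}L(b;1)$; the $\widetilde{\idisom\Euc^2}\times\R$ case only produces a non-maximal product and is discarded.

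The main obstacle is then the bookkeeping of the moduli. I would show: reflection in $\op{O}(2)$ sends a slope to its reciprocal, so $\SLcover\times_\alpha\SLcover$ needs only $0<\alpha\le 1$; the degree of a circle action on an $S^3$ fiber is fixed by homogeneity, leaving $0<\alpha<\infty$ for $\SLcover\times_\alpha S^3$; the $\R$-fibered cases ($\Heis_3\times_\R -$) carry no scaling parameter; and for $L(a;1)\times_{S^1}L(b;1)$ the two circle degrees may and must be normalized to coprime integers $0<a\le b$. After verifying that distinct parameters give non-equivariantly-diffeomorphic geometries and discarding the non-maximal leftovers (products with more than one Euclidean factor, $T^1\Euc^3$, $\Heis_5$, and the like), modeledness follows by producing lattices: a product $\Gamma_H\times\Gamma_Q$ descends to $G$ exactly when its intersection with $Z(\tilde G)$ lands in $\tilde G_p$, an arithmetic condition on $\alpha$ that is satisfied for all the discrete-parameter cases and for the lens-space family (using a finite cyclic $\Gamma$, these being compact) but only for $\alpha$ in a countable set in $\SLcover\times_\alpha S^3$ (Prop.~\ref{prop:fiber2_associated_lattice})---the advertised uncountable family of geometries of which only countably many admit compact quotients. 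Maximality of all of these I would defer, as for Prop.~\ref{prop:fiber2_semidirect_model}, to the combined argument of Section~\ref{sec:fiber2_maximality}, which needs the full list of geometries with $S^1_1$ or $S^1_{1/2}$ isotropy before it can exclude enlargements of~$G$.
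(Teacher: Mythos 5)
Your overall architecture matches the paper's (reduce to direct products $\tilde G = \tilde H \times \tilde Q$, read off $G_p$ through $\op{Inn}G$, parametrize tilted stabilizers by a slope as in Defn.~\ref{defn:fiber2_slope}, then do the moduli bookkeeping, lattices via Prop.~\ref{prop:fiber2_associated_lattice}, and defer maximality to Section~\ref{sec:fiber2_maximality}). But there is a genuine gap in your central dichotomy. You claim $\tilde G_p$ is either ``straight'' (automatic when one $\op{Inn}$ factor has no circle, giving a product) or tilted into a central $\R$ of $\tilde H$, ``possible precisely for $\tilde H$ among $S^3\times\R$, $\SLcover\times\R$, $\widetilde{\idisom \Heis_3}$.'' This is not exhaustive, and the stated criterion is false. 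Table~\ref{table:fiber2_product_groups} contains many \emph{six}-dimensional direct products: when both factors have circles in $\op{Inn}$ (e.g.\ $S^3\times S^3$, $\SLcover\times S^3$, $\SLcover\times\SLcover$), the one-dimensional stabilizer can sit \emph{diagonally} in the two-torus, producing genuine non-product homogeneous spaces such as $S^3\times S^3/\Delta(S^1)\cong\op{SO}(4)/\op{SO}(2)=T^1S^3$; and when the circle-free factor has positive-dimensional center ($\Heis_3$, $\R^3$), the stabilizer can be tilted into that center (e.g.\ $\{((0,0,at),\gamma(t))\}\subset\Heis_3\times\SLcover$), so ``no circle in $\op{Inn}$'' does not force straightness. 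None of these fall into your two branches, and they are neither products nor in the associated-bundle list, so the classification is incomplete unless you prove they are non-maximal.

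That is exactly what the paper's Step 1 supplies and what your proposal lacks: for $\dim G = 6$, whenever $\op{Aut}\tilde G$ contains a two-torus one can adjoin a circle of outer automorphisms independent of the inner action of the stabilizer, so $(\tilde G\rtimes S^1)/(H\times S^1)$ subsumes $G/G_p$ and these cases are discarded as non-maximal; the only six-dimensional survivors are the $\op{Sol}^3$ products, handled separately because $\op{Aut}\op{Sol}^3$ has finite maximal compact subgroup (there all lifts of the maximal torus are conjugate, giving $\op{Sol}^3\times B$). This step is not cosmetic: the diagonal case $\op{SO}(4)/\op{SO}(2)$ is singled out in Rmk.~\ref{rmk:nonunique_maximality} as a geometry subsumed by two distinct maximal geometries, so its elimination genuinely requires an argument rather than inspection. (Two smaller slips: for $\SLcover\times_\alpha\SLcover$ the reduction to $0<\alpha\le 1$ comes from exchanging the two $\Hyp^2$ factors, not from a reflection, which only fixes signs; and the $\widetilde{\idisom\Euc^2}$ rows need the explicit untwisting of the paper's Step 5 to see that even tilted stabilizers give products with $\Euc^2$.)
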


After the preceding sections,
all that remain are geometries where $\tilde{G}$
is a split extension with trivial action---i.e.\ direct products,
which Table \ref{table:fiber2_product_groups}
lists with duplicates hidden.
We start with a study the associated bundle geometries
from several viewpoints in Section \ref{sec:assoc_bundles},
since the vocabulary for describing them
is useful in proving the classification in Prop.~\ref{prop:fiber2_bundle_list}
(Section \ref{sec:fiber2_product_proof}).
For completeness, the product geometries are listed in
Section \ref{sec:fiber2_product_list}.
Maximality for non-products
is deferred to Section \ref{sec:fiber2_maximality}.

\begin{table}[h!]
    \caption{Groups covering the isometry group $G$ of geometries in this section}
    \label{table:fiber2_product_groups}
    \begin{center}
    \begin{tabular}{c|ccc}
        $\lie{h}$ & $B = \Euc^2$ & $B = S^2$ & $B = \Hyp^2$ \\
        \hline
        $\tanisom \Euc^2 \times \R$
            \rule[0pt]{0pt}{18pt}
            & $\widetilde{\isomplus \Euc^2} \times \widetilde{\isomplus \Euc^2} \times \R$
            & $\widetilde{\isomplus \Euc^2} \times S^3 \times \R$
            & $\widetilde{\isomplus \Euc^2} \times \widetilde{\op{PSL}(2,\R)} \times \R$ \\
        $\lie{so}_3 \times \R$
            \rule[0pt]{0pt}{16pt}
            & (duplicate)
            & $S^3 \times S^3 \times \R$
            & $S^3 \times \widetilde{\op{PSL}(2,\R)} \times \R$ \\
        $\lie{sl}_2 \times \R$
            \rule[0pt]{0pt}{16pt}
            & (duplicate)
            & (duplicate)
            & $\widetilde{\op{PSL}(2,\R)} \times \widetilde{\op{PSL}(2,\R)} \times \R$ \\
        $\tanisom \Heis_3$
            \rule[0pt]{0pt}{16pt}
            & $\widetilde{\isomplus \Heis_3} \times \widetilde{\isomplus \Euc^2}$
            & $\widetilde{\isomplus \Heis_3} \times S^3$
            & $\widetilde{\isomplus \Heis_3} \times \widetilde{\op{PSL}(2,\R)}$ \\
        $\tanisom \Euc^2$
            \rule[0pt]{0pt}{16pt}
            & $\widetilde{\isomplus \Euc^2} \times \widetilde{\isomplus \Euc^2}$
            & $\widetilde{\isomplus \Euc^2} \times S^3$
            & $\widetilde{\isomplus \Euc^2} \times \widetilde{\op{PSL}(2,\R)}$ \\
        $\lie{so}_3$
            \rule[0pt]{0pt}{16pt}
            & (duplicate)
            & $S^3 \times S^3$
            & $S^3 \times \widetilde{\op{PSL}(2,\R)}$ \\
        $\lie{sl}_2$
            \rule[0pt]{0pt}{16pt}
            & (duplicate)
            & (duplicate)
            & $\widetilde{\op{PSL}(2,\R)} \times \widetilde{\op{PSL}(2,\R)}$ \\
        $\tanalg \op{Sol}^3$
            \rule[0pt]{0pt}{16pt}
            & $\op{Sol}^3 \times{} \widetilde{\isomplus \Euc^2}$
            & $\op{Sol}^3 \times S^3$
            & $\op{Sol}^3 \times{} \widetilde{\op{PSL}(2,\R)}$ \\
        $\tanalg \Heis_3$
            \rule[0pt]{0pt}{16pt}
            & $\Heis_3 \times \widetilde{\isomplus \Euc^2}$
            & $\Heis_3 \times S^3$
            & $\Heis_3 \times \widetilde{\op{PSL}(2,\R)}$ \\
        $\R^3$
            \rule[0pt]{0pt}{16pt}
            & (Prop.~\ref{prop:fiber2_nilmanifolds}) & $\R^3 \times S^3$ & $\R^3 \times \widetilde{\op{PSL}(2,\R)}$ \\
    \end{tabular}
    \end{center}
\end{table}

\subsubsection{Associated bundle geometries}
\label{sec:assoc_bundles}

This section is a study of the associated bundle geometries---the only geometries
in this classification that have abelian isotropy and are not products.
The most concise names, the classification, and the verification of maximality
all use slightly different constructions of these spaces;
so this section will provide the notation for and relationships between
all three constructions.
The first definition motivates the name ``associated bundle''.

\begin{defn}[\textbf{Associated bundles}, see e.g.\ {\cite[Defn.~5.1]{husemoller}}]
    \label{defn:fiber2_associated_bundles}
    Let $E \to B$ be a principal $G$-bundle, 
    and let $\rho: G \to \op{Aut} F$ be an action of $G$ on some space $F$.
    The $F$-bundle $E \times_\rho F$ \emph{associated} to $E \to B$
    is the bundle over $B$ whose total space is
        \[ E \times F / (e,f) \sim (eg, \rho(g^{-1}) f) \text{ for } g \in G . \]
\end{defn}
\begin{eg}[\textbf{Associated bundle geometries}]
    \label{eg:fiber2_associated_homogeneous}
    For homogeneous $E = H/H_p$ and $F = K/K_q$,
    suppose $G$ is a $1$-dimensional subgroup of $H$ commuting with $H_p$
    by which it acts on $E$ on the right,
    and $\rho(G)$ is central in $K$. Then
        \[ E \times_\rho F
        = (H \times K) / \left(H_p \cdot K_q \cdot
            \left\{ (g, \rho(g)^{-1}) \mid g \in G \right\}\right) . \]
    In particular we may take $E$ and $F$ each to be one of the homogeneous spaces
    \begin{align*}
        \Heis_3 &\cong \Heis_3 \rtimes \widetilde{\op{SO}(2)} / \widetilde{\op{SO}(2)} \\
        L(a;1) &\cong S^3 \rtimes S^1 / \left(e^{(2 \pi i / a) \Z} \times S^1\right) \\
        \SLcover &\cong \SLcover \rtimes \widetilde{\op{SO}(2)} / \widetilde{\op{SO}(2)} ,
    \end{align*}
    with $G$ or $\rho(G)$ the identity component of the center in each case.

    The lens space $L(a;1)$
    is $S^3 / e^{(2 \pi i / a ) \Z}$ where $S^3$ is the unit quaternions
    and $e^{(2 \pi i / a)\Z} = \Z/a\Z \subset S^1 \subset \R + \R i$.
    In paticular, $L(1;1) \cong S^3$ and $L(2;1) \cong \R P^3$.
\end{eg}

Our notation is modeled on that in \cite[\S{1.3}]{sharpe} surrounding
the ``Vector Bundles'' subsection---we denote such a bundle by $E \times_G F$
when there is a natural choice of $G$-action on $F$, otherwise $E \times_* F$
where $*$ is enough data to specify the $G$-action. Since $G$ is $1$-dimensional,
a real number suffices to express this data once some conventions are chosen.

The next definition is a description of the associated bundle geometries
as homogeneous spaces in terms of some data arising from the classification strategy.
\begin{defn}[\textbf{Point stabilizers of associated bundle geometries}]
    \label{defn:fiber2_slope}
    Let $\tilde{G}$ be a connected, simply-connected Lie group,
    and let $\pi$ denote the quotient map $\tilde{G} \to \tilde{G}/Z(\tilde{G})$.
    Suppose $Z(\tilde{G})$ is $1$-dimensional,
    $\tilde{G}/Z(\tilde{G})$ has a $2$-dimensional maximal torus $T$,
    and $\tau_{0,0}: \R^2 \to \tilde{G}$ is a homomorphism
    such that $\pi \circ \tau_{a,b}$ has image $T$ and kernel the standard $\Z^2$.
    For $a$ and $b$ in $\R \cong Z(\tilde{G})^0$, define\footnote{
        $Z(\tilde{G})^0 \cong \R$ follows from computing its $\pi_1$
        using the homotopy exact sequence for
        $Z(\tilde{G}) \to \tilde{G} \to \tilde{G}/Z(\tilde{G})$
        and the fact that $\pi_2 = 0$ for Lie groups \cite[Prop.~V.7.5]{brockerdieck}.
    }
    \begin{align*}
        \tau_{a,b} : \R^2 &\to \tilde{G} \\
            x,y &\mapsto (xa + yb)\tau_{0,0}(x,y) .
    \end{align*}
\end{defn}

The final definition is a notation for line bundles and circle bundles
over products of $2$-dimensional geometries.
Its resemblance to the $3$-dimensional geometries with $\op{SO}(2)$ isotropy
may provide some intuition and will be used to distinguish the
associated bundle geometries from each other and from other geometries.
\begin{defn}[\textbf{Associated bundle geometries as bundles over products}]
    Let $X$ and $Y$ be $\Euc^2$, $S^2$, or $\Hyp^2$,
    scaled to have curvature $0$ or $\pm 1$;
    and let $H$ be $\R$ or $S^1 \cong \R/\Z$.
    Given $a$ and $b$ in $\tanalg H$,
    let $\xi^{a,b} \to {X \times Y}$ denote the simply-connected
    principal $H$-bundle over $X \times Y$ with a connection whose curvature form is
        \[ \Omega_{a,b} = \frac{1}{2\pi} \left( \mathrm{vol}_X \otimes a + \mathrm{vol}_Y \otimes b \right) . \]
\end{defn}

\begin{table}[h!]
    \caption{Descriptions of associated bundle geometries, assuming nonzero $a$ and $b$}
    \label{table:fiber2_associated_bundles}
    \begin{center}
    \begin{tabular}{cccc}
        $\tilde{G}$ for $\tilde{G}/\tau_{a,b}(\R^2)$ & $\tau_{0,0}(\R^2)$ & Associated bundle & $\xi^{x,y} \to X \times Y$ \\
        \hline
        $S^3 \times S^3 \times \R$ &
        $S^1 \times S^1$ &
        $L(a;1) \times_{S^1} L(b;1)$ &
        $\xi^{a,b} \to {S^2 \times S^2}$ \\
        $\SLcover \times S^3 \times \R$ &
        $\widetilde{\op{SO}(2)} \times S^1$ &
        $\SLcover \times_{a/b} S^3$ &
        $\xi^{-a,b} \to {\Hyp^2 \times S^2}$ \\
        $\SLcover \times \SLcover \times \R$ &
        $\widetilde{\op{SO}(2)} \times \widetilde{\op{SO}(2)}$ &
        $\SLcover \times_{a/b} \SLcover$ &
        $\xi^{-a,-b} \to {\Hyp^2 \times \Hyp^2}$ \\
        $(\widetilde{\op{Isom} \Heis_3})^0 \times \SLcover$ &
        $\widetilde{\op{SO}(2)} \times \widetilde{\op{SO}(2)}$ &
        $\Heis_3 \times_{\R} \SLcover$ &
        $\xi^{1,1} \to {\Euc^2 \times \Hyp^2}$ \\
        $(\widetilde{\op{Isom} \Heis_3})^0 \times S^3$ &
        $\widetilde{\op{SO}(2)} \times S^1$ &
        $\Heis_3 \times_{\R} S^3$ &
        $\xi^{1,1} \to {\Euc^2 \times S^2}$ \\
    \end{tabular}
    \end{center}
\end{table}

The correspondence between these three definitions is
summarized in Table \ref{table:fiber2_associated_bundles}.
The proof of the correspondence for the ``associated bundle'' column
is written out only for the first row for illustrative purposes,
since this part of the correspondence is only used
in the hope of selecting an evocative name.
The proof for the rest of the correspondence is
Prop.~\ref{prop:fiber2_associated_curvature}.

\begin{prop}
    \label{prop:fiber2_associated_lens}
    Let
    $L(a;1) \times_{S^1, d} L(b;1)$
    denote the associated bundle geometry as defined in
    Defn.~\ref{defn:fiber2_associated_bundles},
    where the fiber $S^1$ of $L(a;1) \to S^2$
    by translating along fibers of $L(b;1) \to S^2$, with kernel $\Z/d\Z$.
    Then the homogeneous space
    \begin{align*}
        S^3 \times S^3 \times \R / \tau_{ad,b}(\R^2)
            &\cong S^3 \times S^3 \times \R /
                \{e^{\pi is}, e^{\pi it}, ads + bt\}_{s, t \in \R} .
    \end{align*}
    is a $\op{gcd}(ad, b)$-fold cover of $L(a;1) \times_{S^1,d} L(b;1)$.
\end{prop}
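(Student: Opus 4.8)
The displayed equality just unpacks Definition~\ref{defn:fiber2_slope} for $\tilde G = S^3\times S^3\times\R$: here $Z(\tilde G)^\circ$ is the $\R$ factor, $\tilde G/Z(\tilde G) = \op{SO}(3)\times\op{SO}(3)$ has maximal torus $\op{SO}(2)\times\op{SO}(2)$, and one takes $\tau_{0,0}(s,t) = (e^{\pi i s},e^{\pi i t},0)$, so that $\tau_{ad,b}(s,t) = (e^{\pi i s},e^{\pi i t},ads+bt)$. Thus the content is the covering claim. The plan is to realize both sides as circle bundles over $S^2\times S^2$ and to compare their Euler classes in $H^2(S^2\times S^2;\Z)\cong\Z\oplus\Z$; throughout one uses $\gcd(ad,b)=\gcd(d,b)$, which holds because $a$ and $b$ are coprime.

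\textbf{Both sides fiber over $S^2\times S^2$.} For $X := (S^3\times S^3\times\R)/\tau_{ad,b}(\R^2)$, the central subgroup $N=\{1\}\times\{1\}\times\R$ commutes with $\tau_{ad,b}(\R^2)$ and meets it in $\{1\}\times\{1\}\times 2\gcd(ad,b)\Z$ (since $\{ads+bt:s,t\in 2\Z\}=2\gcd(ad,b)\Z$), so $N$ descends to a free circle action on $X$; and because $\tau_{ad,b}(\R^2)$ surjects onto the product $T^2$ of the two Hopf circles in $S^3\times S^3$, the quotient is $(S^3\times S^3\times\R)/(T^2\times\R)=S^2\times S^2$. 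Hence $X\to S^2\times S^2$ is a circle bundle. For $Y := L(a;1)\times_{S^1,d}L(b;1)$, the associated-bundle projection presents $Y$ as a bundle over $S^2$ (the base of $L(a;1)$) with fiber $L(b;1)$, and $L(b;1)$ fibers over $S^2$ (its own base); since the $S^1$ forming $Y$ acts on $L(b;1)$ through fiber translations, these combine to a circle bundle $Y\to S^2\times S^2$.

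\textbf{Euler classes and the covering.} Restricting $Y$ over $\{*\}\times S^2$ recovers $L(b;1)\to S^2$, of Euler number $b$; restricting over $S^2\times\{*\}$ recovers the circle bundle associated to $L(a;1)\to S^2$ (Euler number $a$) along the homomorphism $\rho$ between the two fiber circles, which has degree $d$ because $\ker\rho\cong\Z/d\Z$, hence has Euler number $ad$; so $e(Y)=\pm(ad,b)$. For $X$, restricting over $S^2\times\{*\}$ and unwinding the $N$-action exhibits a circle bundle over $S^2$ whose total space has fundamental group $\Z/(ad/\gcd(ad,b))\Z$---the Hopf circle contributing winding $b/\gcd(ad,b)$ and the $\R$ factor the shift $ad$---so its Euler number is $\pm ad/\gcd(ad,b)$, and symmetrically in the other variable; hence $e(X)=\pm(ad,b)/\gcd(ad,b)$. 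Therefore $e(Y)=\gcd(ad,b)\cdot e(X)$. Since $S^2\times S^2$ is simply connected, circle bundles over it are classified by their Euler class, and the fiberwise $\Z/\gcd(ad,b)$-quotient of $X$ has Euler class $\gcd(ad,b)\cdot e(X)=e(Y)$; so $Y$ is isomorphic to that quotient, and the quotient map is the claimed $\gcd(ad,b)$-fold covering $X\to Y$.

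\textbf{Main obstacle.} The delicate part is the Euler-class computation for $X$: one must keep straight the $\pi$-versus-$2\pi$ conventions built into Definition~\ref{defn:fiber2_slope}, the sign of the Hopf Euler number, and the distinction between $\mu_c\subset S^1\subset S^3$ and its image in $\op{SO}(3)$, and---above all---confirm that the covering degree is \emph{exactly} $\gcd(ad,b)$ rather than a proper divisor or multiple. An equivalent, more hands-on route is to write $Y$ directly as $(S^3\times S^3)/K$ for an explicit closed subgroup $K\subset T^2$, to identify $X$ with $(S^3\times S^3)/K^\circ$, and to check $[K:K^\circ]=\gcd(ad,b)$; it involves the same bookkeeping.
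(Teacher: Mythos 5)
You take a genuinely different route from the paper, and in outline it works. The paper never leaves the homogeneous-space category: it rewrites $L(a;1)$ as $S^3\times\R/\{e^{\pi i t},at+2\Z\}_{t\in\R}$ via Lemma~\ref{lemma:fiber2_semidirect_straightening}, feeds this into the formula of Example~\ref{eg:fiber2_associated_homogeneous}, absorbs the redundant $\R$ factor into the stabilizer to get $L(a;1)\times_{S^1,d}L(b;1)\cong S^3\times S^3\times\R/\{e^{\pi i s},e^{\pi i t},ads+bt+2\Z\}_{s,t\in\R}$, and then the covering claim is immediate because $\tau_{ad,b}(\R^2)$ is exactly this subgroup with $2\Z$ replaced by $2\op{gcd}(ad,b)\Z$, of index $\op{gcd}(ad,b)$; this also hands the paper the explicit equivariant presentation it uses immediately afterwards (the reduction to $L(ad;1)\times_{S^1}L(b;1)$, the symmetry in $a$ and $b$, and the simple-connectivity criterion). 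Your Euler-class comparison over $S^2\times S^2$ is sound and the asserted numbers are correct: writing $X=S^3\times S^3\times\R/\tau_{ad,b}(\R^2)$, its restriction over $S^2\times\{*\}$ is $(S^3\times\R)/\Gamma$ with $\Gamma=\{(e^{\pi i s},ads+2bm)\}_{s\in\R,m\in\Z}$, whose component group is $\Z/(ad/\op{gcd}(ad,b))$, so the Euler number is $\pm ad/\op{gcd}(ad,b)$, and similarly $\pm b/\op{gcd}(ad,b)$ over the other factor, while $e(Y)=(\pm ad,\pm b)$; the fiberwise $\Z/\op{gcd}(ad,b)$-quotient of $X$ then has the same Euler class as $Y$ up to signs, and bundles over the simply-connected $S^2\times S^2$ are determined by that class. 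Two loose ends remain in your write-up: the componentwise signs you leave as $\pm$ must be reconciled before you may say the quotient \emph{is} $Y$ over the identity of the base (a mismatch is harmless for the covering statement, since it can be absorbed by fiber inversion or a reflection of an $S^2$ factor, but you should say this rather than leave it implicit—you yourself flag it as the main obstacle); and the opening remark that one uses $\op{gcd}(ad,b)=\op{gcd}(d,b)$ ``because $a$ and $b$ are coprime'' is both unnecessary and unjustified, as the proposition assumes no coprimality. Finally, your proposed hands-on alternative—compare the stabilizer $K\subset S^1\times S^1$ of $Y$ inside $S^3\times S^3$ with its identity component and check $[K:K^{\circ}]=\op{gcd}(ad,b)$—is in substance the paper's own argument. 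What your route buys is independence from the straightening and associated-bundle bookkeeping; what it loses is the explicit coset description and the naturality of the covering map.
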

The notation interprets $S^3$ as the unit quaternions and any complex
numbers as lying inside the quaternions. It follows from the above
(Prop.~\ref{prop:fiber2_associated_lens}) that
\begin{enumerate}
    \item $L(a;1) \times_{S^1,d} L(b;1) \cong L(ad;1) \times_{S^1} L(b;1)$;
    \item $L(a;1) \times_{S^1} L(b;1) \cong L(b;1) \times_{S^1} L(a;1)$; and
    \item $L(a;1) \times_{S^1} L(b;1)$ is simply-connected if and only if
        $\op{gcd}(a,b) = 1$. (Use the homotopy exact sequence for
        $\tau_{a,b}(\R^2) \to S^3 \times S^3 \times \R
        \to L(a;1) \times_{S^1} L(b;1)$.)
\end{enumerate}
Just one key Lemma is required for its proof.
\begin{lemma}
    \label{lemma:fiber2_semidirect_straightening}
    Let $\gamma: H \to G$ be a homomorphism,
    and let $C: G \to \op{Inn} G$ denote conjugation. Then
        \[ G \times H / \{ (\gamma(h), h) \}_{h \in H} \cong G \rtimes_{C \circ \gamma} H / H . \]
\end{lemma}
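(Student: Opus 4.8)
The plan is to deduce the asserted identification of homogeneous spaces from an isomorphism of the ambient Lie groups. Concretely I would exhibit a Lie group isomorphism $\Phi \colon G \times H \to G \rtimes_{C \circ \gamma} H$ carrying the subgroup $D = \{(\gamma(h), h)\}_{h \in H}$ onto $\{1\} \times H$. Given such a $\Phi$, it descends to a diffeomorphism of coset spaces $(G\times H)/D \to (G \rtimes_{C\circ\gamma} H)/H$ intertwining the transitive left action of $G\times H$ with that of $G \rtimes_{C\circ\gamma} H$ through $\Phi$, which is exactly the claimed correspondence. One should first record that $D$ is a closed subgroup: it is a subgroup because $\gamma$ is a homomorphism, so $(\gamma(h_1),h_1)(\gamma(h_2),h_2) = (\gamma(h_1h_2), h_1h_2)$, and it is closed as the preimage of $1$ under $(g,h)\mapsto g\,\gamma(h)^{-1}$ — so both sides are genuine homogeneous spaces in the sense used throughout.

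For the map itself I would try $\Phi(g,h) = (g\,\gamma(h)^{-1},\, h)$, with smooth inverse $(g,h)\mapsto (g\,\gamma(h),\,h)$; the precise placement of the $\gamma(h)^{\pm1}$ factors, and of the conjugation twist, is dictated by whichever semidirect-product convention is in force (with $(g_1,h_1)(g_2,h_2) = (g_1\,C_{\gamma(h_1)}(g_2),\, h_1h_2)$ the above is the correct choice). Checking that $\Phi$ is a homomorphism is a one-line computation: expanding $\Phi\big((g_1,h_1)(g_2,h_2)\big)$ and $\Phi(g_1,h_1)\,\Phi(g_2,h_2)$ in the semidirect product, the twist $C_{\gamma(h_1)}$ applied to the second entry absorbs the stray $\gamma(h_1)^{\pm1}$ factors, using once more that $\gamma$ is a homomorphism. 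Finally $\Phi(\gamma(h),h) = (\gamma(h)\gamma(h)^{-1},\,h) = (1,h)$, so $\Phi(D) = \{1\}\times H$, completing the construction.

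I do not expect a genuine obstacle here: the content is just the standard ``untwisting'' of a graph-type subgroup of a direct product into the fiber subgroup of a semidirect product. The only thing requiring care is bookkeeping — fixing the semidirect-product sign convention and the left/right placement of the $\gamma(h)^{\pm1}$ factors so that $\Phi$ is genuinely multiplicative, and keeping in mind that the equivariance of the induced map of homogeneous spaces is through the isomorphism $\Phi$ rather than an equality of acting groups. With those conventions pinned down the argument is a few lines.
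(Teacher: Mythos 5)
Your proposal is correct and matches the paper's proof: the paper uses exactly the map $(g,h)\mapsto (g\,\gamma(h)^{-1},h)$ and checks that it is an isomorphism carrying the graph subgroup onto $H$. Your additional remarks on closedness and equivariance are fine but not part of the paper's (very brief) argument.
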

\begin{proof}
    Verify from the definition of a semidirect product that
    \begin{align*}
        \phi: G \times H &\to G \rtimes_{C \circ \gamma} H \\
            g, h &\mapsto g \gamma(h)^{-1}, h .
    \end{align*}
    is an isomorphism sending $\{(\gamma(h), h)\}_{h \in H}$
    onto $H \subseteq G \rtimes_{C \circ \gamma} H$.
\end{proof}

\begin{proof}[Proof of Prop.~\ref{prop:fiber2_associated_lens}]
    Interpret $S^3$ as the unit quaternions,
    and all complex numbers as lying in the same copy of $\C$ in the quaternions.
    Since semidirect products with inner action are isomorphic to direct products
    (Lemma \ref{lemma:fiber2_semidirect_straightening}),
    \begin{align*}
        L(a;1)
            &= S^3 \rtimes S^1 / \{e^{(2\pi i/a)\Z}, t \op{mod} 2\}_{t \in \R} \\
            &\cong S^3 \times \R / \{e^{\pi i t + (2\pi i/a)\Z}, t\}_{t \in \R} \\
            &\cong S^3 \times \R / \{e^{\pi it}, at + 2\Z\}_{t \in \R} .
    \end{align*}
    So with the formula for homogeneous associated bundles
    (Example \ref{eg:fiber2_associated_homogeneous}),
    \begin{align*}
        L(a;1) \times_{S^1, d} L(b;1)
            &= S^3 \times \R \times S^3 \times \R /
                \{e^{\pi is}, as + 2\Z + u, e^{\pi it}, bt + 2\Z - du\}_{s,t,u \in \R} .
    \end{align*}
    Since $u$ ranges over all of $\R$, the entire first $\R$ factor in
    $S^3 \times \R \times S^3 \times \R$ is part of the point stabilizer.
    Since this $\R$ factor is normal,
    the homogeneous space is equivariantly diffeomorphic to a
    coset space of $S^3 \times S^3 \times \R$.
    To see exactly which coset space, let $u'$ be the coordinate in this $\R$ factor.
    Then
    \begin{align*}
        L(a;1) \times_{S^1, d} L(b;1)
            &\cong S^3 \times \R \times S^3 \times \R /
                \{e^{\pi is}, u', e^{\pi it}, bt + 2\Z -
                d(u' - as - 2\Z)\}_{s,t,u' \in \R} \\
            &\cong S^3 \times S^3 \times \R /
                \{e^{\pi is}, e^{\pi it}, bt + 2\Z + d(as + 2\Z)\}_{s,t \in \R} \\
            &\cong S^3 \times S^3 \times \R /
                \{e^{\pi is}, e^{\pi it}, ads + bt + 2\Z\}_{s,t \in \R} .
    \end{align*}
    The resemblance to $S^3 \times S^3 \times \R / \tau_{ad,b}(\R^2)$
    is now apparent. To finish the proof, observe that
    since $e^{2\pi i} = 1$,
    \begin{align*}
        S^3 \times S^3 \times \R / \tau_{ad,b}(\R^2)
            &\cong S^3 \times S^3 \times \R /
                \{e^{\pi is}, e^{\pi it}, ads + bt\}_{s,t \in \R} \\
            &\cong S^3 \times S^3 \times \R /
                \{e^{\pi is}, e^{\pi it}, ad(s + 2\Z) + b(t + 2\Z)\}_{s,t \in \R} \\
            &\cong S^3 \times S^3 \times \R /
                \{e^{\pi is}, e^{\pi it}, ads + bt + 2\op{gcd}(ad,b)\Z\}_{s,t \in \R} .
                \qedhere
    \end{align*}
\end{proof}

The rest of this sub-section is mostly devoted to proving the correctness of
Table \ref{table:fiber2_associated_bundles} other than in the third column.
Before doing so, having established some understanding above of the
$L(a;1) \times_{S^1} L(b;1)$ family of geometries,
we observe one restriction
on the parameters $a$ and $b$ that occurs only for this family.

\begin{prop}
    \label{prop:fiber2_rational_curvature}
    $\tau_{a,b}(\R^2)$ is closed in $\tilde{G} = S^3 \times S^3 \times \R$
    if and only if $a$ and $b$ are linearly dependent over $\Q$.
\end{prop}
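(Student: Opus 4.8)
The plan is to decide closedness entirely inside the closed subgroup $S^1\times S^1\times\R\subseteq\tilde{G}$, where $S^1=S^3\cap\C$ is the circle along which $\tau_{0,0}$ runs. Indeed, unwinding Defn.~\ref{defn:fiber2_slope} (and matching the explicit form already used in Prop.~\ref{prop:fiber2_associated_lens}) one has $\tau_{a,b}(\R^2)=\{(e^{\pi i s},e^{\pi i t},as+bt):s,t\in\R\}$, which lies in $S^1\times S^1\times\R$; and since $S^1\times S^1\times\R$ is closed in $\tilde{G}$, a subset of it is closed in $\tilde{G}$ iff it is closed there. So I only need to decide when $A:=\tau_{a,b}(\R^2)$ is closed in $S^1\times S^1\times\R$. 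Note $\tau_{a,b}$ is a group homomorphism, so $A$ is a subgroup and $\overline{A}$ is a closed subgroup.

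For the ``if'' direction I would argue by reduction to a one-parameter-like case. If $a=b=0$ then $A=S^1\times S^1\times\{0\}$ is closed. Otherwise, rational dependence gives a primitive $(p,q)\in\Z^2$ with $pa+qb=0$, hence $(a,b)=\lambda(-q,p)$ for a unique $\lambda\neq 0$; and since $(-q,p)$ is a primitive integer vector there is $N\in\op{GL}(2,\Z)$ with $(-q,p)N=(1,0)$. The key book-keeping step is to check the identity $\tau_{a,b}\circ N=\Phi_N\circ\tau_{(a,b)N}$, where $\Phi_N$ is the homeomorphism of $S^1\times S^1\times\R$ induced by $N$ acting on the two circle factors (this is legitimate because $\op{GL}(2,\Z)$ preserves the kernel lattice $2\Z^2$ of $(s,t)\mapsto(e^{\pi i s},e^{\pi i t})$). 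Since $N$ is onto $\R^2$, this gives $A=\Phi_N\bigl(\tau_{\lambda,0}(\R^2)\bigr)$, so $A$ is closed iff $\tau_{\lambda,0}(\R^2)$ is. Finally $\tau_{\lambda,0}(\R^2)=\{(z,w,r)\in S^1\times S^1\times\R:z=e^{\pi i r/\lambda}\}$ is the preimage of $1$ under the continuous map $(z,w,r)\mapsto z\,e^{-\pi i r/\lambda}$, hence closed.

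For the ``only if'' direction, assume $a,b$ are linearly independent over $\Q$; in particular both are nonzero. Then the subgroup $\Z a+\Z b\subseteq\R$ is not discrete (a discrete one would be $c\Z$, forcing $a/b\in\Q$), hence dense, and so is $2(\Z a+\Z b)$. Since $\tau_{a,b}(2m,2n)=(1,1,2ma+2nb)$, the closure $\overline{A}$ contains $\{1\}\times\{1\}\times\R$; and as $A$ projects onto all of $S^1\times S^1$ in the first two coordinates, the subgroup $\overline{A}$ then contains $\{z\}\times\{w\}\times\R$ for every $(z,w)$, so $\overline{A}=S^1\times S^1\times\R$. But $A$ itself meets $\{1\}\times\{1\}\times\R$ only in the countable set $2(\Z a+\Z b)$, so $A\neq\overline{A}$ and $A$ is not closed. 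The only genuinely fiddly part of the whole argument is verifying the $\op{GL}(2,\Z)$-conjugation identity $\tau_{a,b}\circ N=\Phi_N\circ\tau_{(a,b)N}$ and that $\Phi_N$ really is a homeomorphism of $S^1\times S^1\times\R$; everything else is elementary, and in particular no Lie-theoretic subtleties about immersed subgroups are needed.
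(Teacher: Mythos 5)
Your proof is correct, and its first move is the same as the paper's: restrict attention to the closed subgroup $S^1 \times S^1 \times \R$ containing $\tau_{a,b}(\R^2)$, so that everything reduces to the interaction between the plane $(s,t) \mapsto (s,t,as+bt)$ and the lattice $2\Z^2$. After that the executions diverge. The paper handles both directions at once by a transfer-of-closedness argument: writing $H \cong \R^3/(\Z^2 \times \{0\})$ and $\tau_{a,b}(\R^2)$ as the image of the plane $V = \{ax+by-z=0\}$, it observes that closedness of $V/(V \cap \Z^2)$ in $\R^3/\Z^2$, of $V + \Z^2$ in $\R^3$, and of the image of $\Z^2$ in $\R^3/V \cong \R$ are all equivalent, and the last is exactly the statement that $\Z a + \Z b$ is discrete, i.e.\ that $a,b$ are rationally dependent. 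You instead argue the two directions separately: for sufficiency you normalize by an element of $\op{GL}(2,\Z)$ (using primitivity of the integer relation and the fact that $\op{GL}(2,\Z)$ preserves the kernel lattice, so it descends to a homeomorphism $\Phi_N$ of $S^1 \times S^1 \times \R$) to reduce to $\tau_{\lambda,0}(\R^2)$, which is visibly a closed level set; for necessity you use that $\Z a + \Z b$ is dense when $a,b$ are $\Q$-independent, so the closure of $\tau_{a,b}(\R^2)$ contains the whole central $\R$ while the subgroup itself meets it only countably. Both routes ultimately rest on the same fact about $\Z a + \Z b \subseteq \R$; the paper's version is more uniform and shorter, while yours is more hands-on and has the small bonus of identifying the closure explicitly (all of $S^1 \times S^1 \times \R$) in the non-closed case. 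The bookkeeping identity $\tau_{a,b} \circ N = \Phi_N \circ \tau_{(a,b)N}$ that you flag does check out with consistent row/column conventions, and your separate treatment of $a=b=0$ ensures $\lambda \neq 0$ where you divide by it.
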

\begin{proof}
    Let $H$ be the preimage in $\tilde{G}$ of the maximal torus in
    $\tilde{G}/Z(\tilde{G})$ over which $\tau_{a,b}(\R^2)$ lies.
    As the continuous preimage of a closed set, $H$ is closed;
    so it suffices to consider closedness in $H$.

    If $\tilde{G} = S^3 \times S^3 \times \R$,
    then $H \cong S^1 \times S^1 \times \R \cong \R^3 / (\Z^2 \times \{0\})$;
    and $\tau_{a,b}(\R^2)$ is the image in $H$ of a vector subspace
    $V \subseteq \R^3$ given in coordinates $(x,y,z)$ by $ax + by - z = 0$.
    Since $V$ and $\Z^2$ are both closed in $\R^3$,
    all of the following inclusions are closed if any one of them is.
    \begin{align*}
        \tau_{a,b}(\R^2) = V/(V \cap \Z^2) &\subseteq \R^3 / \Z^2 \cong H \\
        V\Z^2 &\subseteq \R^3  \\
        \Z^2/(V \cap \Z^2) &\subseteq \R^3/V
    \end{align*}
    Consider the last of the above inclusions.
    Modulo $ax + by - z$, the generators $(1,0,0)$ and $(0,1,0)$ of $\Z^2$
    are equivalent to $(0,0,-a)$ and $(0,0,-b)$. These generate a closed
    subgroup of $\R^3/V \cong \{(0,0,z) \mid z \in \R\} \subset \R^3$
    if and only if $\op{gcd}(a,b)$ exists
    (i.e.\ the integer linear combinations of $a$ and $b$ have
    a smallest positive value rather than accumulating on zero).
\end{proof}

\begin{rmk}
    \label{rmk:fiber2_rational_curvature}
    The rational dependence constraint does not appear if the rank
    $k$ of $\pi_1(H) \cong \Z^k$ is less than $2$,
    since a rank-at-most-one subgroup of a vector space $\tilde{H}/V$
    is closed.

    In particular, for all $\tilde{G}$ in this part of the classification
    other than $S^3 \times S^3 \times \R$, the ratio $a/b$
    for $\tau_{a,b}$ is allowed to be any real number
    (really, any point of $\R P^1$).
    %
\end{rmk}

\begin{prop}[\textbf{Correctness of Table \ref{table:fiber2_associated_bundles}}]
    \label{prop:fiber2_associated_curvature}
    Let $\tilde{G}$ and $\tau_{0,0}$ be as in one of the rows of
    Table \ref{table:fiber2_associated_bundles}.
    Then
    \begin{enumerate}[(i)]
        \item $M = \tilde{G}/\tau_{a,b}(\R^2)$ (Defn.~\ref{defn:fiber2_slope})
            fibers over the product $X \times Y$ in the last column.
        \item For some invariant Riemannian metrics on $X$ and $Y$
            with curvature $0$ or $\pm 1$,
            the connection on $M \to X \times Y$
            given by $(TM^{\tau_{a,b}(\R^2)})^\perp$
            has the same curvature as the bundle in the last column,
            provided that both $a$ and $b$ are nonzero.
    \end{enumerate}
\end{prop}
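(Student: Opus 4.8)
The plan is to verify both parts row by row in Table~\ref{table:fiber2_associated_bundles}. In each row write $\tilde{G} = \tilde{G}_1 \times \tilde{G}_2$ (keeping, in the three purely $3$-dimensional rows, a central $\R$ as a separate factor), where each $\tilde{G}_i$ is $S^3$, $\SLcover$, or $(\widetilde{\op{Isom}\Heis_3})^0$ and acts on a $2$-dimensional constant-curvature space $X_i \in \{S^2, \Hyp^2, \Euc^2\}$ with point stabilizer $K_i$ a rotation subgroup (one of $S^1$, $\widetilde{\op{SO}(2)}$, or---for the Heisenberg factor, which acts on $\Euc^2$---the rotation subgroup together with the center of $\Heis_3$). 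Put $X := X_1$ and $Y := X_2$; the central $\R$, where present, acts trivially on $X \times Y$, and $\tilde{G}$ acts transitively on $X \times Y$ with point stabilizer an abelian subgroup $H$ whose Lie algebra is $3$-dimensional, spanned by the rotation generators $r_1, r_2$ and a generator $z$ of $\tanalg Z(\tilde{G})^0$ (in the Heisenberg rows $z$ already lies in $\tanalg K_1$, so there is no separate central $\R$). Since $\tau_{0,0}(\R^2)$ maps into the maximal torus of $\tilde{G}/Z(\tilde{G})$, its Lie algebra is $\langle r_1, r_2\rangle$, while the central summand of $\tau_{a,b}$ lies in $Z(\tilde{G}) \subseteq H$; hence $\tanalg\tau_{a,b}(\R^2) \subseteq \tanalg H$, and as $H$ is abelian it normalizes $\tau_{a,b}(\R^2)$. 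So $M = \tilde{G}/\tau_{a,b}(\R^2) \to \tilde{G}/H = X \times Y$ is a principal bundle with structure group $H/\tau_{a,b}(\R^2) \cong \R$ or $S^1$ (closedness of $\tau_{a,b}(\R^2)$, when it is needed, is Prop.~\ref{prop:fiber2_rational_curvature}), which together with Rmk.~\ref{rmk:fibering_closed_leaves} proves part~(i). Moreover $G_p = \tau_{a,b}(\R^2) \cong \op{SO}(2) \times \op{SO}(2)$ is connected and acts on the $1$-dimensional tangent line to the fiber, hence fixes it; so the vertical distribution is exactly $TM^{\tau_{a,b}(\R^2)}$ and $(TM^{\tau_{a,b}(\R^2)})^\perp$ is a genuine horizontal complement---by Lemma~\ref{lemma:perp_orthogonality}, the orthogonal one in every invariant metric.

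For part~(ii), fix on $X \times Y$ a product of two constant-curvature metrics scaled to curvature $0$ or $\pm 1$. The curvature of the connection $(TM^{\tau_{a,b}(\R^2)})^\perp$ is a $\tilde{G}$-invariant $2$-form on $X \times Y$ valued in $\lie{z} := \tanalg(H/\tau_{a,b}(\R^2))$, i.e.\ a $G_p$-equivariant map $\Lambda^2 (\text{horizontal})_p \to \lie{z}$, computed by brackets of vector fields via Lemma~\ref{bracket_infinitesimally} (just as in Section~\ref{chap:fiber4}). The isotropy $H$ acts on $T_{(x_0,y_0)}(X \times Y) = T_{x_0}X \oplus T_{y_0}Y$ through $\op{SO}(2) \times \op{SO}(2)$, and $\Lambda^2(T_{x_0}X \oplus T_{y_0}Y)$ has exactly two trivial summands, namely $\R\,\op{vol}_X$ and $\R\,\op{vol}_Y$---the mixed summand $T_{x_0}X \otimes T_{y_0}Y$ carrying no invariant. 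So the curvature form is $\lambda\,\op{vol}_X \otimes e + \mu\,\op{vol}_Y \otimes e$ for a generator $e$ of $\lie{z}$ and scalars $\lambda, \mu$, and it remains only to compute the pair $(\lambda, \mu)$.

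That I would do with structure constants. In the rows where both $\tilde{G}_i$ are $3$-dimensional, choose bases $\{r_i, x_i, y_i\}$ of $\tanalg \tilde{G}_i$ with $[r_i, x_i] = y_i$, $[r_i, y_i] = -x_i$, $[x_i, y_i] = \epsilon_i r_i$, where $\epsilon_i \in \{1, -1\}$ is the sign of the curvature of $X_i$, with $r_i$ normalized as the ``standard $\Z^2$'' of Defn.~\ref{defn:fiber2_slope} prescribes, and let $z$ span $\tanalg Z(\tilde{G})^0$. Then $\tanalg\tau_{a,b}(\R^2) = \langle r_1 + az,\, r_2 + bz\rangle$, the horizontal space at $p$ is spanned by the images $\bar x_1, \bar y_1, \bar x_2, \bar y_2$, and modulo $\tanalg\tau_{a,b}(\R^2)$ one has $r_1 \equiv -a\bar z$, $r_2 \equiv -b\bar z$ with $\bar z$ spanning $\lie{z}$; the only brackets of horizontal generators with a vertical component are $[\bar x_1, \bar y_1] = \epsilon_1 r_1 = -\epsilon_1 a\,\bar z$ and $[\bar x_2, \bar y_2] = -\epsilon_2 b\,\bar z$, brackets across the two factors vanishing. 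So $(\lambda, \mu)$ is proportional to $(\epsilon_1 a, \epsilon_2 b)$, which---reading $\epsilon = -1$ for a $\Hyp^2$ factor and $+1$ for an $S^2$ factor---reproduces the signed labels $\xi^{x,y}$ of the last column; the overall scale and the factor $\tfrac{1}{2\pi}$ are then absorbed by the metric choice (insisting on curvature exactly $\pm 1$ on a spherical or hyperbolic factor pins the area of $X_i$ against $r_i$, and rescaling the $\R$-fiber rescales $\bar z$). The two rows with a $(\widetilde{\op{Isom}\Heis_3})^0$ factor run the same way, except that the relevant factor is $4$-dimensional, the central generator $z$ lies inside it and satisfies $[x_1, y_1] = z$ directly, so the $X = \Euc^2$ coefficient of the curvature is simply nonzero; because $\Euc^2$ carries no curvature normalization, the additional freedom to rescale the Heisenberg group and the fiber normalizes $(\lambda, \mu)$ to $\xi^{1,1}$, which is why those two geometries carry no continuous modulus.

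The main obstacle I expect is this last layer of bookkeeping---pinning down, uniformly across the five rows, the right rotation generators $r_i$, the right base factors $X_i$ with their signs $\epsilon_i$, and a compatible system of metric and fiber rescalings so that the curvature matches the specific $\xi^{x,y}$ of Table~\ref{table:fiber2_associated_bundles} on the nose rather than merely up to sign and scale---together with accounting for the hypothesis that $a$ and $b$ are both nonzero: if, say, $b = 0$, then $[\bar x_2, \bar y_2]$ becomes horizontal, the fibering splits off a flat $3$-dimensional factor, and $M$ is one of the product geometries of Prop.~\ref{prop:fiber2_bundle_list}(i), so is legitimately excluded here.
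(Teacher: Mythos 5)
Your proposal is correct in substance, and for part (ii) it takes a genuinely different route from the paper. For (i) the two arguments are close: the paper projects to $\tilde{G}/Z(\tilde{G})$ and quotes Filipkiewicz's classification of $4$-dimensional geometries with $2$-torus isotropy to identify the base as $X \times Y$, whereas you identify the base factor-by-factor and make the principal $H/\tau_{a,b}(\R^2)$-bundle structure explicit (with the abelian preimage $H$ of the maximal torus); both work, and your version makes the structure group visible. For (ii), the paper restricts the connection to the preimage of each factor, identifies those circle bundles with $T^1 S^2$, $T^1 \Hyp^2$ (via $S^3 \to \op{SO}(3)$ and $\SLcover \to \op{PSL}(2,\R)$) and $\Heis_3 \to \Euc^2$, and reads off the curvature from Gauss--Bonnet and Thurston's computation, the parameter $a$ entering through the length normalization of the fiber circle; you instead compute directly with structure constants, getting coefficients proportional to $(\epsilon_1 a, \epsilon_2 b)$, which does reproduce the sign pattern of Table \ref{table:fiber2_associated_bundles}, with the factor $\tfrac{1}{2\pi}$ and the overall scale absorbed into the identification of the fiber group---exactly the bookkeeping you flag. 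Your route is uniform and self-contained across the rows; the paper's buys the exact normalization cheaply by outsourcing to known bundles. One step to tighten: the fields induced by $x_i, y_i$ are horizontal only at the basepoint, so the vertical part of their Lie-algebra bracket is not literally the integrability tensor of Lemma \ref{bracket_infinitesimally}; justify the structure-constant formula instead via the invariant connection $1$-form (pulled back to $\tilde{G}$, invariance gives $d\omega(X,Y) = \pm\,\omega([X,Y])$ on the nose), or equivalently by the standard description of invariant connections on homogeneous principal bundles. With that justification your computation, your treatment of the Heisenberg rows, and your exclusion of the cases $a = 0$ or $b = 0$ all agree with the paper's conclusions.
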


\begin{proof}[Proof of Prop.~\ref{prop:fiber2_associated_curvature}(i)]
    The quotient map $\pi: \tilde{G} \to \op{Inn} G = \tilde{G}/Z(\tilde{G})$
    sends $\tau_{a,b}(\R^2)$ to a $2$-dimensional maximal torus $T$.
    Then $(\op{Inn} G)/T$ is a $4$-dimensional geometry with point stabilizer $T$,
    which is a product of $2$-dimensional geometries $X \times Y$ by
    \cite[Thm.~3.1.1(c)]{filipk}. Computing $\tilde{G}/Z(\tilde{G})$
    determines $X$ and $Y$.
\end{proof}

\begin{proof}[Proof of Prop.~\ref{prop:fiber2_associated_curvature}(ii)]
    Suppose $a, b \in Z(\tilde{G})^0 \cong \R$ are both nonzero.
    For the connection on $M = \tilde{G}/\tau_{a,b}(\R^2) \to X \times Y$ given by
    $(TM^{\tau_{a,b}(\R^2)})^\perp$, the curvature $\Omega$ is an invariant $2$-form
    on $X \times Y$ with values in the Lie algebra of the fiber.

    \paragraph{Step 1: Curvature is nonzero only along $X$ and $Y$.}
    If $V$ is the standard representation of $\op{SO}(2)$,
    then a tangent space to $X \times Y$ decomposes
    as the $\op{SO}(2) \times \op{SO}(2)$ representation
        \[ T_x X \oplus T_y Y \cong (V \otimes \R) \oplus (\R \otimes V), \]
    whose second exterior power is
    \begin{align*}
        \Lambda^2(V \otimes \R) \oplus (V \otimes V) \oplus \Lambda^2(\R \otimes V)
            &\cong 2\R \oplus (V \otimes V) .
    \end{align*}
    So the $2$-form $\Omega$, being $\op{SO}(2) \times \op{SO}(2)$-invariant,
    is determined by its values on the $2\R$ summand---that is, on
    $\Lambda^2(T_x X)$ and $\Lambda^2(T_y Y)$.

    \paragraph{Step 2: Express the preimage of $X$ as a homogeneous space.}
    Fix a copy of $X$, and let $E$ be its preimage in $M$.
    Then choosing a factor of $\tilde{G}$ acting transitively on $X$
    and containing $Z(\tilde{G})^0$
    expresses $E$ as covered by one of the following homogeneous spaces $H/H_p$.
    \begin{align*}
        \Heis_3 \rtimes \widetilde{\op{SO}(2)} /
            \{(0,0,at), \gamma(t)\}_{t \in \R}
            &\cong \Heis_3 \\
        S^3 \times \R / \{e^{\pi it}, at\}_{t \in \R}
            &\cong S^3 \text{ (or $S^2 \times \R$ if $a = 0$)} \\
        \SLcover \times \R / \{\gamma(t), at\}_{t \in \R}
            &\cong \SLcover \text{ (or $\Hyp^2 \times \R$ if $a = 0$)}
    \end{align*}
    The notation conventions are that:
    \begin{enumerate}
        \item $\Heis_3$ is $\R^3$ with the composition law
            \( (x,y,z)(x',y',z') = (x+x', y+y', z+z'+xy'); \)
        \item $\gamma: \R \overset{\sim}{\to} \widetilde{\op{SO}(2)}$
            sends $t \in \R$ to a rotation by $2\pi t$; and
        \item $S^3$ and $\C$ are interpreted as subsets of the quaternions.
    \end{enumerate}

    \paragraph{Step 3: If $X$ has nonzero curvature $K$,
        then the curvature along $X$ is $\frac{K}{2\pi}\mathrm{vol}_X \otimes a$.}
    The curvature of $(TM^{\tau_{a,b}(\R^2)})^\perp$ restricted to $X$
    is the curvature of $TE^H$ as a connection on $E \to X$.

    Equivariantly, $S^3$ covers $\op{SO}(3) \cong T^1 S^2$
    and $\SLcover$ covers $\op{PSL}(2,\R) \cong T^1 \Hyp^2$;
    which takes $TE^H$ to the connection induced by the Levi-Civita
    connection on $S^2$ or $\Hyp^2$.
    If the unit tangent circles are declared to have length $2\pi$, then
    this connection's curvature is the surface's
    scalar curvature $K$ times its area form $\mathrm{vol}_X$.
    (This is a version of the Gauss-Bonnet Theorem;
    see e.g.\ \cite[\S{4.5} p.~274]{docarmosurfs}.)
    In $E = H/H_p$, such a circle is most naturally assigned the
    length of the interval
    $[0,a) \subset \R \cong Z(\tilde{G})^0 \subset H$
    that maps bijectively onto it.
    Then identifying $T_0 \R$ with $\R \cong Z(\tilde{G})^0$
    by the exponential map gives the expression
    $\frac{K}{2\pi} \mathrm{vol}_X \otimes a$
    for the curvature over $X$.

    The same argument applies to $Y$,
    which establishes the claim (ii) for the first three rows of
    Table \ref{table:fiber2_associated_bundles}.

    \paragraph{Step 4: Curvature along $X = \Euc^2$ is any nonzero
        multiple of the volume form.}
    First, observe that
    \begin{align*}
        \Heis_3 \rtimes \widetilde{\op{SO}(2)} /
            \{ (0,0,at), \gamma(t) \}_{t \in \R}
            &\cong \Heis_3 \rtimes \op{SO}(2) / \op{SO}(2) \\
        x,y,z,\gamma(t) &\mapsto x,y,z-at,\gamma(t).
    \end{align*}
    Under the usual metrics, $\Heis_3 \to \Euc^2$
    with the connection $T\Heis_3^{\op{SO}(2)}$ has curvature $1$
    (i.e.\ $1$ times the area form on $\Euc^2$)
    \cite[discussion after Exercise 3.7.1]{thurstonbook}.
    Through an appropriate (possibly orientation-reversing)
    conformal automorphism of $\Euc^2$, the area form
    can be pulled back to any nonzero constant multiple of itself.

    To finish, apply Step 3 to $Y$,
    choosing a length scale on $Z(\tilde{G})^0 = Z(\Heis_3)$
    that makes the curvature along $Y$ equal to $\frac{1}{2\pi} \mathrm{vol}_Y$.
    Then choose an orientation and scale on $X = \Euc^2$ that
    makes the curvature along $X$ equal to $\frac{1}{2\pi} \mathrm{vol}_X$.
    This establishes claim (ii) for the remaining (last two) rows of
    Table \ref{table:fiber2_associated_bundles}.
\end{proof}

\subsubsection{Proof of the classification}
\label{sec:fiber2_product_proof}

This section proves Prop.~\ref{prop:fiber2_bundle_list},
the classification of geometries $G/G_p$
when $\tilde{G}$ is a direct product listed in Table \ref{table:fiber2_product_groups}.
The recurring method is to relate $G_p$ to a maximal torus of some group;
each individual step is merely whatever happens to decrease
the number of remaining cases.
Maximality is deferred to Section \ref{sec:fiber2_maximality}.

As part of the classification, we also prove that
the associated bundle geometry $\SLcover \times_\alpha S^3$
is a model geometry (Prop.~\ref{prop:fiber2_associated_lattice})
but has compact quotients if and only if $\alpha$ is rational
(Prop.~\ref{prop:fiber2_compact_rationals}).

\begin{proof}[Proof of Prop.~\ref{prop:fiber2_bundle_list}
    (except for maximality)]
    Let $G/G_p$ be a $5$-dimensional maximal model geometry where
    $G$ is covered by one of the groups in Table \ref{table:fiber2_product_groups}.

    \paragraph{Step 1: If $\op{dim} G = 6$ and $\tilde{G} \ncong * \times \Sol^3$,
        then $G/H$ is non-maximal.}
    We will show that if $\op{dim} G = 6$ and the maximal torus of
    $\op{Aut} \tilde{G}$ has dimension at least $2$,
    then any $5$-dimensional geometry with isometry group $G$ is non-maximal.

    Since $\op{dim} G = 6$, the point stabilizer $G_p$ is $1$-dimensional
    and therefore isomorphic to $S^1$. Let $H$ be the lift of $G_p$ to $\tilde{G}$,
    i.e.\ the group such that $\tilde{G}/H \cong G/G_p$.
    Since $\op{Aut} \tilde{G}$ has a maximal torus of dimension at least $2$,
    there is an $S^1 \subset \op{Aut} \tilde{G}$ that commutes
    with the inner action of $H$ and is independent---that is,
    $H \times S^1$ maps to a $2$-dimensional subgroup of $\op{Aut} \tilde{G}$.
    Then
        \[ (\tilde{G} \rtimes S^1) / (H \times S^1) \]
	is a homogeneous space subsuming $G/G_p$; and a geometry
    subsuming $G/G_p$ is realized by passing to the quotient by
    $Z(\tilde{G} \rtimes S^1) \cap (H \times S^1)$ (Rmk.~\ref{rmk:quotient_by_center}).

	In particular, if $\tilde{G}$ is a product of any two of the following groups---each
    with dimension $3$ and an $S^1$ in its automorphism group---then
	any $5$-dimensional geometry $G/G_p$ is non-maximal.
	\begin{align*}
		&{}\widetilde{\isomplus \Euc^2}  &
		&{}S^3  &
		&{}\widetilde{\op{PSL}(2,\R)}  &
		&{}\Heis_3  &
		&{}\R^3
	\end{align*}

    \paragraph{Step 2: $G_p$ maps injectively to
        $\op{Inn} G = \tilde{G}/Z(\tilde{G})$
        as a maximal torus.}
    In the remaining cases, $\tilde{G}$ is one of the following products,
	where each $\bullet$ denotes the identity component of the isometry group of
	$\Euc^2$, $S^2$, or $\Hyp^2$.
	\begin{align*}
		&\tilde{\bullet} \times \tilde{\bullet} \times \R &
		&\tilde{\bullet} \times \widetilde{\isomplus \Heis_3} &
		&\tilde{\bullet} \times \op{Sol}^3
	\end{align*}
    The dimensions of these $\tilde{G}$ are respectively $7$, $7$, and $6$.
	The corresponding $\op{Inn} G$ are
	\begin{align*}
		&\bullet \times \bullet &
		&\bullet \times \isomplus \Euc^2 &
		&\bullet \times \op{Sol}^3 .
	\end{align*}
    The maximal torus of $\bullet$
    has dimension $1$, so the maximal torus $T$ of $\op{Inn} G$ has dimension
    $2$, $2$, or $1$, respectively.
    Since $G_p$ has faithful conjugation action in a geometry
    (Lemma \ref{lemma:faithful_conjugation}),
    which is equivalent to injectivity of $G_p \to \op{Inn} G$,
    the claim in this step follows from
    $\op{dim} T = \op{dim} G - 5$.

    \paragraph{Step 3: If $\tilde{G} = * \times \Sol^3$
        then $G/G_p$ is a product with $\Sol^3$.}
    In this case, $\tilde{G} \to \op{Inn} G$ is a covering map;
    so if $H$ is the identity component of the preimage of $G_p$ in $\tilde{G}$,
    then $H$ covers a maximal torus of $\op{Inn} G$.
	Then all possible $H$ are conjugate in $\tilde{G}$, so
        \[ G/G_p \cong \Sol^3 \times \isomplus B / \op{SO}(2) \cong \Sol^3 \times B \]
    where $B$ is $\Euc^2$, $S^2$, or $\Hyp^2$.

    \paragraph{Step 4: $G/G_p \cong \tilde{G}/\tau_{a,b}(\R^2)$
        for some $a$ and $b$ in $Z(\tilde{G})^0$.}
    In the remaining cases, $\dim Z(\tilde{G}) = 1$ and
    $\op{Inn} G = \tilde{G}/Z(\tilde{G})$ has $2$-dimensional maximal torus $T$.
    Since $\pi: \tilde{G} \to \op{Inn} G$ descends to a map from $G$
    sending $G_p$ isomorphically to $T$ (Step 2),
    the preimage of $G_p$ in $\tilde{G}$ is the image of a homomorphism
    $\tau: \R^2 \to \pi^{-1}(T)$.
    Any two such $\tau$ that compose with $\pi$ to the same map $\R^2 \to T$
    differ only by some $\R^2 \to Z(\tilde{G})$;
    so $\tau$ can be identified with some $\tau_{a,b}$
    as defined in Defn.~\ref{defn:fiber2_slope}
    after choosing $\tau_{0,0}$.

    In $\widetilde{\isomplus \Euc^2} \times S^3 \times \R$,
    choose $\tau_{0,0}(\R^2) = \widetilde{\op{SO}(2)} \times S^1 \times \{0\}$.
    In the three other $\tilde{G}$ that have $\widetilde{\isomplus \Euc^2}$ as a factor,
    let $\tau_{0,0}(\R^2) = \widetilde{\op{SO}(2)} \times \widetilde{\op{SO}(2)}$.
    All remaining $\tilde{G}$
    occur in Table \ref{table:fiber2_associated_bundles},
    where the choice of $\tau_{0,0}$ is also recorded.

    \paragraph{Step 5: If $\tilde{G} = \widetilde{\isomplus \Euc^2} \times K$
        for some group $K$,
        then $G/G_p$ is a product with $\Euc^2$.}
    Express $\widetilde{\isomplus \Euc^2}$ as $\C \rtimes \widetilde{\op{SO}(2)}$,
    i.e. as $\C \times \R$ with the composition law
        \[ (x + iy, z)(x' + iy', z') = (x+iy + e^{iz}(x'+iy'), z + z') . \]
    Then in $\tilde{G}/\tau_{a,b}(\R^2) \to \Euc^2 \times Y$, the preimage $E$ of
    $\Euc^2$ has a transitive action by a subgroup of $\tilde{G}$
    isomorphic to $\widetilde{\isomplus \Euc^2} \times \R$;
    and there is an equivariant diffeomorphism
    \begin{align*}
        E = \widetilde{\isomplus \Euc^2} \times \R / \{(0 + 0i, z), az\}_{z \in \R}
            &\overset{\sim}{\to} \isomplus \Euc^2 \times \R / \op{SO}(2) \\
        x + iy, z, t &\mapsto x + iy, z, t - az .
    \end{align*}
    Extending this to $\tilde{G}/\tau_{a,b}(\R^2)$ yields\footnote{
        That the diffeomorphism extends may be easier to see on the Lie algebra level.
        It corresponds to an automorphism $\tanisom \Euc^2 \times \R$
        that sends a basis $(\hat{x}, \hat{y}, \hat{z}, \hat{t})$
        to $(\hat{x}, \hat{y}, \hat{z} - a\hat{t}, \hat{t})$.
        This extends to the appropriate automorphism of
        $\tanalg G = \tanisom \Euc^2 \oplus \lie{k}$
        by the identity on $\lie{k}$.
    }
        \[ \tilde{G}/\tau_{a,b}(\R^2) \cong \widetilde{\isomplus \Euc^2} \times K
            / \tau_{0,b}(\R^2) \cong \Euc^2 \times (K/K_q) . \]

    \paragraph{Step 6:
        If $\tilde{G} = (\widetilde{\op{Isom} \Heis_3})^0 \times K$,
        then $G/G_p$ is one of four geometries.}
    There are only two cases remaining for this step to handle:
    $K = S^3$ and $K = \SLcover$.
    Step 4 of Prop.~\ref{prop:fiber2_associated_curvature}
    showed that the equivariant diffeomorphism type of $\tilde{G}/\tau_{a,b}(\R^2)$
    is independent of $a$, so set $a = 0$. Then if
    $\Heis_3$ is $\R^3$ with the composition law
        \[ (x,y,z)(x',y',z') = (x+x', y+y', z+z'+xy') \] and
    $\gamma: \R \to K$
    is a $1$-parameter subgroup of $K$ with $\gamma(\Z) = \gamma(\R) \cap Z(K)$,
    \begin{align*}
        \tilde{G}/\tau_{0,b}(\R^2)
            &= \Heis_3 \rtimes \op{SO}(2) \times K /
                \{(0,0,bt), s, \gamma(t)\}_{s,t \in \R}
    \end{align*}
    is one of
    \begin{enumerate}
        \item a product $\Heis_3 \times S^2$ or $\Heis_3 \times \Hyp^2$
            if $b = 0$; or
        \item equivariantly diffeomorphic to $\tilde{G}/\tau_{0,1}(\R^2)$
            by a map sending $(x,y,z,r) \in \Heis_3 \rtimes \op{SO}(2)$ to
                \[ \left(x|b|^{-1/2}, sy|b|^{-1/2}, z|b|^{-1}, r^s\right) \]
            where $s = b/|b|$, if $b \neq 0$. So the names
            $\Heis_3 \times_\R S^3$ and $\Heis_3 \times_\R \SLcover$
            from Table \ref{table:fiber2_associated_bundles}
            can be used without ambiguity.
    \end{enumerate}

    \paragraph{Step 7:
        Parametrize the isomorphism types of $\tilde{G}/\tau_{a,b}(\R^2)$
        by $\big[|a|:|b|\big] \in \R P^1$.}
    In the remaining cases, $M = \tilde{G}/\tau_{a,b}(\R^2)$
    has a canonical fibering over
        \[ X \times Y \cong \tilde{G}/(Z(\tilde{G}) \cdot \tau_{a,b}(\R^2)), \]
    where $X$ and $Y$ are surfaces of nonzero constant curvature.
    Decomposing $G_p \curvearrowright T_p M$ into irreducible
    subrepresentations, every invariant inner product on $T_p M$
    (and hence every invariant metric on $M$)
    is determined by a scale factor along the fiber (i.e.\ on $T_p M^{G_p}$),
    a scale factor on $X$, and a scale factor on $Y$
    (Lemmas \ref{lemma:perp_orthogonality} and \ref{lemma:inner_product_unique}).
    The scale factors on $X$ and $Y$ can be chosen by normalizing
    their curvatures to be $\pm 1$.

    Given any member of this family of normalized metrics,
    the ratio of curvatures for the connection
    $(TM^G)^\perp$ on $M \to X \times Y$
    (listed in Table \ref{table:fiber2_associated_bundles})
    represents, up to finite choices,
    an invariant for the family.
    Explicitly, an invariant number can be recovered as
    the ratio of displacements along a fiber
    that result from horizontal lifts of loops
    enclosing the same small area in $X$ and $Y$.
    The choices that need to be made are
    \begin{enumerate}
        \item assigning $X$ and $Y$ to the two factors in the base space, and
        \item orientations on $X$ and $Y$.
    \end{enumerate}
    These reflect the following symmetries.
    \begin{enumerate}
        \item[0.] Rescaling $\R \subset \tilde{G}$
            induces $\tilde{G}/\tau_{a,b}(\R^2) \cong \tilde{G}/\tau_{at,bt}(\R^2)$
            for nonzero $t$.
        \item If $X \cong Y$, then exchanging $X$ and $Y$ allows assuming $|a| \leq |b|$.
        \item Conjugating by $j$ in $S^3$ reverses the $1$-parameter
            subgroup $e^{it}$; and conjugating $\SLcover$ by
            $\begin{pmatrix} 0 & 1 \\ 1 & 0 \end{pmatrix}$
            reverses the $1$-parameter subgroup $\widetilde{\op{SO}(2)}$.
            This allows assuming $a > 0$ and $b > 0$.
    \end{enumerate}
    There are two more considerations that affect the parametrization:
    \begin{enumerate}
        \item[3.] If $a = 0$ or $b = 0$, then products result as in Step 6.
        \item[4.] If $\tilde{G} = S^3 \times S^3 \times \R$,
            then $a$ and $b$ need to be rationally dependent
            (Prop.~\ref{prop:fiber2_rational_curvature}),
            which allows rescaling them both to be coprime integers.
            This constraint is not present for other $\tilde{G}$
            (Rmk.~\ref{rmk:fiber2_rational_curvature}).
    \end{enumerate}
    So excluding products, the geometries remaining to be classified---those
    fibering over products of $S^2$ and $\Hyp^2$---are specified exactly once
    each by the following.
    \begin{align*}
        \SLcover \times_{a/b} \SLcover
            &= \SLcover \times \SLcover \times \R / \tau_{a,b}(\R^2),
            \quad  a/b \in (0,1] \\
        \SLcover \times_{a/b} S^3
            &= \SLcover \times S^3 \times \R / \tau_{a,b}(\R^2) ,
            \quad a/b \in (0,\infty) \\
        L(a;1) \times_{S^1} L(b;1)
            &= S^3 \times S^3 \times \R / \tau_{a,b}(\R^2) ,
            \quad 0 < a \leq b \text{ coprime in } \Z .
    \end{align*}

    \paragraph{Step 8:
        All of the above are model geometries.}
    Products of model geometries are model geometries,
    since they model products of manifolds modeled on the factors.
    The $L(a;1) \times_{S^1} L(b;1)$ geometry is a model geometry
    since it is already compact. So it only remains to show
    that bundles associated to $\Heis_3$ and $\SLcover$
    are also model geometries.

    The construction is: if $E$ and $F$ model compact circle
    bundles $M$ and $N$, then $M \times_{S^1} N$ is modeled on some $E \times_\rho F$.
    In particular, $\Heis_3$ models the circle bundle $\Heis_3/\Heis_3(\Z)$ over a torus
    modeled on $\Euc^2 \cong \Heis_3/Z(\Heis_3)$;
    $S^3$ models the Hopf fibration over $S^2$;
    and $\SLcover$ models the unit tangent bundle of any compact hyperbolic surface.

    However, only some $E \times_\rho F$
    can be recovered as the universal cover of an $M \times_{S^1} N$:
    combinations of these three bundles are modeled only on $\tilde{G}/\tau_{1,1}(\R^2)$.
    This is enough for the bundles associated to $\Heis_3$.
    For bundles associated to $\SLcover$,
    compact bundles modeled on $\tilde{G}/\tau_{a,b}(\R^2)$ can be obtained, for integers
    $a$ and $b$, by starting with quotients of the initial circle bundles
    by rotations of $2\pi/a$ and $2\pi/b$.
    This, however, still leaves the case when $a/b$ is irrational, which
    is a somewhat different construction, given below in
    Prop.~\ref{prop:fiber2_associated_lattice}.
\end{proof}

\begin{prop} \label{prop:fiber2_associated_lattice}
    If $\tilde{G} = \SLcover \times * \times \R$,
    then $\tilde{G}/\tau_{a,b}(\R^2)$ is a model geometry.
\end{prop}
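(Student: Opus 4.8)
The plan is to exhibit a torsion-free lattice in $G$. Because the point stabilizer $G_p = \op{SO}(2) \times \op{SO}(2)$ is compact, every torsion-free lattice $\Gamma \subset G$ acts freely on $M = G/G_p$ — for each $g$ the intersection $\Gamma \cap gG_pg^{-1}$ is finite, hence trivial — and the quotient $\Gamma\backslash M = \Gamma\backslash G/G_p$ inherits finite volume from $\Gamma\backslash G$ since $G_p$ is compact. So it suffices to produce such a lattice; equivalently, a torsion-free lattice $\Gamma_0$ in $\tilde{G} = \SLcover \times * \times \R$ (with $* = S^3$ or $\SLcover$) whose image in $G = \tilde{G}/N$ is discrete and torsion-free, where $N = \tau_{a,b}(\R^2) \cap Z(\tilde{G})$ is the kernel of the action of $\tilde{G}$ on $M$ — a discrete, central, rank-$2$ subgroup.

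First I would clear away the easy cases. If $a$ or $b$ vanishes then $M$ is a product and a model geometry by Prop.~\ref{prop:products_are_models}; and if $a/b$ is rational, compact quotients were already constructed in the proof of Prop.~\ref{prop:fiber2_bundle_list} (Step~8), built from circle bundles over products of closed surfaces. So assume $a, b$ are nonzero and $a/b$ is irrational. By Prop.~\ref{prop:fiber2_compact_rationals} no compact quotient exists, so the quotient to be built must be a noncompact finite-volume manifold, and the construction must use a non-uniform lattice.

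The construction uses a cusped hyperbolic surface. Let $\Sigma \subset \op{PSL}(2,\R)$ be a torsion-free non-uniform lattice; its quotient surface is open, so $\Sigma$ is free and $H^2(\Sigma) = 0$, whence the preimage $\tilde{\Sigma} \subset \SLcover$ of $\Sigma$ is a \emph{split} central extension of $\Sigma$ by $Z(\SLcover)$. In particular there is a homomorphism $\psi: \tilde{\Sigma} \to \R$ that is nonzero on the central generator, and $\psi$ may be rescaled so that its value on that generator equals the $\R$-component of the generator $\nu_1$ of $N$ coming from the first factor. The tilted copy $\{(\sigma, \psi(\sigma)) : \sigma \in \tilde{\Sigma}\} \subset \SLcover \times \R$, enlarged by an appropriate cyclic subgroup of the $\R$-factor, then contains $\nu_1$ up to finite index; the second generator $\nu_2$ of $N$ is handled by a second cusped surface in the other $\SLcover$ factor when $* = \SLcover$, and when $* = S^3$ by the fact that the Chern number of the circle bundle over the compact $S^2$-factor is an integer, so (after the rescaling normalizing $a/b$) a power of $\nu_2$ is already an integral translation in the $\R$-factor. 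This yields $\Gamma_0 \subset \tilde{G}$ with $\Gamma_0 N$ discrete (hence closed), and one checks that $\Gamma_0$ and its image $\Gamma_0/(\Gamma_0 \cap N)$ in $G$ are torsion-free; the latter is then a torsion-free lattice in $G$, and the argument of the first paragraph finishes the proof.

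The heart of the matter — and the step I expect to be the main obstacle — is exactly this matching of slopes, i.e.\ the verification that $\Gamma_0 N$ is closed so that $\Gamma_0/(\Gamma_0\cap N)$ is discrete in $G$. Over a \emph{cocompact} surface the central extension $\tilde{\Sigma}\to\Sigma$ does not split (its Euler class is nonzero), the central generator of $\tilde{\Sigma}$ is torsion in $H_1$, no homomorphism $\tilde\Sigma\to\R$ can absorb $\nu_1$, and the irrational ratio $a/b$ then forces $\Gamma_0 N$ to fail to be closed. This is precisely the structural reason that irrational $\alpha$ admits only noncompact finite-volume quotients, and carrying the cusped-surface construction through — confirming discreteness, torsion-freeness, and finite covolume of $\Gamma_0/(\Gamma_0\cap N)$ in $G$ — is where the real work lies.
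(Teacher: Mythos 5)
Your core mechanism is the paper's: because a cusped hyperbolic surface has free fundamental group, its preimage $\tilde{\Sigma}\subset\SLcover$ is a split central extension, so there is a homomorphism to $\R$ that can be tilted to absorb the first central generator $\nu_1$ of $\tau_{a,b}(\Z^2)$; the paper's embedding $(g,h)\mapsto(g,h,af(g))$ is exactly your $\psi$-tilt. But there is a genuine gap in how you handle $\nu_2$ when $* = \SLcover$, and it sits precisely in the part you defer. If you tilt \emph{both} $\SLcover$ factors by cusped-surface homomorphisms, the resulting group $\Gamma_0=\{(\sigma_1,\sigma_2,\psi_1(\sigma_1)+\psi_2(\sigma_2))\}$ contains $N=\tau_{a,b}(\Z^2)$ but meets the central $\R$-factor only in the identity; consequently $G/\Delta\cong\tilde{G}/\Gamma_0$ fibers over the two finite-volume unit tangent bundles with fiber a full line $\R$, so it has \emph{infinite} volume and $\Delta$ is not a lattice. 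The paper avoids this by leaving the second factor untilted (taking the preimage of a surface group, or the center when $*=S^3$) and adjoining $\tau_{a,b}(\Z^2)$ directly, which forces the group to contain a nonzero pure translation $(1,1,b)$ in the $\R$-direction and makes the central fiber a circle; the finite-covolume computation is then the chain of fibrations in its Step 3. Your phrase ``enlarged by an appropriate cyclic subgroup of the $\R$-factor'' could repair this, but you attach it to the wrong purpose (the tilt already contains $\nu_1$ exactly once $\psi(k)=a$) and never feed it into a covolume argument.

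You have also misplaced the difficulty. In your setup, discreteness of the image in $G$ is automatic: if $\Gamma_0$ is discrete and contains the central discrete $N$, then any sequence $h_iN\to N$ lifts to $h_in_i\to 1$ with $h_in_i\in\Gamma_0$, forcing $h_i\in N$ eventually, so ``$\Gamma_0N$ closed'' is not the crux. The real work — which your proposal explicitly leaves undone — is (a) the finite-covolume verification (the paper's Step 3, and the place where your two-tilt construction actually fails as written), and (b) freeness of the action, i.e.\ showing no element of the constructed group outside $\tau_{a,b}(\Z^2)$ has a power in $\tau_{a,b}(\Z^2)$ (the paper's Step 4, using torsion-freeness of the punctured-surface lattice and uniqueness of roots in $Z(\tilde{G})$ for $*=\SLcover$). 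Your case split (products when $ab=0$, rational $a/b$ via the circle-bundle quotients of Prop.~\ref{prop:fiber2_bundle_list}) is fine, though note the paper's construction needs no irrationality assumption at all.
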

\begin{proof}
    More honestly, since a geometry $M = G/G_p$ must have $G$ acting faithfully,
    we need to use
    $G = \tilde{G}/(Z(\tilde{G}) \cap \tau_{a,b}(\R^2))
    \cong \tilde{G}/\tau_{a,b}(\Z^2)$
    (Rmk.~\ref{rmk:quotient_by_center}).

    \paragraph{Step 1: Construct a subgroup $\tilde{\Gamma} \times \tilde{\Lambda}$
        of $\tilde{G}$.}
    Assume $b \neq 0$ since otherwise $G/G_p$ is a product with $S^2$ or $\Hyp^2$.
	Let $\Gamma = \pi_1(S) \subset \op{PSL}(2,\R)$ for some orienable\footnote{
        Requiring orientability permits assuming that
        $\pi_1(S)$ embeds in $\op{PSL}(2,\R)$,
        the identity component of $\op{Isom} \Hyp^2$.
    }
    punctured surface $S$ of genus at least $2$, and let
	$\tilde{\Gamma}$ be its preimage in $\SLcover$.
	Then $\tilde{\Gamma}$ is central extension of a free group
	by $Z(\SLcover) \cong \Z$;
	so it splits as a semidirect product, and $Z(\SLcover)$ maps
	to a copy of $\Z$ in the abelianization of $\tilde{\Gamma}$.
	Then
	there is a homomorphism $f: \tilde{\Gamma} \to \R$
    that sends $Z(\SLcover) \subset \widetilde{\op{SO}(2)}$ to $\Z$.
	Let $\Lambda$ be the fundamental group of $S^2$ or a compact orientable
    surface modeled on $\Hyp^2$, let $\tilde{\Lambda}$ be its lift to
    the group $*$, and define
    \begin{align*}
        i: \tilde{\Gamma} \times \tilde{\Lambda} &\to \SLcover \times * \times \R \\
	    g,h &\mapsto (g, h, af(g)) .
    \end{align*}
    
    \paragraph{Step 2: $\tilde{\Gamma} \times \tilde{\Lambda}$
        descends to a discrete subgroup $\Delta$ of $G$.}
    Let $\gamma$ be a $1$-parameter subgroup of $*$ sending $\Z$ to the center.
    We first show that the image of $i$ does not accumulate on
    $\tau_{a,b}(\{0\} \times \Z) = \{1, \gamma(n), nb\}_{n \in \Z}$.
    By discreteness of $\tilde{\Gamma} \times \tilde{\Lambda}$,
    some neighborhood $U \times V$ of the identity in
    $\SLcover \times *$ meets $\tilde{\Gamma} \times \tilde{\Lambda}$
    only in the identity. Then with the standing assumption that $b = 0$,
        \[ \bigcup_{n \in \Z} U \times \gamma(n) V \times ((n-1)b,(n+1)b) \]
    is an open set containing and preserved by
    $\tau_{a,b}(\{0\} \times \Z)$ in which the only
    elements $i(g,h) = (g,h,af(g))$ of $i(\tilde{\Gamma} \times \tilde{\Lambda})$
    satisfy all of
    \begin{align*}
        g &= 1  &  h &= \gamma(n)  &
        0 = \frac{a}{b} f(1) = \frac{a}{b} f(g) &\in (n-1,n+1) .
    \end{align*}
    That is, only the identity in $i(\tilde{\Gamma} \times \tilde{\Lambda})$
    lies in this neighborhood of $\tau_{a,b}(\{0\} \times \Z)$.

    This implies $i(\tilde{\Gamma} \times \tilde{\Lambda})$
    remains discrete in $\tilde{G}/\tau_{a,b}(\{0\} \times \Z)$.
    Since $i(\tilde{\Gamma} \times \tilde{\Lambda})$
    was constructed to contain $\tilde{G}/\tau_{a,b}(\Z \times \{0\})$,
    it remains discrete in $G = \tilde{G}/\tau_{a,b}(\Z^2)$.

    \paragraph{Step 3: $\Delta$ is a lattice in $G$.}
    With discreteness established, it suffices to show that
    for the image $\Delta$ of $\tilde{\Gamma} \times \tilde{\Lambda}$ in $G$,
    the volume of $G/\Delta \cong \tilde{G} /
    \left(i(\tilde{\Gamma} \times \tilde{\Lambda}) \cdot \tau_{a,b}(\Z^2)\right)$
    is finite.

    So let $H = i(\tilde{\Gamma}) \times \tilde{\Lambda}) \cdot \tau_{a,b}(\Z^2)$,
    and observe that
    \begin{align*}
        \tilde{G}/(H \cdot (* \times \R))
            &\cong \SLcover/\tilde{\Gamma} \cong S
            \quad\text{(chosen at the start of Step 1)} \\
        H \cdot (* \times \R) / (H \cdot \R)
            &\cong * / \tilde{\Lambda}
            \quad\text{(the compact surface chosen at the end of Step 1)} \\
        H \cdot \R / H
            &\cong \R / \{nb\}_{n \in \Z} \cong S^1 .
    \end{align*}
    In a situation involving only closed subgroups $E \subseteq F \subseteq G$
    of a locally compact $G$, an invariant measure on $G/E$ is constructed
    as a product of invariant measures on $G/F$ and $F/E$
    as in \cite[2.4 Case 2]{mostow_homogeneous_1962};
    so the volume of $G/H$ is finite since all three intermediate
    spaces are.

    \paragraph{Step 4: $\Delta \backslash G / G_p$ is a finite-volume manifold.}
    The space $\Delta \backslash G / G_p$ has finite volume and
    is modeled on $G/G_p$ but might be
    an orbifold---ruling out orbifold points requires checking that
    $\Delta$ acts freely, i.e.\ that $\Delta$ meets each point stabilizer
    in only the identity. Since $\Delta$ is discrete, its intersection
    with any compact point stabilizer has finite order. So it suffices to check
    that a subgroup of $\tilde{G} = \SLcover \times * \times \R$
    surjecting onto $\Delta$---specifically,
    $i(\tilde{\Gamma} \times \tilde{\Lambda})$---contains no element
    $(g,h,t)$ outside of $\tau_{a,b}(\Z^2)$ with a nonzero power in $\tau_{a,b}(\Z^2)$.

    Since $\tau_{a,b}(\Z^2)$ is central,
    any $i(g,h) = (g,h,af(g))$ with a nonzero power in $\tau_{a,b}(\Z^2)$
    has finite-order image in $\tilde{G}/Z(\tilde{G})$.
    Since $g$ is in the lift $\tilde{\Gamma}$ of $\pi_1(S)$ where $S$
    is a surface (in particular, with no orbifold points),
    $g$ has finite-order image in $\op{PSL}(2,\R)$ only if this image is the identity.
    Similarly, $h$ lies over the identity of $\op{SO}(3)$ or $\op{PSL}(2,\R)$;
    so some $\tau_{a,b}(m,n)$ has the same first two coordinates $g$ and $h$.

    If $* = S^3$, then $h = 1$ and $(g,1,af(g)) = \tau_{a,b}(m,0)$.
    If $* = \SLcover$, then
    $Z(\tilde{G})$ has no finite-order elements, which makes $n$th roots unique;
    so $(g,h,af(g))$ has a nonzero power in $\tau_{a,b}(\Z^2)$ if and only if it
    lies in $\tau_{a,b}(\Z^2)$ itself.
    Either way, $i(\tilde{\Gamma} \times \tilde{\Lambda})$ contains nothing
    outside of $\tau_{a,b}(\Z^2)$ with a nonzero power in $\tau_{a,b}(\Z^2)$;
    so by the first paragraph, $\Delta \backslash G / G_p$ has no orbifold points
    and is a finite-volume manifold modeled on $G/G_p$.
\end{proof}

The above construction always produces noncompact $\Delta \backslash G / G_p$,
with fundamental group independent of $a$ and $b$.
In compact manifolds, the story is different---for instance,
one can prove the following.

\begin{prop} \label{prop:fiber2_compact_rationals}
    Let $\tilde{G} \cong \SLcover \times S^3 \times \R$,
    If there is a compact manifold $N$ modeled on $\tilde{G}/\tau_{a,b}(\R^2)$
    and $b \neq 0$, then $a/b \in \Q$.
\end{prop}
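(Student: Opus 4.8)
My plan is to realize a compact $N$ modeled on $M$ as an honest principal circle bundle over a $4$-manifold which itself fibers with $S^2$ fibers over a closed hyperbolic surface, and then to deduce rationality of $a/b$ from the integrality of the Euler class of that circle bundle, compared against the curvature datum $-a\,\mathrm{vol}_X+b\,\mathrm{vol}_Y$ recorded in Table~\ref{table:fiber2_associated_bundles}. To begin: a compact $N$ modeled on $M=\tilde{G}/\tau_{a,b}(\R^2)$ is $\Gamma\backslash G/G_p$ for a cocompact lattice $\Gamma\subset G=\tilde{G}/(Z(\tilde{G})\cap\tau_{a,b}(\R^2))$ acting freely (Defn.~\ref{defn:geometries3}, Rmk.~\ref{rmk:quotient_by_center}), with $\tilde{G}=\SLcover\times S^3\times\R$. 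If $a=0$ then $M$ is a product with $\Hyp^2$ and $a/b=0\in\Q$, so assume $a\neq0$; only $b\neq0$ is used below. Since $Z(\tilde{G})\cap\tau_{a,b}(\R^2)$ meets $\idcompo{Z(\tilde{G})}=\{1\}\times\{1\}\times\R$ in a lattice, this factor descends to a central circle $S^1\subset G$; as $S^1\cap G_p=\{1\}$ it acts freely on $M$, and $M\to M/S^1$ is the principal circle bundle $\xi^{-a,b}$ over $X\times Y=\Hyp^2\times S^2$ whose natural connection $(TM^{\tau_{a,b}(\R^2)})^\perp$ has curvature a fixed nonzero constant multiple of $-a\,\mathrm{vol}_X+b\,\mathrm{vol}_Y$ (Prop.~\ref{prop:fiber2_associated_curvature}, Table~\ref{table:fiber2_associated_bundles}).

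Next I would descend and regularize. As $S^1$ is central it acts on $N=\Gamma\backslash M$, and the fibering descends to $N\to\Gamma\backslash(X\times Y)$, whose base is compact since $N$ is. Now $G$ acts properly on $X\times Y=G/(S^1 G_p)$, so $\Gamma$ acts properly discontinuously; writing $\bar{\Gamma}$ for its image in $G/S^1=\op{PSL}(2,\R)\times\op{SO}(3)$, this is a discrete cocompact subgroup whose projection $\bar{\Gamma}_X\subset\op{PSL}(2,\R)$ is a cocompact Fuchsian group. By Selberg's lemma $\bar{\Gamma}$ has a torsion-free subgroup of finite index; replacing $\Gamma$ by its preimage replaces $N$ by a finite cover -- still a compact manifold -- so I may assume $\bar{\Gamma}$ is torsion-free. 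Then $\mathcal{B}:=N/S^1=\bar{\Gamma}\backslash(X\times Y)$ is a compact $4$-manifold, $N\to\mathcal{B}$ is an honest principal circle bundle (the $S^1$-action on $N$ is effective with manifold quotient, hence free), and the projection $X\times Y\to X$ induces an $S^2$-bundle $\pi:\mathcal{B}\to\Sigma:=\bar{\Gamma}_X\backslash\Hyp^2$ over a closed hyperbolic surface of genus $g\geq2$, namely the flat $S^2$-bundle with monodromy $\pi_1(\Sigma)=\bar{\Gamma}\to\op{SO}(3)$.

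Finally I would compute. Let $e\in H^2(\mathcal{B};\Z)$ be the Euler class of $N\to\mathcal{B}$; in de Rham cohomology it equals $c\bigl(-\tfrac{a}{b}\,\pi^*[\mathrm{vol}_\Sigma]+[\mathrm{vol}^v]\bigr)$ for a constant $c\neq0$, where $\mathrm{vol}_\Sigma$ is the curvature-$(-1)$ area form of $\Sigma$ -- so $[\mathrm{vol}_\Sigma]=4\pi(g-1)\,u$ for an integral generator $u$ of $H^2(\Sigma;\Z)$, by Gauss--Bonnet -- and $\mathrm{vol}^v$ is the fibrewise round area form of $\pi$, the descent of $\mathrm{vol}_{S^2}$ on $X\times S^2$. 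Because the $S^2$-bundle is flat, pulling the fibrewise Levi--Civita connection and round metric up to $X\times S^2$ shows that $\mathrm{vol}^v$ is, as a form, exactly $2\pi$ times the Chern--Weil Euler form of the oriented vertical tangent bundle $T^v\mathcal{B}$; hence $[\mathrm{vol}^v]=2\pi\,e(T^v\mathcal{B})$, an integral class. Integrating $e$ over an $S^2$-fibre gives $4\pi c\in\Z\setminus\{0\}$, so $2\pi c\in\Q$, and therefore $e-c[\mathrm{vol}^v]=e-2\pi c\,e(T^v\mathcal{B})$ is a rational combination of integral classes and so lies in the image of $H^2(\mathcal{B};\Q)$. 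But $e-c[\mathrm{vol}^v]=-\tfrac{a}{b}\,\bigl(4\pi c(g-1)\bigr)\,\pi^*u$, and $\pi^*u$ is a non-torsion integral class (the Gysin sequence of $\pi$ makes $\pi^*$ injective), so any real $\lambda$ with $\lambda\,\pi^*u\in H^2(\mathcal{B};\Q)$ is rational; since $4\pi c(g-1)$ is a nonzero integer, it follows that $a/b\in\Q$.

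The step I expect to be the main obstacle is the regularization: passing from $N$ -- which a priori is only a Seifert circle fibration over an orbifold, where the Euler datum is merely rational -- to a finite cover on which $N\to\mathcal{B}\to\Sigma$ is an honest principal $S^1$-bundle over a $4$-manifold fibering with $S^2$ fibers over a \emph{closed} surface, and in particular checking that the $S^1$-action on the cover is free and that the $S^2$-bundle is genuinely flat with $\op{SO}(3)$ structure group. That flatness is exactly what upgrades $[\mathrm{vol}^v]$ from a real class to an integral one, which is the crux of the final computation; the remaining verifications are bookkeeping with the explicit curvature form of Table~\ref{table:fiber2_associated_bundles}.
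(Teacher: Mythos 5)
Your argument is essentially correct, but it takes a genuinely different route from the paper. The paper's proof is purely group-theoretic: it projects the lattice to $\SLcover \times (\R/2b\Z)$ (using compactness of $S^3$), descends to a cocompact Fuchsian group, notes that the central generator $k$ of $\SLcover$ becomes torsion in the abelianization of the lattice $\tilde{\Lambda}$ (via a finite surface cover and the fact that $k^{\chi(S)}$ is a product of commutators), and then applies the Stallings five-term exact sequence to the central extension $1 \to C \to \Gamma \to \tilde{\Lambda} \to 1$ to conclude that $a \bmod 2b$ is torsion in $\R/2b\Z$. Your proof instead passes (via Selberg's lemma) to a finite cover on which $N$ is a genuine principal circle bundle over a flat $S^2$-bundle $\mathcal{B} \to \Sigma$ over a closed hyperbolic surface, and extracts rationality by comparing the integral Euler class of $N \to \mathcal{B}$ with the curvature datum of Prop.~\ref{prop:fiber2_associated_curvature}; the key insight, which is correct, is that flatness with $\op{SO}(3)$ (isometric) monodromy makes $[\mathrm{vol}^v]$ equal to $2\pi$ times the integral Euler class of the vertical tangent bundle, so the only possibly irrational contribution to the Euler class is the $\pi^*[\mathrm{vol}_\Sigma]$ term, which Gysin-injectivity then forces to be rational. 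Both proofs ultimately exploit the same phenomenon (Euler numbers of circle bundles over surfaces versus the central $\R$ direction), but yours trades the paper's abelianization/Stallings bookkeeping for Chern--Weil, Gauss--Bonnet and Selberg's lemma; the paper's version is shorter and avoids torsion-free reductions entirely by working with orbifold fundamental groups, while yours gives a more geometric picture and makes the integrality obstruction explicit.

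One step is stated incorrectly, though it is easily repaired and does not affect the outcome: the $S^1$-action on $N$ need not be free (nor effective), and ``effective with manifold quotient, hence free'' is not a valid implication in general. What is true is that, once $\bar{\Gamma}$ is torsion-free, the stabilizer of every point of $N$ equals the finite central subgroup $\Gamma \cap S^1$ (any nontrivial stabilizing element would project to a torsion element of $\bar{\Gamma}$ lying in a compact point stabilizer of $\Hyp^2 \times S^2$), so the effective circle $S^1/(\Gamma \cap S^1)$ acts freely and $N \to \mathcal{B}$ is a principal bundle for that quotient circle; this only rescales your constant $c$, which your argument never pins down anyway. You should also record explicitly that $\bar{\Gamma} \cap (\{1\} \times \op{SO}(3))$ is trivial (finite subgroup of a torsion-free group), which is what makes $\bar{\Gamma} \to \bar{\Gamma}_X$ an isomorphism, $\bar{\Gamma}_X$ torsion-free, and $\Sigma$ a closed surface with $\pi_1(\Sigma) \cong \bar{\Gamma}$.
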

\begin{proof}
    As above in Prop.~\ref{prop:fiber2_associated_lattice},
    $G = \tilde{G}/\tau_{a,b}(\Z^2)$,
    in which $\pi_1(N)$ is a cocompact lattice.
    Its preimage $\tilde{\Gamma}$ in $\tilde{G}$ is also a cocompact lattice.
    We will study the projection of $\tilde{\Gamma}$ to
    $\Gamma \subset \SLcover \times (\R/2b\Z)$
    and show in particular that $\tau_{a,b}(\Z \oplus \{0\})$
    projects to a finite subgroup of $\R/2b\Z$.

    \paragraph{Step 1: $\Gamma$ is discrete in $\SLcover \times (\R/2b\Z)$.}
    Since $\tilde{\Gamma}$ is discrete and $S^3$ is compact,
    $\tilde{\Gamma}$ cannot accumulate on a coset of $S^3$.
    Then $\tilde{\Gamma}S^3$ is closed, so $\tilde{\Gamma}/(\tilde{\Gamma} \cap S^3)$
    is discrete in $\tilde{G}/S^3 \cong \SLcover \times \R$.
    Moreover,
        \[ \tilde{\Gamma} \supset \tau_{a,b}(\Z^2) \supset \tau_{a,b}(\{0\} \oplus \Z)
            = \{1, e^{\pi in}, bn\}_{n \in \Z} \subset \SLcover \times S^3 \times \R , \]
    so $\tilde{\Gamma}$ contains $2b\Z \subset \R$.
    Then $\Gamma = \tilde{\Gamma}/(S^3 \times 2b\Z)$
    is discrete in $\SLcover \times (\R/2b\Z)$.

    \paragraph{Step 2: $\Gamma$ descends to a cocompact lattice
        $\Lambda$ in $\op{PSL}(2,\R)$.}
    This proceeds similarly to Step 1.
    Let $\tilde{\Lambda}$ be the projection of $\Gamma$ to $\SLcover$.
    Since $\R/2b\Z$ is compact, $\tilde{\Lambda}$ is discrete in $\SLcover$.
    Furthermore, if $k$ is a generator of $Z(\SLcover)$,
    then the image in $\SLcover \times (\R/2b\Z)$
    of $\tau_{a,b}(\Z^2) \subset \tilde{\Gamma}$ is
        \[ \{k^n, an \op{mod} 2b\}_{n \in \Z} \subset \Gamma. \]
    Then $\tilde{\Lambda}$ contains $Z(\SLcover)$,
    so its image $\Lambda$ in $\op{PSL}(2,\R)$ is discrete;
    and $\Lambda$ is cocompact since $\Lambda \backslash \op{PSL}(2,\R)$
    is a quotient space of $\tilde{\Gamma} \backslash \tilde{G}$.

    \paragraph{Step 3: $k$ becomes torsion in the abelianization of $\tilde{\Lambda}$.}
    Since $\Lambda \subset \op{PSL}(2,\R)$ is a cocompact lattice,
    it is the orbifold fundamental group of a compact orbifold $O$ modeled on $\Hyp^2$;
    and $\tilde{\Lambda} = \pi_1(T^1 O)$
    (see e.g.\ \cite[\S{13.4}]{thurstonnotes} for some discussion
    of unit tangent bundles of orbifolds).
    Since a hyperbolic $2$-orbifold admits a finite cover by a hyperbolic surface
    \cite[Thm.~2.3 and 2.5]{scott}, 
    $\tilde{\Lambda}$ contains a subgroup isomorphic to the unit tangent bundle
    of a closed hyperbolic surface $S$. Its center is generated by $k$,
    and $k^{\chi(S)}$ is a product of commutators
    \cite[discussion surrounding Lemma 3.5]{scott}.
    So $k$ becomes finite order
    in the abelianization of $\tilde{\Lambda}$.

    \paragraph{Step 4: $\tau_{a,b}(\Z^2)$ becomes torsion in
        the abelianization of $\Gamma$.}
    The intersection of the compact $\R/2b\Z$ with the discrete $\Gamma$
    is a finite cyclic group $C$, which makes $\Gamma$ a central extension
        \[ 1 \to C \to \Gamma \to \tilde{\Lambda} \to 1 . \]
	Using the Stallings exact sequence
    \cite[II.5 Exercise 6(a)]{kenbrown}, the induced
        \[ C \to \Gamma^{\mathrm{Ab}} \to \tilde{\Lambda}^{\mathrm{Ab}} \]
    is exact in the middle. Then $(k, a \op{mod} 2b) \in \Gamma$,
    which lies over $k \in \tilde{\Lambda}$,
    becomes finite order in the abelianization;
    so its image $a$ in the abelian $\R/2b\Z$ has finite order.
    Therefore $a$ is a rational multiple of $b$.
\end{proof}

\subsubsection{Explicit enumeration of product geometries}
\label{sec:fiber2_product_list}

This section collects a list of the product geometries
with nontrivial abelian isotropy---i.e.\ those with the $2$-dimensional base
in the fibering description (Prop.~\ref{prop:fibering_description}(iii)).

\begin{prop}
    \label{prop:fiber2_explicit_products}
    The maximal model product geometries with nontrivial abelian isotropy are:
    \begin{enumerate}[(i)]
        \item $4$-by-$1$:
            \begin{align*}
                \mathbb{F}^4 &\times \Euc  &
                \Sol^4_0 &\times \Euc
            \end{align*}
        \item $2$-by-$2$-by-$1$:
            \begin{align*}
                S^2 &\times S^2 \times \Euc &
                S^2 &\times \Hyp^2 \times \Euc &
                \Hyp^2 &\times \Hyp^2 \times \Euc
            \end{align*}
        \item $3$-by-$2$:
            \begin{align*}
                \Heis_3 &\times \Euc^2 &
                \Heis_3 &\times S^2 &
                \Heis_3 &\times \Hyp^2 \\
                \Sol^3 &\times \Euc^2 &
                \Sol^3 &\times S^2 &
                \Sol^3 &\times \Hyp^2 \\
                \SLcover &\times \Euc^2 &
                \SLcover &\times S^2 &
                \SLcover &\times \Hyp^2
            \end{align*}
    \end{enumerate}
\end{prop}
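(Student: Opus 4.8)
The plan is to build the list by running through the ways of distributing dimension $5$ among the factors of a product geometry, and then deciding modelhood and maximality one candidate at a time. The factors of a product geometry are themselves geometries, each of which must be maximal, since a geometry subsuming a factor yields one subsuming the product. For the isotropy $G_{p_1}\times\cdots\times G_{p_k}$ of $M=M_1\times\cdots\times M_k$ to be nontrivial, abelian, and have largest irreducible summand of dimension $2$ --- i.e.\ for $M$ to fall under case (iii) of the fibering description (Prop.~\ref{prop:fibering_description}) --- no factor may contribute an irreducible isotropy summand of dimension $\ge 3$, while at least one must contribute a $2$-dimensional one. Consulting the classifications of geometries in dimensions $\le 4$ (the uniformization theorem in dimension $2$, Thurston in dimension $3$, Filipkiewicz and Wall in dimension $4$, and the trivial dimension $1$), the \emph{indecomposable} maximal geometries meeting these constraints are $\Euc$ in dimension $1$; $S^2$, $\Hyp^2$, $\Euc^2$ in dimension $2$; $\Heis_3$, $\SLcover$, $\Sol^3$ in dimension $3$; and $\mathbb{F}^4$, $\Sol^4_0$ in dimension $4$. (Constant-curvature spaces of dimension $\ge 3$, together with $\C P^2$ and $\C\Hyp^2$, are excluded for their $\ge 3$-dimensional isotropy summands; decomposable geometries are products of lower-dimensional factors and are picked up under a finer partition; and one checks against Filipkiewicz's and Wall's tables that the only indecomposable $4$-dimensional geometries with abelian isotropy carrying a $2$-dimensional summand are $\mathbb{F}^4$ and $\Sol^4_0$.)

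Running through the partitions $5=4+1=3+2=2+2+1$, every other partition collapses: two Euclidean factors merge into a non-maximal $\Euc^{a+b}$, and a surviving constant-curvature factor of dimension $\ge 3$ places $M$ in case (ii) rather than (iii). Since the only $1$-dimensional geometry is $\Euc$, the ``$1$'' is always Euclidean; so in $4+1$ the $4$-dimensional factor must be $\mathbb{F}^4$ or $\Sol^4_0$ (any other admissible one is decomposable and contributes a second Euclidean factor); in $2+2+1$ both $2$-dimensional factors must be $S^2$ or $\Hyp^2$ (an $\Euc^2$ would merge with the ``$1$''); and $3+2$ yields the nine products of $\{\Heis_3,\Sol^3,\SLcover\}$ with $\{S^2,\Hyp^2,\Euc^2\}$. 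This reproduces exactly the list (i)--(iii). Modelhood is then immediate: each listed factor is a model geometry, so Prop.~\ref{prop:products_are_models} makes every product in the list a model geometry.

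For maximality I would split into two regimes. Every candidate except $\Sol^4_0\times\Euc$ contains a factor admitting no nonzero invariant vector field --- namely $S^2$, $\Hyp^2$, or $\Euc^2$ (for $n\ge 2$, $\op{SO}(n)$ fixes no nonzero vector in $\R^n$), or $\mathbb{F}^4$ (whose $\op{SO}(2)$ isotropy acts as $V\oplus V$) --- so maximality follows from Prop.~\ref{prop:products_are_maximal}, applied iteratively for the three-factor products; its remaining hypothesis holds because no candidate has two factors that are products with a Euclidean factor. The sole exception $\Sol^4_0\times\Euc$, both of whose factors carry invariant vector fields, is the simply-connected unimodular solvable Lie group $\R^4\rtimes_{e^{tA}}\R$ with $A=\op{diag}(1,1,-2,0)$; its adjoint representation has only real eigenvalues, so Lemma~\ref{lemma:solvable_maximality} identifies its maximal realization as $(\R^4\rtimes_{e^{tA}}\R)\rtimes\op{SO}(2)/\op{SO}(2)$, which is the product geometry $\Sol^4_0\times\Euc$.

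The main obstacle will be the bookkeeping in the maximality step rather than any single deep point: for each candidate one must correctly decide whether some factor admits no nonzero invariant vector field --- the delicate features being that $\Euc$ admits one whereas $\Euc^n$ with $n\ge 2$ does not, and that the solvable and nilpotent factors carrying a ``rotational'' circle of isotropy nonetheless have a trivial isotropy summand and hence do admit invariant vector fields --- and then invoke whichever of Prop.~\ref{prop:products_are_maximal} and Lemma~\ref{lemma:solvable_maximality} applies, minding the degenerate reading of ``product with a Euclidean factor'' when a factor is $\Euc$. A secondary point of care, feeding the enumeration, is verifying against the $4$-dimensional classification that $\mathbb{F}^4$ and $\Sol^4_0$ really are the only indecomposable $4$-dimensional geometries with a circle of isotropy.
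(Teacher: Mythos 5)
Your proposal is correct and takes essentially the same route as the paper's proof: assemble the list from the lower-dimensional classifications, get modelhood from Prop.~\ref{prop:products_are_models}, and get maximality from Prop.~\ref{prop:products_are_maximal} for every product except $\Sol^4_0 \times \Euc$, which the paper likewise settles via Lemma~\ref{lemma:solvable_maximality} (the computation of the maximal compact subgroup of $\op{Aut}$ being carried out in Step 6 of Prop.~\ref{prop:fiber2_essentials}(iii), which you assert rather than redo). One cosmetic slip: the isotropy circle of $\mathbb{F}^4$ acts with weights $1$ and $2$ (the $S^1_{1/2}$ of Fig.~\ref{fig:isotropy_poset}), not as $V \oplus V$, though the only fact you use---the absence of a trivial summand, hence of invariant vector fields---still holds.
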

\begin{proof}
    The list can be built up by looking up the factors in previous classifications
    by Thurston \cite[Thm.~3.8.4]{thurstonbook} and Filipkiewicz \cite{filipk}.
    Products with multiple Euclidean factors are non-maximal and omitted.

    As products of model geometries, these are all model geometries
    (Prop.~\ref{prop:products_are_models}). All are products with at most one
    Euclidean factor; and except for $\Sol^4_0 \times \Euc$, all are products
    of two maximal geometries where
    one has no trivial subrepresentation in its isotropy,
    which makes them maximal (Prop.~\ref{prop:products_are_maximal}).
    Maximality of $\Sol^4_0 \times \Euc$ was proven in Step 6
    of Prop.~\ref{prop:fiber2_essentials}(iii).
\end{proof}

\subsection{Deferred maximality proofs for isometrically fibering geometries}
\label{sec:fiber2_maximality}

Having just handled the products in Section \ref{sec:fiber2_product_list} above,
maximality remains to be proven only for
the five associated bundle geometries or families
and the six geometries from semidirect products
in Prop.~\ref{semidirect_product_geometries}.

The strategy in many cases
(perhaps because the author failed to think of anything less ad-hoc)
is to show that $G'/H'$ cannot subsume $G/H$
by showing that some subgroup of $G$ does not appear in $G'$.
The most useful in what follows is a restriction on
embeddings of $\Heis_3$.
\begin{lemma} \label{lemma:nilembedding} ~
    Let $\lie{g}$ be a direct product of algebras
    of the form $\tanalg \op{Isom} M$ ($M = S^k$, $\Euc^k$, or $\Hyp^k$).
    Then $\lie{g}$ contains no nonabelian nilpotent subalgebra.
\end{lemma}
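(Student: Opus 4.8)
The plan is to reduce to the case where $\lie{g}$ is a single factor $\tanalg \op{Isom} M$, handle the three constant-curvature cases, and then assemble the general result from the factors. The key reduction is this: a nilpotent subalgebra $\lie{n}$ of a direct product $\lie{g}_1 \times \cdots \times \lie{g}_m$ projects to a nilpotent subalgebra $\overline{\lie{n}}_i$ of each $\lie{g}_i$, and $\lie{n}$ embeds in $\overline{\lie{n}}_1 \times \cdots \times \overline{\lie{n}}_m$; so if each $\overline{\lie{n}}_i$ is abelian then $\lie{n}$ is abelian. Hence it suffices to prove that $\tanalg \op{Isom} M$ contains no nonabelian nilpotent subalgebra for $M = S^k$, $\Euc^k$, or $\Hyp^k$.

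For $M = S^k$ the isometry algebra $\lie{so}_{k+1}$ is compact, so every subalgebra is reductive; a nilpotent reductive algebra is abelian, which settles this case. For $M = \Hyp^k$ the isometry algebra is $\lie{so}_{k,1}$, which has real rank $1$; a nilpotent subalgebra lies (after conjugation) in a maximal nilpotent subalgebra, and in a rank-one algebra the nilpotent radical of a minimal parabolic is abelian (it is the horospherical $\R^{k-1}$). More elementarily, one can note that any nilpotent subalgebra of a semisimple algebra consists of $\op{ad}$-nilpotent elements (a nilpotent Lie algebra of semisimple elements would be toral hence abelian, and mixing is excluded by the Jordan decomposition being compatible with the bracket), then invoke that the $\op{ad}$-nilpotent elements of $\lie{so}_{k,1}$ generate, up to conjugacy, only the abelian horospherical subalgebra — or simply cite that $\lie{so}_{k,1}$ has no nonabelian nilpotent subalgebra because its nilpotency degree of nilpotent subalgebras is $1$. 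For $M = \Euc^k$, use $\tanisom \Euc^k \cong \R^k \semisum \lie{so}_k$: a nilpotent subalgebra $\lie{n}$ maps to a nilpotent subalgebra of $\lie{so}_k$, which is abelian and consists of semisimple elements, hence is $0$ (a nonzero nilpotent-as-Lie-algebra subalgebra of the compact $\lie{so}_k$ would be abelian toral, but it must also be $\op{ad}$-nilpotent inside $\lie{n}$, forcing it to zero); so $\lie{n} \subseteq \R^k$, which is abelian.

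The main obstacle I anticipate is making the $\Hyp^k$ argument clean without invoking heavy structure theory: the cleanest route is probably the observation that any nilpotent subalgebra of a real semisimple Lie algebra is conjugate into the nilradical $\lie{n}_{\min}$ of a minimal parabolic, combined with the rank-one fact that $\lie{n}_{\min}$ for $\lie{so}_{k,1}$ is abelian (isomorphic to $\R^{k-1}$ for $k \geq 2$, and trivial for $k=1$). For the $\Euc$ case the subtlety is only that one must argue $\lie{n} \cap \R^k$ is an ideal of $\lie{n}$ on which the image $\overline{\lie{n}} \subseteq \lie{so}_k$ acts nilpotently, yet $\overline{\lie{n}}$ is forced to be $0$; both points are routine given Lemma \ref{isomeuc_normalsubs} and the semisimplicity of elements of the compact $\lie{so}_k$. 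Once the single-factor statement is in hand, the product reduction in the first paragraph finishes the proof immediately.
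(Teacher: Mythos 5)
Your reduction to a single factor and your $S^k$ case are fine (and essentially match the paper). The trouble starts with $\Euc^k$: you conclude that the image $\overline{\lie{n}}$ of a nilpotent subalgebra $\lie{n} \subseteq \R^k \semisum \lie{so}_k$ in $\lie{so}_k$ must vanish, so that $\lie{n} \subseteq \R^k$. That intermediate claim is false: any toral subalgebra of $\lie{so}_k$ --- already $\lie{so}_2 \subset \tanisom \Euc^2$ --- is a nilpotent subalgebra of $\tanisom \Euc^k$ with nonzero image. The step ``it must also be ad-nilpotent inside $\lie{n}$, forcing it to zero'' is a non sequitur: a semisimple element of $\lie{so}_k$ can perfectly well act nilpotently (indeed by zero) on a small subalgebra $\lie{n}$ without being zero. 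What the lemma needs is only that $\lie{n}$ is abelian, and your argument does not reach that conclusion because it routes through this false statement. The paper instead takes $x=(t_x,r_x)$, $y=(t_y,r_y)$ with commuting rotation parts and shows that if $[x,y]\neq 0$ then semisimplicity of $r_x$, $r_y$ on $\R^k$ keeps the lower central series nonzero forever, so a nonabelian subalgebra cannot be nilpotent.

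The $\Hyp^k$ case has a deeper problem: both structural facts you lean on are false as stated. ``Every nilpotent subalgebra of a real semisimple Lie algebra is conjugate into the nilradical of a minimal parabolic'' fails already for $\lie{so}_2 \subset \lie{sl}_2\R \cong \lie{so}_{2,1}$, a nilpotent (abelian) subalgebra of semisimple elements, which cannot meet any conjugate of the ad-nilpotent nilradical except in $0$. Likewise ``every nilpotent subalgebra consists of ad-nilpotent elements'' fails for the same example, and the claim that Jordan decomposition excludes ``mixing'' is also wrong: in $\lie{sl}_3\R$ the span of $\operatorname{diag}(1,1,-2)$ and $E_{12}$ is an abelian, hence nilpotent, subalgebra containing semisimple, nilpotent, and mixed elements. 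Finally, ``simply cite that $\lie{so}_{k,1}$ has no nonabelian nilpotent subalgebra'' is circular --- that is exactly the assertion to be proved for this factor. The conclusion is true and can be rescued algebraically (for a nonabelian nilpotent $\lie{n}$, a nonzero element of $Z(\lie{n}) \cap [\lie{n},\lie{n}]$ is ad-nilpotent in $\lie{g}$ by Lie's theorem applied to the complexification, and its centralizer in $\lie{so}_{k,1}$ is a compact algebra acting semisimply on an abelian ideal, reducing to the Euclidean case), but that is not the argument you gave, and your Euclidean case needs repair first in any event. The paper sidesteps all of this at the group level: a connected nilpotent subgroup of $\isomplus \Hyp^k$ fixes a point of $\Hyp^k$ or of $\partial_\infty \Hyp^k$, hence lies in $\op{SO}(k)$ or in $\op{Conf}^+ \Euc^{k-1} \cong \R \times \isomplus \Euc^{k-1}$, which reduces the hyperbolic case to the spherical and Euclidean ones.
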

\begin{proof}
    If a subalgebra has an abelian projection to each factor in the product,
    then it is itself abelian;
    so it suffices to prove the Lemma when
    $\lie{g} = \tanalg \op{Isom} M$ for $M = S^k$, $\Euc^k$, and $\Hyp^k$.

    \paragraph{Case 1: $M = S^k$.}
            In this case one can in fact show that every solvable subalgebra is abelian.
            A solvable subalgebra of $\tanalg \isomplus S^k \cong \lie{so}_{k+1} \R$
            is tangent to a connected solvable group,
            whose closure in $\op{SO}(k+1)$ is solvable
            since solvability is a closed condition.
            A connected solvable Lie group is abelian if it is compact
            \cite[Cor.~IV.4.25]{knapp}.

    \paragraph{Case 2: $M = \Euc^k$.}
            Suppose $x$ and $y$ generate a nonabelian subalgebra $\lie{g}$ of
            $\tanalg \isomplus \Euc^k \cong \R^k \semisum \lie{so}_k$.
            If their images in $\lie{so}_k$ do not commute,
            then Case 1 implies $\lie{g}$ is not solvable.

            Otherwise, write $x = (t_x \in \R^k, r_x \in \lie{so}_k)$ and $y = (t_y, r_y)$.
            Then
            \begin{align*}
                [x,y] &= r_x t_y - r_y t_x \\
                [x,[x,y]] &= r_x^2 t_y \\
                [y,[y,x]] &= r_y^2 t_x .
            \end{align*}
            Since $[x,y] \neq 0$, at least one of $r_x t_y$ and $r_y t_x$
            is nonzero. Then since $r_x$ and $r_y$ act semisimply on $\R^k$,
            at least one of $[x,[x,y]]$ and $[y,[y,x]]$ is nonzero.
            Recursing, the lower central series of $\lie{g}$ is never zero,
            so $\lie{g}$ is not nilpotent.

    \paragraph{Case 3: $M = \Hyp^k$.}
            Suppose $\lie{g}$ is a nilpotent subalgebra
            of $\lie{so}_{1,k} \cong \tanalg \isomplus \Hyp^k$,
            with corresponding group $G$.
            Every connected solvable subgroup of $\isomplus \Hyp^k$
            fixes a point either in $\Hyp^k$ or in its boundary at infinity
            $\partial_\infty \Hyp^k \cong S^{k-1}$ \cite[Thm.~5.5.10]{ratcliffe2006}.
            Then:
            \begin{itemize}
                \item If this fixed point is in $\Hyp^k$,
                    then $G \subseteq \op{SO}(k)$,
                    which makes $G$ abelian by Case 1.
                \item Otherwise, the fixed point is in $\partial_\infty \Hyp^k$.
                    Since $\isomplus \Hyp^k \cong \op{Conf}^+ S^{k-1}$
                    \cite[Prop.~A.5.13(4)]{bp}
                    acts on $S^{k-1}$ with point stabilizer $\op{Conf}^+ \Euc^{k-1}$
                    \cite[Cor.~A.3.8]{bp},
                    this implies that
                    $G \subseteq \op{Conf}^+ \Euc^{k-1} \cong \R \times \isomplus \Euc^{k-1}$,
                    which makes $G$ is abelian by Case 2. \qedhere
            \end{itemize}
\end{proof}

\begin{prop} \label{prop:assoc_bundle_maximality}
    The five associated bundle geometries classified in
    Prop.~\ref{prop:fiber2_bundle_list}
    are maximal.
\end{prop}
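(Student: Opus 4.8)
The plan is to show that no geometry with a strictly larger transformation group can subsume any of the five spaces. Suppose $M = G/G_p$, one of the five, were subsumed by $G'/G'_p$; we may take $G'/G'_p$ to be the maximal geometry subsuming $M$, so that $G \subsetneq G'$ and $G_p = G \cap G'_p$. All five spaces have $G_p \cong \op{SO}(2)\times\op{SO}(2)$ acting on $T_p M$ with irreducible summands of dimensions $1,2,2$, so $\dim G_p = 2$; since $\dim M = 5$ is fixed and $\dim G' > \dim G$, we get $\dim G'_p \ge 3$, with $G'_p$ containing the $2$-torus $G_p$. By the classification of closed connected subgroups of $\op{SO}(5)$ (Figure~\ref{fig:isotropy_poset}), the identity component of $G'_p$ is then $\op{U}(2)$, $\op{SO}(3)\times\op{SO}(2)$, $\op{SO}(4)$, or $\op{SO}(5)$; so $G'/G'_p$ has an irreducible isotropy summand of dimension at least $3$ and hence occurs among the maximal model geometries already classified: the $\op{U}(2)$- and $\op{SO}(4)$-geometries of Section~\ref{chap:fiber4} (the four products with a Euclidean factor, $\widetilde{\op{U}(2,1)/\op{U}(2)}$, $\Heis_5$), the $\op{SO}(3)\times\op{SO}(2)$-geometries of Section~\ref{chap:fiber3} (products of a surface with a $3$-dimensional constant-curvature space, at most one factor Euclidean), and the constant-curvature $5$-geometries $S^5$, $\Euc^5$, $\Hyp^5$ from Part~II---finitely many in all.

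It remains to rule out, for each associated bundle geometry $M$ and each candidate $N$ on this list, that $N$ is $G$-equivariantly diffeomorphic to $M$. Such a diffeomorphism would preserve the underlying diffeomorphism type and embed $G = \idisom M$ transitively in $\idisom N$, so I would argue case by case with the following tools. First, diffeomorphism type---using Table~\ref{table:geoms_by_diffeo} together with the contractibility of $\Heis_3\times_\R\SLcover$, $\SLcover\times_\alpha\SLcover$, $\Heis_5$, $\widetilde{\op{U}(2,1)/\op{U}(2)}$, $\Euc^5$, $\Hyp^5$ (and with $S^5$, $\C P^2\times\Euc$, $S^4\times\Euc$ being non-contractible)---already eliminates most candidates. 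For the survivors, there are obstructions to $G \hookrightarrow \idisom N$: Lemma~\ref{lemma:nilembedding}, that $\lie{so}_n$, $\lie{so}_{n,1}$, and products of algebras $\tanalg\op{Isom}B$ with $B$ of constant curvature contain no nonabelian nilpotent subalgebra, rules out such $N$ against $\Heis_3\times_\R S^3$ and $\Heis_3\times_\R\SLcover$, whose isometry groups contain $\Heis_3$; compactness of the semisimple part of $\idisom N$ when $N$ is built from Euclidean and compact factors (or is $\Heis_5$) rules it out against the four associated bundles whose isometry group contains $\op{PSL}(2,\R)$; the real-rank bound rules out the rank-one candidates ($\Hyp^5$, the $\op{U}(2,1)$-type, $\Hyp^4\times\Euc$, and so on) against $\SLcover\times_\alpha\SLcover$, whose isometry group contains $\op{PSL}(2,\R)\times\op{PSL}(2,\R)$; and in the few residual cases ($\SLcover\times_\alpha\SLcover$ against $\Hyp^2\times\Hyp^3$; $\Heis_3\times_\R\SLcover$ against $\C\Hyp^2\times\Euc$ and $\widetilde{\op{U}(2,1)/\op{U}(2)}$) the centralizer in $\idisom N$ of the image of the nilpotent or semisimple subgroup of $G$ is too small to accommodate the remaining factor of $G$.

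The genuinely delicate cases are those in which the underlying manifolds do coincide: $\Heis_3\times_\R S^3$ and $\SLcover\times_\alpha S^3$ (both diffeomorphic to $S^3\times\R^2$) against $\Euc^2\times S^3$ and $\Hyp^2\times S^3$, and $L(a;1)\times_{S^1}L(b;1)$ (diffeomorphic to $S^3\times S^2$) against the product $S^2\times S^3$. The nilpotent and semisimple obstructions still handle $\Heis_3\times_\R S^3$ everywhere and $\SLcover\times_\alpha S^3$ against $\Euc^2\times S^3$, but an abstract embedding $G\hookrightarrow\idisom N$ does exist for $\SLcover\times_\alpha S^3$ against $\Hyp^2\times S^3$ and for $L(a;1)\times_{S^1}L(b;1)$ against $S^2\times S^3$. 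Here I would instead invoke the invariant fibering: by Prop.~\ref{prop:fiber2_associated_curvature}, $M$ carries a $G$-invariant fibering over a product $X\times Y$ of constant-curvature surfaces with the bundle curvature nonzero along each of $X$ and $Y$, so neither the $G$-invariant distribution tangent to $X$ nor the one tangent to $Y$ is integrable; whereas a product geometry $S^2\times S^3$, $\Hyp^2\times S^3$, or $S^2\times\Hyp^3$ carries a $G'$-invariant---hence, were it to subsume $M$, also $G$-invariant---splitting $TM = D_2\oplus D_3$ into \emph{integrable} $2$- and $3$-dimensional distributions, and inspecting the $\op{SO}(2)\times\op{SO}(2)$-subrepresentations of $T_p M$ shows $M$ admits no such splitting. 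Pinning this last point down---equivalently, confirming that the isometry group of each associated bundle is exactly the $7$-dimensional group identified in its construction---is the step I expect to be the main obstacle; everything else is finite, if laborious, bookkeeping with diffeomorphism types, Lemma~\ref{lemma:nilembedding}, and the rank and centralizer bounds.
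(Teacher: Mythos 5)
Your proposal is correct, and its two decisive ingredients are exactly the paper's: Lemma~\ref{lemma:nilembedding} to rule out any subsuming geometry whose isometry group is a product of constant-curvature isometry groups when $G$ contains $\Heis_3$, and the non-integrability of the two invariant $2$-plane distributions (forced by the nonzero curvatures of Prop.~\ref{prop:fiber2_associated_curvature} together with the fact that $T_pM$ splits into three pairwise non-isomorphic $\op{SO}(2)\times\op{SO}(2)$-summands) to exclude the $3$-by-$2$ product geometries --- this is verbatim the paper's final step. Where you genuinely diverge is in the large-isotropy candidates. The paper disposes of $\op{U}(2)$ and $\op{SO}(4)$ isotropy in one stroke: for such $G'_p$ the fixed line $(T_pM)^{G'_p}$ equals $(T_pM)^{G_p}$, so $M/\foliation{F}^{G'} = M/\foliation{F}^{G}$, and the left side would have to be one of the $4$-dimensional isotropy-irreducible bases of Section~\ref{chap:fiber4} while the right side is a $2$-by-$2$ product --- no case-by-case comparison with $S^4\times\Euc$, $\C\Hyp^2\times\Euc$, $\widetilde{\op{U}(2,1)/\op{U}(2)}$, $\Heis_5$, etc.\ is ever needed. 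You instead keep those candidates and eliminate them by diffeomorphism type, compactness of semisimple parts, real rank, and centralizer bounds; this does work (your residual cases check out: the centralizer of an $\lie{sl}_2$ in $\lie{su}(2,1)$ is at most $1$-dimensional, leaving no room for $\Heis_3\rtimes\op{SO}(2)$, and the centralizer of the image of $\lie{sl}_2\oplus\lie{sl}_2$ in $\lie{psl}_2\R\oplus\lie{sl}_2\C$ is trivial, leaving no room for the $1$-dimensional center of $G$), but it is substantially longer and those Lie-theoretic verifications still have to be written out, whereas the paper's base-space argument makes them unnecessary. For the constant-curvature candidates the paper uses $\pi_2$, $\pi_3$, and, against $\Hyp^5$, the observation that in $\SLcover\times_\alpha\SLcover$ some $\op{SO}(2)$ has fixed set a copy of $\SLcover$ while in $\Hyp^5$ every $\op{SO}(2)$ fixes a copy of $\Hyp^3$; your contractibility/rank/compactness substitutes are equally valid. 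Finally, your closing worry is unnecessary: the distribution argument does not require first computing that $\idisom M$ is exactly the $7$-dimensional group. It only uses that any invariant structure of a subsuming $G'$ is in particular $G$-invariant, and that $G$-invariant distributions correspond to $G_p$-invariant subspaces of $T_pM$; with the curvature computation already in hand, that closes the argument with no further isometry-group computation.
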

\begin{proof}
    Let $M = G/G_p$ be one of the associated bundle geometries.

    \paragraph{Step 1: The isotropy of any subsuming geometry is $\op{SO}(5)$
        or $\op{SO}(3) \times \op{SO}(2)$.}
    In any geometry $G'/G'_p$ properly subsuming $M$, the isotropy $G'_p$
    must contain $\op{SO}(2)^2$---so consulting Figure \ref{fig:isotropy_poset},
    $G'_p$ is one of $\op{SO}(3) \times \op{SO}(2)$,
    $\op{U}(2)$, $\op{SO}(4)$, or $\op{SO}(5)$.

    In fact, $G'_p$ cannot be $\op{U}(2)$ or $\op{SO}(4)$.
    If this were the case, $G'$ would preserve $TM^G$,
    inducing a $G$-equivariant diffeomorphism
    $M/\foliation{F}^G \to M/\foliation{F}^{G'}$.
    But $M/\foliation{F}^G$ is a product of $2$-dimensional maximal model geometries
    with $G$ acting by the isometry group,
    whereas (consulting the classification in Prop.~\ref{prop:main:i})
    a base space of a geometry with $\op{U}(2)$ or $\op{SO}(4)$ isotropy can only be
    $S^4$, $\Euc^4$, $\Hyp^4$, $\C P^2$, $\C^2$, and $\C\Hyp^2$.

    \paragraph{Step 2: Geometries involving $\Heis_3$ are maximal.}
    In the isometry group of a constant-curvature geometry,
    every connected nilpotent subgroup is abelian (Lemma \ref{lemma:nilembedding});
    so this holds for products of such groups too.
    Since all geometries with isotropy $\op{SO}(5)$ or $\op{SO}(3) \times \op{SO}(2)$
    are constant-curvature geometries or products thereof,
    their isometry groups do not contain subgroups covered by $\Heis_3$.
    Therefore the geometries $\Heis_3 \times_\R S^3$ and $\Heis_3 \times_\R \SLcover$
    are maximal.

    \paragraph{Step 3: $\op{SO}(5)$-isotropy geometries do not subsume.}
    Since $\SLcover \times_\alpha S^3$ is an $S^3$ bundle over $\R^2$,
    its nonzero $\pi_3$ distinguishes it from the $\op{SO}(5)$-isotropy geometries.
    Similarly, $L(a;1) \times_{S^1} L(b;1)$ is an $S^3$ bundle over $S^2$,
    so it is distinguished from the $\op{SO}(5)$ geometries by having
    nontrivial $\pi_2$.

    $\SLcover \times_\alpha \SLcover$ cannot be subsmed by $S^5$ (which is compact)
    or $\Euc^5$ (whose isometry group has compact semisimple part).
    To it distinguish from $\Hyp^5$, observe that the image in
    $\SLcover \times_\alpha \SLcover$ of
    $\SLcover \times \{1\} \times \R \subseteq \SLcover \times \SLcover \times \R$
    is a copy of $\SLcover \rtimes \op{SO}(2) / \op{SO}(2)$,
    fixed by an $\op{SO}(2)$ with trivial projection to the first $\SLcover$ factor;
    whereas in $\Hyp^5$, every group of isometries conjugate to $\op{SO}(2)$
    fixes a copy of $\Hyp^3$.

    \paragraph{Step 4: $\op{SO}(3) \times \op{SO}(2)$ geometries
        do not subsume.}
    If $\R$ and $V$ denote the $1$-dimensional trivial and
    $2$-dimensional standard representations of $\op{SO}(2)$, then
    the tangent space $T_p M$ decomposes into three nonisomorphic
    representations of $G_p = \op{SO}(2) \times \op{SO}(2)$ as\footnote{
        Over $\C$, the irreducible representations of a direct product of groups
        are the tensor products of their irreducible representations;
        see e.g.\ \cite[Prop.~II.4.14]{brockerdieck}
    }
    \[ T_p M \cong (\R \otimes \R) \oplus (\R \otimes V) \oplus (V \otimes \R) , \]
    which determines two invariant $2$-dimensional distributions.
    From the description of $M$
    as a bundle over $2$-by-$2$ product geometries
    (Table \ref{table:fiber2_associated_bundles},
    Prop.~\ref{prop:fiber2_associated_curvature}),
    neither of these is integrable.
    Then $M$ cannot be subsumed by a $3$-by-$2$ product geometry,
    since every geometry with $\op{SO}(3) \times \op{SO}(2)$ isotropy
    has an invariant integrable $2$-dimensional distribution.
\end{proof}

\begin{rmk} \label{rmk:nonunique_maximality}
    Proposition \ref{prop:assoc_bundle_maximality}
    provides a negative answer to the question raised by Filipkiewicz
    in the discussion after \cite[Prop.~1.1.2]{filipk}:
    is every non-maximal geometry
    subsumed by a \emph{unique} maximal geometry?

    The counterexample is
    $T^1 S^3 \cong \op{SO}(4)/\op{SO}(2) \cong S^3 \times S^3 / \Delta(S^1)$
    where $\Delta: S^3 \to S^3 \times S^3$ is the diagonal map.
    It is subsumed:
    \begin{enumerate}
        \item by $S^3 \times S^2$
            since $S^3$ is parallelizable by left-invariant vector fields, and
        \item by $L(1;1) \times_{S^1} L(1;1) = S^3 \times S^3 \times \R / \tau_{1,1} (\R^2)$
            since $\tau_{1,1}(\R^2)$ meets $S^3 \times S^3$ in
            an antidiagonal (which is conjugate to diagonal) copy of $S^1$.
    \end{enumerate}
    Since $S^3 \times S^2$ is maximal as a product and
    $L(1;1) \times_{S^1} L(1;1)$ is maximal
    due to Prop.~\ref{prop:assoc_bundle_maximality},
    a subsuming geometry for $\op{SO}(4)/\op{SO}(2)$ is not unique.

    One may alternatively check that $S^3 \times S^2$ does not
    subsume $L(1;1) \times_{S^1} L(1;1)$
    by classifying all homomorphisms
        \[ S^3 \times S^3 \times \R \to (S^3)^3
            \cong \widetilde{\left(\op{Isom} (S^3 \times S^2)\right)^0} \]
    using the fact that $\op{Aut} S^3 = \op{Inn} S^3$.
\end{rmk}

\begin{prop}
    \label{prop:fiber2_semidirect_maximality}
    All geometries listed in Prop.~\ref{semidirect_product_geometries}
    are maximal.
\end{prop}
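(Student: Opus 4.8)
The plan is to assume one of the geometries $M=G/G_p$ listed in Prop.~\ref{semidirect_product_geometries} is subsumed by a larger one, to pass to the maximal realization $M'=G'/G'_p$ of the subsuming geometry so that $G'$ is the identity component of an isometry group in some invariant metric (\cite[Prop.~1.1.2]{filipk}), and to derive a contradiction from $G\subsetneq G'$. Each of the groups $G$ in Prop.~\ref{semidirect_product_geometries} is $6$-dimensional, and each underlying manifold is an $\R^k$-bundle over a contractible base, hence diffeomorphic to $\R^5$; so $\dim G'\geq 7$, and $G'_p$ is a connected subgroup of $\op{SO}(5)$ properly containing $G_p$, which is $S^1_1$ for $T^1\Euc^{1,2}$ and $S^1_{1/2}$ for the others. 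The poset of isotropy groups (Fig.~\ref{fig:isotropy_poset}) gives a short list of such $G'_p$ in each case, and the corresponding $M'$ is one of the geometries already classified with that isotropy: a constant-curvature space, a product of constant-curvature geometries (Prop.~\ref{prop:main:ii}), $\Heis_5$ or $\widetilde{\op{U}(2,1)/\op{U}(2)}$ (Prop.~\ref{prop:main:i}), one of the $\op{SO}(2)\times\op{SO}(2)$-isotropy geometries (Table~\ref{table:geoms_by_isotropy}), or an isotropy-irreducible geometry from Part~II. Since $M'\cong\R^5$ is contractible, every candidate with nonzero reduced homology is excluded at once --- in particular all those with an $S^k$ or $\C P^k$ factor and the compact $\op{SU}(3)/\op{SO}(3)$ --- leaving only $\Euc^5$, $\Hyp^5$, $\Hyp^4\times\Euc$, $\C\Hyp^2\times\Euc$, $\Heis_5$, $\widetilde{\op{U}(2,1)/\op{U}(2)}$, $\Hyp^3\times\Euc^2$, $\Euc^3\times\Hyp^2$, $\Hyp^3\times\Hyp^2$, $\SemiR{\R^4}{x-1,\,x-1,\,x+1,\,x+1}$, $\Heis_3\times_\R\SLcover$, $\SLcover\times_\alpha\SLcover$, and (only when $G_p=S^1_{1/2}$) $\op{SL}(3,\R)/\op{SO}(3)$.

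Maximality of $\Euc\times\mathbb{F}^4=\mathbb{F}^4\times\Euc$ has already been recorded in Prop.~\ref{prop:fiber2_explicit_products}, so it remains to rule out the candidates above for $M=\R^2\rtimes\SLcover$, $\mathbb{F}^5_0$, $\mathbb{F}^5_1$, and $T^1\Euc^{1,2}$; the method, as for the associated bundle geometries (Prop.~\ref{prop:assoc_bundle_maximality}), is to exhibit a subgroup of $G$, or a subalgebra of its Lie algebra $\lie{g}$, that does not occur in $G'$. The handles I plan to use are: \textbf{(a)} $\lie{g}$ has noncompact semisimple part isogenous to $\lie{sl}_2\R$, so (by Levi--Malcev) $G$ embeds neither in the solvable-by-compact group $\isomplus\Heis_5=\Heis_5\rtimes\op{U}(2)$ nor in the solvable group $\isomplus\SemiR{\R^4}{x-1,\,x-1,\,x+1,\,x+1}$; \textbf{(b)} for $M=\R^2\rtimes\SLcover$ and $M=\mathbb{F}^5_1$, $G$ contains a copy of $\widetilde{\op{SL}(2,\R)}$, which has no faithful finite-dimensional representation, so $G$ embeds in none of the candidate isometry groups that are linear; \textbf{(c)} $\lie{g}$ contains a nonabelian nilpotent subalgebra --- indeed a $4$-dimensional filiform one, lying inside $\R^3\semisum\lie{so}_{1,2}$ for $T^1\Euc^{1,2}$ and inside $\lie{heis}_3\semisum\lie{sl}_2\R$ for the two $\mathbb{F}^5$'s --- so by Lemma~\ref{lemma:nilembedding} it is not a subalgebra of the Lie algebra of any product of constant-curvature geometries (including $\op{SO}(5,1)=\isomplus\Hyp^5$), and a short computation shows the same filiform algebra does not embed in $\lie{su}_{2,1}\oplus\R$ (the isometry algebra of both $\C\Hyp^2\times\Euc$ and $\widetilde{\op{U}(2,1)/\op{U}(2)}$) or in $\lie{sl}_3\R$, since the maximal nilpotent subalgebras of those are at most $2$-step or too small; \textbf{(d)} when $\lie{g}$ maps into an algebra $\lie{k}\oplus\lie{sl}_2\R$ with $\lie{k}$ solvable ($\Heis_3\times_\R\SLcover$) or into $\lie{sl}_2\R\oplus\lie{sl}_2\R\oplus\R$ ($\SLcover\times_\alpha\SLcover$), projecting to the $\lie{sl}_2\R$-summand and casing on the (ideal) kernel shows that the nilradical of $\lie{g}$, on which its Levi acts nontrivially, would have to land in a summand on which the Levi acts trivially --- a contradiction; and \textbf{(e)} for $M=\R^2\rtimes\SLcover$ against $\SLcover\times_\alpha\SLcover$ and $\Heis_3\times_\R\SLcover$ one may alternatively note that $M$ carries an invariant integrable $2$-plane distribution (tangent to the cosets of its nilradical) while these associated bundles carry none (Prop.~\ref{prop:assoc_bundle_maximality}, \ref{prop:fiber2_associated_curvature}).

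Assembling (a)--(e) then disposes of every surviving candidate: the constant-curvature geometries and their products fall to (c) (or (a) for $\Euc^5$), $\Heis_5$ and the solvable group $\SemiR{\R^4}{x-1,\,x-1,\,x+1,\,x+1}$ to (a), the spaces $\C\Hyp^2\times\Euc$, $\widetilde{\op{U}(2,1)/\op{U}(2)}$, $\op{SL}(3,\R)/\op{SO}(3)$ to (b) for $\R^2\rtimes\SLcover$ and $\mathbb{F}^5_1$ and to (c) for $\mathbb{F}^5_0$ (while they are never candidates for $T^1\Euc^{1,2}$), and $\Heis_3\times_\R\SLcover$, $\SLcover\times_\alpha\SLcover$ to (d) (or to (b)/(e)). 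The main obstacle I anticipate is organizational rather than conceptual: there are on the order of a dozen surviving candidates to process and several distinct Lie-algebra non-embedding statements to verify --- in particular one must also check that the $5$-dimensional non-solvable algebra $\R^2\semisum\lie{sl}_2\R$, onto which $\lie{heis}_3\semisum\lie{sl}_2\R$ collapses after quotienting by the centre of the Heisenberg ideal, is not a subalgebra of $\lie{su}_{2,1}$, $\lie{sl}_3\R$, or $\lie{sl}_2\R\oplus\lie{sl}_2\R$ (a Borel--Tits argument: the nilradical it would carry, normalized by an $\lie{sl}_2\R$, would force that $\lie{sl}_2\R$ into a parabolic, which for the real-rank-one algebras $\lie{su}_{2,1}$ and the Borel of $\lie{sl}_3\R$ is solvable) --- so most of the labor lies in presenting the case analysis without redundancy.
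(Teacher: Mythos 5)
Your overall plan---compare $M=G/G_p$ against the already-classified geometries whose isotropy properly contains $S^1_{1/2}$ or $S^1_1$, then exhibit a subgroup or subalgebra of $G$ that cannot occur in the larger isometry group---is viable and close in spirit to what the paper does, but it breaks down exactly at the $\op{U}(2)$-isotropy candidates. You dispose of $\widetilde{\op{U}(2,1)/\op{U}(2)}$ by linearity (handle (b)), but the isometry group of that geometry is \emph{not} linear: the space is $\widetilde{\op{SU}(2,1)}/\op{SU}(2)$, so its isometry group contains the infinite cover $\widetilde{\op{SU}(2,1)}$ (this is why the geometry carries a tilde), and non-linearity of $\SLcover$ then excludes nothing. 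For $\mathbb{F}^5_1$ you could fall back on your filiform argument (c), but for $\R^2\rtimes\SLcover$ the only nonabelian nilpotent subalgebra of its isometry algebra is $\lie{heis}_3$, which \emph{does} embed in $\lie{su}_{2,1}$ (it is the Iwasawa nilradical), so (c) gives nothing there; the statement you actually need is the one you defer to your last sentence, that $\R^2\semisum\lie{sl}_2\R$ does not embed in $\lie{su}_{2,1}$. That statement is true and your Borel--Tits sketch is sound in real rank one, but the same sentence asserts the non-embedding into $\lie{sl}_3\R$, which is false---$\R^2\rtimes\op{SL}(2,\R)$ is the affine group, sitting in $\op{SL}(3,\R)$ as the block upper-triangular matrices fixing the last coordinate functional---so that closing paragraph needs repair, not just citation. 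Two further slips in the case bookkeeping: $\op{U}(2)$ contains $S^1_1$ (its scalar circle acts with weights $1,1,0$), so $\C\Hyp^2\times\Euc$, $\widetilde{\op{U}(2,1)/\op{U}(2)}$ and $\Heis_5$ \emph{are} candidates for subsuming $T^1\Euc^{1,2}$, contrary to your parenthetical; and your surviving-candidate list omits $\Hyp^2\times\Hyp^2\times\Euc$. These last points are repairable with your own tools, but as written the proof does not close.

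For comparison, the paper avoids this entire branch with one observation you do not use: a subsuming isotropy $G'_p$ cannot preserve the invariant line field $TM^G$, since otherwise $G'$ would act on the fibering over $M/\foliation{F}^G$, which is $\mathbb{F}^4$ (resp.\ $T\Hyp^2$), and the geometries of Prop.~\ref{semidirect_product_geometries} are the only ones with those bases. Consulting Fig.~\ref{fig:isotropy_poset}, this forces $G'_p\in\{\op{SO}(5),\ \op{SO}(3)\times\op{SO}(2),\ \op{SO}(3)_5\}$, so together with contractibility only $\op{SL}(3,\R)/\op{SO}(3)$ and products of a hyperbolic space with Euclidean or hyperbolic factors survive---in particular every $\op{U}(2)$-, $\op{SO}(4)$- and $\op{SO}(2)^2$-isotropy candidate disappears at once. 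The paper then separates $\R^2\rtimes\SLcover$ and $\mathbb{F}^5_1$ by the $3$-dimensional fixed sets of order-$2$ isometries (these are copies of $\SLcover$, a value no candidate attains), and finishes $\mathbb{F}^5_0$ and $T^1\Euc^{1,2}$ with Lemma~\ref{lemma:nilembedding} plus an $\op{SL}(2,\R)$-weight computation in $\lie{sl}_3\R$. Either adopting that pruning step or carefully proving the $\lie{su}_{2,1}$ non-embedding statements you sketch would close the gap.
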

\begin{proof}
    Let $M = G/H$ be one of the geometries named,
    and suppose $G'/H'$ subsumes it.

    \paragraph{Step 1: List restrictions on subsuming geometries.}
    Since $G/H$ is contractible, so is $G'/H'$.
    Since $G$ contains a group covered by $\SLcover$, so does $G'$.
    Finally, $H'$ contains $H$ ($S^1_{1/2}$, or $S^1_1$ in the case of $T^1 \Euc^{1,2}$)
    and cannot preserve $TM^G$---since if it did, then it would preserve the fibering
    $M \to M/\foliation{F}^G$; but these geometries are the only ones encountered
    in the classification for which $M/\foliation{F}^G$ is $\mathbb{F}^4$ or $T\Hyp^2$.
    Consulting Figure \ref{fig:isotropy_poset}, this last restriction implies $H$ is
    $\op{SO}(5)$, $\op{SO}(3) \times \op{SO}(2)$, or $\op{SO}(3)_5$.

    Then by the classification so far, $G'/H'$ is either
    $\op{SL}(3,\R)/\op{SO}(3)$ or a product of a hyperbolic space
    with zero or more Euclidean or hyperbolic spaces.

    \paragraph{Step 2: Find $3$-dimensional fixed sets of order $2$ isometries.}
    Such a fixed set in $G/H$ must also appear in $G'/H'$, so we list these.
    See Table \ref{table:fiber2_fixed_sets_3d} for a summary.
    \begin{table}
        \caption{$3$-dimensional fixed sets of order $2$ isometries}
        \label{table:fiber2_fixed_sets_3d}
        \begin{center}\begin{tabular}{cc}
            Geometry & Fixed set \\
            \hline
            (product) & (product) \\
            $\op{SL}(3,\R)/\op{SO}(3)$ & $\Hyp^2 \times \Euc$ \\
            $\mathbb{F}^5_0 = \Heis_3 \rtimes \op{SL}(2,\R) / \op{SO}(2)$ & $\Hyp^2 \times \Euc$ \\
            $\mathbb{F}^5_1$ & $\SLcover$ \\
            $\R^2 \rtimes \SLcover$ & $\SLcover$ \\
        \end{tabular}\end{center}
    \end{table}
    \begin{itemize}
        \item Using the classification of isometries of $\Hyp^n$ \cite[A.5.14]{bp}
            and the fact that $\op{SO}(k)$ is maximal compact in $\isomplus \Euc^k$,
            these fixed sets in products of Euclidean and hyperbolic spaces
            are also products of Euclidean and hyperbolic spaces.
        \item In $\op{SL}(3,\R)/\op{SO}(3)$, an order $2$ isometry
            is a $3 \times 3$ matrix with a positive even number of $-1$ eigenvalues.
            Its centralizer is conjugate (by diagonalization) to the subgroup of
            $2+1$ block matrices; and the orbit of this subgroup
            is the isometry's fixed set,
                \[ \op{S}\left(\op{GL}(2,\R) \times \op{GL}(1,\R)\right)^0 / \op{SO}(2)
                    \cong \op{SL}(2,\R) \times \R / \op{SO}(2) \cong \Hyp^2 \times \Euc .
                \]
        \item In $\Heis_3 \rtimes \op{SL}(2,\R) / \op{SO}(2)$,
            the order $2$ isometries are all conjugate to the order $2$ element of
            $\op{SO}(2)$. Its centralizer is $Z(\Heis_3) \times \op{SL}(2,\R)$,
            with orbit $\Euc \times \Hyp^2$.
        \item Following the same recipe for $\R^2 \rtimes \SLcover$ and $\mathbb{F}^5_1$
            produces a centralizer of $\SLcover \rtimes \op{SO}(2)$,
            with orbit $\SLcover$.
    \end{itemize}
    This last result implies $\R^2 \rtimes \SLcover$ and $\mathbb{F}^5_1$
    are maximal, since none of the candidates for $G'/H'$ have $\SLcover$
    as a fixed set of an order $2$ isometry.

    \paragraph{Step 3: $\Heis_3 \rtimes \op{SL}(2,\R) / \op{SO}(2)$
        is maximal.}
    If $G$ contains $\Heis_3$, then so does $G'$. A copy of $\Heis_3$
    in a direct product must have nonabelian---hence locally injective---image
    in at least one factor. Since $\Heis_3$ covers no subgroup of isometries
    of $\Euc^k$ or $\Hyp^k$ (Lemma \ref{lemma:nilembedding}),
    only $\op{SL}(3,\R)/\op{SO}(3)$ can subsume.

    By the classification of irreducible representations of
    $\op{SL}(2,\R)$---one in each dimension \cite[11.8]{fultonharris}---all
    $\op{SL}(2,\R)$ in $\op{SL}(3,\R)$ are conjugate to a standard copy.
    Denoting the standard representation of $\op{SL}(2,\R)$ by $V$,
    counting weights (see e.g.\ \cite[\S{11.2}]{fultonharris})
    produces the decomposition
        \[
            \lie{sl}_3 \R \cong_{\op{SL}(2,\R)}
            \R \oplus 2V \oplus \lie{sl}_2 \R .
        \]
    The two copies of $V$ are $\op{Hom}(\R^2, \R)$ and $\op{Hom}(\R, \R^2)$,
    which are abelian subalgebras of $\lie{sl}_3 \R$ and therefore
    cannot generate a copy of $\Heis_3$.
    Therefore $\op{SL}(3,\R)/\op{SO}(3)$ does not subsume
    $\Heis_3 \rtimes \op{SL}(2,\R) / \op{SO}(2)$.
    
    \paragraph{Step 4: $T^1 \Euc^{1,2}$ is maximal.}
    Since the isotropy of $\op{SL}(3,\R)/\op{SO}(3)$
    does not contain $S^1_1$
    (Fig.~\ref{fig:isotropy_poset}; see also
    \cite[Prop.~\ref{ii:isotropy_classification} footnote]{geng2}),
    it cannot subsume $T^1 \Euc^{1,2}$.
    So it only remains to eliminate the products of hyperbolic
    and Euclidean spaces, as follows.

    In $\lie{so}_{1,2} \R$, there is a matrix
        \[ A = \begin{pmatrix}  & & 1 \\ & & -1 \\ 1 & 1 & 0 \end{pmatrix} , \]
    which sends the third standard basis vector $e_3$
    to $e_1 - e_2$ and sends $e_1 - e_2$ to zero.
    Then $A$, $e_3$, and $e_1 - e_2$ span a copy of the
    $3$-dimensional Heisenberg algebra in $\R^3 \semisum \lie{so}_{1,2} \R$.
    Since $\tanalg \isomplus \Hyp^k$ and $\tanalg \isomplus \Euc^k$
    cannot contain the Heisenberg algebra (Lemma \ref{lemma:nilembedding}),
    neither can a product of them, since the projection to at least one
    factor would have to be nonabelian and thereby injective.
    Then none of the products of hyperbolic and Euclidean spaces
    can subsume $T^1 \Euc^{1,2}$.
\end{proof}

\bibliographystyle{amsalpha}
\bibliography{main}

\end{document}